\providecommand{\keywords}[1]{\textbf{\text{Keywords: }} #1}
\numberwithin{equation}{section}
\newtheorem{assumption}{Assumption}[section]
\newtheorem{remark}[assumption]{Remark}
\newtheorem{lemma}[assumption]{Lemma}
\newtheorem{theorem}[assumption]{Theorem}   
\begin{document}

\title{Long time strong convergence analysis of one-step methods for McKean-Vlasov SDEs with superlinear growth coefficients \thanks{
Emails addresses: 222101027@csu.edu.cn; yaozhong@ualberta.ca;sqgan@csu.edu.cn.}}
\author{\normalsize Taiyuan Liu$^{a}$,\quad Yaozhong Hu$^{b}$,\quad Siqing Gan$^{a}$
\\
\footnotesize $^{a}$School of Mathematics and Statistics, HNP-LAMA, Central South University, Changsha, China
\\
\footnotesize $^{b}$Department of Mathematical and Statistical Sciences, AB T6G 2G1, University of Alberta, Edmonton, Canada
}

\maketitle

\begin{abstract}
    This paper presents a strong convergence rate  analysis of general discretization  approximations for McKean-Vlasov SDEs with  super-linear growth    coefficients over infinite time horizon. Under some specified non-globally Lipschitz conditions,  we derive the propagation of chaos,  and the  mean-square convergence rate  over infinite time horizon   for general one-step time discretization schemes for the underlying Mckean-Vlasov SDEs. As an application of the general result  it is  obtained  the  mean-square convergence rate over infinite time horizon for  two numerical schemes: the projected Euler scheme and the backward Euler scheme for Mckean-Vlasov SDEs in non-globally Lipschitz settings. Numerical experiments  are   provided to validate the  theoretical findings.
\end{abstract}

\keywords{McKean-Vlasov SDEs with super linear
growth, propagation of chaos,  projected
Euler method, backward Euler method, strong convergence.}

\section{Introduction}\label{2023SIS-section:Introduction}

McKean-Vlasov stochastic differential equations (MV-SDEs), also referred to as distribution-dependent or mean-field SDEs, are characterized by their coefficients that depend on the law of the unknown  solution itself. They  describe large systems where each individual particle  is affected by the aggregate dynamics of the entire group.   
In this work, we focus on the long time convergence analysis of several numerical schemes   for  superlinear 
autonomous $d$-dimensional MV-SDEs of the following form:
\begin{equation}\tag{1.1}\label{1.1}
d X_t=b\left(X_t, \mu_{t}^{X}\right) d t+\sigma\left(X_t, \mu_{t}^{X}\right) d W_t, \quad  t\geq 0
\end{equation}
with initial condition $X_0 \in \mathbb{R}^d$, where $\mu_t^X$ represents the marginal distribution of the process $X$ at time $t$.
 
 Originally introduced by  \cite{mckean1966class, mckean1967propagation}, this category of equations    have  attracted growing attention after the pioneering contribution of  \cite{dawson1983critical}.   They frequently emerge as the asymptotic dynamics of large  interacting particle systems governed by mean-field interactions and   have  applications  in diverse scientific domains, including physics, biology, engineering, quantitative finance, mean-field games, and neuroscience, and have attracted substantial attention in recent years.  
As illustrative examples, MV-SDEs characterize both the optimal paths in mean-field control problems \cite{bensoussan2013mean, sznitman1991topics} and the equilibrium trajectories in mean-field games (e.g.  \cite{ mishura2020existence, wu2022stabilization}). MV-SDEs play a fundamental role in reducing complexity in managing control problems in dispersed systems, for example robotic networks and electrical grids. By emphasizing the mean-field statistical properties of agents rather than modeling their individual interactions, MV-SDEs substantially reduce computational complexity. Moreover, these equations can be used to model  uncertainties in stochastic environment, thereby enabling the development of stable control techniques.  Other practical applications include cooperative agent networks (see \cite{benachour1998nonlinear, bossy1997stochastic}) and related fields such as filtering theory (as explored in \cite{crisan2010approximate}). Additional relevant references  are referred to  \cite{baladron2012mean, bolley2011stochastic, bossy2015clarification, dreyer2011phase, guhlke2018stochastic, mckean1966class}.

Similar to deterministic differential equations, exact solutions to stochastic differential equations are often unattainable or exceedingly complex. This challenge extends to McKean-Vlasov SDEs as well. Consequently, numerical simulation emerges as an indispensable tool for solving MV-SDEs, requiring the development of effective numerical methods. Here, effectiveness typically implies convergence in an appropriate sense, preferably with a high order to enhance computational efficiency, while also preserving key properties of the original system, such as the invariance of certain regions or structural features.
Theorem~1 in \cite{hutzenthaler2011strong} demonstrates that, for standard SDEs featuring non-globally Lipschitz coefficients, the conventional Euler-Maruyama scheme does not maintain finite moments, resulting in divergence in both the strong and weak sense. An analogous divergence phenomenon has been observed in MV-SDEs, commonly referred to as “particle corruption", as detailed in Section~4.1 of \cite{dos2022simulation}. Therefore, the design and analysis of convergent numerical methods under non-globally Lipschitz conditions represent a highly significant and compelling research area.

The practical significance of distribution-dependent stochastic differential equations has motivated extensive research into numerical algorithms for their approximation. Theoretical investigations over finite time horizons have primarily focused on the convergence of numerical schemes and their associated rates. With the condition that the coefficients exhibiting locally Lipschitz continuity with respect to the state variables,  strong convergence analyses of the Euler-Maruyama scheme applied to MV-SDEs have been conducted in \cite{li2023strong}. In cases where the coefficients exhibit super-linear growth in the state variables while maintaining linear growth in the measure variables, several studies have examined the boundedness of moments and convergence characteristics of diverse numerical schemes, with some establishing convergence rates over finite time intervals. Owing to their straightforward algebraic framework and minimal computational expense, explicit methods offer distinct advantages; consequently, several improved explicit schemes have been developed to approximate distribution-dependent SDEs featuring super-linear expansion. Specifically, tamed methods were introduced and analyzed in \cite{dos2022simulation, liu2023tamed, bao2021first, kumar2021explicit, Kumar2022Well-posedness}, where convergence rates were rigorously established. As an alternative to tamed schemes, adaptive Euler methods have been proposed and shown to achieve good  strong convergence rates, as demonstrated in \cite{reisinger2022adaptive}. Modified Euler methods were investigated in \cite{jian2025modified}, which established a global strong convergence rate of $\tfrac{1}{2}$ for MV-SDEs. Implicit methods, valued for their superior stability properties and larger stability regions compared to explicit schemes, are particularly effective in preserving the dissipative structures of the underlying systems over long time horizons. The convergence of backward Euler schemes was studied in \cite{dos2022simulation}, while the split-step backward Euler scheme was analyzed in \cite{chen2022flexible, chen2024euler}, with convergence rates derived.

In the context of high-dimensional distribution-dependent SDEs, the primary challenge is to mitigate explosion of dimensions. To confront this difficulty, multilevel Picard approximations were proposed in \cite{hutzenthaler2022multilevel} for solving high-dimensional McKean-Vlasov equations, substantially reducing the impact of dimensional growth. Additionally, deep learning-based algorithms for approximating solutions to high-dimensional McKean-Vlasov equations have been developed in \cite{han2024learning} and \cite{germain2022numerical}.

It is noteworthy that the long-time numerical analysis of classical  stochastic differential equations holds significant importance across various scientific disciplines, including high-dimensional sampling, Bayesian reasoning, statistical mechanics, and artificial intelligence \cite{dalalyan2017theoretical, hong2019invariant, song2020score}. For SDEs with coefficients that are globally Lipschitz continuous, long time  approximation findings have been derived in \cite{mattingly2010convergence, talay1990second, talay2002stochastic}. In contrast, when the coefficients are  non-global Lipschitz conditions, fewer  studies have examined the long-time approximations of the solution \cite{liu2023backward, WANG2024long-timeStrong, angeli2023uniform, crisan2021uniform}.

The investigation   of  mean-field SDEs over  infinite time horizon is of significant importance, yet it poses substantial challenges. Unlike standard SDEs, MV-SDEs lack the strong Markov property, which precludes the direct application of stopping time technique  and complicates theoretical analysis. In \cite{yuanping2024explicit}, the long-time moment boundedness of truncated Euler methods was analyzed, along with the convergence of the associated invariant measures. The long time  stability of the Euler method was investigated in \cite{liu2023stability}, while \cite{tran2025infinite} provided a long-time convergence analysis for adaptive tamed Euler methods. Furthermore, \cite{bao2024uniform} conducted long-time convergence analyses for backward Euler, tamed Euler, and adaptive Euler schemes. Due to the absence of a unified analytical framework, classical finite-time convergence techniques often fail under these more general conditions. As a result, the long-time convergence analysis of numerical methods for MV-SDEs remains a critical and largely unresolved research area.   
To the best of our knowledge,   strong convergence analysis  for long-time numerical schemes approximating MV-SDEs featuring non-globally Lipschitz  coefficients has not yet been established in prior studies.  Our current work intends to fill this gap. 
Rather than concentrating on particular numerical methods, we propose a comprehensive framework that incorporates a broad range of one-step numerical approximations. Under this framework, we first construct  long-time propagation of chaos and prove their 
long-time strong convergence theorems. A similar methodology was employed in \cite{bao2024uniform} to develop a long-time strong convergence, where  however, the authors  partially relies on global Lipschitz assumptions and yields strong convergence results in the $\mathcal{W}_1$-Wasserstein distance. In contrast, the assumptions adopted here are more general than those in \cite{bao2024uniform}, and moreover  we obtain strong convergence results in the $\mathcal{W}_2$-Wasserstein distance, thereby extending and enhancing the prior findings.
 
The principal contributions of this research are outlined below:
\begin{itemize}
  \item We establish a new long-time version of the propagation of chaos in Theorem~\ref{propagationofchaos}.
  \item 
  For MV-SDEs exhibiting super-linear growth in the drift and diffusion coefficients, we develop a novel framework for the fundamental long-time strong convergence theorem  in Theorems~\ref{theorem 4.2} and~\ref{theorem 4.3}. This framework accommodates a broad range of one-step numerical methods for mean-field SDEs, enabling the derivation of long-time strong convergence rates for these methods. As an application, we derive explicit long-time strong convergence rates for the projected Euler and backward Euler methods   as detailed in Theorems~\ref{theorem5.9} and~\ref{theorem6.9}, respectively.
  
  \item The long-time boundedness of moments for one-step methods applied to MV-SDEs constitutes a critical component in establishing the long-time convergence of these methods. We extend the analytical techniques employed for demonstrating long-time moment boundedness in one-step methods for classical SDEs (as outlined in Lemma~\ref{lemma5.2}) to the setting of MV-SDEs. To overcome the principal challenge in these proofs—namely, the treatment of measure-dependent terms—we employ a judicious combination of Hölder's inequality, Jensen's inequality, and the binomial expansion.
\end{itemize}

The rest of the paper is structured as follows. Section~\ref{Notations and assumptions} introduces some  necessary notation and assumptions. Section~\ref{Long-time behavior of McKean-Vlasov SDEs} establishes the propagation of chaos   and derives the uniform moment boundedness of solutions to the interacting particle system.  Both of these results are over infinite horizon.    Section~\ref{The long-time mean-square convergence results of the one-step method for MV-SDEs} is dedicated to developing a general framework for the long-time mean-square convergence of one-step time-discretization schemes applied to MV-SDEs. Sections~\ref{strong convergence rate of the projected Euler method over infinite time} and~\ref{strong convergence rate of the backward Euler scheme  over infinite time} analyze the long-time strong convergence of two well-known methods—the projected Euler method and the backward Euler method—under non-globally Lipschitz conditions, and derive their respective long-time convergence rates within the proposed framework. Section~\ref{numerical_experiments} presents numerical examples to validate the theoretical results. To facilitate the reading of the paper, we postpone the  detailed proofs  in Section~\ref{Appendix}.

\section{Notations and assumptions}\label{Notations and assumptions}
\textbf{Notation:} Throughout this paper, let $\left(\Omega, \mathcal{F}, \mathbb{P}\right)$  denote a complete  probability space and the associated expectation is denoted by $\mathbb{E}$. Let   
 $\left\{W_t\right\}_{t \geq 0}$ be an  $m$-dimensional Wiener process.  Let  $\left\{ \mathcal{F}_t =\sigma(W_s, 0\le s\le t), t\ge 0\right\}$ be the filtration  generated by the 
Wiener process. We augment   $\mathcal{F}_0$ to contain all $\mathbb{P}$-null sets.       We define $S_{N}=\left\{ 1,2,...,N \right\} $ as the set of positive integers up to  $N$. The inner product and Euclidean norm of vectors in $\mathbb{R}^d$ are denoted by  $\left< \cdot ,\cdot \right>$ and $\left|\cdot\right|$, respectively. For a matrix $ B \in \mathbb{R}^{d \times m} $, $ B^T $ denotes its transpose, and its Hilbert-Schmidt norm is denoted by   $ \left\|B\right\| = \sqrt{\operatorname{trace}\left(B B^T\right)} $. For any $a, b \in \mathbb{R}$, we use $a \wedge b=\min \left\{a, b\right\}$ and $a \vee b=\max \left\{a, b\right\}$.  Let $\mathcal{P}\left(\mathbb{R}^d\right)$ represent the space of all probability measures on $\mathbb{R}^d$. 
The $L^q$-Wasserstein distance between measures $\mu$ and $\nu$ is given by
\begin{equation}\tag{2.1}\label{2.1}
\mathcal{W}_q\left(\mu, \nu\right)=\inf _{\pi \in \mathcal{C}\left(\mu, \nu\right)}\left(\int_{\mathbb{R}^d \times \mathbb{R}^d}\left|x-y\right|^q \pi\left(\mathrm{d} x, \mathrm{d} y\right)\right)^{\frac{1}{q}},
\end{equation}
where $\mathcal{C}\left(\mu, \nu\right)$ is the set of all measures $\pi$ on $\mathbb{R}^{2d}$ such that    $\pi\left(\cdot, \mathbb{R}^d\right)=\mu\left(\cdot\right)$ and $\pi\left(\mathbb{R}^d, \cdot\right)=\nu\left(\cdot\right)$. 

A probability measure $\mu \in \mathcal{P}\left(\mathbb{R}^d\right)$ has a finite $q$-th moment if
\begin{equation}\tag{2.2}\label{2.2}
\mathcal{W}_q\left(\mu\right) := \left( \int_{\mathbb{R}^d} \left|x\right|^q \, \mu\left(dx\right) \right)^{\frac{1}{q}} < \infty.
\end{equation}
We denote by $\mathcal{P}_q\left(\mathbb{R}^d\right)$ the subspace of $\mathcal{P}\left(\mathbb{R}^d\right)$ comprising all measures with finite $q$-th moment:
\begin{equation}\tag{2.3}\label{2.3}
\mathcal{P}_q(\mathbb{R}^d) = \left\{ \mu \in \mathcal{P}(\mathbb{R}^d) : \mathcal{W}_q(\mu) < \infty \right\}.
\end{equation}
In particular, for any $\mu \in \mathcal{P}_2(\mathbb{R}^d)$, it holds that $\mathcal{W}_2(\mu, \delta_0) = \mathcal{W}_2(\mu)$ (see \cite[Lemma~2.3]{GDReis2019Freidlin-Wentzell}), where $\delta_x$ denotes the Dirac measure centered at $x \in \mathbb{R}^d$. Moreover, a direct application of
H\"{o}lder’s inequality yields that, for $p \leq q$,  $\mathcal{W}_p\left( \mu ,\nu \right) \leq \mathcal{W}_q\left( \mu ,\nu \right)$.

For $x:=\left(x_1, \cdots, x_N\right) \in \mathbb{R}^{d\times N}$, where $x_i \in \mathbb{R}^d$ for $1 \leq i \leq N$, define
\begin{equation}\tag{2.4}\label{2.4}
\mu ^{x,N}:=\tfrac{1}{N}\sum_{i=1}^N{\delta _{x_i}}\in \mathcal{P} \left( \mathbb{R}^d \right).
\end{equation}
For $p\geq 1 $, due to
\begin{equation}\tag{2.5}\label{2.5}
\mathcal{W}_{2p}\left( \delta _{x_i} \right) ^{2p}=\int_{\mathbb{R} ^d}\left|x\right|^{2p}\delta _{x_i}\left(\mathrm{d}x\right)=\left|x_i\right|^{2p},
\end{equation}
we can conclude that
\begin{equation}\tag{2.6}\label{2.6}
\mathcal{W}_{2p}\left( \mu ^{x,N} \right) ^{2p}=\int_{\mathbb{R} ^d}\left|x\right|^{2p}\tfrac{1}{N}\sum_{i=1}^N{\delta _{x_i}}\left(\mathrm{d}x\right)=\tfrac{1}{N}\sum_{i=1}^N{\mathcal{W}_{2p}\left( \delta _{x_i} \right) ^{2p}}=\tfrac{1}{N}\sum_{i=1}^N{\left|x_i\right|^{2p}}.
\end{equation}
It is evident that for any $x=\left(x_1, \cdots, x_N\right) \in \mathbb{R}^{d\times N}$ and $y=\left(y_1, \cdots, y_N\right) \in \mathbb{R}^{d\times N}$,
\begin{equation}\tag{2.7}\label{2.7}
\tfrac{1}{N}\sum_{i=1}^N{\left( \delta_{x_i}\times\delta_{y_i}\right)}\in\mathcal{C} \left(\mu^{x,N},\mu^{y,N}\right) ,
\end{equation}
therefore,
\begin{equation}\tag{2.8}\label{2.8}
\mathcal{W}_{2}\left( \mu ^{x,N},\mu ^{y,N} \right)^{2}
\leq \int_{\mathbb{R} ^d\times \mathbb{R} ^d}\left|x-y\right|^2\tfrac{1}{N}\sum_{i=1}^N{\left( \delta _{x_i}\left( dx \right) \times \delta _{y_i}\left( dy \right) \right)}\\
=\tfrac{1}{N}\sum_{i=1}^N{\left| x_i-y_i \right|^2}.
\end{equation}

We focus on the following autonomous $d$-dimensional  distribution-dependent SDEs (MV-SDEs):
\begin{equation}\tag{2.9}\label{MVSDEs}
d X_t=b\left(X_t, \mu_{t}^{X}\right) d t+\sigma\left(X_t, \mu_{t}^{X}\right) d W_t, \quad   t\geq 0,
\end{equation}
where, throughout this paper, $\mu_t^X$ represents the distribution of $X$ at time $t$. The initial value $X_0$ is an $\mathcal{F}_0$-measurable random variable taking values in $\mathbb{R}^d$ and is independent of $W$. We impose the following assumptions on the coefficients
$ 
b: \mathbb{R}^d \times \mathcal{P}_2\left(\mathbb{R}^d\right) \rightarrow \mathbb{R}^d, \quad \sigma : \mathbb{R}^d \times \mathcal{P}_2\left(\mathbb{R}^d\right) \rightarrow \mathbb{R}^{d \times m}  
$.
\begin{assumption}[Contractive monotonicity condition]\label{ass2.1}
There exist positive constants $K_1>K_2\geq 0$ such that, for all $x, y \in \mathbb{R}^d$ and $\mu, \nu \in \mathcal{P}_2\left(\mathbb{R}^d\right)$,
\begin{equation}\tag{2.10}\label{2.10}
2\left\langle x-y, b\left(x, \mu\right)-b\left(y, \nu\right)\right\rangle+\left|\left|\sigma\left(x, \mu\right)-\sigma\left(y, \nu\right)\right|\right|^2 \leq-K_1\left|x-y\right|^2+K_2 \mathcal{W}_2\left(\mu, \nu\right)^2.
\end{equation}
\end{assumption}
\begin{assumption}[Polynomial growth Lipschitz condition]\label{ass2.2}
The drift function $b$ satisfies a polynomial growth Lipschitz condition; specifically, there exist positive constants $L_1 > 0$ and $\kappa \geq 1$ such that, for all $x, y \in \mathbb{R}^d$ and $\mu, \nu \in \mathcal{P}_2\left(\mathbb{R}^d\right)$,
\begin{equation}\tag{2.11}\label{2.11}
\left|b\left( x,\mu \right) -b\left( y,\nu \right) \right|^2\leq L_1\left[ \left( 1+\left|x\right|^{2\kappa -2}+\left|y\right|^{2\kappa -2} \right) \left|x-y\right|^2+\mathcal{W}_2\left( \mu ,\nu \right) ^2 \right].
\end{equation}
\end{assumption}
\begin{assumption}\label{ass2.3}
 There exist positive constant $L_2$ and $q_0 \geq 20$ such that
\begin{equation}\tag{2.12}\label{2.12}
 \mathbb{E}\left[ \left| X_0 \right|^{2q_0} \right] <+\infty,\quad \left| b\left( 0,\delta _0 \right) \right|\leq L_2.
 \end{equation}
\end{assumption}
\begin{assumption}\label{ass2.4}
There exist constants $a_1>a_2\geq 0$, $q^*\geq 4q_0-2$, and $C\geq 0$ such that, for all $x \in \mathbb{R}^d$ and $\mu \in \mathcal{P}_2\left(\mathbb{R}^d\right)$,
\begin{equation}\tag{2.13}\label{2.13}
2\left\langle x, b\left(x, \mu\right)\right\rangle+\left(q^*-1\right)\left|\left|\sigma\left(x, \mu\right)\right|\right|^2 \leq-a_1\left|x\right|^2+a_2 \mathcal{W}_2\left(\mu\right)^2+C.
\end{equation}
\end{assumption}
\begin{remark}\label{remark 2.5}
 Assumptions~\ref{ass2.1}--\ref{ass2.4} imply the existence of positive constants $K_3$, $K_4$, $K_5$, $a_3$, and $a_4$ such that, for all $x, y \in \mathbb{R}^d$ and $\mu, \nu \in \mathcal{P}_2\left(\mathbb{R}^d\right)$,
\begin{equation}\tag{2.14}\label{2.14}
\left\|\sigma \left( x,\mu \right) -\sigma \left( y,\nu \right) \right\|^2\leq K_3\left[ \left( 1+\left|x\right|^{\kappa -1}+\left|y\right|^{\kappa -1} \right) \left|x-y\right|^2+\mathcal{W}_2\left( \mu ,\nu \right) ^2 \right] ,
    \end{equation}
    \begin{equation}\tag{2.15}\label{2.15}
    \left|b\left( x,\mu \right) \right|^2\leq K_4\left( \left|x\right|^{2\kappa}+1 \right) +a_3\mathcal{W}_2\left( \mu \right) ^2 ,
    \end{equation}
    \begin{equation}\tag{2.16}\label{2.16}
    \left\|\sigma \left( x,\mu \right) \right\|^2\leq K_5\left( \left|x\right|^{\kappa+1}+1\right) +a_4\mathcal{W}_2\left( \mu \right) ^2 .
    \end{equation}
\end{remark}
\begin{remark}\label{remark 2.6}
Under Assumption~\ref{ass2.2}, for any fixed $\mu \in \mathcal{P}_2\left(\mathbb{R}^d\right)$, the drift coefficient $b\left(x, \mu\right)$ is continuous with respect to $x \in \mathbb{R}^d$.
\end{remark}
\begin{lemma}[Theorem~2.1 in \cite{Kumar2022Well-posedness}: Existence, uniqueness, and moment bounds on a finite horizon ]\label{lem:Well-posedness}
Under Assumptions~\ref{ass2.1}--\ref{ass2.4}, the MV-SDE \eqref{MVSDEs} admits a unique strong solution. Moreover, the following bounds hold:
\begin{equation}\tag{2.17}\label{2.17}
\mathbb{E}\left[\underset{0\leqslant t\leqslant T}{\sup}\left|X_t\right|^{2q_0}\right]\leq K\,, 
\end{equation}
where $K:=K\left(T,  K_{1},a_{1},E\left[\left|X_0\right|^{2q_0}\right],d,\kappa \right) >0$ is a constant.  
\end{lemma}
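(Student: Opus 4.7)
The plan is to establish existence and uniqueness via a Picard--Sznitman-type fixed-point argument on the space of measure flows, and then to derive the moment bound via It\^o's formula combined with the dissipativity encoded in Assumption~\ref{ass2.4}. For a fixed measure flow $\nu = (\nu_t)_{t \in [0,T]}$ belonging to $C([0,T];\mathcal{P}_2(\mathbb{R}^d))$, I would first solve the frozen classical SDE $dY^{\nu}_t = b(Y^{\nu}_t, \nu_t)\,dt + \sigma(Y^{\nu}_t, \nu_t)\,dW_t$ with $Y^{\nu}_0 = X_0$. Under Assumptions~\ref{ass2.1}--\ref{ass2.3}, the frozen coefficients are continuous in $x$ (Remark~\ref{remark 2.6}), locally Lipschitz in $x$, and satisfy the one-sided monotonicity inequality inherited from Assumption~\ref{ass2.1}, so classical well-posedness theory for monotone SDEs with superlinear growth produces a unique strong solution $Y^{\nu}$. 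Define $\Phi(\nu) := \mathrm{Law}(Y^{\nu}_{\cdot})$ acting on measure flows.

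Second, I would show that $\Phi$ is a contraction on short time intervals in an appropriate $\mathcal{W}_2$-based metric and then paste together to cover $[0,T]$. Given flows $\mu,\nu$, couple the solutions $Y^{\mu}, Y^{\nu}$ synchronously (same $X_0$, same $W$). Applying It\^o's formula to $|Y^{\mu}_t - Y^{\nu}_t|^2$ and invoking Assumption~\ref{ass2.1} gives, after taking expectation, $\frac{d}{dt}\mathbb{E}|Y^{\mu}_t - Y^{\nu}_t|^2 \leq -K_1\,\mathbb{E}|Y^{\mu}_t - Y^{\nu}_t|^2 + K_2\,\mathcal{W}_2(\mu_t,\nu_t)^2$, and since $\mathcal{W}_2(\Phi(\mu)_t,\Phi(\nu)_t)^2 \leq \mathbb{E}|Y^{\mu}_t - Y^{\nu}_t|^2$ by the synchronous coupling, an integration over $[0,\tau]$ with $\tau$ small yields a strict contraction; concatenation then produces the unique strong solution on $[0,T]$.

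Third, for the $2q_0$-moment bound I would apply It\^o's formula to $f(x) = |x|^{2q_0}$. Using $|\sigma^{\top} x|^2 \leq |x|^2\|\sigma\|^2$, the It\^o differential satisfies $d|X_t|^{2q_0} \leq q_0 |X_t|^{2q_0-2}\bigl[\,2\langle X_t, b(X_t,\mu^X_t)\rangle + (2q_0-1)\|\sigma(X_t,\mu^X_t)\|^2\bigr]\,dt + dM_t$ for a local martingale $M$. Since $q^* \geq 4q_0-2 \geq 2q_0$, Assumption~\ref{ass2.4} bounds the bracketed quantity by $-a_1|X_t|^2 + a_2\mathcal{W}_2(\mu^X_t)^2 + C$. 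The H\"older/Jensen estimate $\mathbb{E}[|X_t|^{2q_0-2}]\cdot \mathbb{E}|X_t|^2 \leq \mathbb{E}|X_t|^{2q_0}$, together with Young's inequality applied to the residual $|X_t|^{2q_0-2}$ term, produces an autonomous differential inequality whose drift coefficient is strictly negative because $a_1 > a_2$; Gronwall's lemma then bounds $\sup_{t \leq T}\mathbb{E}|X_t|^{2q_0}$. The pathwise supremum version is recovered by applying Burkholder--Davis--Gundy to $M$, bounding its quadratic variation by the moment estimates on $b$ and $\sigma$ from \eqref{2.15}--\eqref{2.16}, and absorbing back via the pointwise bound.

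The main obstacle is the coupling of superlinear growth with measure dependence: closing the contraction requires converting $\mathcal{W}_2(\mu_t,\nu_t)^2$ to the diagonal quantity $\mathbb{E}|Y^{\mu}_t - Y^{\nu}_t|^2$ via synchronous coupling, and the moment computation only closes because the exponent $q^*$ in Assumption~\ref{ass2.4} is calibrated to exceed the factor $2q_0-1$ that It\^o's formula produces on $\|\sigma\|^2$. The need for the stronger condition $q^* \geq 4q_0-2$, rather than the bare minimum $q^*\geq 2q_0$, presumably comes from the downstream long-time analysis of the interacting particle system in later sections.
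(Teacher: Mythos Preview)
The paper does not supply its own proof of this lemma: it is quoted verbatim as Theorem~2.1 of \cite{Kumar2022Well-posedness} and invoked as a black-box well-posedness input for the subsequent analysis. There is therefore nothing in the paper to compare your argument against.

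That said, your sketch is the standard and correct route. The Sznitman-type contraction on $C([0,T];\mathcal{P}_2(\mathbb{R}^d))$ driven by Assumption~\ref{ass2.1}, together with classical monotone-SDE well-posedness for the frozen equation, is exactly how \cite{Kumar2022Well-posedness} proceeds, and your moment computation via It\^o's formula on $|x|^{2q_0}$ is the intended use of Assumption~\ref{ass2.4} (indeed, the paper runs precisely this calculation in the proof of Theorem~\ref{momentboundednessofthesolution} for the interacting system, and a simpler variant at \eqref{3.18} for the non-interacting one). Your closing remark is also right: the bare moment bound only needs $q^{*}\geq 2q_0$, and the stronger requirement $q^{*}\geq 4q_0-2$ in Assumption~\ref{ass2.4} is there to service the higher-moment manipulations in Sections~\ref{strong convergence rate of the projected Euler method over infinite time} and~\ref{strong convergence rate of the backward Euler scheme  over infinite time}.
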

\begin{lemma}[Lemma~8.1 in \cite{KIto1964numericalstationarysolutions}: Modified Gronwall's lemma ]\label{lem:Gronwalllemma}
Let $v: \left[0, \infty\right) \to \left[0, \infty\right)$ and $\xi: \left[0, \infty\right) \to \left[0, \infty\right)$ be continuous functions with $v\left(0\right) \geq 0$. Suppose there exists a positive constant $\beta$ such that
\begin{equation}\tag{2.18}\label{2.18}
v\left(t\right)-v\left(s\right) \leq-\beta \int_s^t v\left(r\right) d r+\int_s^t \xi\left(r\right) d r,
\end{equation}
holds for any $0 \leq s < t < \infty$. Then,
\begin{equation}\tag{2.19}\label{2.19}
v\left(t\right) \leq v\left(0\right)+\int_0^t e^{-\beta\left(t-r\right)} \xi\left(r\right) d r.
\end{equation}
\end{lemma}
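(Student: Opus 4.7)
The plan is to sidestep the non-differentiability of $v$ by working with the indefinite integral $V(t) := \int_0^t v(r)\,dr$, which is automatically $C^1$ with $V'=v$ by continuity of $v$. Setting $s=0$ in the hypothesis converts it to the differential inequality
\[
V'(t) + \beta V(t) \;\le\; v(0) + \Xi(t), \qquad \Xi(t):=\int_0^t \xi(r)\,dr,
\]
to which the standard integrating-factor identity $(e^{\beta t}V(t))' \le e^{\beta t}(v(0)+\Xi(t))$ applies. Integration from $0$ to $t$ (using $V(0)=0$) followed by a Fubini swap then yields the upper bound
\[
\beta V(t) \;\le\; v(0)\bigl(1-e^{-\beta t}\bigr) + \int_0^t \xi(r)\bigl(1-e^{-\beta(t-r)}\bigr)\,dr .
\]

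The trouble is that substituting this bound directly into the rearranged hypothesis $v(t)\le v(0)+\Xi(t)-\beta V(t)$ flips the sign and so fails. The fix is to exploit the hypothesis at a variable left endpoint: multiply the inequality $v(t)\le v(s)+\int_s^t\xi(r)\,dr-\beta\int_s^tv(r)\,dr$ by the nonnegative weight $e^{-\beta(t-s)}$ and integrate over $s\in[0,t]$. Two Fubini applications (on the double integrals involving $\xi$ and $v$) reduce the right-hand side to $e^{-\beta t}V(t)+\tfrac{1}{\beta}\int_0^t\xi(r)(e^{-\beta(t-r)}-e^{-\beta t})\,dr$, and multiplying through by $\beta$ gives
\[
v(t)\bigl(1-e^{-\beta t}\bigr) \;\le\; \beta e^{-\beta t}\,V(t) + \int_0^t \xi(r)\bigl(e^{-\beta(t-r)}-e^{-\beta t}\bigr)\,dr .
\]

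Here $V(t)$ enters with the positive coefficient $e^{-\beta t}>0$, so substituting the earlier upper bound for $\beta V(t)$ is legitimate. A short simplification using the identity $e^{-\beta t}(1-e^{-\beta(t-r)}) + (e^{-\beta(t-r)}-e^{-\beta t}) = e^{-\beta(t-r)}(1-e^{-\beta t})$ factors the resulting right-hand side as $(1-e^{-\beta t})\bigl[e^{-\beta t}v(0)+\int_0^t e^{-\beta(t-r)}\xi(r)\,dr\bigr]$. Cancelling the positive factor $(1-e^{-\beta t})$ for $t>0$ yields the sharper bound $v(t)\le e^{-\beta t}v(0)+\int_0^t e^{-\beta(t-r)}\xi(r)\,dr$, from which the stated conclusion follows since $v(0)\ge 0$ and $e^{-\beta t}\le 1$; the case $t=0$ is trivial.

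The main obstacle is conceptual rather than technical: recognising that the naive substitution of the upper bound on $V(t)$ fails because $-\beta V(t)$ has the wrong sign, and that one must first \emph{convolve} the hypothesis against $e^{-\beta(t-s)}\,ds$ before substituting. Once this trick is identified, all intermediate steps reduce to Fubini's theorem and elementary algebra, fully justified by the continuity and nonnegativity of $v$ and $\xi$.
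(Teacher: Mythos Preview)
Your proof is correct, and in fact establishes the sharper inequality $v(t)\le e^{-\beta t}v(0)+\int_0^t e^{-\beta(t-r)}\xi(r)\,dr$, from which the stated bound follows since $v(0)\ge 0$. Note, however, that the paper does not supply its own proof of this lemma: it is quoted directly from It\^o--Nisio (Lemma~8.1 of \cite{KIto1964numericalstationarysolutions}) and used as a black box in the proof of Theorem~\ref{propagationofchaos}, so there is no ``paper proof'' to compare against. Your two-stage argument---first bounding $\beta V(t)$ via the integrating factor applied to $V'+\beta V\le v(0)+\Xi$, then convolving the hypothesis against $e^{-\beta(t-s)}\,ds$ so that $V(t)$ reappears with a \emph{positive} coefficient---is a nice and self-contained route that avoids any differentiability assumption on $v$; the Fubini manipulations and the algebraic identity you invoke all check out.
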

\section{Main Results}
\subsection{Long-time behavior of McKean-Vlasov SDEs}\label{Long-time behavior of McKean-Vlasov SDEs}
In this section, we present the propagation of chaos over an infinite time horizon and establish the uniform moment bounds of solutions to the interacting particle system over infinite time. We begin by introducing the concept of propagation of chaos. As is well-known, a  primary difficulty in developing numerical methods for MV-SDEs is the discretization of the probability distribution embedded in the coefficients. A standard approach approximates the original MV-SDEs using an interacting particle system, in which the marginal probability distribution embedded in the coefficients of the MV-SDEs is replaced by the empirical distribution derived from the interacting particles. This approximation technique is known as the propagation of chaos.

 To establish the propagation of chaos over an infinite time horizon we  fix an $N \in \mathbb{N}_{+}$, and let $\left(\left\{W_t^i\right\}_{t \geq 0}, X_0^i\right), 1 \leq i \leq N$,  be independent and identically distributed copies of  $\left(\left\{W_t\right\}_{t \geq 0}, X_0\right)$ defined on the same probability space $\left(\Omega, \mathcal{F}, \mathbb{P},\left\{\mathcal{F}_t\right\}_{t \geq 0}\right)$. Consider the following interacting particle system (IPS):
\begin{equation}\tag{3.1}\label{interacting}
    \left\{\begin{array}{l}
\mathrm{d} X_t^{i, N}=b\left(X_t^{i, N}, \mu_t^{X, N}\right) \mathrm{d} t+\sigma\left(X_t^{i, N}, \mu_t^{X, N}\right) \mathrm{d} W_t^i, \quad t \geq 0, \\
\left.X_t^{i, N}\right|_{t=0}=X_0^i, \quad 1 \leq i \leq N,
\end{array}\right.
\end{equation}
where
$$
\mu_t^{X, N}=\tfrac{1}{N} \sum_{i=1}^N \delta_{X_t^{i, N}}
$$
denotes the empirical distribution of the $N$ particles $X_t^{i, N}$, $i=1,2, \cdots, N$.

We also write  the corresponding non-interacting particle system (NIPS):
\begin{equation}\tag{3.2}\label{noninteracting}
\mathrm{d} X_t^i=b\left(X_t^i, \mu_t^{X^i}\right) \mathrm{d} t+\sigma\left(X_t^i, \mu_t^{X^i}\right) \mathrm{d} W_t^i, \quad \forall t \geq 0,
\end{equation}
with the initial condition $X_0^i$ for $1 \leq i \leq N$. By the weak uniqueness established in Lemma~\ref{lem:Well-posedness}, we obtain $\mu_t^{X^i} = \mu_t^X$ for all $t \geq 0$ and $1 \leq i \leq N$. 
\begin{theorem}[Propagation of chaos over an infinite time horizon]\label{propagationofchaos}
Assume that Assumptions~\ref{ass2.1}--\ref{ass2.4} hold with $K_1 > 2K_2$. Then, for any $2 < l \leq q^*$, there exists a constant $C_{d,l}$ such that

\begin{equation}\tag{3.3}\label{3.4}
\underset{1\leqslant i\leqslant N}{\sup}\underset{t\geqslant 0}{\sup}\mathbb{E} \left[ \left| X_{t}^{i}-X_{t}^{i,N} \right|^2 \right] \le \widetilde{K}\Upsilon \left(N\right),
\end{equation}
where 
$\widetilde{K}=\tfrac{2K_2C_{d,l}}{K_1-2K_2}
$ is a positive constant, and
\begin{equation}\tag{3.4}\label{3.5}
\Upsilon \left(N\right):= \left( \mathbb{E} \left[ \left| X_0 \right|^{l} \right] +1 \right) ^{\frac{2}{l^2}}\times\begin{cases}N^{-\frac{1}{2}}+N^{-\frac{{l}-2}{l}}, & d<4 \text { and } {l} \neq 4, \\ N^{-\frac{1}{2}} \log \left(1+N\right)+N^{-\frac{{l}-2}{{l}}}, & d=4 \text { and } {l} \neq 4, \\ N^{-\frac{2}{d}}+N^{-\frac{{l}-2}{l}}, & d>4 \text { and } {l} \neq \frac{d}{d-2}.\end{cases} \\
\end{equation}
\end{theorem}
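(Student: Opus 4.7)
The plan is to apply It\^o's formula to $|X_t^i - X_t^{i,N}|^2$ and exploit the contractive monotonicity Assumption~\ref{ass2.1} to obtain a linear differential inequality in $\mathbb{E}[|X_t^i - X_t^{i,N}|^2]$ with a constant-in-time source term coming from the empirical approximation error. Starting from the common initial condition (so $X_0^i = X_0^{i,N}$) and applying It\^o's formula followed by Assumption~\ref{ass2.1}, taking expectation to kill the martingale part yields, for every $i$ and every $t\ge 0$,
\begin{equation*}
\tfrac{d}{dt}\mathbb{E}\bigl[|X_t^i - X_t^{i,N}|^2\bigr] \;\le\; -K_1\,\mathbb{E}\bigl[|X_t^i - X_t^{i,N}|^2\bigr] + K_2\,\mathbb{E}\bigl[\mathcal{W}_2(\mu_t^{X^i},\mu_t^{X,N})^2\bigr].
\end{equation*}

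The main work is bounding the Wasserstein source term. I would split it via the triangle inequality through the empirical measure $\mu_t^{X,*,N}:=\tfrac{1}{N}\sum_{j=1}^N \delta_{X_t^j}$ of the \emph{non-interacting} copies:
\begin{equation*}
\mathcal{W}_2(\mu_t^{X^i},\mu_t^{X,N})^2 \;\le\; 2\mathcal{W}_2(\mu_t^{X^i},\mu_t^{X,*,N})^2 + 2\mathcal{W}_2(\mu_t^{X,*,N},\mu_t^{X,N})^2.
\end{equation*}
For the second piece, inequality~\eqref{2.8} gives the pathwise bound $\mathcal{W}_2(\mu_t^{X,*,N},\mu_t^{X,N})^2\le \tfrac{1}{N}\sum_{j=1}^N|X_t^j-X_t^{j,N}|^2$, and by exchangeability of the $N$ particles in both the IPS and NIPS, its expectation equals $\mathbb{E}[|X_t^i-X_t^{i,N}|^2]$. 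For the first piece, since $\mu_t^{X^i}=\mu_t^X$ by weak uniqueness, this is exactly the expected Wasserstein distance between the law of $X_t$ and its empirical counterpart from $N$ i.i.d.\ samples. Here I would invoke the Fournier--Guillin quantitative estimate on empirical measures, which supplies a constant $C_{d,l}$ and the dimension/moment-dependent rate $\Upsilon(N)$ appearing in the statement, \emph{provided} that $\sup_{t\ge 0}\mathbb{E}[|X_t|^l]<\infty$ for $l\le q^*$.

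This uniform-in-time moment bound is the technical crux. I would obtain it by applying It\^o's formula to $|X_t|^{q^*}$ and using Assumption~\ref{ass2.4}: the term $2\langle x,b(x,\mu)\rangle + (q^*-1)\|\sigma(x,\mu)\|^2\le -a_1|x|^2+a_2\mathcal{W}_2(\mu)^2+C$ combined with $\mathcal{W}_2(\mu_t^X)^2=\mathbb{E}[|X_t|^2]$ and the hypothesis $a_1>a_2$ produces a dissipative differential inequality, from which Lemma~\ref{lem:Gronwalllemma} yields $\sup_{t\ge 0}\mathbb{E}[|X_t|^{q^*}]\le K$ (and the same for any $l\le q^*$ by H\"older). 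Crucially, this moment bound is uniform in $t$, so the Fournier--Guillin constant in front of $\Upsilon(N)$ can be chosen independent of $t$, which is exactly what distinguishes the present infinite-horizon statement from its classical finite-horizon analogue.

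Combining the three pieces, the differential inequality becomes
\begin{equation*}
\tfrac{d}{dt}\mathbb{E}[|X_t^i-X_t^{i,N}|^2] \;\le\; -(K_1-2K_2)\,\mathbb{E}[|X_t^i-X_t^{i,N}|^2] + 2K_2\,C_{d,l}\,\Upsilon(N),
\end{equation*}
where the coefficient $K_1-2K_2>0$ by hypothesis. Applying Lemma~\ref{lem:Gronwalllemma} with $v(0)=0$ and the constant forcing $\xi\equiv 2K_2 C_{d,l}\Upsilon(N)$ gives $\mathbb{E}[|X_t^i-X_t^{i,N}|^2]\le \tfrac{2K_2 C_{d,l}}{K_1-2K_2}\Upsilon(N)=\widetilde{K}\Upsilon(N)$, uniformly in $t\ge 0$ and in $i$. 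The hard part is genuinely the uniform-in-time moment propagation and the verification that the Fournier--Guillin constant depends only on $l$-th moments (hence the factor $(\mathbb{E}[|X_0|^l]+1)^{2/l^2}$ absorbed inside $\Upsilon(N)$ via the moment bound); the rest is a straightforward Gronwall/dissipativity argument made possible by the strict contraction $K_1>2K_2$.
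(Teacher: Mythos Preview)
Your proposal is correct and follows essentially the same approach as the paper: It\^o's formula plus Assumption~\ref{ass2.1}, the triangle-inequality split through the empirical measure of the non-interacting copies, the coupling bound~\eqref{2.8}, the Fournier--Guillin rate, a uniform-in-time $l$-th moment bound obtained from Assumption~\ref{ass2.4}, and finally the modified Gronwall Lemma~\ref{lem:Gronwalllemma}. The only cosmetic difference is that the paper first averages over all particles and applies Gronwall to $\tfrac{1}{N}\sum_j\mathbb{E}|X_t^j-X_t^{j,N}|^2$, invoking identical distribution at the end, whereas you invoke exchangeability immediately to work with a single particle---but the substance is the same.
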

Next, we establish the uniform moment bound  of solutions to the IPS over an infinite time horizon
\begin{theorem}[Uniform moment boundedness of solutions to the IPS over infinite time]\label{momentboundednessofthesolution}
Under Assumptions~\ref{ass2.1}--\ref{ass2.4}, there exists a positive constant $C_1 = C_1\left(p\right)$, independent of $t$ and $N$, such that
\begin{equation}\tag{3.5}\label{3.25}
\underset{t\geqslant 0}{\mathrm{sup}}\underset{1\leqslant j\leqslant N}{\sup} \mathbb{E}\left[\left|X^{j,N}_{t}\right|^{2 p}\right] \leq C_1\underset{1\leqslant j\leqslant N}{\sup} \mathbb{E}\left[\left(1+\left|X_0^{j}\right|^{2 p}\right)\right]=C_1\mathbb{E}\left[\left(1+\left|X_0\right|^{2 p}\right)\right], \quad 1 \leq p \leq q_0.
\end{equation}
\end{theorem}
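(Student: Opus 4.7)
The plan is to apply It\^o's formula to $|X_t^{j,N}|^{2p}$, extract a contractive drift from Assumption~\ref{ass2.4}, absorb the empirical-measure cross term by combining Young's and Jensen's inequalities together with the exchangeability of the particle system, and close the estimate uniformly in time through the modified Gronwall lemma (Lemma~\ref{lem:Gronwalllemma}).

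First, after a standard stopping-time localization to handle the super-linearly growing coefficients, It\^o's formula applied to $|X_t^{j,N}|^{2p}$, together with the bound $|\sigma^T X|^2 \leq \|\sigma\|^2|X|^2$ for the Hessian trace, gives
\begin{equation*}
d|X_t^{j,N}|^{2p} \leq p|X_t^{j,N}|^{2p-2}\bigl[2\langle X_t^{j,N}, b(X_t^{j,N}, \mu_t^{X,N})\rangle + (2p-1)\|\sigma(X_t^{j,N}, \mu_t^{X,N})\|^2\bigr]dt + dM_t.
\end{equation*}
Since $p \leq q_0$ and $q^*\geq 4q_0-2 \geq 2p$ by Assumptions~\ref{ass2.3}--\ref{ass2.4}, the dissipativity estimate in Assumption~\ref{ass2.4} may be invoked with the smaller coefficient $2p-1 \leq q^*-1$, producing
\begin{equation*}
d|X_t^{j,N}|^{2p} \leq \bigl[-pa_1|X_t^{j,N}|^{2p} + pa_2|X_t^{j,N}|^{2p-2}\mathcal{W}_2(\mu_t^{X,N})^2 + pC|X_t^{j,N}|^{2p-2}\bigr]dt + dM_t.
\end{equation*}

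Next I would apply Young's inequality with conjugate exponents $p/(p-1)$ and $p$ to split the cross term via $|X|^{2p-2}\mathcal{W}_2^2 \leq \tfrac{p-1}{p}|X|^{2p} + \tfrac{1}{p}\mathcal{W}_2^{2p}$, and for any small $\eta>0$ bound $pC|X|^{2p-2} \leq \eta|X|^{2p} + C_\eta$. Using identity~\eqref{2.6} and Jensen's inequality applied to $x\mapsto x^p$ to get $\mathcal{W}_2(\mu_t^{X,N})^{2p} \leq \tfrac{1}{N}\sum_{i=1}^N|X_t^{i,N}|^{2p}$, then taking expectations and invoking the exchangeability of the IPS (the $X_0^i$ and $W^i$ are i.i.d.\ and the dynamics are symmetric in the particles), so that $\psi(t):=\mathbb{E}[|X_t^{1,N}|^{2p}]$ is independent of the label, I arrive after removing the localization by Fatou's lemma at
\begin{equation*}
\psi(t)-\psi(s) \leq -\beta\int_s^t \psi(r)\,dr + \widetilde{C}(t-s),\qquad 0\leq s\leq t,
\end{equation*}
with $\beta := p(a_1-a_2)-\eta > 0$ once $\eta$ is chosen sufficiently small and $\widetilde{C}<\infty$ depending only on $p,a_1,a_2,C$.

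The conclusion then follows by applying Lemma~\ref{lem:Gronwalllemma}, yielding $\psi(t) \leq \psi(0) + \widetilde{C}/\beta$ uniformly in $t$, and $\psi(0)\leq \mathbb{E}[1+|X_0|^{2p}]<\infty$ by Assumption~\ref{ass2.3}. The main obstacle is the delicate balancing of Young's exponents on the mixed term $|X|^{2p-2}\mathcal{W}_2(\mu_t^{X,N})^2$: after Jensen's inequality converts the resulting $\mathcal{W}_2^{2p}$ back into the very moment $\psi(t)$ we are trying to bound, the effective coefficient of $|X|^{2p}$ collapses to $-p(a_1-a_2)$, which must remain strictly negative to yield a time-uniform estimate. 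This is precisely where the strict contractivity $a_1 > a_2$ of Assumption~\ref{ass2.4} is indispensable, and it is what distinguishes the infinite-horizon estimate from a classical finite-horizon Gronwall argument.
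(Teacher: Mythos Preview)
Your proposal is correct and follows essentially the same approach as the paper: apply It\^o's formula, invoke Assumption~\ref{ass2.4}, split the cross term $|X|^{2p-2}\mathcal{W}_2^2$ via Young's inequality, convert $\mathcal{W}_2^{2p}$ back into the particle moments by Jensen/exchangeability, and close with a Gronwall-type argument. The only cosmetic difference is that the paper multiplies by the exponential weight $e^{\beta t}$ inside the It\^o formula and then chooses $\beta=\tfrac{p(a_1-a_2)}{2}$ so that the drift coefficient vanishes, whereas you derive the increment inequality $\psi(t)-\psi(s)\leq -\beta\int_s^t\psi+\widetilde{C}(t-s)$ and invoke Lemma~\ref{lem:Gronwalllemma} directly; these are two equivalent ways of integrating the same differential inequality.
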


\subsection{Long-time mean-square convergence   of the one-step method}\label{The long-time mean-square convergence results of the one-step method for MV-SDEs}

In this section, we aim to establish the long-time mean-square convergence result for general one-step time discretization schemes applied to the MV-SDEs. We begin by presenting a general long-time strong convergence theorem for interacting particle systems (\ref{interacting}) with super-linear growth.  This result, together  with  earlier results on propagation of chaos   in the infinite-time setting, 
will produce a long-time mean-square convergence   for one-step time-discretization schemes applied to MV-SDEs \eqref{MVSDEs}.

To approximate the interacting particle system (\ref{interacting}), we adopt a uniform step size $h=\tfrac{1}{M}$ with  $M \geq 1$. Let $X^{j,N}\left(t, x^{j,N} ; s\right)$ for $t \leq s<\infty$ refer to the solution of (\ref{interacting}) with  the initial condition $X^{j,N}\left(t, x^{j,N} ; t\right)=x^{j,N}$. In particular, for $t \geq 0$, we write $X^{j,N}\left(t\right)$ to refer to the solution to the SDEs (\ref{interacting}) with initial value $X^{j,N}\left(0\right)=X^{j,N}_0$. A general one-step (local) approximation $Y^{j,N}\left(t, x^{j,N} ; t+h\right)$ to $X^{j,N}\left(t, x^{j,N} ; t+h\right)$, when  $s=t+h $ 
is sufficiently close to $t$, can be expressed as
\begin{equation}\tag{3.6}\label{4.1}
Y^{j,N}\left(t, x^{j,N} ; t+h\right)=x^{j,N}+\varPsi^{j,N}\left(t, x^{j,N}, h ; \zeta ^{j,N}_t\right),
\end{equation}
where $\varPsi^{j,N}$ maps $\left[0, \infty\right) \times \mathbb{R}^d \times\left(0, 1\right) \times \mathbb{R}^m$ to $\mathbb{R}^d$, and $\zeta ^{j,N}_t$ is a random variable defined on the probability space $\left(\Omega, \mathcal{F}, \mathbb{P}\right)$ that possesses sufficiently high-order moments. 
An example of such a mapping $\varPsi$ is  the 
Euler-Maruyama scheme, given by
\begin{equation}\tag{3.7}\label{4.2}
\varPsi^{j,N}\left(t, x^{j,N}, h ; \zeta ^{j,N}_t\right)=hb\left(x^{j,N},\mu^{x^{j,N},N} _t \right)+\sigma\left(x^{j,N},\mu^{x^{j,N},N}_t\right) \Delta W^{j}\left(h\right),
\end{equation}
where $\Delta W^{j}\left(h\right)=W^{j}\left(t+h\right)-W^{j}\left(t\right)$.
With this local one-step approximation, the original solution can be approximated by an explicit scheme  on the mesh grids $\left\{t_k=k h, k \geq 0\right\}$
\begin{equation}\tag{3.8}\label{4.3}
Y^{j,N}_0=X^{j,N}_0, \quad Y^{j,N}_{k+1}=Y^{j,N}_k+\varPsi^{j,N}\left(t_k, Y^{j,N}_k, h ; \zeta^{j,N} _k\right), k \geq 0,
\end{equation}
where $\zeta^{j,N}_k$ for $k \geq 0$ are independent of $Y^{j,N}_0, Y^{j,N}_1, \cdots, Y^{j,N}_{k-1}, \zeta^{j,N}_0, \zeta^{j,N}_1, \cdots, \zeta^{j,N}_{k-1}$. An alternative formulation can be written as:
\begin{equation}\tag{3.9}\label{4.4}
Y^{j,N}_0=X^{j,N}_0, \quad Y^{j,N}_{k+1}=Y^{j,N}\left(t_k, Y^{j,N}_k ; t_{k+1}\right)=Y^{j,N}\left(t_0, Y^{j,N}_0 ; t_{k+1}\right), k \geq 0.
\end{equation}

We present a general  long-time strong convergence theorem for interacting
particle systems as detailed below.
\begin{theorem}[A general long-time strong convergence theorem for interacting particle systems]\label{theorem 4.2}
Assume the following conditions hold:\\
$\left(\mathrm{H}_1\right)$ Assumptions \ref{ass2.1}--\ref{ass2.4} are satisfied.\\
$\left(\mathrm{H}_2\right)$ The local 
 one-step approximation $Y^{j,N}\left(t_0, X^{j,N}_0 ; h\right)$, as defined in (\ref{4.1}), achieves local weak and strong errors of order $q_1>1$ and $q_2\in \left(\tfrac{1}{2}, q_1-\tfrac{1}{2}\right)$, respectively:  there exist constants $1 \leq \eta_1 \leq q_0$, $1 \leq \eta_2 \leq q_0$, and $0<h_0\leq 1$
such that, for all $0<h \leq h_0$, 
\begin{equation}\tag{3.10}\label{4.19}
\underset{1\leqslant j\leqslant N}{\sup}\left|\mathbb{E}\left[X^{j,N}\left(t_0, X^{j,N}_0 ; h\right)-Y^{j,N}\left(t_0, X^{j,N}_0 ; h\right)\right]\right| \leq C_2\mathbb{E}\left[\left(1+\left|X_0\right|^{\eta_1}\right)\right] h^{q_1}, 
\end{equation}
\begin{equation}\tag{3.11}\label{4.20}
\underset{1\leqslant j\leqslant N}{\sup}\left\{\mathbb{E}\left[\left|X^{j,N}\left(t_0, X^{j,N}_0 ; h\right)-Y^{j,N}\left(t_0, X^{j,N}_0 ; h\right)\right|^{2}\right]\right\}^{\frac{1}{2}}\leq C_3\left\{\mathbb{E}\left[\left(1+\left|X_0\right|^{2\eta_2}\right) \right]\right\}^{\frac{1}{2}}h^{q_2},
\end{equation}
where $C_2, C_3>0$ are constants independent of 
 $h, N$.\\
$\left(\mathrm{H}_3\right)$ The global approximation $Y^{j,N}_k$, as defined in (\ref{4.3}), possesses finite moments. Specifically, there exist constants $h_0>0$, $\eta_3 \geq 1$, and $C_4>0$ such that, for all $0<h \leq h_0$, $1 \leq p\eta_3 \leq q_0$, and $k \geq 0$,
\begin{equation}\tag{3.12}\label{4.21}
\underset{1\leqslant j\leqslant N}{\sup}\mathbb{E}\left[\left|Y^{j,N}_k\right|^{2 p}\right] \leq C_4\mathbb{E}\left[\left(1+\left|X_0\right|^{2 p\eta_3}\right)\right],
\end{equation}
where $C_4$ is independent of  $h, k,N$.\\
Then, there exists a constant $\lambda:=\max \left\{\eta_1\eta_3 ,\left(\tfrac{\kappa-1}{2}+\eta_2\right)\eta_3 \right\} \leq q_0$  such that, for all  $h \leq h_1:=\min \left\{\tfrac{1}{2\left(K_1-K_2\right)}, h_0\right\}$, 
\begin{equation}\tag{3.13}\label{4.22}
\underset{k\geqslant0}{\sup}\underset{1\leqslant j\leqslant N}{\sup}\left\{\mathbb{E}\left[\left|X^{j,N}_k-Y^{j,N}_k\right|^{2}\right]\right\}^{\frac{1}{2}}\le C_5\left\{\mathbb{E}\left[\left(1+\left|X_0\right|^{2 \lambda}\right)\right]\right\}^{\frac{1}{2}} h^{q_2-\frac{1}{2}},
\end{equation}
where $C_{5}$ is a constant independent of $h, k,N,d$.
\end{theorem}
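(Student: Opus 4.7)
The plan is to follow the Milstein--Tretyakov ``fundamental theorem of mean-square convergence'' paradigm, adapted so that the contractive monotonicity of Assumption~\ref{ass2.1} is activated uniformly in the step index $k$ and the particle number $N$. Writing $e_k^{j} := X^{j,N}_k - Y^{j,N}_k$ and $v_k := \tfrac{1}{N}\sum_{j=1}^N \mathbb{E}[|e_k^{j}|^2]$, the target is a one-step recursion
\begin{equation*}
v_{k+1} \le \left(1 - \alpha h\right) v_k + C\, h^{2q_2}\, \mathbb{E}\left[1+|X_0|^{2\lambda}\right],\qquad \alpha > 0,
\end{equation*}
valid for every $h \le h_1$ with $C$ independent of $k,N,d$. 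Geometric summation in $k$ yields $v_k \le (C/\alpha)\mathbb{E}[1+|X_0|^{2\lambda}]\,h^{2q_2-1}$ uniformly in $k$, and the claimed $h^{q_2-1/2}$ rate follows on taking square roots; exchangeability of the $N$ particles then promotes the averaged bound $v_k$ into the supremum over $j$ required by \eqref{4.22}.

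\textbf{Decomposition and contractive propagated error.} At step $k$ I would decompose $e^{j}_{k+1} = A^{j}+B^{j}$ with propagated error $A^{j} := X^{j,N}(t_k, X^{j,N}_k; t_{k+1}) - X^{j,N}(t_k, Y^{j,N}_k; t_{k+1})$ and local truncation error $B^{j} := X^{j,N}(t_k, Y^{j,N}_k; t_{k+1}) - Y^{j,N}(t_k, Y^{j,N}_k; t_{k+1})$. Applying It\^o's formula to $|A^{j}|^2$, summing over $j$, and invoking Assumption~\ref{ass2.1} together with the empirical-measure Wasserstein estimate \eqref{2.8} (so the $K_2\mathcal{W}_2^2$ term is absorbed after multiplying by $N$ and dividing by $N$), produces the contraction
\begin{equation*}
\tfrac{1}{N}\sum_{j=1}^N \mathbb{E}[|A^{j}|^2] \le e^{-(K_1-K_2) h}\, v_k,
\end{equation*}
uniformly in $N$, which is precisely where $K_1>K_2$ enters. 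A parallel quadratic-variation computation gives the fluctuation bound $\tfrac{1}{N}\sum_j \mathbb{E}|A^{j} - \mathbb{E}[A^{j}\mid\mathcal{F}_{t_k}]|^2 \le C\,h\, v_k$, needed below.

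\textbf{Cross term and the emergence of $\lambda$.} For $\mathbb{E}\langle A^{j},B^{j}\rangle$ I would condition on $\mathcal{F}_{t_k}$ and split $B^{j} = \mathbb{E}[B^{j}\mid\mathcal{F}_{t_k}] + \widetilde{B}^{j}$. Treating $(\mathrm{H}_2)$ as a pointwise-in-initial-condition estimate (applied at $Y^{j,N}_k$ via the Markov structure of the IPS), the conditional mean is of size $C(1+|Y^{j,N}_k|^{\eta_1})h^{q_1}$ through \eqref{4.19}, while the martingale remainder satisfies $\mathbb{E}|\widetilde{B}^{j}|^2 \le C\,\mathbb{E}[1+|Y^{j,N}_k|^{2\eta_2}]h^{2q_2}$ through \eqref{4.20}. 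Pairing $\mathbb{E}[B^j\mid\mathcal{F}_{t_k}]$ with $\mathbb{E}[A^j\mid\mathcal{F}_{t_k}]$ (of size $(1+Ch)|e^j_k|$), and $\widetilde{B}^j$ with $A^j-\mathbb{E}[A^j\mid\mathcal{F}_{t_k}]$ (of size $O(\sqrt{h\,v_k})$ in $L^2$ by the fluctuation estimate), then applying Cauchy--Schwarz and Young's inequality $2ab\le \beta h\,a^2 + (\beta h)^{-1}b^2$ with $\beta$ chosen so small that $\alpha:=K_1-K_2-2\beta>0$, bounds the cross contribution by $\beta h\,v_k + C_\beta(h^{2q_1-1} + h^{2q_2})\mathbb{E}[1+|X_0|^{2\lambda}]$; since $q_1>q_2+\tfrac12$, the $h^{2q_1-1}$ residual is dominated by $h^{2q_2}$. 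The exponent $\lambda=\max\{\eta_1\eta_3,(\tfrac{\kappa-1}{2}+\eta_2)\eta_3\}$ arises from H\"older's inequality applied to factor the superlinear prefactor $(1+|X^{j,N}_k|^{\kappa-1}+|Y^{j,N}_k|^{\kappa-1})$ from Assumption~\ref{ass2.2} off the error, and then absorbing the resulting powers of $|X^{j,N}_k|, |Y^{j,N}_k|$ through \eqref{4.21} and Theorem~\ref{momentboundednessofthesolution}.

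\textbf{Anticipated main obstacle.} The principal difficulty is to preserve a strictly positive contraction rate $\alpha>0$ after every Young's inequality, while simultaneously keeping the H\"older exponents used to factor out the superlinear prefactors no larger than $q_0$. Because $b,\sigma$ are not globally Lipschitz, every bound on $A^j$ and $B^j$ carries polynomial prefactors of degree controlled by $\kappa$, and the careful budgeting of where each power lands — onto the contractive term versus onto the $h$-small forcing — is exactly what pins down the value of $\lambda$ and distinguishes this long-time analysis from the finite-horizon situation where a Gr\"onwall-type exponential blow-up in $k$ is tolerated.
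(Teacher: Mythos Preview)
Your proposal is correct and follows essentially the same route as the paper: the same $A^{j}+B^{j}$ decomposition, the same $e^{-(K_1-K_2)h}$ contraction for the propagated error obtained from Assumption~\ref{ass2.1} combined with \eqref{2.8}, Young's inequality to preserve a strictly positive net contraction rate, and the geometric recursion to remove the $k$-dependence. The only cosmetic difference is in the bookkeeping of the cross term: the paper splits the propagated piece as $A^{j}=\rho_{k}^{j}+\mathcal{H}^{j}$ with $\mathcal{H}^{j}:=A^{j}-e_{k}^{j}$ (Lemma~\ref{lemma 4.1}, estimate \eqref{4.7}) and then pairs $\rho_{k}^{j}$ against the conditional weak error $\mathbb{E}[B^{j}\mid\mathcal{F}_{t_k}]$ and $\mathcal{H}^{j}$ against $B^{j}$ via conditional Cauchy--Schwarz, whereas you split both $A^{j}$ and $B^{j}$ into conditional mean plus martingale remainder; the two splittings are equivalent up to lower-order terms and yield the same exponent $\lambda$.
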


We now present the long-time mean-square convergence
results for the one-step method applied to the MV-SDEs (\ref{MVSDEs}).
\begin{theorem}[Long-time mean-square convergence results for one-step methods applied to MV-SDEs]\label{theorem 4.3}
Let $X^{i}$ denote the solution to (\ref{noninteracting}), and $Y^{i,N}$ be the one-step numerical approximation to (\ref{interacting}). Assume that Assumptions \ref{ass2.1}--\ref{ass2.4} hold with $K_1>2 K_2$, and that conditions $\left(\mathrm{H}_2\right)$ and $\left(\mathrm{H}_3\right)$ are satisfied. Then, for any $2<l\leq q^*$ and $h \leq h_1:=\min \left\{\tfrac{1}{2\left(K_1-K_2\right)}, h_0\right\}$, the global mean-square error satisfies
\begin{equation}\tag{3.14}\label{4.40}
\underset{k\geqslant0}{\sup}\underset{1\leqslant i\leqslant N}{\sup}\mathbb{E}\left[\left|X_k^i-Y_k^{i, N}\right|^2\right] 
\leq C_6 \begin{cases}N^{-\frac{1}{2}}+N^{-\frac{{l}-2}{l}}+h^{2q_2-1}, & d<4 \text { and } {l} \neq 4, \\ N^{-\frac{1}{2}} \log \left(1+N\right)+N^{-\frac{{l}-2}{{l}}}+h^{2q_2-1}, & d=4 \text { and } {l} \neq 4, \\ N^{-\frac{2}{d}}+N^{-\frac{{l}-2}{l}}+h^{2q_2-1}, & d>4 \text { and } {l} \neq \frac{d}{d-2} ,\end{cases}
\end{equation}
where $C_6$ is a constant independent of $h$, $k$, $N$.
\end{theorem}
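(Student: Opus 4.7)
The plan is to reduce Theorem~\ref{theorem 4.3} to a direct corollary of Theorems~\ref{propagationofchaos} and~\ref{theorem 4.2} via the standard triangle-inequality splitting around the continuous-time interacting particle system. Concretely, I would write
\begin{equation*}
X_k^i - Y_k^{i,N} \;=\; \bigl(X_{t_k}^i - X_{t_k}^{i,N}\bigr) + \bigl(X_{t_k}^{i,N} - Y_k^{i,N}\bigr)
\end{equation*}
and apply the elementary inequality $|a+b|^2 \le 2|a|^2 + 2|b|^2$ to obtain
\begin{equation*}
\mathbb{E}\bigl[|X_k^i - Y_k^{i,N}|^2\bigr] \;\le\; 2\,\mathbb{E}\bigl[|X_{t_k}^i - X_{t_k}^{i,N}|^2\bigr] + 2\,\mathbb{E}\bigl[|X_{t_k}^{i,N} - Y_k^{i,N}|^2\bigr].
\end{equation*}
The purpose of inserting $X_{t_k}^{i,N}$ is that the first summand isolates the NIPS--IPS distance, controlled \emph{uniformly in time} by the infinite-horizon propagation of chaos, while the second isolates the purely numerical error of the one-step scheme applied to the IPS, controlled uniformly in $k$ by the general strong convergence theorem.

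For the first summand, since $K_1 > 2K_2$ and Assumptions~\ref{ass2.1}--\ref{ass2.4} are in force, Theorem~\ref{propagationofchaos} yields, for any fixed $2 < l \le q^*$,
\begin{equation*}
\sup_{t \ge 0}\sup_{1 \le i \le N} \mathbb{E}\bigl[|X_t^i - X_t^{i,N}|^2\bigr] \;\le\; \widetilde{K}\,\Upsilon(N),
\end{equation*}
which produces exactly the $N$-dependent case distinction appearing in \eqref{4.40}. For the second summand, since $(\mathrm{H}_1)$ (namely Assumptions~\ref{ass2.1}--\ref{ass2.4}) together with $(\mathrm{H}_2)$ and $(\mathrm{H}_3)$ hold and $h \le h_1$, Theorem~\ref{theorem 4.2} gives
\begin{equation*}
\sup_{k \ge 0}\sup_{1 \le j \le N}\mathbb{E}\bigl[|X_k^{j,N} - Y_k^{j,N}|^2\bigr] \;\le\; C_5^{\,2}\,\mathbb{E}\bigl[1+|X_0|^{2\lambda}\bigr]\,h^{2q_2 - 1},
\end{equation*}
supplying the $h^{2q_2-1}$ term. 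Adding the two bounds, taking suprema over $k$ and $i$, and absorbing $\widetilde{K}$, $C_5^{\,2}$, and the finite moment factors $(\mathbb{E}[|X_0|^l]+1)^{2/l^2}$ and $\mathbb{E}[1+|X_0|^{2\lambda}]$ (both finite by Assumption~\ref{ass2.3} since $l \le q^* \le 4q_0-2$ and $\lambda \le q_0$) into a single constant $C_6$ independent of $h$, $k$, $N$ yields \eqref{4.40}.

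There is essentially no technical obstacle remaining beyond this bookkeeping: the heavy analysis has already been absorbed into Theorem~\ref{propagationofchaos} (which required the strict contraction $K_1 > 2K_2$ to handle the measure component uniformly in time) and Theorem~\ref{theorem 4.2} (which required the long-time moment bound $(\mathrm{H}_3)$ and compatibility of the step-size threshold $h_1$). The only points worth double-checking are (i)~that the two theorems use a \emph{common} upper bound $h \le h_1 = \min\{1/(2(K_1-K_2)), h_0\}$, which is explicit in both statements, and (ii)~that the moment exponent $2\lambda$ arising from Theorem~\ref{theorem 4.2} indeed satisfies $\lambda \le q_0$, so that the initial-data moment in Assumption~\ref{ass2.3} suffices. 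Both are immediate, and hence the theorem follows by a one-line triangle argument applied on top of the two main ingredients.
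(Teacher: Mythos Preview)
Your proposal is correct and matches the paper's approach exactly: the paper does not give an explicit proof of Theorem~\ref{theorem 4.3} but states (in the surrounding text and in the analogous Theorems~\ref{theorem5.9} and~\ref{theorem6.9}) that it follows by combining Theorem~\ref{propagationofchaos} with Theorem~\ref{theorem 4.2}, which is precisely your triangle-inequality splitting. One small slip in your bookkeeping: Assumption~\ref{ass2.4} gives $q^* \ge 4q_0-2$, not $q^* \le 4q_0-2$, so finiteness of $\mathbb{E}[|X_0|^l]$ for all $l\le q^*$ is not guaranteed by Assumption~\ref{ass2.3} alone; however, this does not affect the argument since the paper absorbs the factor $(\mathbb{E}[|X_0|^l]+1)^{2/l^2}$ into $\Upsilon(N)$ and hence into $C_6$ without separately asserting its finiteness.
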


\subsection{Strong convergence rate of the projected Euler method over infinite time}\label{strong convergence rate of the projected Euler method over infinite time}
In this section, we apply the general long-time mean-square convergence results for one-step methods to MV-SDEs to establish a strong convergence rate for the projected Euler scheme under standard conditions. The projected Euler method for classical SDEs has been well   explored  (see, e.g., \cite{Wolf-Jurgen2016project}). For the interacting particle system (\ref{interacting}), it takes the following form:
\begin{equation}\tag{3.15}\label{5.1}
\left\{\begin{array}{l}
\bar{Y}_{k}^{j,N}:=\psi\left(Y_{k-1}^{j,N}\right)
, \\
Y^{j,N}_{k+1}:=\bar{Y}^{j,N}_k+h b\left(\bar{Y}^{j,N}_k,\mu^{\bar{Y}^{j,N}_k,N}_k\right)+\sigma\left(\bar{Y}^{j,N}_k,\mu^{\bar{Y}^{j,N}_k,N}_k\right) \Delta W^{j}_{k},
\end{array}\right.
\end{equation}
where $\Delta W^{j}_{k}:=W^{j}\left(t_{k+1}\right)-W^{j}\left(t_{k}\right)$, $k \geq 0$, $Y^{j,N}_0=X^{j}_0$, $\mu^{\bar{Y}^{j,N}_k,N}_k:=\tfrac{1}{N} \sum_{j=1}^N \delta_{\bar{Y}^{j,N}_k}$ for $j\in S_{N}$, and the projection operator $\psi: \mathbb{R}^d \to \mathbb{R}^d$ is defined by
$\psi\left(x\right) = \min \left\{ 1, h^{-\frac{1}{2\left(\kappa + 2\right)}} \left|x\right|^{-1} \right\} x$, with $\kappa$ as given in (\ref{2.11}).
\begin{remark}\label{ass5.1}
The projection operator $\psi$ satisfies the following conditions:
\begin{equation}\tag{3.16}\label{5.2}
\left|\psi\left(x\right)\right| \leq h^{-\frac{1}{2\left(\kappa+2\right)}}, 
\end{equation}
\begin{equation}\tag{3.17}\label{5.3}
\left|\psi\left(x\right)-\psi\left(y\right)\right|  \leq\left|x-y\right|,
\end{equation}
and $\psi(0)=0 \in \mathbb{R}^d$. Setting $y=0$ in (\ref{5.3}) yields:
\begin{equation}\tag{3.18}\label{5.4}
\left|\psi\left(x\right)\right| \leq\left|x\right|.
\end{equation}
\end{remark}

To establish the strong convergence rate for the method \eqref{5.1}, we divide the analysis into two parts. The first part establishes the boundedness of the numerical solution in the $2p$-th moment. 

\begin{theorem}[Moment bounds for the projected Euler scheme]\label{momentboundsforPEM}
Under Assumptions \ref{ass2.1}--\ref{ass2.4}, the following holds for any $1 \leq p < \left\lfloor \frac{q_0}{2} \right\rfloor$ and $0<h \leq h_2:=\min \left\{h_1, \tfrac{1}{p\left(a_1-a_2\right)}\right\}$ 
\begin{equation}\tag{3.19}\label{5.16}
\underset{1\leqslant j\leqslant N}{\sup}\mathbb{E}\left[\left|Y^{j,N}_k\right|^{2 p}\right] \leq K_6 \underset{1\leqslant j\leqslant N}{\sup}\mathbb{E}\left[\left(1+\left|X_0^{j}\right|^{4 p}\right)\right]= K_6\mathbb{E}\left[\left(1+\left|X_0\right|^{4 p}\right)\right],
\end{equation}
where $K_{6}$ is a constant independent of $h, k, N$.
\end{theorem}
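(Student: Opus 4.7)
The plan is to establish the bound by writing out the one-step recursion, squaring, and iterating the resulting inequality with a discrete Gronwall-type argument, using the coercivity from Assumption~\ref{ass2.4} to create a uniform-in-$k$ contraction. The projection operator $\psi$ is the essential technical device: by (\ref{5.2}) we have $|\bar Y^{j,N}_k| \le h^{-1/(2(\kappa+2))}$, so combined with the growth bounds (\ref{2.15})--(\ref{2.16}) every term of the form $h^{s}|b(\bar Y^{j,N}_k,\mu_k^{\bar Y,N})|^{2s}$ or $\|\sigma(\bar Y^{j,N}_k,\mu_k^{\bar Y,N})\|^{2s}$ can be controlled in a way that gives the right order in $h$ after using the projection.

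First, I would expand $|Y^{j,N}_{k+1}|^{2}=|\bar Y^{j,N}_k + h b_k^{j} + \sigma_k^{j}\Delta W^j_k|^{2}$ (abbreviating the coefficients), take conditional expectation with respect to $\mathcal F_{t_k}$ so that the mean-zero terms in $\Delta W^j_k$ vanish, and apply Assumption~\ref{ass2.4} to the main cross term $2h\langle \bar Y^{j,N}_k, b_k^{j}\rangle + h\|\sigma_k^{j}\|^{2}$, absorbing the factor $(q^*-1)$ against the residual $\|\sigma_k^{j}\|^{2}$ by taking $p$ small relative to $q_0$ and using $\mathbb E[|\Delta W^j_k|^{2r}]\le C_r h^r$. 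This produces a one-step inequality of the shape
\begin{equation*}
\mathbb E\bigl[|Y^{j,N}_{k+1}|^{2}\,\big|\,\mathcal F_{t_k}\bigr] \le \bigl(1-a_1 h\bigr)|\bar Y^{j,N}_k|^{2}+a_2 h\,\mathcal W_2(\mu_k^{\bar Y,N})^{2}+Ch+R_k^{j}(h),
\end{equation*}
where the remainder $R_k^{j}(h)$ collects the higher-order terms; these are of order $O(h^{1+\epsilon})$ thanks to the projection bound $|\bar Y^{j,N}_k|\le h^{-1/(2(\kappa+2))}$ and can be absorbed into the $Ch$ term by possibly shrinking $h_2$.

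To pass to the $2p$-th moment, I would raise the squared identity to the $p$-th power (binomial expansion) and take conditional expectation, producing a dominant piece $(1-a_1h)^p|\bar Y^{j,N}_k|^{2p}\le (1-pa_1h+O(h^2))|\bar Y^{j,N}_k|^{2p}$, plus cross terms involving $\mathcal W_2(\mu_k^{\bar Y,N})^{2s}$ with $1\le s\le p$. Here I would use $|\bar Y^{j,N}_k|\le|Y^{j,N}_{k-1}|$ from (\ref{5.4}) so the projection does not inflate the previous step's moments. The measure terms are handled by the Jensen inequality
\begin{equation*}
\mathcal W_2(\mu_k^{\bar Y,N})^{2s}=\Bigl(\tfrac{1}{N}\sum_{i=1}^N|\bar Y^{i,N}_k|^{2}\Bigr)^{s}\le \tfrac{1}{N}\sum_{i=1}^N|\bar Y^{i,N}_k|^{2s},
\end{equation*}
and Hölder is used to split mixed cross terms like $|\bar Y^{j,N}_k|^{2s_1}\mathcal W_2(\mu_k^{\bar Y,N})^{2s_2}$ into pure moments. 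Averaging the resulting inequality over $j=1,\dots,N$ and using the symmetry of the particle system (each $X^j_0$ is identically distributed), the coupled measure-dependent terms collapse into the same averaged quantity $\tfrac{1}{N}\sum_j \mathbb E[|Y^{j,N}_k|^{2p}]$, so the $a_1>a_2$ hypothesis yields a net contraction factor $1-p(a_1-a_2)h+O(h^2)$.

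Iterating this discrete Gronwall inequality over $k$ and using $0<h\le h_2\le 1/(p(a_1-a_2))$ produces a geometric series that is summable uniformly in $k$, which gives the stated uniform-in-$k$ bound with a constant depending only on $p$, the model constants, and $\mathbb E[(1+|X_0|^{4p})]$; the exponent $4p$ (rather than $2p$) on the right-hand side appears because of the product structure that arises when combining Hölder on mixed cross terms with the binomial expansion. I expect the main obstacle to be the careful bookkeeping of the residual terms produced by the binomial expansion of $(|\bar Y^{j,N}_k|^2+\text{small})^p$, in particular verifying that every term involving higher powers of $\|\sigma\|$, $|b|$, or $\mathcal W_2(\mu_k^{\bar Y,N})^{2s}$ with $s>1$ either sits at order $h^{1+\epsilon}$ (via the projection) or can be dominated by the contraction coming from $a_1>a_2$ after the average-over-particles step; getting the constants to be genuinely independent of $N$ and $k$ is the delicate part.
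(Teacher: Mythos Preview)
Your overall strategy matches the paper's: expand the one-step recursion, invoke Assumption~\ref{ass2.4} for contraction, use the projection bound (\ref{5.2}) to control residual powers, and close with a discrete Gronwall argument (the paper packages this last step as Lemma~\ref{lemma5.2}, an induction on $p$). There is, however, a genuine gap in how you handle the binomial expansion for $p\ge 2$.

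Your claim that the dominant piece after conditional expectation is $(1-a_1h)^p|\bar Y^{j,N}_k|^{2p}$ is wrong, and the error is not merely cosmetic. When you expand $|Y^{j,N}_{k+1}|^{2p}$ and take conditional expectation, the \emph{square} of the martingale increment $2\langle \bar Y^{j,N}_k+hb_k^j,\,\sigma_k^j\Delta W^j_k\rangle$ contributes a term of exact order $h$, not $h^{1+\epsilon}$: schematically $\binom{p}{2}\,|\bar Y^{j,N}_k|^{2(p-2)}\cdot 4h\,|\bar Y^{j,N}_k|^{2}\,\|\sigma_k^j\|^{2}$. This cannot be absorbed as a residual via the projection; it must instead be combined with the first-order $\|\sigma\|^2$ contribution so that the aggregate reads $p\,|\bar Y^{j,N}_k|^{2(p-1)}h\bigl[\,2\langle\bar Y^{j,N}_k,b_k^j\rangle + c_p\|\sigma_k^j\|^2\,\bigr]$ with a coefficient $c_p$ growing like $p^2$. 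Only then can Assumption~\ref{ass2.4} be applied---and this is precisely why that assumption carries the factor $(q^*-1)$ with $q^*\ge 4q_0-2$: one needs $c_p\le q^*-1$ over the range $1\le p<\lfloor q_0/2\rfloor$. Your proposal never uses the largeness of $q^*$, which is the missing ingredient. The paper organizes this by working with $(1+|\bar Y^{j,N}_k|^2)^p$ rather than $|\bar Y^{j,N}_k|^{2p}$ and introducing a normalized variable $\xi^{j,N}$ (see (\ref{5.19})--(\ref{5.28})), which cleanly isolates the order-$h$ second-moment piece; the terms of order $r\ge 3$ in the expansion are then shown to be $O(h)/(1+|\bar Y^{j,N}_k|^2)$ via the projection, as you correctly anticipated.
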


The  one-step approximation of the projected Euler scheme (\ref{5.1}) can be reformulated as follows:
\begin{equation}\tag{3.20}\label{5.63}
\begin{aligned}
Y^{j,N}\left( t,x^{j,N};t+h \right) :=&\psi\left(x^{j,N}\right)+\int_t^{t+h}{b\left( \psi\left(x^{j,N}\right),\mu ^{\psi\left(x^{j,N}\right),N} \right)}ds\\
&+\int_t^{t+h}{\sigma \left( \psi\left(x^{j,N}\right),\mu ^{\psi\left(x^{j,N}\right),N} \right)}dW^j\left( s \right). 
\end{aligned}
\end{equation}

The second part focuses on establishing the local weak and strong errors, as specified in (\ref{4.19}) and (\ref{4.20}).

\begin{theorem}\label{theorem5.7}
Under Assumptions \ref{ass2.1}--\ref{ass2.4}, the following bounds hold for any $1\leq \kappa \leq \tfrac{q_0-11}{9}$ and $0<h \leq h_2$:
\begin{equation}\tag{3.21}\label{5.85}
\underset{1\leqslant j\leqslant N}{\sup}\left|\mathbb{E}\left[X^{j,N}\left(t, x^{j,N} ; t+h\right)-Y^{j,N}\left(t, x^{j,N} ; t+h\right)\right]\right|  \leq K_{7}\mathbb{E}\left[\left(1+\left|x^{j,N}\right|^{ 4\kappa+6}\right)\right] h^{\frac{3}{2}},
\end{equation}
    \begin{equation}\tag{3.22}\label{5.86}
    \underset{1\leqslant j\leqslant N}{\sup} \left\{\mathbb{E}\left[\left|X^{j,N}\left(t, x^{j,N} ; t+h\right)-Y^{j,N}\left(t, x^{j,N} ; t+h\right)\right|^{2}\right]\right\}^{\frac{1}{2}} \leq K_{8}\left\{\mathbb{E}\left[\left(1+\left|x^{j,N}\right|^{8\kappa+12 }\right)\right]\right\}^{\frac{1}{2}} h,
    \end{equation}
    where $K_{7}$ and $K_{8}$ are constants independent of $h, N$.
\end{theorem}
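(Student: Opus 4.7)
The plan is to decompose the local error into its three natural pieces:
\[ X^{j,N}(t,x^{j,N};t+h) - Y^{j,N}(t,x^{j,N};t+h) = I_1 + I_2 + I_3, \]
where $I_1 := x^{j,N} - \psi(x^{j,N})$ is the projection error, $I_2 := \int_t^{t+h}\bigl[b(X^{j,N}_s, \mu^{X,N}_s) - b(\psi(x^{j,N}), \mu^{\psi(x^{j,N}),N})\bigr]\,ds$ is the drift discretization error, and $I_3 := \int_t^{t+h}\bigl[\sigma(X^{j,N}_s, \mu^{X,N}_s) - \sigma(\psi(x^{j,N}), \mu^{\psi(x^{j,N}),N})\bigr]\,dW^j_s$ is the corresponding diffusion It\^{o} integral. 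For the weak bound (\ref{5.85}), the martingale $I_3$ vanishes after taking expectation; for the strong bound (\ref{5.86}), the It\^{o} isometry controls $I_3$ while Cauchy-Schwarz controls $I_2$, each producing a gain of $h^{1/2}$.

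For $I_1$, the key observation is that on the event $\{|x^{j,N}| \leq h^{-1/(2(\kappa+2))}\}$ the projection acts as the identity, so $I_1 = 0$, whereas on its complement $|I_1| \leq |x^{j,N}|$. A Markov-type inequality then yields, for any $r > 0$,
\[ \mathbb{E}[|I_1|^p] \leq \mathbb{E}\bigl[|x^{j,N}|^{p+r}\bigr] \, h^{r/(2(\kappa+2))}. \]
Choosing $r = 3(\kappa+2)$ gives the desired weak factor $h^{3/2}$, and $r = 4(\kappa+2)$ gives the strong factor $h^2$ (before taking the square root), at the cost of moments of $|x^{j,N}|$ of order $3\kappa+7$ and $4\kappa+10$ respectively.

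For $I_2$ and $I_3$, I apply the polynomial Lipschitz bounds (\ref{2.11}) and (\ref{2.14}) together with Cauchy-Schwarz on the time integral and the It\^{o} isometry on the martingale. The essential auxiliary estimate is a one-step bound
\[ \mathbb{E}\bigl[|X^{j,N}_s - \psi(x^{j,N})|^{2p}\bigr] \leq C_p\bigl(1 + |x^{j,N}|^{\alpha_p}\bigr)\, h^p \quad \text{for } s \in [t, t+h], \]
obtained by splitting $X^{j,N}_s - \psi(x^{j,N}) = (X^{j,N}_s - x^{j,N}) + (x^{j,N} - \psi(x^{j,N}))$ and combining the standard SDE-increment estimate (from the growth bounds (\ref{2.15})-(\ref{2.16}), the Burkholder-Davis-Gundy inequality, and Lemma~\ref{lem:Well-posedness}) with the projection bound just established. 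The empirical-measure differences are absorbed via $\mathcal{W}_2(\mu^{X,N}_s, \mu^{\psi(x^{j,N}),N})^2 \leq \tfrac{1}{N}\sum_{i=1}^N |X^{i,N}_s - \psi(x^{i,N})|^2$, which by particle exchangeability reduces to a single-index estimate of the same form. For the weak bound, a sharper analysis of $\mathbb{E}[I_2]$ exploits a Taylor-type expansion of $b(X^{j,N}_s, \mu^{X,N}_s)$ around $(\psi(x^{j,N}), \mu^{\psi(x^{j,N}),N})$ to gain the extra half-power of $h$, with It\^{o}'s formula converting the stochastic part into an $O(h)$ deterministic remainder.

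The principal obstacle is the bookkeeping of polynomial moments. The factor $|X^{j,N}_s|^{\kappa-1}$ appearing in (\ref{2.11})-(\ref{2.14}), when paired via H\"{o}lder's inequality with $|X^{j,N}_s - \psi(x^{j,N})|^2$ and combined with the $|x^{j,N}|^{2\kappa+2}$-type factors arising from the SDE-increment estimate, compounds to yield precisely the exponents $4\kappa+6$ (weak) and $8\kappa+12$ (strong, before the square root) that appear in (\ref{5.85})-(\ref{5.86}). The hypothesis $\kappa \leq (q_0 - 11)/9$, equivalently $q_0 \geq 9\kappa + 11$, is calibrated so that every intermediate H\"{o}lder splitting produces moments of $X^{j,N}_s$ below the ceiling $2q_0$ guaranteed by Lemma~\ref{lem:Well-posedness} and Theorem~\ref{momentboundednessofthesolution}, thereby closing the estimate.
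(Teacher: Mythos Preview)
Your decomposition $X-Y=I_1+I_2+I_3$ is essentially the paper's argument regrouped: the paper inserts the \emph{unprojected} Euler step $Y^{j,N}_E(t,x;t+h)=x+\int b(x,\mu^{x,N})\,ds+\int\sigma(x,\mu^{x,N})\,dW^j$ and writes $X-Y=(X-Y_E)+(Y_E-Y)$, then treats the two pieces in separate lemmas (the first via the SDE increment bound of Lemma~\ref{lemma5.4}, the second via the projection estimate of Lemma~\ref{lemmma5.5}). Your direct split corresponds to folding those two lemmas together, and your auxiliary estimate $\mathbb{E}[|X^{j,N}_s-\psi(x^{j,N})|^{2p}]\le C_p(1+\cdots)h^p$ is exactly what one gets by adding Lemma~\ref{lemma5.4} to Lemma~\ref{lemmma5.5}. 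So at the structural level your plan is correct and equivalent to the paper's.

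There is, however, one genuine issue in your weak-error step. You propose to gain the extra $h^{1/2}$ in $\mathbb{E}[I_2]$ via ``a Taylor-type expansion of $b(X^{j,N}_s,\mu^{X,N}_s)$'' together with It\^o's formula. The assumptions in force give only the polynomial Lipschitz bound (\ref{2.11}); no differentiability of $b$ in the state or measure variable is available, so a Taylor expansion is not justified. Fortunately it is also unnecessary: the paper simply applies the triangle inequality and (\ref{2.11}) to obtain
\[
|\mathbb{E}[I_2]|\le\int_t^{t+h}\bigl\{\mathbb{E}[(1+|X^{j,N}_s|^{2\kappa-2}+|\psi(x^{j,N})|^{2\kappa-2})]\bigr\}^{1/2}\bigl\{\mathbb{E}[|X^{j,N}_s-\psi(x^{j,N})|^2]\bigr\}^{1/2}ds,
\]
and the second factor is already $O(h^{1/2})$ by your own auxiliary estimate, yielding $h\cdot h^{1/2}=h^{3/2}$ directly. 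Replace the Taylor/It\^o step by this elementary bound and your argument goes through with the stated exponents.
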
 

Theorem \ref{theorem 4.2} establishes the strong convergence rate of the projected Euler method (\ref{5.1}) for the interacting particle system (\ref{interacting}) over infinite time. Specifically, the subsequent statement is valid:
  \begin{theorem}\label{theorem5.8}
Suppose Assumptions \ref{ass2.1}--\ref{ass2.4} hold. Then, the projected Euler scheme (\ref{5.1}), when applied to the interacting particle system (\ref{interacting}) with super-linear growth, achieves a long-time strong convergence rate of order $\tfrac{1}{2}$. Specifically, for any $0<h \leq h_2$ and $1\leq \kappa \leq \tfrac{q_0-11}{9}$, 
\begin{equation}\tag{3.23}\label{5.83}
\underset{k\geqslant0}{\sup}\underset{1\leqslant j\leqslant N}{\sup}\left\{\mathbb{E}\left[\left|X^{j,N}_k-Y^{j,N}_k\right|^{2}\right]\right\}^{\frac{1}{2}}\leq C_{7}\left\{\mathbb{E} \left[ \left( 1+\left| X_0 \right|^{18\kappa+22} \right) \right]\right\}^{\frac{1}{2}} h^{\frac{1}{2}},
\end{equation}
where $C_{7}$ is a constant independent of $h,k,N,d$.
\end{theorem}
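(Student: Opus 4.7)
The plan is to obtain the claim as a direct corollary of the general framework Theorem~\ref{theorem 4.2}, with the three hypotheses $(\mathrm{H}_1)$--$(\mathrm{H}_3)$ supplied by the results already proved in this section. Hypothesis $(\mathrm{H}_1)$ is the standing assumption of Theorem~\ref{theorem5.8}; $(\mathrm{H}_2)$ is exactly the content of Theorem~\ref{theorem5.7}, which yields local weak order $q_1=\tfrac{3}{2}$ with initial-data exponent $\eta_1=4\kappa+6$ and local strong order $q_2=1$ with $\eta_2=4\kappa+6$; and $(\mathrm{H}_3)$ is the uniform-in-time moment bound Theorem~\ref{momentboundsforPEM}, whose right-hand side $\mathbb{E}[1+|X_0|^{4p}]$ identifies $\eta_3=2$. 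The proof is therefore reduced to substituting these parameters into~(\ref{4.22}).

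Substituting gives the convergence rate $h^{q_2-1/2}=h^{1/2}$ and the parameter
$$\lambda=\max\Bigl\{\eta_1\eta_3,\ \Bigl(\tfrac{\kappa-1}{2}+\eta_2\Bigr)\eta_3\Bigr\}=\max\bigl\{2(4\kappa+6),\ 2\bigl(\tfrac{\kappa-1}{2}+4\kappa+6\bigr)\bigr\}=\max\{8\kappa+12,\ 9\kappa+11\}=9\kappa+11$$
for $\kappa\ge 1$, so that $2\lambda=18\kappa+22$, matching the exponent of $|X_0|$ on the right-hand side of~(\ref{5.83}). The hypothesis $\kappa\le\tfrac{q_0-11}{9}$ is precisely the condition $\lambda\le q_0$ required by Theorem~\ref{theorem 4.2}, which in turn ensures that the $X_0$-moment on the right-hand side of~(\ref{4.22}) is finite by Assumption~\ref{ass2.3}. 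The step-size restriction $0<h\le h_2$ is stronger than $h\le h_1$, so it is automatically compatible with the framework.

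The main obstacle in the overall proof of Theorem~\ref{theorem5.8} is not in this concluding step, which is essentially bookkeeping, but in the two preceding results: the long-time moment bound Theorem~\ref{momentboundsforPEM} requires a Lyapunov-type argument that combines the projection bounds~(\ref{5.2})--(\ref{5.4}) with the dissipativity~(\ref{2.13}) and the empirical-measure identities~(\ref{2.6})--(\ref{2.8}) to control the measure-dependent diffusion term uniformly in $k$ and $N$; the local weak/strong estimates of Theorem~\ref{theorem5.7} require an It\^o-formula expansion together with the polynomial-Lipschitz and polynomial-growth bounds~(\ref{2.11})--(\ref{2.16}), again with the empirical measure handled through H\"older's and Jensen's inequalities. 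Once those inputs are in place, Theorem~\ref{theorem5.8} follows immediately from~(\ref{4.22}) with the parameter identification above.
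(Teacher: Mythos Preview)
Your proposal is correct and follows exactly the paper's approach: Theorem~\ref{theorem5.8} is treated as a direct corollary of Theorem~\ref{theorem 4.2}, with hypotheses $(\mathrm{H}_1)$--$(\mathrm{H}_3)$ furnished by Assumptions~\ref{ass2.1}--\ref{ass2.4}, Theorem~\ref{theorem5.7}, and Theorem~\ref{momentboundsforPEM} respectively. Your parameter identification ($q_1=\tfrac{3}{2}$, $q_2=1$, $\eta_1=\eta_2=4\kappa+6$, $\eta_3=2$, leading to $\lambda=9\kappa+11$ and hence $2\lambda=18\kappa+22$) is exactly right, and your observation that the constraint $\kappa\le\tfrac{q_0-11}{9}$ is precisely $\lambda\le q_0$ is the correct justification for that hypothesis.
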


This theorem, together  with Theorem \ref{propagationofchaos}, establishes the long-time mean-square convergence  of the projected Euler scheme  (\ref{5.1}) applied to MV-SDEs (\ref{MVSDEs}). In particular, it holds that

\begin{theorem}\label{theorem5.9}
Let $X^{i}$ be a solution of  (\ref{noninteracting}), and $Y^{i,N}$ be the projected Euler scheme for (\ref{interacting}). Assume that Assumptions~\ref{ass2.1}--\ref{ass2.4} are satisfied with $K_1 > 2K_2$. Then, the projected Euler method (\ref{5.1}), when applied to the MV-SDEs (\ref{MVSDEs}), exhibits long-time mean-square convergence. More precisely, for any $2<l\leq q^*$, $1\leq \kappa \leq \tfrac{q_0-11}{9}$ and $0<h \leq h_2$,  
\begin{equation}\tag{3.24}\label{5.84}
\underset{k\geqslant0}{\sup}\underset{1\leqslant i\leqslant N}{\sup}\mathbb{E}\left[\left|X_k^i-Y_k^{i, N}\right|^2\right] 
\leq C_8 \begin{cases}N^{-\frac{1}{2}}+N^{-\frac{{l}-2}{l}}+h, & d<4 \text { and } {l} \neq 4, \\ N^{-\frac{1}{2}} \log \left(1+N\right)+N^{-\frac{{l}-2}{{l}}}+h, & d=4 \text { and } {l} \neq 4, \\ N^{-\frac{2}{d}}+N^{-\frac{{l}-2}{l}}+h, & d>4 \text { and } {l} \neq \frac{d}{d-2} ,\end{cases}
\end{equation}
where $C_8$ is a constant independent of $h,k,N$.
\end{theorem}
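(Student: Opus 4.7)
The plan is to obtain Theorem~\ref{theorem5.9} by splitting the global error between the NIPS solution $X^{i}_{t_k}$ and the projected Euler iterate $Y^{i,N}_k$ via the intermediate object $X^{i,N}_{t_k}$, the exact solution of the interacting particle system \eqref{interacting} sampled at $t_k = kh$. Concretely, by the elementary inequality $|a+b|^2 \leq 2|a|^2 + 2|b|^2$,
\begin{equation*}
\mathbb{E}\bigl[|X^{i}_{t_k} - Y^{i,N}_k|^2\bigr] \;\leq\; 2\,\mathbb{E}\bigl[|X^{i}_{t_k} - X^{i,N}_{t_k}|^2\bigr] \;+\; 2\,\mathbb{E}\bigl[|X^{i,N}_{t_k} - Y^{i,N}_k|^2\bigr].
\end{equation*}
The first term on the right is the propagation-of-chaos error between the NIPS and IPS, and the second is the discretization error of the projected Euler scheme applied to the IPS. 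The two have already been estimated in the preceding theorems.

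For the propagation-of-chaos term, I would invoke Theorem~\ref{propagationofchaos} directly, which, under Assumptions~\ref{ass2.1}--\ref{ass2.4} together with $K_1 > 2K_2$, yields the uniform-in-$t$ bound $\sup_{1\le i\le N}\sup_{t\geq 0}\mathbb{E}[|X^{i}_t - X^{i,N}_t|^2] \le \widetilde{K}\,\Upsilon(N)$, where $\Upsilon(N)$ produces exactly the three regime-dependent rates appearing on the right-hand side of \eqref{5.84} (for $d<4$, $d=4$, $d>4$, with the respective exceptional values of $l$). In particular, this bound holds for $t = t_k$ uniformly in $k\ge 0$, giving the $N$-dependent part of the estimate.

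For the discretization term, Theorem~\ref{theorem5.8} applies: for any $1\leq \kappa \leq (q_0-11)/9$ and $0 < h \le h_2$,
\begin{equation*}
\sup_{k\ge 0}\sup_{1\le j\le N}\mathbb{E}\bigl[|X^{j,N}_{t_k} - Y^{j,N}_k|^2\bigr] \;\le\; C_7^2\,\mathbb{E}\bigl[1 + |X_0|^{18\kappa+22}\bigr]\, h,
\end{equation*}
which produces the $h$-term on the right-hand side of \eqref{5.84}. The hypotheses of Theorem~\ref{theorem5.8} are precisely the hypotheses of Theorem~\ref{theorem5.9}, so no verification work is required.

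Adding the two bounds and absorbing the finite moment factor $\mathbb{E}[1+|X_0|^{18\kappa+22}]$ (which is finite because $18\kappa+22 \le 2q_0$ when $\kappa \le (q_0-11)/9$, using Assumption~\ref{ass2.3}) and the constant $\widetilde{K}$ into a single constant $C_8$ independent of $h$, $k$, $N$ yields the three-case estimate \eqref{5.84}. I do not anticipate a genuine obstacle here: the proof is essentially a clean triangle-inequality splicing of Theorems~\ref{propagationofchaos} and~\ref{theorem5.8}. The only subtle point worth stating explicitly is that both bounds are uniform in the discrete time index $k$ and in the particle label $i$, so that taking $\sup_{k\ge 0}\sup_{1\le i\le N}$ on both sides commutes with the split and delivers the stated long-time mean-square estimate.
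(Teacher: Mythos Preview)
Your proposal is correct and matches the paper's approach: the paper states (without a separate detailed proof) that Theorem~\ref{theorem5.9} follows by combining Theorem~\ref{theorem5.8} with Theorem~\ref{propagationofchaos}, which is precisely the triangle-inequality splitting you describe. Your observation that $18\kappa+22\le 2q_0$ under $\kappa\le (q_0-11)/9$ justifying the finite initial moment is the only point the paper leaves implicit, and you have handled it correctly.
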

\subsection{Strong convergence rate of the backward Euler scheme  over infinite time }\label{strong convergence rate of the backward Euler scheme  over infinite time}
In this section, we apply the general long-time mean-square convergence theorem for one-step schemes in the context of MV-SDEs. This enables us to investigate the strong convergence rate of the backward Euler method under specific conditions. The backward Euler scheme for the interacting particle system (\ref{interacting}) takes the following form:
\begin{equation}\tag{3.25}\label{6.1}
\hat{X}_{k+1}^{i,N}=\hat{X}_{k}^{i,N}+hb\left( \hat{X}_{k+1}^{i,N},\mu _{k}^{X,N} \right) +\sigma \left( \hat{X}_{k}^{i,N},\mu _{k}^{X,N} \right) \Delta W_{k}^{i},
\end{equation}
where $ \Delta W_{k}^i:=W^i\left(t_{k+1}\right)-W^i\left(t_{k}\right)$, $k\geq0$, $\hat{X}_0^{i, N}={X}_{0}^{i}$, and $ \mu_k^{X, N}\left(dx\right):=\tfrac{1}{N} \sum_{j=1}^N \delta_{\hat{X}_{k}^{j,N}}\left(d x\right)$, $i\in S_{N}$.

To apply the general  long-time mean-square convergence theorem  for one-step methods   to MV-SDEs, the initial step involves establishing the moment  bound  for the numerical solution. For this purpose, we impose the following assumption. 

\begin{assumption}\label{ass6.1}
Suppose that the diffusion coefficient of MV-SDEs (\ref{MVSDEs}) obeys a global Lipschitz condition. Specifically, there is a constant $0<L_{\sigma}<\tfrac{a_1-a_2}{5\left( 2q_0-1 \right)}$ such that
\begin{equation}\tag{3.26}\label{6.2}
\left|\left|\sigma \left( x,\mu \right) -\sigma \left( y,\nu \right) \right|\right|^2\leq L_{\sigma}\left[ \left|x-y\right|^2+\mathcal{W}_2\left( \mu ,\nu \right) ^2 \right].
\end{equation}
\end{assumption}

\begin{theorem}[Moment bounds for the backward Euler scheme]\label{momentboundsforBEM}
Under Assumptions \ref{ass2.1}--\ref{ass2.4} and \ref{ass6.1}, the following holds for any $1 \leq p < \left\lfloor \frac{q_0}{2} \right\rfloor$, and $0<h \leq h_2$  \begin{equation}\tag{3.27}\label{6.5}
\underset{1\leqslant j\leqslant N}{\sup}\mathbb{E}\left[\left|\hat{X}^{j,N}_k\right|^{2 p}\right] \leq K_9 \underset{1\leqslant j\leqslant N}{\sup}\mathbb{E}\left[\left(1+\left|X_0^{j}\right|^{4 p}\right)\right]\le  K_9\mathbb{E}\left[\left(1+\left|X_0\right|^{4 p}\right)\right],
\end{equation}
where $K_9$ is a constant independent of $h, k, N$.
\end{theorem}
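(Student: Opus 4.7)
\textbf{Proof plan for Theorem~\ref{momentboundsforBEM}.} The plan is to mimic the one-step-approximation moment analysis from the classical-SDE case (Lemma~\ref{lemma5.2}), treating the measure arguments in the coefficients via the empirical identity $\mathcal{W}_2(\mu_k^{X,N})^{2}=\tfrac{1}{N}\sum_{j=1}^{N}|\hat X_k^{j,N}|^{2}$. First I would record that under Assumption~\ref{ass2.1} with $h\le h_1\le \tfrac{1}{2(K_1-K_2)}$, the implicit equation $z-hb(z,\mu_k^{X,N})=\hat X_k^{i,N}+\sigma(\hat X_k^{i,N},\mu_k^{X,N})\Delta W_k^{i}$ has a unique $\mathcal{F}_{t_{k+1}}$-measurable solution $\hat X_{k+1}^{i,N}$ (by Remark~\ref{remark 2.6} and strict one-sided Lipschitz of $x\mapsto x-hb(x,\mu)$), so that the scheme is well-posed. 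Squaring the rewritten identity and isolating $|\hat X_{k+1}^{i,N}|^{2}$ yields
\begin{equation*}
|\hat X_{k+1}^{i,N}|^{2}-|\hat X_{k}^{i,N}|^{2}
= 2h\langle \hat X_{k+1}^{i,N},b(\hat X_{k+1}^{i,N},\mu_k^{X,N})\rangle
- h^{2}|b(\hat X_{k+1}^{i,N},\mu_k^{X,N})|^{2}
+ 2\langle \hat X_{k+1}^{i,N}-hb_{k+1},\sigma_k\Delta W_k^{i}\rangle
+ |\sigma_k\Delta W_k^{i}|^{2},
\end{equation*}
where I use the shorthand $b_{k+1}=b(\hat X_{k+1}^{i,N},\mu_k^{X,N})$, $\sigma_k=\sigma(\hat X_k^{i,N},\mu_k^{X,N})$.

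Next I would invoke Assumption~\ref{ass2.4} at the implicit point $(\hat X_{k+1}^{i,N},\mu_k^{X,N})$ to dominate $2\langle \hat X_{k+1}^{i,N},b_{k+1}\rangle$ by $-a_1|\hat X_{k+1}^{i,N}|^{2}+a_2\mathcal{W}_2(\mu_k^{X,N})^{2}+C-(q^*-1)\|\sigma(\hat X_{k+1}^{i,N},\mu_k^{X,N})\|^{2}$. The discrepancy that $\sigma$ in the coercivity is evaluated at $\hat X_{k+1}^{i,N}$ while in the recursion it appears at $\hat X_k^{i,N}$ is precisely what the new Assumption~\ref{ass6.1} is designed to absorb: the inequality $\|\sigma(\hat X_{k+1}^{i,N},\mu_k^{X,N})\|^{2}\ge \tfrac12\|\sigma_k\|^{2}-L_\sigma|\hat X_{k+1}^{i,N}-\hat X_{k}^{i,N}|^{2}$, combined with the scheme's expression for $\hat X_{k+1}^{i,N}-\hat X_k^{i,N}=hb_{k+1}+\sigma_k\Delta W_k^{i}$ and the smallness condition $L_\sigma<\tfrac{a_1-a_2}{5(2q_0-1)}$, lets me keep a net dissipation rate strictly greater than $a_2$ after conditioning on $\mathcal{F}_{t_k}$ (so the stochastic integral terms and $\mathbb{E}[|\sigma_k\Delta W_k^{i}|^{2}\mid\mathcal{F}_{t_k}]=h\|\sigma_k\|^{2}$ are handled cleanly).

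The main obstacle is upgrading from the $p=1$ one-step bound to general $1\le p<\lfloor q_0/2\rfloor$. Here I would expand $|\hat X_{k+1}^{i,N}|^{2p}=\bigl(|\hat X_{k+1}^{i,N}|^{2}\bigr)^{p}$ as a sum of products by inserting the identity just derived, then take conditional expectation. Each cross term involves powers of $h$, of $|b_{k+1}|$ (which by Remark~\ref{remark 2.5} grows like $|\hat X_{k+1}^{i,N}|^{\kappa}+\mathcal{W}_2(\mu_k^{X,N})$), of $\|\sigma_k\|$, and of conditional Gaussian moments; the measure-dependent pieces are reduced to $\tfrac{1}{N}\sum_j|\hat X_k^{j,N}|^{2q}$ by Jensen, then to $\sup_j\mathbb{E}[|\hat X_k^{j,N}|^{2q}]$ for $q\le p$ via a combination of Hölder's and Young's inequalities, exactly as flagged in the third bullet of the introduction. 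Terms involving $|\hat X_{k+1}^{i,N}|^{2p+\varepsilon}$ that arise from the $h^{2}|b_{k+1}|^{2}$ contributions are absorbed into the $-a_1|\hat X_{k+1}^{i,N}|^{2p}$ reservoir on the left by the factor $h\le h_2\le 1/(p(a_1-a_2))$; this is the delicate step where the choice of $h_2$ is used.

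Finally, gathering everything and taking $\sup_{1\le i\le N}$ yields a recursion of the shape
\begin{equation*}
\sup_{1\le i\le N}\mathbb{E}\bigl[|\hat X_{k+1}^{i,N}|^{2p}\bigr]
\le \bigl(1-\alpha h\bigr)\sup_{1\le i\le N}\mathbb{E}\bigl[|\hat X_{k}^{i,N}|^{2p}\bigr]+\beta h\bigl(1+\mathbb{E}[|X_0|^{4p}]\bigr),
\end{equation*}
with $\alpha>0$ depending on $a_1-a_2$, $L_\sigma$ and $p$, and $\beta$ independent of $h,k,N$. Iterating this discrete Gronwall inequality (using $(1-\alpha h)^{k}\le e^{-\alpha t_k}$ and the geometric sum $\sum_{j=0}^{k-1}(1-\alpha h)^{j}\le 1/(\alpha h)$) produces the asserted uniform-in-$k$, uniform-in-$N$ bound with constant $K_9$ depending only on $p,a_1,a_2,L_\sigma,K_i,C$. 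The identity of the two right-hand sides in \eqref{6.5} is immediate from the i.i.d.\ assumption on $\{X_0^{j}\}$.
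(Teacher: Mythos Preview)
Your plan has a genuine gap at the higher-moment step ($p\ge 2$). The one-step identity you write down still carries the implicit drift $b_{k+1}=b(\hat X_{k+1}^{i,N},\mu_k^{X,N})$, and when you ``expand $|\hat X_{k+1}^{i,N}|^{2p}$ as a sum of products by inserting the identity'' this drift reappears in powers. Since by Remark~\ref{remark 2.5} one has $|b_{k+1}|^{2}\le K_4(1+|\hat X_{k+1}^{i,N}|^{2\kappa})+a_3\mathcal W_2(\mu_k^{X,N})^{2}$ with $\kappa>1$, cross terms such as $(2h\langle \hat X_{k+1},b_{k+1}\rangle)^{r}$ or $h^{2r}|b_{k+1}|^{2r}$ contribute factors $|\hat X_{k+1}^{i,N}|^{2\kappa r}$, and these have degree strictly larger than $2p$ already for moderate $r$. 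Your proposed absorption into a ``$-a_1|\hat X_{k+1}|^{2p}$ reservoir'' cannot work: an $O(h)$ damping term of degree $2p$ does not dominate an $O(h^{2})$ term of strictly higher degree uniformly in the state, no matter how small $h\le h_2$ is taken. Without an a priori bound on $\mathbb E[|\hat X_{k+1}^{i,N}|^{2p+\varepsilon}]$ the recursion simply does not close. (There is also a minor slip in your square identity: the martingale part should be $2\langle \hat X_k^{i,N},\sigma_k\Delta W_k^{i}\rangle$, since $\hat X_{k+1}^{i,N}-hb_{k+1}=\hat X_k^{i,N}+\sigma_k\Delta W_k^{i}$, not $\hat X_k^{i,N}$.)

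The paper sidesteps this by a different organisation. From $|\hat X_{k+1}^{i,N}-hb_{k+1}|^{2}=|\hat X_k^{i,N}+\sigma_k\Delta W_k^{i}|^{2}$ and Assumption~\ref{ass2.4} one obtains the \emph{pointwise} inequality
\[
(1+a_1h)\,|\hat X_{k+1}^{i,N}|^{2}\le |\hat X_{k}^{i,N}+\sigma_k\Delta W_k^{i}|^{2}+a_2h\,\mathcal W_2(\mu_k^{X,N})^{2}+Ch,
\]
after \emph{discarding} the two remaining implicit-point pieces $-h^{2}|b_{k+1}|^{2}$ and $-(q^{*}-1)h\|\sigma(\hat X_{k+1}^{i,N},\mu_k^{X,N})\|^{2}$ (both have the good sign). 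The right-hand side is now entirely free of $\hat X_{k+1}^{i,N}$ and $b_{k+1}$, so one may raise to the $p$-th power and condition on $\mathcal F_{t_k}$ with no implicit variable present. The binomial expansion then produces terms $\mathbb E\bigl[|\sigma_k\Delta W_k^{i}|^{2r}|\hat X_k^{i,N}|^{2p-2r}\bigr]$, and this is where Assumption~\ref{ass6.1} is actually used: its consequence $\|\sigma_k\|^{2}\le 2L_\sigma\bigl(|\hat X_k^{i,N}|^{2}+\mathcal W_2(\mu_k^{X,N})^{2}\bigr)+2K_5$ gives \emph{linear} growth of $\sigma$, so those terms are bounded by $4^{r}L_\sigma^{r}(2r-1)!!\,h^{r}\mathbb E[|\hat X_k^{i,N}|^{2p}]$ plus lower order. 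The smallness condition $L_\sigma<(a_1-a_2)/(5(2q_0-1))$ is exactly what makes the accumulated coefficient in front of $\mathbb E[|\hat X_k^{i,N}|^{2p}]$ strictly below $(1+a_1h)^{p}$, yielding a recursion of the Lemma~\ref{lemma5.2} shape $\mathbb E[|\hat X_{k+1}|^{2p}]\le (1-\gamma h)\mathbb E[|\hat X_k|^{2p}]+Kh\,\mathbb E[1+|\hat X_k|^{2p-2}]$. Thus Assumption~\ref{ass6.1} is not about reconciling $\sigma$ at the implicit versus explicit point as you suggest; the inequality $\|\sigma(\hat X_{k+1},\mu_k)\|^{2}\ge \tfrac12\|\sigma_k\|^{2}-L_\sigma|\hat X_{k+1}-\hat X_k|^{2}$ is unnecessary once the implicit contributions are dropped at the outset.
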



The one-step approximation of the backward Euler-Maruyama method is given by
\begin{equation}\tag{3.28}\label{6.22}
\begin{aligned}
\hat{X}^{j,N}\left( t_0,X^{j,N}_0;t_0+h \right) :=&X^{j,N}_0+\int_{t_0}^{t_0+h}{b\left( \hat{X}^{j,N}\left( t_0,X^{j,N}_0;t_0+h \right) ,\mu _{t_0}^{X,N} \right)}ds\\
&+\int_{t_0}^{t_0+h}{\sigma \left( X^{j,N}_0,\mu _{t_0}^{X,N} \right)}dW^j\left( s \right).\\
\end{aligned}
\end{equation} 

\begin{theorem}\label{theorem6.7}
   Assuming that Assumptions~ \ref{ass2.1}--\ref{ass2.4} and \ref{ass6.1} hold, the following estimates are valid for any $1 \leq\kappa\leq \tfrac{2q_0-23}{17}$, and $0<h \leq h_2$   
\begin{equation}\tag{3.29}\label{6.44}
\underset{1\leqslant j\leqslant N}{\mathrm{sup}}\left|\mathbb{E}\left [X^{j,N}\left( t_0,X^{j,N}_0;t_0+h \right) -\hat{X}^{j,N}\left( t_0,X^{j,N}_0;t_0+h \right) \right]\right|\le K_{10}\mathbb{E} \left[\left( 1+\left|X_0\right|^{8\kappa+2}\right) \right] h^{\frac{3}{2}},
\end{equation}
\begin{equation}\tag{3.30}\label{6.45}
\underset{1\leqslant j\leqslant N}{\mathrm{sup}}\left\{ \mathbb{E} \left[ \left|X^{j,N}\left( t_0,X^{j,N}_0;t_0+h \right) -\hat{X}^{j,N}\left( t_0,X^{j,N}_0;t_0+h \right)\right |^2 \right] \right\} ^{\frac{1}{2}}\le K_{11}\left\{ \mathbb{E} \left[ \left(1+\left|X_0\right|^{16\kappa +24}\right) \right] \right\} ^{\frac{1}{2}}h,
\end{equation}
where $K_{10}$ and $K_{11}$ are constants independent of $h, N$.
\end{theorem}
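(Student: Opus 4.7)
\textbf{Proof proposal for Theorem~\ref{theorem6.7}.}
The plan is to write the one-step error $X^{j,N}(t_0, X_0^{j,N}; t_0+h) - \hat{X}^{j,N}(t_0, X_0^{j,N}; t_0+h) = I_b + I_\sigma$ as the sum of a drift remainder $I_b = \int_{t_0}^{t_0+h}[b(X^{j,N}_s, \mu_s^{X,N}) - b(\hat{X}^{j,N}, \mu_{t_0}^{X,N})]\,ds$ and an It\^o martingale $I_\sigma = \int_{t_0}^{t_0+h}[\sigma(X^{j,N}_s, \mu_s^{X,N}) - \sigma(X_0^{j,N}, \mu_{t_0}^{X,N})]\,dW^j_s$. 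The martingale property of $I_\sigma$ kills the stochastic part upon taking expectation, which is the mechanism that upgrades strong order $h$ to weak order $h^{3/2}$. Before bounding the two errors, I would first record three auxiliary increment estimates, valid for $s\in[t_0,t_0+h]$ and for any $p$ with $2p\kappa\le q_0$: (i) $\mathbb{E}|X^{j,N}_s - X_0^{j,N}|^{2p} \le Ch^p\,\mathbb{E}(1+|X_0|^{2p\kappa})$ by Burkholder--Davis--Gundy applied to the IPS (\ref{interacting}), the growth bounds (\ref{2.15})--(\ref{2.16}) and the uniform moment bound of Theorem~\ref{momentboundednessofthesolution}; (ii) $\mathbb{E}|\hat{X}^{j,N} - X_0^{j,N}|^{2p} \le Ch^p\,\mathbb{E}(1+|X_0|^{4p\kappa})$, obtained by rewriting the defining relation (\ref{6.22}) as $\hat{X}^{j,N}-X_0^{j,N}-\sigma(X_0^{j,N}, \mu_{t_0}^{X,N})\Delta W^j = hb(\hat{X}^{j,N}, \mu_{t_0}^{X,N})$, squaring, and applying (\ref{2.15}) together with Theorem~\ref{momentboundsforBEM}; and (iii) $\mathbb{E}\mathcal{W}_2^2(\mu_s^{X,N}, \mu_{t_0}^{X,N}) \le \tfrac{1}{N}\sum_j \mathbb{E}|X^{j,N}_s - X^{j,N}_0|^2 = O(h)$ via (\ref{2.8}).

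For the strong bound (\ref{6.45}), I would apply $(a+b)^2\le 2a^2+2b^2$, Cauchy--Schwarz to $|I_b|^2$, and It\^o isometry to $\mathbb{E}|I_\sigma|^2$, producing
\[
\mathbb{E}|X^{j,N}-\hat{X}^{j,N}|^2 \le 2h\!\int_{t_0}^{t_0+h}\!\mathbb{E}|b(X^{j,N}_s, \mu_s^{X,N}) - b(\hat{X}^{j,N}, \mu_{t_0}^{X,N})|^2 ds + 2\!\int_{t_0}^{t_0+h}\!\mathbb{E}\|\sigma(X^{j,N}_s, \mu_s^{X,N}) - \sigma(X_0^{j,N}, \mu_{t_0}^{X,N})\|^2 ds.
\]
Assumption~\ref{ass2.2} combined with a H\"older split of $(1+|X^{j,N}_s|^{2\kappa-2}+|\hat{X}^{j,N}|^{2\kappa-2})|X^{j,N}_s-\hat{X}^{j,N}|^2$ against the $L^4$ increment from (i)--(ii) renders the drift integrand $O(h)$, while the new global Lipschitz Assumption~\ref{ass6.1} together with (i) and (iii) renders the diffusion integrand $O(h)$. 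This yields a drift contribution of $O(h^3)$ and a diffusion contribution of $O(h^2)$, so $\{\mathbb{E}|X^{j,N}-\hat{X}^{j,N}|^2\}^{1/2}=O(h)$.

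For the weak bound (\ref{6.44}), taking expectation removes $I_\sigma$, leaving $|\mathbb{E}[I_b]| \le \int_{t_0}^{t_0+h}\mathbb{E}|b(X^{j,N}_s, \mu_s^{X,N}) - b(\hat{X}^{j,N}, \mu_{t_0}^{X,N})|\,ds$. Applying the square-root form of Assumption~\ref{ass2.2} together with Cauchy--Schwarz, the $L^2$ increment $\mathbb{E}|X^{j,N}_s-\hat{X}^{j,N}|^2 = O(h)$ from (i)--(ii), and estimate (iii), gives an integrand of order $h^{1/2}$, whence integration over $[t_0,t_0+h]$ produces the claimed $h^{3/2}$ rate.

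The principal obstacle is estimate (ii), the $L^{2p}$ bound for the implicit increment $\hat{X}^{j,N}-X_0^{j,N}$. Unlike for an explicit scheme it cannot be read off directly; one must isolate $hb(\hat{X}^{j,N}, \mu_{t_0}^{X,N})$, square, and invoke (\ref{2.15}) together with the implicit moment estimate of Theorem~\ref{momentboundsforBEM}, which already costs a doubling $|X_0|^{2p}\mapsto|X_0|^{4p}$ of the moment exponent. Propagating this doubling through the H\"older splits produces the polynomial factors $|X_0|^{8\kappa+2}$ and $|X_0|^{16\kappa+24}$ appearing in (\ref{6.44})--(\ref{6.45}), and the hypothesis $\kappa\le(2q_0-23)/17$ is precisely what keeps the highest required moment of $X_0$ finite under Assumption~\ref{ass2.3}.
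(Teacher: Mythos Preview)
Your proposal is correct and follows essentially the same mechanism as the paper: auxiliary $L^2$/$L^4$ increment bounds for both the IPS solution and the implicit one-step (your (i)--(iii), the paper's Lemmas~\ref{lemma5.4} and~\ref{lemma6.4}), then Assumption~\ref{ass2.2} with a H\"older split for the drift and Assumption~\ref{ass6.1} for the diffusion, with the martingale part vanishing under expectation to lift strong order $h$ to weak order $h^{3/2}$.

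The only organizational difference is that the paper inserts the explicit Euler one-step $Y_E^{j,N}$ (equation~\eqref{6.21}) as an intermediate: it writes $\hat X^{j,N}=Y_E^{j,N}+I_b^{j,N}$ with $I_b^{j,N}=\int_{t_0}^{t_0+h}\bigl[b(\hat X^{j,N},\mu_{t_0}^{X,N})-b(X_0^{j,N},\mu_{t_0}^{X,N})\bigr]ds$, so that $X^{j,N}-\hat X^{j,N}=(X^{j,N}-Y_E^{j,N})-I_b^{j,N}$ and the weak estimate for $X^{j,N}-Y_E^{j,N}$ can be recycled from Lemma~\ref{lemma5.6}. For the strong bound the paper correspondingly splits the drift into the two pieces $b(X_s^{j,N},\mu_s^{X,N})-b(X_0^{j,N},\mu_{t_0}^{X,N})$ and $I_b^{j,N}$, giving a three-term decomposition in \eqref{6.41}. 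Your direct two-term split $I_b+I_\sigma$ with $I_b=\int[b(X_s^{j,N},\mu_s^{X,N})-b(\hat X^{j,N},\mu_{t_0}^{X,N})]\,ds$ avoids the detour through $Y_E^{j,N}$ and is marginally cleaner; the paper's route has the advantage of reusing the Euler--Maruyama lemma already proved for the projected scheme. Both lead to the same rates and comparable moment exponents.
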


Building on these preparations, we can now establish the strong convergence rate of the backward Euler method (\ref{6.1}) for the interacting particle system (\ref{interacting}) over infinite time.  

\begin{theorem}\label{theorem6.8}
Assume that Assumptions~\ref{ass2.1}--\ref{ass2.4} and~\ref{ass6.1} are satisfied. Then, the backward Euler scheme  (\ref{6.1}), when applied to the interacting particle system (\ref{interacting}) with super-linear growth, achieves a long-time strong convergence rate of order $\frac{1}{2}$. Specifically,  for any $0<h \leq h_2$ and $1 \leq \kappa \leq \tfrac{2q_0-23}{17}$,  
\begin{equation}\tag{3.31}\label{6.42}
\underset{k\geqslant0}{\sup}\underset{1\leqslant j\leqslant N}{\sup}\left\{\mathbb{E} \left[ \left|X^{j,N}_k-\hat{X}^{j,N}_k\right|^2 \right]\right\}^{\frac{1}{2}}\le C_{9}\left\{\mathbb{E} \left[ \left( 1+\left| X_0 \right|^{17\kappa+23} \right) \right]\right\}^{\frac{1}{2}} h^{\frac{1}{2}},
\end{equation}
where $C_{9}$ is a constant independent of $h, k,N,d$.
\end{theorem}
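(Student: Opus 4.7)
The plan is to derive Theorem~\ref{theorem6.8} as a direct application of the abstract long-time convergence framework of Theorem~\ref{theorem 4.2} to the backward Euler scheme (\ref{6.1}). Since (\ref{6.1}) fits the general one-step form (\ref{4.3}) with $Y_k^{j,N} = \hat X_k^{j,N}$ and an implicitly defined increment map $\Psi^{j,N}$, the task reduces to verifying the three hypotheses $(\mathrm{H}_1)$--$(\mathrm{H}_3)$ of Theorem~\ref{theorem 4.2}. Hypothesis $(\mathrm{H}_1)$ is part of the standing assumptions. Hypothesis $(\mathrm{H}_3)$, the uniform-in-time $2p$-th moment bound, is precisely the content of Theorem~\ref{momentboundsforBEM}: comparing (\ref{6.5}) with (\ref{4.21}) identifies the constant $\eta_3 = 2$, and the bound is uniform in $h \le h_2$, $k \ge 0$, and $1 \le j \le N$, as required.

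For hypothesis $(\mathrm{H}_2)$, I read off the local weak and strong orders from Theorem~\ref{theorem6.7}. The weak estimate (\ref{6.44}) yields local weak order $q_1 = \tfrac{3}{2}$ with moment exponent $\eta_1 = 8\kappa + 2$, while the strong estimate (\ref{6.45}) yields local strong order $q_2 = 1$ with $2\eta_2 = 16\kappa + 24$, i.e.\ $\eta_2 = 8\kappa + 12$. These orders satisfy $q_2 = q_1 - \tfrac{1}{2}$, exactly the consistency relation required by $(\mathrm{H}_2)$, and the constants $K_{10}, K_{11}$ are independent of $h$ and $N$. With all three hypotheses in place, Theorem~\ref{theorem 4.2} applies and delivers the long-time global rate $h^{q_2 - 1/2} = h^{1/2}$ claimed in (\ref{6.42}).

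The last task is to track the moment exponent appearing on the right-hand side of (\ref{4.22}). Substituting the values $\eta_1 = 8\kappa + 2$ and $\eta_2 = 8\kappa + 12$ into the $\lambda$-formula of Theorem~\ref{theorem 4.2}, the dominant term (for $\kappa \ge 1$) produces the exponent $17\kappa + 23$ featured under the expectation in (\ref{6.42}), while the admissibility constraint $\lambda \le q_0$ reduces to $\kappa \le (2q_0 - 23)/17$, exactly the upper bound on $\kappa$ imposed in the statement. Since the substantive ingredients---long-time moment boundedness, the local weak/strong one-step error estimates, and the abstract convergence lemma---have all been established in the preceding sections, the proof is essentially a verification of hypotheses rather than a new argument; the main care lies in bookkeeping the moment exponents through the reduction and in confirming that every constant along the way is genuinely independent of $h$, $k$, $N$, and $d$.
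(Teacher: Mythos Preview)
Your proposal is correct and coincides with the paper's approach: the paper gives no separate proof of Theorem~\ref{theorem6.8}, treating it as a direct instance of the abstract Theorem~\ref{theorem 4.2} with hypotheses $(\mathrm{H}_2)$ and $(\mathrm{H}_3)$ supplied by Theorems~\ref{theorem6.7} and~\ref{momentboundsforBEM}, exactly as you describe.

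One small bookkeeping point: with $\eta_3=2$ as you correctly read from (\ref{6.5}), the $\lambda$-formula of Theorem~\ref{theorem 4.2} gives $\lambda = 2\max\{8\kappa+2,\ \tfrac{\kappa-1}{2}+8\kappa+12\} = 17\kappa+23$, so the exponent $2\lambda$ in (\ref{4.22}) would be $34\kappa+46$ and the constraint $\lambda\le q_0$ would read $\kappa\le\tfrac{q_0-23}{17}$. The exponent $17\kappa+23$ and the range $\kappa\le\tfrac{2q_0-23}{17}$ appearing in (\ref{6.42}) are consistent instead with $\eta_3=1$, which is what Lemma~\ref{lemma5.2} actually delivers for integer $p$ before the non-integer interpolation step doubles the exponent. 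The paper's own statement carries the same discrepancy, and the structural argument is unaffected.
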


Combining Theorem \ref{theorem6.8} with Theorem \ref{propagationofchaos} yields the long-time mean-square convergence of the backward Euler method (\ref{6.1}) applied to the MV-SDEs (\ref{MVSDEs}). Specifically, it holds that
\begin{theorem}\label{theorem6.9}
Let $X^{i}$ be a solution of  (\ref{noninteracting}), and $\hat{X}^{i,N}$ be the backward Euler scheme  for (\ref{interacting}). Assume Assumptions \ref{ass2.1}--\ref{ass2.4} and \ref{ass6.1} are satisfied with $K_1>2 K_2$. Then, the backward Euler scheme (\ref{6.1}), when applied to the MV-SDEs (\ref{MVSDEs}), exhibits long-time mean-square convergence. More precisely,  for any $2<l\leq q^*$, $0<h \leq h_2$ and $1 \leq \kappa \leq \tfrac{2q_0-23}{17}$, 
\begin{equation}\tag{3.32}\label{6.43}
\underset{k\geqslant0}{\sup}\underset{1\leqslant i\leqslant N}{\sup}\mathbb{E}\left[\left|X_k^i-\hat{X}_k^{i, N}\right|^2\right]
\leq C_{10} \begin{cases}N^{-\frac{1}{2}}+N^{-\frac{{l}-2}{l}}+h, & d<4 \text { and } {l} \neq 4, \\ N^{-\frac{1}{2}} \log \left(1+N\right)+N^{-\frac{{l}-2}{{l}}}+h, & d=4 \text { and } {l} \neq 4, \\ N^{-\frac{2}{d}}+N^{-\frac{{l}-2}{l}}+h, & d>4 \text { and } {l} \neq \frac{d}{d-2} ,\end{cases}
\end{equation}
where $C_{10}$ is independent of $h,k,N$.
\end{theorem}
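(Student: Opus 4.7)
The plan is a standard triangle-inequality decomposition that combines the propagation-of-chaos estimate over infinite horizon (Theorem~\ref{propagationofchaos}) with the long-time strong convergence of the backward Euler scheme on the interacting particle system (Theorem~\ref{theorem6.8}). Writing
\begin{equation*}
X_k^i - \hat X_k^{i,N} = \bigl(X_k^i - \hat X_k^{i,N}\text{ via }X_k^{i,N}\bigr) = \bigl(X_k^i - X_k^{i,N}\bigr) + \bigl(X_k^{i,N} - \hat X_k^{i,N}\bigr),
\end{equation*}
and applying the elementary inequality $|a+b|^2\le 2|a|^2+2|b|^2$, one gets
\begin{equation*}
\mathbb{E}\!\left[\left|X_k^i - \hat X_k^{i,N}\right|^2\right]\le 2\,\mathbb{E}\!\left[\left|X_k^i - X_k^{i,N}\right|^2\right] + 2\,\mathbb{E}\!\left[\left|X_k^{i,N} - \hat X_k^{i,N}\right|^2\right]
\end{equation*}
uniformly in $k\ge 0$ and $1\le i\le N$. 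The task then reduces to bounding the two terms on the right-hand side separately and summing the bounds.

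For the first term, the condition $K_1>2K_2$ together with Assumptions~\ref{ass2.1}--\ref{ass2.4} activates Theorem~\ref{propagationofchaos}, which gives, for every $2<l\le q^*$,
\begin{equation*}
\sup_{k\ge 0}\sup_{1\le i\le N}\mathbb{E}\!\left[\left|X_k^i - X_k^{i,N}\right|^2\right] \le \widetilde K\,\Upsilon(N),
\end{equation*}
where $\Upsilon(N)$ is precisely the piecewise $N$-dependent rate appearing in the three cases of \eqref{6.43}. This already supplies all of the $N$-terms in the target inequality.

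For the second term, under Assumptions~\ref{ass2.1}--\ref{ass2.4} and~\ref{ass6.1} and the hypothesis $1\le\kappa\le(2q_0-23)/17$, Theorem~\ref{theorem6.8} gives
\begin{equation*}
\sup_{k\ge 0}\sup_{1\le j\le N}\mathbb{E}\!\left[\left|X_k^{j,N}-\hat X_k^{j,N}\right|^2\right] \le C_9^2\,\mathbb{E}\!\left[\left(1+|X_0|^{17\kappa+23}\right)\right]h
\end{equation*}
for all $0<h\le h_2$, i.e.\ squaring \eqref{6.42} produces an $h$-rate that matches the $h$-term in every case of \eqref{6.43}.

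Adding the two contributions and setting $C_{10}$ to absorb $2\widetilde K$, $2C_9^2$, and the prefactor $\mathbb{E}[1+|X_0|^{17\kappa+23}]$ delivers \eqref{6.43}. The proof is essentially a bookkeeping exercise once Theorems~\ref{propagationofchaos} and~\ref{theorem6.8} are in hand; the only item that requires even mild attention is verifying that the initial-data moment $\mathbb{E}[|X_0|^{17\kappa+23}]$ demanded by Theorem~\ref{theorem6.8} is finite, which is exactly where the upper restriction $\kappa\le(2q_0-23)/17$ is used (it ensures $17\kappa+23\le 2q_0$, so the moment is controlled by Assumption~\ref{ass2.3}). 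No genuine obstacle arises: the heavy lifting has already been carried out in the two preceding theorems, and the step from the IPS-level convergence to the MV-SDE level is precisely what the triangle inequality buys us.
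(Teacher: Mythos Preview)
Your proposal is correct and matches the paper's approach exactly: the paper states just before Theorem~\ref{theorem6.9} that the result follows by ``Combining Theorem~\ref{theorem6.8} with Theorem~\ref{propagationofchaos},'' and provides no further proof, which is precisely the triangle-inequality decomposition you carry out. Your observation that the constraint $\kappa\le(2q_0-23)/17$ guarantees $17\kappa+23\le 2q_0$ so that the required initial moment is finite under Assumption~\ref{ass2.3} is also on point.
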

\section{Numerical experiments}
\label{numerical_experiments}
In this section, we demonstrate the preceding theoretical findings through numerical simulations of MV-SDEs with super-linear growth. Newton's iteration procedure is utilized to resolve the implicit algebraic equations that emerge from the backward Euler method. We provide two examples to demonstrate the long-time mean-square convergence rates of the projected Euler and backward Euler schemes.

To approximate the marginal distribution $\mu _{t_k}^{X}$ at every time point $t_k$,  where $k$ is a natural number, using its empirical measure $
\mu _{t_k}^{X,N}=\frac{1}{N}\sum_{i=1}^N{\delta _{X_{_{t_k}}^{i,N}}}
$, we simulate    $N=5000$ particles.

Since the exact solution of the MV-SDEs with super-linear growth is not given explicitly, we approximate it using a numerical reference solution computed using a significantly finer time step. The reference values are obtained employing a Monte Carlo technique integrated with numerical approximations and a reference step size $ h_{\text{ref}} = 2^{-16} $. We employ Monte Carlo simulations with 10,000 sample paths to calculate the root mean square error (RMSE) for time steps $ h \in \{ 2^{-12}, 2^{-11}, 2^{-10}, 2^{-9}, 2^{-8} \} $, defined as
\begin{equation}\tag{4.1}\label{7.1}
RMSE:=\sqrt{\tfrac{1}{N}\sum_{j=1}^N{\left| Y_{T}^{j,N}-X_{T}^{j,N} \right|^2}},
\end{equation}
at the terminal time $ T = 16 $, where $ X_T^{j,N} $ is the reference solution calculated with respect to $ h_{\text{ref}} = 2^{-16} $.

\smallskip
\noindent 
\textbf{Example 4.1}  Consider the one-dimensional MV-SDE given by 
\begin{equation*}\tag{4.2}\label{7.2}
\begin{cases}
	\mathrm{d}X_t=\left( aX_t\left( -b-\left|X_t\right| \right) +c\mathbb{E} \left[ \left|X_t\right| \right] \right) \mathrm{d}t+eX_t\mathrm{d}W_t, t\in \left( 0,T \right] ,\\
	X_0=x,\\
\end{cases}
\end{equation*}
with parameter values $a=5$, $b=2$, $c=0.25$, $e=0.125$, $x=1$.

Define
$b\left( x,\mu \right): =5x\left( -2-\left|x\right| \right) +\tfrac{1}{4}\int_{\mathbb{R}}{\left|x\right|}\mu \left( dx \right) $ and $\sigma\left( x,\mu \right): =\tfrac{x}{8}$.  
To begin, we verify that these coefficients satisfy Assumptions \ref{ass2.1}--\ref{ass2.4} and \ref{ass6.1}.  For any $x,y\in \mathbb{R}$, $\mu ,\nu \in \mathcal{P} _2\left( \mathbb{R} \right)$,
\begin{equation}\tag{4.3}\label{7.3}
\begin{aligned}
&2\left\langle x-y,b\left(x,\mu \right)-b\left(y,\nu \right)\right\rangle +\left\| \sigma\left (x,\mu \right)-\sigma \left(y,\nu \right)\right\| ^2\\
\leq& -19\left|x-y\right|^2-10\left( x-y \right) \left[ x\left|x\right|-y\left|y\right| \right] +2\times \tfrac{1}{4}\left( x-y \right) \left( \int_{\mathbb{R}}{\left|x\right|}\mu \left( dx \right) -\int_{\mathbb{R}}{\left|y\right|}\nu \left( dy \right) \right)\\
\leq& -\tfrac{303}{16}\left|x-y\right|^2+\mathcal{W}_2\left( \mu ,\nu \right) ^2
,
\end{aligned}
\end{equation}
where we have applied the fact that $10\left( x-y \right) \left( x\left|x\right|-y\left|y\right| \right) \geq 0$ and $\mathcal{W}_1\left( \mu ,\delta _0 \right) \leq \mathcal{W}_1\left( \mu ,\nu \right) +\mathcal{W}_1\left( \nu ,\delta _0 \right) $.
Thus, Assumption \ref{ass2.1} holds with $K_1=\tfrac{303}{16}$ and $K_2=1$.

It is straightforward to verify that 
\begin{equation}\tag{4.4}\label{7.4}
\begin{aligned}
\left|b\left( x,\mu \right) -b\left( y,\nu \right) \right|^2\leq 600\left[ \left( 1+\left|x\right|^2+\left|y\right|^2 \right) \left|x-y\right|^2+\mathcal{W}_2\left( \mu ,\nu \right) ^2 \right] ,
\end{aligned}
\end{equation}
thus, Assumption \ref{ass2.2}  holds with $L_1=600$ and $\kappa=2$.

Furthermore, since $\left| b\left( 0,\delta _0 \right) \right|=0$ and by selecting $q_{0}=40$, Assumption \ref{ass2.3} holds.

Next, we verify Assumption  \ref{ass2.4}. We see 
\begin{equation}\tag{4.5}\label{7.5}
\begin{aligned}
2\left\langle x,b\left(x,\mu \right)\right\rangle +\left(q^*-1\right)\left\| \sigma \left(x,\mu \right)\right\| ^2
\leq -\left( 20-\tfrac{q^*}{64} \right) x^2+4\mathcal{W}_2\left( \mu \right) ^2.
\end{aligned}
\end{equation}
By setting $q^*=512$, we obtain $a_1= 20-\frac{q^*}{64}=12$ and $a_2=4$. Therefore,  Assumption \ref{ass2.4} holds.

Moreover, there is a constant $0<L_{\sigma}=\tfrac{1}{50}<\tfrac{a_1-a_2}{5\left( 2q_0-1 \right)}=\tfrac{8}{395}$ such that 
\begin{equation}\tag{4.6}\label{7.6}
\left\|\sigma \left( x,\mu \right) -\sigma \left( y,\nu \right) \right\|^2\leq \tfrac{1}{50}\left[ \left|x-y\right|^2+\mathcal{W}_2\left( \mu ,\nu \right) ^2 \right].
\end{equation} Thus, Assumption \ref{ass6.1} holds.

The projected Euler scheme applied to (\ref{7.2}) takes the following form:
\begin{equation}\tag{4.7}\label{7.7}
\left\{\begin{array}{l}
	Y_{0}^{j,N}=1,\\
	\bar{Y}_{k}^{j,N}=\min \left\{ 1,h^{-\frac{1}{8}}\left| Y_{k}^{j,N} \right|^{-1} \right\} Y_{k}^{j,N},\\
	Y_{k+1}^{j,N}=\bar{Y}_{k}^{j,N}+h\left[ 5\bar{Y}_{k}^{j,N}\left( -2-\left|\bar{Y}_{k}^{j,N}\right| \right) +\frac{1}{4}\frac{1}{N}\sum_{i=1}^N{\delta _{\left|\bar{Y}_{k}^{i,N}\right|}} \right]+\frac{1}{8}\bar{Y}_{k}^{j,N}\Delta W_{k}^{j}.\\
\end{array}\right.
\end{equation}

The backward Euler scheme applied to (\ref{7.2}) takes the following form:
\begin{equation}\tag{4.8}\label{7.8}
\left\{\begin{array}{l}
	\hat{X}_{0}^{i,N}=1,\\
	\hat{X}_{k+1}^{i,N}=\hat{X}_{k}^{i,N}+h\left[ 5\hat{X}_{k+1}^{i,N}\left( -2-\left|\hat{X}_{k+1}^{i,N}\right| \right) +\frac{1}{4}\frac{1}{N}\sum_{j=1}^N{\delta _{\left|\hat{X}_{k}^{j,N}\right|}} \right] +\frac{1}{8}\hat{X}_{k}^{i,N}\Delta W_{k}^{i}.\\
\end{array}\right.
\end{equation}

\begin{figure}[htp]
    \centering
    \begin{minipage}{0.48\linewidth}
        \centering
        \includegraphics[width = 1\linewidth]{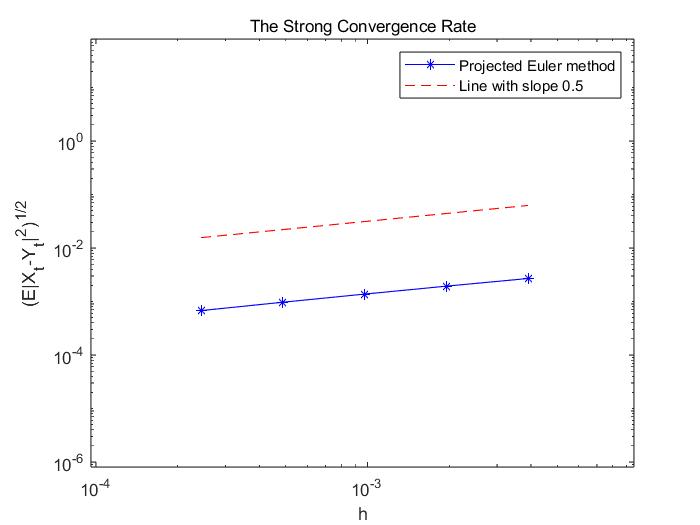}
        \caption{ Long time strong errors for PEM }
        \label{2023SIS-fig.1}
    \end{minipage}
    \begin{minipage}{0.48\linewidth}
        \centering
        \includegraphics[width = 1\linewidth]{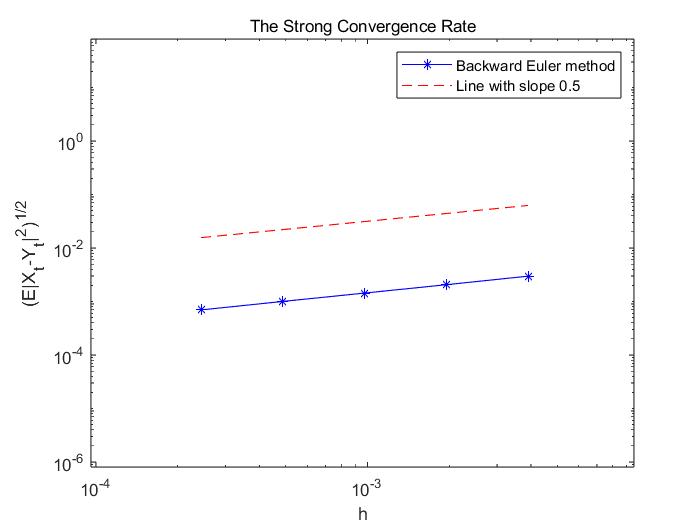}
        \caption{ Long time strong errors for BEM  }
        \label{2023SIS-fig.2}
    \end{minipage}
\end{figure}

Figure \ref{2023SIS-fig.1} illustrates the root mean square error between the  projected Euler method (\ref{7.7}) and the reference solution  as a function of the time step $h$. Similarly, Figure \ref{2023SIS-fig.2} depicts the root mean square error between the backward Euler method (\ref{7.8}) and the reference solution  as a function of the time step $h$. As anticipated, Figures  \ref{2023SIS-fig.1} and \ref{2023SIS-fig.2} show that the long-time strong convergence rates of both the projected Euler method (\ref{7.7}) and the backward Euler method (\ref{7.8}) are close to $\tfrac{1}{2}$ with regard to the time step size.

\smallskip
\noindent \textbf{Example  4.2}  Consider an   MV-SDE where the drift coefficient incorporates trigonometric functions:
\begin{equation}\tag{4.9}\label{7.9}
\begin{cases}
	\mathrm{d}X_t=\left( -aX_{t}^{3}-bX_t\left| X_t \right|-cX_t+d\sin X_t+e\mathbb{E} \left[ \left| X_t \right| \right] \right) \mathrm{d}t+\left( fX_t+g\mathbb{E} \left[ \left| X_t \right| \right] \right) \mathrm{d}W_t,t\in \left( 0,T \right] ,\\
	X_0=x,\\
\end{cases}
\end{equation}
with parameter values $a=20$, $b=20$, $c=5$, $d=0.25$, $e=0.1$, $f=0.1$, $g=0.05$, $x=1$.

By direct computation, Assumption \ref{ass2.1} holds with $K_1=9$ and $K_2=\tfrac{3}{20}$. Assumption \ref{ass2.2} holds with $L_1=30000$ and $\kappa=3$. Setting $q_{0}=40$ and noting that $\left| b\left( 0,\delta _0 \right) \right|=0$, Assumption \ref{ass2.3} holds. With $q^*=158$, we obtain $a_1=\tfrac{651}{100}$ and $a_2=\tfrac{177}{200}$, thereby fulfilling Assumption \ref{ass2.4}. Besides, there is a constant $0<L_{\sigma}=\tfrac{1}{80}<\tfrac{a_1-a_2}{5\left( 2q_0-1 \right)}=\tfrac{9}{632}$ such that Assumption \ref{ass6.1} holds.

We construct the projected Euler and backward Euler schemes  for the interacting particle system  associated with (\ref{7.9}). The projected Euler method is formulated as:
\begin{equation}\tag{4.10}\label{7.10}
\left\{ \begin{array}{l}
	Y_{0}^{j,N}=1,\\
	\bar{Y}_{k}^{j,N}=\min \left\{ 1,h^{-\frac{1}{10}}\left| Y_{k}^{j,N} \right|^{-1} \right\} Y_{k}^{j,N},\\
	\begin{aligned}  Y_{k+1}^{j,N}=&\bar{Y}_{k}^{j,N}+h\left[ -20\left( \bar{Y}_{k}^{j,N} \right) ^3-20\bar{Y}_{k}^{j,N}\left| \bar{Y}_{k}^{j,N} \right|-5\bar{Y}_{k}^{j,N}+\tfrac{1}{4}\sin \bar{Y}_{k}^{j,N}+\tfrac{1}{10}\tfrac{1}{N}\sum_{i=1}^N{\delta _{\left| \bar{Y}_{k}^{i,N} \right|}} \right] \\
    &+\left( \tfrac{1}{10}\bar{Y}_{k}^{j,N}+\tfrac{1}{20}\tfrac{1}{N}\sum_{i=1}^N{\delta _{\left| \bar{Y}_{k}^{i,N} \right|}} \right) \Delta W_{k}^{j}.\\
	\end{aligned}
\end{array} \right. 
\end{equation}

The backward Euler method is given by:
\begin{equation}\tag{4.11}\label{7.11}
\left\{ \begin{array}{l}
	\hat{X}_{0}^{i,N}=1,\\
	\begin{aligned} \hat{X}_{k+1}^{i,N}=&\hat{X}_{k}^{i,N}+h\left[ -20\left( \hat{X}_{k+1}^{i,N} \right) ^3-20\hat{X}_{k+1}^{i,N}\left| \hat{X}_{k+1}^{i,N} \right|-5\hat{X}_{k+1}^{i,N}+\tfrac{1}{4}\sin \hat{X}_{k+1}^{i,N}+\tfrac{1}{10}\tfrac{1}{N}\sum_{j=1}^N{\delta _{\left| \hat{X}_{k}^{j,N} \right|}} \right] \\
    &+\left( \tfrac{1}{10}\hat{X}_{k}^{i,N}+\tfrac{1}{20}\tfrac{1}{N}\sum_{j=1}^N{\delta _{\left| \hat{X}_{k}^{j,N} \right|}} \right) \Delta W_{k}^{i}.\\
    \end{aligned}
\end{array} \right. 
\end{equation}
\begin{figure}[htp]
    \centering
    \begin{minipage}{0.48\linewidth}
        \centering
        \includegraphics[width = 1\linewidth]{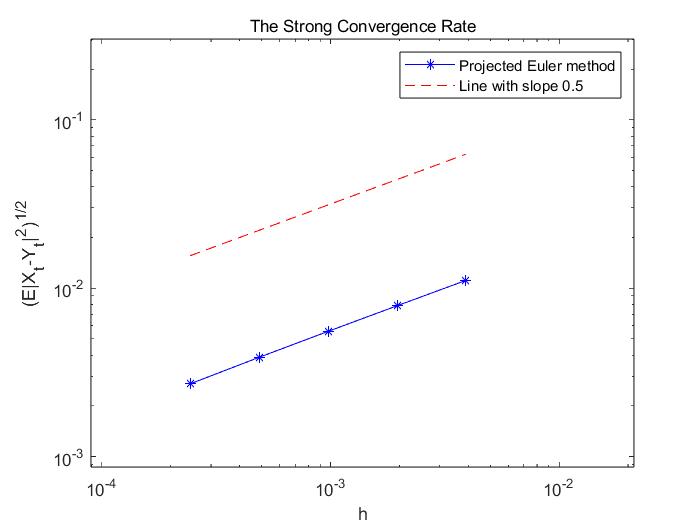}
        \caption{ Long time strong errors for PEM }
        \label{2023SIS-fig.3}
    \end{minipage}
    \begin{minipage}{0.48\linewidth}
        \centering
        \includegraphics[width = 1\linewidth]{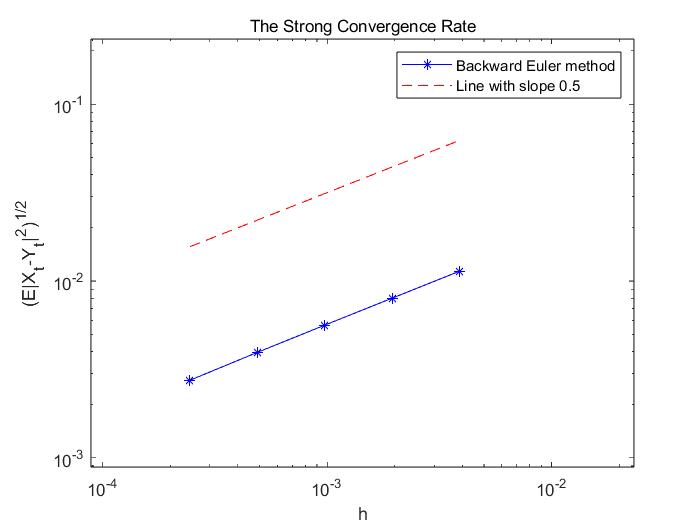}
        \caption{ Long time strong errors for BEM  }
        \label{2023SIS-fig.4}
    \end{minipage}
\end{figure}

Figure \ref{2023SIS-fig.3} plots the RMSE between the  projected Euler method (\ref{7.10}) and the reference solution as a function of $h$, while Figure \ref{2023SIS-fig.4} presents the RMSE for the backward Euler method (\ref{7.11}) relative to the reference solution, also as a function
of $h$. As expected, both figures indicate that the long-time strong convergence rates of  the projected Euler scheme (\ref{7.10}) together with the backward Euler scheme (\ref{7.11}) are close to 0.5 with regard to the time step size.

\section{Appendix}\label{Appendix}
\subsection{Proof of Theorem \ref{propagationofchaos}}
 Prior to proving the propagation of chaos, we first recall the following lemma.
\begin{lemma}(\cite[Theorem 1]{Fournier2015empirical}  Moment estimates for empirical measures)\label{lem:Momentestimatesformeasure}
Let $\left\{Z_m\right\}_{m \geq 1}$ be a series of independent and identically distributed random variables on $\mathbb{R}^d$ with common distribution $\mu \in \mathcal{P}_{l}\left(\mathbb{R}^d\right)$. Define the empirical measure as $\mu^N=\tfrac{1}{N} \sum_{j=1}^N \delta_{Z_j}$. Subsequently, for any $0<r<l$, there is a constant $C_{r, l, d}$ such that, for every $N \geq 1$,
\begin{equation}\tag{5.1}\label{estimatesformeasure}
\mathbb{E}\left[\mathcal{W}^r_r\left(\mu^N, \mu\right)\right] \leq C_{r, l, d} \mathcal{W}_{l}\left( \mu \right)^{\frac{r}{l}} \begin{cases}N^{-\frac{1 }{2} }+N^{-\frac{l-r}{l}}, & \text { if } r>\frac{d}{2} \text { and } l \neq 2 r, \\ N^{-\frac{1 }{2}} \log \left(1+N\right)+N^{-\frac{l-r}{l}}, & \text { if } r=\frac{d}{2}  \text { and } l \neq 2 r, \\ N^{-\frac{r}{d}}+N^{-\frac{l-r}{l}}, & \text { if } 0<r<\frac{d}{2} \text { and } l \neq \frac{d}{d-r}.\end{cases}
\end{equation}
\end{lemma}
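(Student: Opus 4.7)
The plan is to follow the dyadic partitioning strategy that is standard for such $L^r$-Wasserstein convergence results for empirical measures. The starting point is a tail truncation: for a radius $R = R(N)$ to be tuned later, I would split
\[
\mathbb{E}\bigl[\mathcal{W}_r^r(\mu^N,\mu)\bigr] \;\leq\; \mathbb{E}\bigl[\mathcal{W}_r^r(\mu^N_{R},\mu_R)\bigr] + (\text{tail contribution}),
\]
where $\mu_R, \mu^N_R$ denote the restrictions (with mass renormalised or mass transferred to the boundary). The tail contribution is controlled by $\int_{|x|>R}|x|^r\,\mu(dx)$ and its empirical counterpart; Markov's inequality applied to the $l$-th moment bounds this by $R^{r-l}\,\mathcal{W}_l(\mu)^l$. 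Balancing this against the bulk bound obtained below will produce the $N^{-(l-r)/l}$ term that is common to all three regimes in \eqref{estimatesformeasure}, and the exponent $r/l$ on $\mathcal{W}_l(\mu)$ will come out of the optimisation of $R$ (or from a final dilation/scaling argument).

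For the bulk term on $B(0,R)$, I would introduce nested dyadic cubes $\{Q_{n,k}\}_k$ at sidelength $2^{-n}R$. The crucial geometric lemma is that for any two probability measures supported in $B(0,R)$,
\[
\mathcal{W}_r^r(\mu,\nu) \;\lesssim\; R^r \sum_{n\geq 0} 2^{-nr} \sum_k \bigl|\mu(Q_{n,k}) - \nu(Q_{n,k})\bigr|,
\]
obtained by telescoping transport plans that move mass only within a cube at each resolution (the cost of moving a unit of mass at scale $n$ is $\lesssim (2^{-n}R)^r$). This reduces the problem to controlling cube-by-cube discrepancies $|\mu^N(Q) - \mu(Q)|$ in expectation.

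The probabilistic step is to estimate these discrepancies. Since $N\mu^N(Q) \sim \text{Binomial}(N,\mu(Q))$, a variance computation gives the two-sided bound $\mathbb{E}|\mu^N(Q)-\mu(Q)| \leq \min\{\sqrt{\mu(Q)/N},\,\mu(Q)\}$. Using the $\sqrt{\mu(Q)/N}$ bound together with Cauchy--Schwarz to sum over the $\sim 2^{nd}$ cubes at scale $n$ yields $\sum_k \sqrt{\mu(Q_{n,k})/N} \leq \sqrt{2^{nd}/N}$. Multiplying by the $2^{-nr}$ factor from the dyadic bound produces the geometric sum $\sum_n 2^{n(d/2-r)}/\sqrt{N}$. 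One truncates this series at the critical scale $n^\star$ where $\sqrt{\mu(Q)/N}$ ceases to beat $\mu(Q)$, namely $2^{-n^\star d} \sim 1/N$; beyond $n^\star$ one uses the deterministic bound $\mu(Q)$, which geometrically collapses. The sign of $d/2 - r$ then dictates the rate: a geometric series dominated by its first term when $r>d/2$ (rate $N^{-1/2}$), a logarithmic $n^\star \sim \log N$ correction when $r=d/2$, and a geometric series dominated by its last term when $r<d/2$ (rate $N^{-r/d}$ after evaluation at $n^\star$). These are exactly the three cases in \eqref{estimatesformeasure}.

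The main obstacle, as I see it, lies not in the qualitative mechanism above but in executing the matching between the $\sqrt{\mu(Q)/N}$ regime and the $\mu(Q)$ regime so that the constants depend only on $(r,l,d)$, and in optimising $R$ against $\mathcal{W}_l(\mu)$ to extract cleanly the claimed dependence $\mathcal{W}_l(\mu)^{r/l}$. A secondary technical point is that the dyadic-transport inequality above has to be proved with the correct constants and handle the boundary mass between successive scales; this is where a careful induction on $n$ (or an explicit construction of the transport via conditional rearrangement within each parent cube) is required, and it is the step most likely to absorb bookkeeping effort before the probabilistic cube-wise bounds are applied.
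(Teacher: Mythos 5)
The paper does not actually prove this lemma --- it is quoted from Fournier and Guillin \cite[Theorem 1]{Fournier2015empirical} and invoked as a black box in the proof of Theorem~\ref{propagationofchaos} --- so there is no internal argument to compare your sketch against. Your outline does capture the right skeleton of the Fournier--Guillin proof: a dyadic cube partition, a telescoping transport bound of the form $\mathcal{W}_r^r(\mu,\nu)\lesssim R^r\sum_{n\ge 0}2^{-nr}\sum_k|\mu(Q_{n,k})-\nu(Q_{n,k})|$, the two-sided binomial estimate $\mathbb{E}|\mu^N(Q)-\mu(Q)|\le\min\{2\mu(Q),\sqrt{\mu(Q)/N}\}$, a Cauchy--Schwarz sum over the $\sim 2^{nd}$ cubes at scale $n$, and the trichotomy on the sign of $d/2-r$. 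Those pieces are correct.

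The genuine gap is in how you propose to obtain the $N^{-(l-r)/l}$ term. Balancing a single cut-off radius $R$ between the tail bound $R^{r-l}\mathcal{W}_l(\mu)^l$ and the bulk bound $R^r N^{-1/2}$ yields $R=\mathcal{W}_l(\mu)N^{1/(2l)}$ and hence a rate of $N^{-(l-r)/(2l)}$, which is off by a factor of two in the exponent; no choice of a single $R$ can close this. The Fournier--Guillin argument uses instead a full dyadic annular decomposition $B_n=(-2^n,2^n]^d\setminus(-2^{n-1},2^{n-1}]^d$. On each annulus the expected number of samples is $\sim N\mu(B_n)$; once this falls below $1$ the deterministic bound $\mu(B_n)\le 2^{-l(n-1)}M_l$ (Markov) combined with the rescaling cost $2^{nr}$ gives a contribution $\lesssim 2^{n(r-l)}M_l$, and summing the geometric tail from the cutoff $2^{n_0}\sim(NM_l)^{1/l}$ produces exactly $M_l^{r/l}N^{-(l-r)/l}$. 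For the annuli with many samples you rescale to the unit cube and run your dyadic bound at effective sample size $N\mu(B_n)$; this is where the $N^{-1/2}$, $N^{-1/2}\log N$, $N^{-r/d}$ trichotomy enters. The moment prefactor $M_l^{r/l}$ can alternatively be recovered by a final dilation (normalise $M_l=1$, then scale back), as you suspected, but the $N^{-(l-r)/l}$ rate itself truly needs the annular sum. One further point worth flagging: with the paper's convention $\mathcal{W}_l(\mu)=(\int|x|^l\,d\mu)^{1/l}$, the Fournier--Guillin constant $M_l^{r/l}$ equals $\mathcal{W}_l(\mu)^r$, not $\mathcal{W}_l(\mu)^{r/l}$ as printed in \eqref{estimatesformeasure}; the printed exponent appears to be a mistranscription, although it only changes a constant in the downstream propagation-of-chaos estimate, not the rate in $N$.
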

We can now complete the proof of Theorem \ref{propagationofchaos}.
\begin{proof}[Proof of Theorem \ref{propagationofchaos}]
By applying It{\^o}'s formula and by using the properties of the It{\^o} integral, for each $1 \leq j \leq N$, it is inferred from equations (\ref{interacting}) and (\ref{noninteracting}) that

\begin{equation}\tag{5.2}\label{3.9}
\begin{aligned}
\mathbb{E}\left[\left|X_{t} ^j-X_{t}^{j, N}\right|^2\right]= & 2 \mathbb{E}\left[\int_0^{t}\left\langle X_s^j-X_s^{j, N}, b\left(X_s^j, \mu_s^{X^j}\right)-b\left(X_s^{j, N}, \mu_s^{X, N}\right)\right\rangle d s \right]\\
& +\mathbb{E}\left[\int_0^{t}\left\|\sigma\left(X_s^j, \mu_s^{X^j}\right)-\sigma\left(X_s^{j, N}, \mu_s^{X, N}\right)\right\|^2 d s\right].
\end{aligned}
\end{equation}
Next, applying Assumption~\ref{ass2.1}, we obtain
\begin{equation}\tag{5.3}\label{3.10}
\mathbb{E}\left[\left|X_{t} ^j-X_{t}^{j, N}\right|^2\right]\leq \mathbb{E}\left[\int_0^{t}-K_1 \left|X_s^j-X_s^{j, N}\right|^2+K_2 \mathcal{W}_2\left(\mu_s^{X^j}, \mu_s^{X, N}\right)^2 d s \right].\\
\end{equation}
Denote $\mu_t^N:=\tfrac{1}{N} \sum_{i=1}^N \delta_{X_t^i}$.  It is clear that
\begin{equation}\tag{5.4}\label{3.11}
\begin{aligned}
& \mathbb{E}\left[\left|X_t^j-X_t^{j, N}\right|^2 \right] \\
\leq &\mathbb{E}\left[\int_0^{t} -K_1 \left|X_s^j-X_s^{j, N}\right|^2 +\tfrac{2 K_2}{N} \sum_{i=1}^N \left|X_s^i-X_s^{i, N}\right|^2  +2 K_2 \mathcal{W}_2\left(\mu_s^{X^j}, \mu_s^N\right)^2 d s\right],\\
\end{aligned}
\end{equation}
where we have used the inequality 
\begin{equation}\tag{5.5}\label{3.12}
\mathbb{E}\left[\mathcal{W}_2\left(\mu_t^{X^j}, \mu_t^{X, N}\right)^2\right] \leq 2 \mathbb{E}\left[\mathcal{W}_2\left(\mu_t^{X^j}, \mu_t^N\right)^2\right]+2 \mathbb{E}\left[\mathcal{W}_2\left(\mu_t^N, \mu_t^{X, N}\right)^2\right].
\end{equation}
Summing (\ref{3.11}) over $j = 1$ to $N$  and dividing by $ N $ gives
\begin{equation}\tag{5.6}\label{3.13}
\begin{aligned}
&\tfrac{1}{N}\sum_{j=1}^N{\mathbb{E} \left[ \left| X_{t}^{j}-X_{t}^{j,N} \right|^2 \right]}\\
\leq&\mathbb{E}   \int_0^t\left[ {-}\tfrac{K_1}{N}\sum_{j=1}^N{\left| X_{s}^{j}-X_{s}^{j,N} \right|^2}+\tfrac{2K_2}{N}\sum_{i=1}^N{\left| X_{s}^{i}-X_{s}^{i,N} \right|^2}+\tfrac{2K_2}{N}\sum_{j=1}^N{\mathcal{W} _2\left( \mu _{s}^{X^j},\mu _{s}^{N} \right) ^2}  \right]ds .\\
\end{aligned}
\end{equation}
By setting $r=2$ in Lemma \ref{lem:Momentestimatesformeasure}, we obtain that, for any $2<l\leq q^*$ and $t \geq 0$, a constant $ C_{d,l} $ exists that relies solely on $ d $ and $ l $, such that
\begin{equation}\tag{5.7}\label{3.15}
\mathbb{E}\left[\mathcal{W}_2\left(\mu_t^{X^j}, \mu_t^N\right)^2\right] \leq C_{d, {l}}\mathcal{W} _{l}\left( \mu _{t}^{X^j} \right) ^{\frac{2}{l}} \eta\left(N\right)\leq C_{d, {l}}\left(\mathbb{E}\left[\left|X^j_t\right|^{l}\right]\right)^{\frac{2}{l^2}} \eta \left(N\right),
\end{equation}
where 
\begin{equation}\tag{5.8}\label{3.16}
\eta \left(N\right):= \begin{cases}N^{-\frac{1}{2}}+N^{-\frac{{l}-2}{l}}, & d<4 \text { and } {l} \neq 4, \\ N^{-\frac{1}{2}} \log \left(1+N\right)+N^{-\frac{{l}-2}{l}}, & d=4 \text { and } {l} \neq 4, \\ N^{-\frac{2}{d}}+N^{-\frac{{l}-2}{{l}}}, & d>4 \text { and } {l} \neq \frac{d}{d-2}.\end{cases}
\end{equation}
Given 
\begin{equation}\tag{5.9}\label{3.14}
\mathcal{W} _{l}\left( \mu _{t}^{X^j} \right) =\left( \int_{\mathbb{R} ^d}\left|x\right|^{l}\mu _{t}^{X^j}\left(\mathrm{d}x\right) \right) ^{\frac{1}{l}}=\left( \mathbb{E}\left[ \left|X_{t}^{j}\right|^{l} \right] \right) ^{\frac{1}{l}} \,, 
\end{equation} 
we aim to show that
\begin{equation}\tag{5.10}\label{3.17}
\underset{t\geqslant 0}{\mathrm{sup}}\left(\mathbb{E}\left[\left|X^j_t\right|^{l}\right]\right)^{\frac{2}{l^2}} \leq C_{l}\left(1+\mathbb{E}\left[\left|X^j_0\right|^{l}\right]\right)^{\frac{2}{{l^2}}},
\end{equation}
where $C_{{l}}$ is a constant depending only on $ {l}$.
In fact, for any $\alpha>0$ and $t\geq 0$, employing It{\^o}'s lemma together with Assumption \ref{ass2.4}  yields
\begin{equation}\tag{5.11}\label{3.18}
\begin{aligned}
& e^{\alpha  t} \mathbb{E}\left[\left|X_t^j\right|^{l}\right] \\
\leq & \mathbb{E}\left[\left|X_0^j\right|^{l}\right]+\int_0^t \alpha  e^{\alpha s} \mathbb{E}\left[\left|X_s^j\right|^{l}\right] d s \\
& +\tfrac{l}{2} \mathbb{E}\left[\int_0^t e^{\alpha  s}\left|X_s^j\right|^{{l} -2}\left(2\left\langle X_s^j, b\left(X_s^j, \mu_s^{X^j}\right)\right\rangle+\left({l}-1\right)\left\|\sigma\left(X_s^j, \mu_s^{X^j}\right)\right\|^2\right) d s\right] \\
\leq & \mathbb{E}\left[\left|X_0^j\right|^{l}\right]+\int_0^t \alpha  e^{\alpha  s} \mathbb{E}\left[\left|X_s^j\right|^{l}\right] d s \\
& +\tfrac{{l}}{2} \mathbb{E}\left[\int_0^t e^{\alpha  s}\left|X_s^j\right|^{{l}-2}\left(-a_1\left|X_s^j\right|^2+a_2 \mathcal{W}_2\left(\mu_s^{X^j}\right)^2+C\right) d s\right] \\
\leq & \mathbb{E}\left[\left|X_0^j\right|^{l}\right]+\int_0^t \alpha e^{\alpha s} \mathbb{E}\left[\left|X_s^j\right|^{l}\right] d s+\tfrac{{l}}{2} \int_0^t e^{\alpha s}\left(-a_1 \mathbb{E}\left[\left|X_s^j\right|^{l}\right]\right.\\
&\left. +a_2 \mathbb{E}\left[\left|X_s^j\right|^{l}\right]+\tfrac{C\left({l}-2\right)\varepsilon_{1}}{{l}} \mathbb{E}\left[\left|X_s^j\right|^{l}\right]\right) d s+ \tfrac{C}{ \varepsilon ^{\frac{l}{2}-1}_{1}}\int_0^t e^{\alpha s} d s\\
\leq & \mathbb{E}\left[\left|X_0^j\right|^{l}\right]+\int_0^t e^{\alpha s}\left(\alpha-\tfrac{{l} a_1}{2}+\tfrac{{l} a_2}{2}+\tfrac{C\left({l}-2\right)\varepsilon_{1}}{2}\right) \mathbb{E}\left[\left|X_s^j\right|^{l} \right]d s+ \tfrac{C}{\alpha \varepsilon ^{\frac{l}{2}-1}_{1}}\left[e^{\alpha t}-1\right] ,
\end{aligned}
\end{equation}
where we have used the fact
\begin{equation}\tag{5.12}\label{3.19}
\mathbb{E}\left[ \mathcal{W}_2\left( \mu _{s}^{X^j} \right) ^{l} \right] \leq \mathbb{E}\left[ \mathcal{W}_{l}\left( \mu _{s}^{X^j} \right) ^{l} \right] =\mathbb{E}\left[ \int_{\mathbb{R} ^d}{\left|x\right|^{l}\mu _{s}^{X^j}\left( dx \right)} \right] =\mathbb{E}\left[ \left|X_{s}^{j}\right|^{l} \right] ,
\end{equation}
and   Young's inequality
\begin{equation}\tag{5.13}\label{3.20}
ab\leq \varepsilon_{1} \tfrac{l-2}{l}a^{\frac{l}{l-2}}+\tfrac{1}{\varepsilon ^{\frac{l}{2}-1}_{1}}\tfrac{2}{l}b^{\frac{l}{2}}\,. 
\end{equation}
Applying Gronwall's lemma with $\varepsilon_{1} =\tfrac{l\left(a_1-a_2\right)}{2\left( l-2 \right) C}$ and $\alpha =\tfrac{l\left( a_1-a_2 \right)}{4}$ yields (\ref{3.17}). 
Consequently,     incorporating \eqref{3.15} and \eqref{3.17} into \eqref{3.13}, and noting that $\mathbb{E} \left[\left|X_0\right|^{l}\right]=\mathbb{E} \left[\left|X_0^{j}\right|^{l}\right]$ and $\Upsilon \left(N\right)=\left(1+\mathbb{E}\left[\left|X_0\right|^{l}\right]\right)^{\frac{2}{{l^2}}}\eta \left(N\right)$, we obtain
\begin{equation}\tag{5.14}\label{3.21}
\begin{aligned}
&\tfrac{1}{N}\sum_{j=1}^N{\mathbb{E} \left[ \left| X_{t}^{j}-X_{t}^{j,N} \right|^2 \right]}\\
\leq& -\left( K_1-2K_2 \right) \tfrac{1}{N}\sum_{j=1}^N{\int_0^t{\mathbb{E} \left[ \left| X_{s}^{j}-X_{s}^{j,N} \right|^2 \right]}}ds+\int_0^t{2K_2C_{d,l}\Upsilon \left(N\right)ds}.\\
\end{aligned}
\end{equation}
Define
\begin{equation}\tag{5.15}
v\left(t\right)= \tfrac{1}{N}\sum_{j=1}^N{\mathbb{E} \left[ \left| X_{t}^{j}-X_{t}^{j,N} \right|^2 \right]} , \beta=K_1-2K_2 , \xi\left(u\right)=2K_2C_{d,l}\Upsilon \left(N\right).
\end{equation}
Applying the modified Gronwall's lemma (Lemma \ref{lem:Gronwalllemma}), we conclude that, for all $t>0$,
\begin{equation}\tag{5.16}\label{3.22}
\tfrac{1}{N}\sum_{j=1}^N{\mathbb{E}\left[ \left| X_{t}^{j}-X_{t}^{j,N} \right|^2\right]} \leq \tfrac{2K_2C_{d,l}\Upsilon \left(N\right)}{K_1-2K_2}.
\end{equation}
Since the  sequence  $\left( X_{t}^{i} ,  X_{t}^{i,N} \right) _{1\leqslant i\leqslant N}$  is  identically distributed, this implies that $\left\{ \left| X_{t}^{i}-X_{t}^{i,N} \right|^2 \right\} _{1\leqslant i\leqslant N}$ is also identically distributed. Therefore, for any $1\leq i\leq N$,
\begin{equation}\tag{5.17}\label{3.23}
\mathbb{E} \left[ \left| X_{t}^{i}-X_{t}^{i,N} \right|^2 \right] =\tfrac{1}{N}\sum_{j=1}^N{\mathbb{E} \left[ \left| X_{t}^{j}-X_{t}^{j,N} \right|^2 \right]}\le \tfrac{2K_2C_{d,l}\Upsilon \left(N\right)}{K_1-2K_2}.
\end{equation}
Taking the supremum over $i$ in (\ref{3.23}) and setting $\widetilde{K}=\tfrac{2K_2C_{d,l}}{K_1-2K_2}
$, we deduce that
\begin{equation}\tag{5.18}\label{3.24}
\underset{1\leqslant i\leqslant N}{\sup}\underset{t\geqslant 0}{\sup}\mathbb{E} \left[ \left| X_{t}^{i}-X_{t}^{i,N} \right|^2 \right] \leq \widetilde{K}\Upsilon \left(N\right).
\end{equation}
The theorem is thus proved.  
\end{proof}
\subsection{Proof of Theorem \ref{momentboundednessofthesolution}}
Under Assumptions \ref{ass2.1}--\ref{ass2.4}, we establish uniform moment bounds for the solutions of the interacting particle system (\ref{interacting}) over infinite time.
\begin{proof}[Proof of Theorem \ref{momentboundednessofthesolution}] 
By applying It\^o's lemma in conjunction with the inner product inequality, for any $1\leq j\leq N$ and $\beta>0$, we obtain  
\begin{equation}\tag{5.19}\label{3.27}
\begin{aligned}
e^{\beta t}\left|X^{j,N}_{t}\right|^{2p} \leq & \left|X^j_0\right|^{2p}+\beta \int_0^t{e^{\beta s}\left|X^{j,N}_{s}\right|^{2p}ds}\\
&+2p \int_0^t{e^{\beta s}}\left|X^{j,N}_{s}\right|^{2p-2}\left\langle X^{j,N}_{s}, b\left( X^{j,N}_{s} ,\mu _{s}^{X,N} \right) 
\right\rangle \mathrm{d} s \\
& +2p \int_0^t{e^{\beta s}}\left|X^{j,N}_{s}\right|^{2p-2}\left\langle X^{j,N}_{s}, \sigma\left( X^{j,N}_{s} ,\mu _{s}^{X,N} \right)  \mathrm{d} W_s^j\right\rangle \\
& +p\left(2 p-1\right) \int_0^t{e^{\beta s}}\left|X^{j,N}_{s}\right|^{2p-2}\left\|\sigma\left( X^{j,N}_{s} ,\mu _{s}^{X,N} \right) \right\|^2 \mathrm{d} s.
\end{aligned}
\end{equation}
Since $q_0\geq 20$, it follows that $2p \leq 2q_0\leq 4q_0-2\leq q^*$.
By applying Assumption \ref{ass2.4} in conjunction with the characteristics of the It\^o integral, we derive
\begin{equation}\tag{5.20}\label{3.31}
\begin{aligned}
&e^{\beta t}\mathbb{E}\left[\left|X^{j,N}_{t}\right|^{2p}\right]\\
\leq&\mathbb{E}\left[\left|X_0^{j}\right|^{2p}\right]+\beta \mathbb{E}\left[\int_0^{t}{e^{\beta s}\left|X^{j,N}_{s}\right|^{2p}ds}\right]\\
&+p\mathbb{E}\left[\int_0^{t}{e^{\beta s}\left|X^{j,N}_{s}\right|^{2p-2}}\left( -a_1\left|X^{j,N}_{s}\right|^2+a_2\mathcal{W}_2\left( \mu _{s}^{X,N} \right) ^2+C \right) \mathrm{d}s\right].
\end{aligned}
\end{equation}
By applying (\ref{2.6}) and Young's inequality, we obtain
\begin{equation}\tag{5.21}\label{3.32}
 \begin{aligned}
\mathbb{E} \left[ \left|X^{j,N}_{s}\right|^{2p-2}\mathcal{W} _2\left( \mu _{s}^{X,N} \right) ^2 \right] &\leq \tfrac{p-1}{p}\mathbb{E} \left[ \left|X^{j,N}_{s}\right|^{2p} \right] +\tfrac{1}{p}\mathbb{E} \left[ \mathcal{W} _2\left( \mu _{s}^{X,N} \right) ^{2p} \right] \\
&\leq \tfrac{p-1}{p}\mathbb{E} \left[ \left|X^{j,N}_{s}\right|^{2p} \right] +\tfrac{1}{p}\mathbb{E} \left[ \mathcal{W} _{2p}\left( \mu _{s}^{X,N} \right) ^{2p} \right]\\
&\leq \tfrac{p-1}{p}\mathbb{E} \left[ \left|X^{j,N}_{s}\right|^{2p} \right] +\tfrac{1}{p}\mathbb{E} \left[ \left|X^{j,N}_{s}\right|^{2p} \right]\\
&=\mathbb{E} \left[ \left|X^{j,N}_{s}\right|^{2p} \right].\\
\end{aligned}
\end{equation}
Furthermore, by utilizing (\ref{3.32}) and applying Young's inequality (specifically, (\ref{3.20}) with $l=2p$), we obtain
\begin{equation}\tag{5.22}\label{3.33}
\begin{aligned}
e^{\beta t}\mathbb{E} \left[ \left|X^{j,N}_{t}\right|^{2p} \right] \le &\mathbb{E} \left[ \left| X^{j}_0 \right|^{2p} \right] +\int_0^t{e^{\beta s}\mathbb{E} \left[\left|X^{j,N}_{s}\right|^{2p}\right]}\left[ \beta -a_1p+a_2p+C\varepsilon_{1} \left( p-1 \right) \right] \mathrm{d}s\\
&+\tfrac{C}{\varepsilon_{1} ^{p-1}}\int_0^t{e^{\beta s}ds}.\\
\end{aligned}
\end{equation}
Subsequently, by setting $\varepsilon_{1} =\tfrac{p\left(a_1-a_2\right)}{2\left(p-1 \right) C}$ and $\beta = \tfrac{\left(a_1-a_2\right)p}{2}$, we obtain, for any $1\leq j\leq N$, 
\begin{equation}\tag{5.23}\label{3.34}
\mathbb{E} \left[ \left|X^{j,N}_{t}\right|^{2p} \right] \le C_p\left( \mathbb{E} \left[ \left| X^{j}_0 \right|^{2p} \right] +1 \right).
\end{equation}
Taking the supremum over $t$ and $j$ in (\ref{3.34}) completes the proof of the theorem.
\end{proof}

\subsection{Proof of Theorem \ref{theorem 4.2}}
To derive the long-time mean-square convergence results for one-step methods applied to the MV-SDEs (\ref{MVSDEs}), we need first to establish    the next lemma.
\begin{lemma}\label{lemma 4.1}
    Suppose that Assumptions~\ref{ass2.1}--\ref{ass2.4} hold, and that the joint sequence $\left\{ \left( x^{j,N},y^{j,N} \right) \right\} _{1\leq j\leq N}$ is identically distributed. For $s\ge t$, define 
\begin{equation}\tag{5.24}\label{4.5}
\mathcal{H} _{t,x,y}^{j,N}\left(s\right):=X^{j,N}\left(t,x;s \right)-X^{j,N}\left(t,y;s\right)-x+y\,. 
\end{equation}
Then, for any $t \geq 0$, $1 \leq\kappa\leq q_0$, and $1 \leq j \leq N$,
\begin{equation}\tag{5.25}\label{4.6}
 \underset{1\leqslant j\leqslant N}{\sup}\mathbb{E}\left[\left|X^{j,N}\left(t, x^{j,N} ; t+h\right)-X^{j,N}\left(t, y^{j,N} ; t+h\right)\right|^{2}\right] \leq\mathbb{E}\left[\left|x^{j,N}-y^{j,N}\right|^{2}\right] e^{-\left(K_1-K_2\right) h}, 
 \end{equation}
 \begin{equation}\tag{5.26}\label{4.7}
\underset{1\leqslant j\leqslant N}{\sup}\mathbb{E}\left[\left|\mathcal{H}^{j,N}_{t, x^{j,N}, y^{j,N}}\left(t+h\right)\right|^{2}\right] \leq C_{11}\mathbb{E}\left[\left(1+\left|x^{j,N}\right|^{2 \kappa-2}+\left|y^{j,N}\right|^{2 \kappa-2}\right)^{\frac{1}{2}}\left|x^{j,N}-y^{j,N}\right|^{2}\right] h,
\end{equation}
where $C_{11} $ is a positive constant independent of $j,t, h, N $.
\end{lemma}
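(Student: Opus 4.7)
The plan is to derive both estimates via It\^o calculus and Gronwall-type arguments, exploiting the identical distribution of $\{(x^{j,N},y^{j,N})\}_{j}$ (which renders the particle system exchangeable across indices $j$) to collapse empirical-measure terms into single-particle expectations, and invoking the moment propagation of Theorem~\ref{momentboundednessofthesolution} together with the contractive monotonicity of Assumption~\ref{ass2.1} to convert current-state quantities into initial-value quantities.

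For \eqref{4.6}, I apply It\^o's formula to $|X^{j,N}(t,x^{j,N};s) - X^{j,N}(t,y^{j,N};s)|^2$ for $s\ge t$. Taking expectation removes the martingale, and Assumption~\ref{ass2.1} bounds the remaining integrand by $-K_1\,\mathbb{E}[|\cdot|^2]+K_2\,\mathbb{E}[\mathcal{W}_2(\mu_s^{x,N},\mu_s^{y,N})^2]$. The coupling inequality \eqref{2.8} dominates the empirical Wasserstein term by $\tfrac{1}{N}\sum_i\mathbb{E}[|X^{i,N}(t,x^{i,N};s)-X^{i,N}(t,y^{i,N};s)|^2]$, and exchangeability collapses this average into the common single-particle expectation $\phi(s):=\mathbb{E}[|X^{j,N}(t,x^{j,N};s)-X^{j,N}(t,y^{j,N};s)|^2]$. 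The resulting ODE $\phi'(s)\le -(K_1-K_2)\phi(s)$ then yields the exponential contraction over $[t,t+h]$ via Gronwall.

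For \eqref{4.7}, write $\mathcal{H}^{j,N}_{t,x,y}(t+h)$ as the sum of the drift integral $\int_t^{t+h}[b(X^{x,s,j},\mu_s^{x,N})-b(X^{y,s,j},\mu_s^{y,N})]\,ds$ and the diffusion integral $\int_t^{t+h}[\sigma(X^{x,s,j},\mu_s^{x,N})-\sigma(X^{y,s,j},\mu_s^{y,N})]\,dW_s^j$, with $X^{z,s,j}:=X^{j,N}(t,z^{j,N};s)$. Squaring with $|a+b|^2\le 2|a|^2+2|b|^2$, Cauchy--Schwarz on the drift integral (contributing $2h\int\mathbb{E}[|b_{\mathrm{diff}}|^2]\,ds$) and the It\^o isometry on the diffusion integral (contributing $2\int\mathbb{E}[\|\sigma_{\mathrm{diff}}\|^2]\,ds$) reduce the problem to bounding these two time integrals. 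Assumption~\ref{ass2.2} and \eqref{2.14} give the pointwise polynomial--Lipschitz control $|b_{\mathrm{diff}}|^2\le L_1[(1+|X^{x,s,j}|^{2\kappa-2}+|X^{y,s,j}|^{2\kappa-2})|X^{x,s,j}-X^{y,s,j}|^2+\mathcal{W}_2(\mu_s^{x,N},\mu_s^{y,N})^2]$ with an analogous $\kappa-1$ bound for $\sigma$; the Wasserstein contributions reduce to $\mathbb{E}[|x^{j,N}-y^{j,N}|^2]$ as in the proof of \eqref{4.6}, and the extra factor of $h$ on the drift makes its contribution lower-order relative to the diffusion.

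The key technical step is to control the functional $\Psi(s):=\mathbb{E}[(1+|X^{x,s,j}|^{\kappa-1}+|X^{y,s,j}|^{\kappa-1})|X^{x,s,j}-X^{y,s,j}|^2]$ uniformly on $[t,t+h]$ in terms of its initial value $\Psi(t)$. I carry this out by applying the triangle decompositions $|X^{x,s,j}|^{\kappa-1}\le C(|x^{j,N}|^{\kappa-1}+|X^{x,s,j}-x^{j,N}|^{\kappa-1})$ and $|X^{x,s,j}-X^{y,s,j}|^2\le 2(|x^{j,N}-y^{j,N}|^2+|\mathcal{H}^{j,N}_{t,x,y}(s)|^2)$ pointwise, then taking expectation. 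The cross terms involve either increments $\mathbb{E}[|X^{x,s,j}-x^{j,N}|^p]=O(h^{p/2})$ (controlled by Theorem~\ref{momentboundednessofthesolution} combined with \eqref{2.15}--\eqref{2.16}) or $\mathbb{E}[|\mathcal{H}^{j,N}_{t,x,y}(s)|^2]$ itself (absorbed into the left-hand side for $h$ small enough). Jensen's inequality $1+|a|^{\kappa-1}+|b|^{\kappa-1}\le\sqrt{3}(1+|a|^{2\kappa-2}+|b|^{2\kappa-2})^{1/2}$ then produces the $(\cdot)^{1/2}$ factor in the target, and the admissibility $\kappa\le q_0$ via Assumption~\ref{ass2.3} ensures all polynomial moments invoked remain in the permitted range. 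The main obstacle is precisely this final bookkeeping: a naive Cauchy--Schwarz splitting $\mathbb{E}[RQ]\le\{\mathbb{E}[R^2]\}^{1/2}\{\mathbb{E}[Q^2]\}^{1/2}$ would replace $|x^{j,N}-y^{j,N}|^2$ inside the expectation by $\{\mathbb{E}[|x^{j,N}-y^{j,N}|^4]\}^{1/2}$ and would demand moments of order $4\kappa-4$, potentially exceeding $2q_0$ for large $\kappa$; the pointwise triangle decomposition keeps both factors under a single expectation and within the admissible moment window.
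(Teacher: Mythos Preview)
Your argument for \eqref{4.6} is essentially the paper's: It\^o's formula on $|\mathcal{P}|^2$, Assumption~\ref{ass2.1}, the coupling bound \eqref{2.8}, exchangeability to collapse the particle average, and Gronwall (the paper builds the exponential into the It\^o formula rather than invoking Gronwall afterward, but this is cosmetic).

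For \eqref{4.7} your route differs from the paper's and contains a gap. Writing $\mathcal{H}$ as drift plus diffusion integrals and squaring via $|a+b|^2\le 2|a|^2+2|b|^2$ leads, after the It\^o isometry and \eqref{2.14}, to integrands of the form $(1+|X^{x,s}|^{\kappa-1}+|X^{y,s}|^{\kappa-1})\,|\mathcal{P}(s)|^2$ with $|\mathcal{P}|$ \emph{squared}. Your triangle decompositions do not close. The increment cross term $\mathbb{E}\bigl[|X^{x,s}-x|^{\kappa-1}\,|x-y|^2\bigr]$ cannot be treated by the unconditional estimate $\mathbb{E}[|X^{x,s}-x|^p]=O(h^{p/2})$ as you suggest, because the two factors are not independent; conditioning on $\mathcal{F}_t$ instead gives $\mathbb{E}[|X^{x,s}-x|^{\kappa-1}\mid\mathcal{F}_t]\le C(1+|x|^{\kappa(\kappa-1)})(s-t)^{(\kappa-1)/2}$ via \eqref{2.15}--\eqref{2.16}, and the polynomial degree $\kappa(\kappa-1)$ strictly exceeds the target $\kappa-1$ for every $\kappa\ge 2$, so this piece cannot be dominated by the stated right-hand side with a constant $C_{11}$ independent of the law of $x$. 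The remaining weighted term $\mathbb{E}\bigl[(1+|X^{x,s}|^{\kappa-1})\,|\mathcal{H}(s)|^2\bigr]$ likewise resists ``absorption into the left-hand side'': the weight is not $\mathcal{F}_t$-measurable, and separating it by Cauchy--Schwarz would require a fourth-moment contraction $\mathbb{E}[|\mathcal{P}(s)|^4\mid\mathcal{F}_t]\le C|x-y|^4$, which does not follow from Assumption~\ref{ass2.1} (that hypothesis controls $2\langle\cdot,b\rangle+\|\sigma\|^2$, not $2\langle\cdot,b\rangle+3\|\sigma\|^2$).

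The paper avoids the squared $|\mathcal{P}|$ altogether by applying It\^o's formula directly to $|\mathcal{H}|^2$ (using $d\mathcal{H}=d\mathcal{P}$). The quadratic-variation term $\|\sigma_{\mathrm{diff}}\|^2$ then combines with $2\langle\mathcal{P},b_{\mathrm{diff}}\rangle$ via Assumption~\ref{ass2.1} into something that is nonpositive in expectation after exchangeability. The sole surviving drift contribution is $-2\langle x-y,b_{\mathrm{diff}}\rangle\le 2|x-y|\,|b_{\mathrm{diff}}|$, which by Assumption~\ref{ass2.2} carries only the \emph{first} power of $|\mathcal{P}|$. Since $|x-y|$ is $\mathcal{F}_t$-measurable, a conditional Cauchy--Schwarz together with the conditional versions of Theorem~\ref{momentboundednessofthesolution} and of \eqref{4.6} yields exactly $(1+|x|^{2\kappa-2}+|y|^{2\kappa-2})^{1/2}|x-y|^2$ under one expectation, with no leftover $|\mathcal{H}|^2$ terms to bootstrap.
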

\begin{proof}  For $s \geq t$,  denote  \begin{equation}\tag{5.27}\label{4.8}
\mathcal{P}^{j,N}_{t, x^{j,N}, y^{j,N}}\left(s\right):=X^{j,N}\left(t, x^{j,N} ; s\right)-X^{j,N}\left(t,y^{j,N} ; s\right).
\end{equation}
We can write 
\begin{equation}\tag{5.28}\label{4.9}
\mathcal{H}^{j,N}_{t, x^{j,N}, y^{j,N}}\left(s\right)=\mathcal{P}^{j,N}_{t, x^{j,N}, y^{j,N}}\left(s\right)-\left(x^{j,N}-y^{j,N}\right).
\end{equation}
 Moreover, we denote $\mu _{s}^{X,N}:=\tfrac{1}{N}\sum_{j=1}^N{\delta _{X^{j,N}\left(t,x^{j,N};s\right)}}$ and $\nu _{s}^{X,N}:=\tfrac{1}{N}\sum_{j=1}^N{\delta _{X^{j,N}\left(t,y^{j,N};s\right)}}$. Since $X^{j,N}\left(t, x^{j,N} ; s\right)$ and $X^{j,N}\left(t, y^{j,N} ; s\right)$ are driven by the same Brownian motion, it follows that
\begin{equation}\tag{5.29}\label{4.10}
\begin{aligned}
&\mathrm{d} \mathcal{H}^{j,N}_{t, x^{j,N}, y^{j,N}}\left(s\right)=\mathrm{d} \mathcal{P}^{j,N}_{t, x^{j,N}, y^{j,N}}\left(s\right)\\
=&\left(b\left(X^{j,N}\left(t, x^{j,N} ; s\right), \mu _{s}^{X,N}\right)-b\left(X^{j,N}\left(t, y^{j,N} ; s\right), \nu _{s}^{X,N}\right)\right) \mathrm{d} s\\
&+\left(\sigma\left(X^{j,N}\left(t, x^{j,N} ; s\right), \mu _{s}^{X,N}\right)-\sigma\left(X^{j,N}\left(t, y^{j,N} ; s\right), \nu _{s}^{X,N}\right)\right) \mathrm{d} W^{j}\left(s\right).
\end{aligned}
\end{equation}
   Applying It{\^o}'s formula in a manner analogous to the derivation of (\ref{3.27}), and utilizing Assumption \ref{ass2.1} along with the characteristics of the It{\^o} integral, we derive, for all $t \leq s \leq t+h$  and $1\leq j\leq N$,
\begin{equation}\tag{5.30}\label{4.12}
\begin{aligned}
& \mathbb{E}\left[e^{\left(K_1-K_2\right)\left(s-t\right)}\left|\mathcal{P}^{j,N}_{t, x^{j,N}, y^{j,N}}\left(s\right)\right|^{2}\right]-\mathbb{E}\left[\left|\mathcal{P}^{j,N}_{t, x^{j,N}, y^{j,N}}\left(t\right)\right|^{2}\right] \\
\leq & \int_t^s  e^{\left(K_1-K_2\right)\left(\theta-t\right)}\left\{\left(K_1-K_2\right)\mathbb{E}\left[\left|\mathcal{P}^{j,N}_{t, x^{j,N}, y^{j,N}}\left(\theta\right)\right|^{2}\right]\right\}\mathrm{d} \theta\\ 
&+\int_t^s  e^{\left(K_1-K_2\right)\left(\theta-t\right)}\left\{-K_1\mathbb{E}\left[\left|\mathcal{P}^{j,N}_{t, x^{j,N}, y^{j,N}}\left(\theta\right)\right|^{2}\right]+K_2\mathbb{E}\left[\mathcal{W}_2\left(\mu _{\theta}^{X,N},\nu _{\theta}^{X,N}\right)^2\right]\right\}\mathrm{d} \theta\\
\leq & \int_t^s  e^{\left(K_1-K_2\right)\left(\theta-t\right)}\left\{\left(K_1-K_2\right)\mathbb{E}\left[\left|\mathcal{P}^{j,N}_{t, x^{j,N}, y^{j,N}}\left(\theta\right)\right|^{2}\right]\right\}\mathrm{d} \theta\\ 
&+\int_t^s  e^{\left(K_1-K_2\right)\left(\theta-t\right)}\left\{-K_1\mathbb{E}\left[\left|\mathcal{P}^{j,N}_{t, x^{j,N}, y^{j,N}}\left(\theta\right)\right|^{2}\right]+\tfrac{K_2}{N}\sum_{i=1}^N{\mathbb{E} \left[ \left|\mathcal{P}_{t,x^{i,N},y^{i,N}}^{i,N}\left( \theta \right) \right|^2\right]}\right\}\mathrm{d} \theta.
\end{aligned}
\end{equation}
Since the joint sequence $
\left\{ \left( x^{j,N},y^{j,N} \right) \right\} _{1\leq j\leq N}$ is   identically distributed,
the sequence of initial differences $
\left\{  x^{j,N}-y^{j,N}  \right\} _{1\leq j\leq N}$ is also identically distributed. Furthermore, the sequence of processes $\left\{ \left( X^{j,N}\left( t,x^{j,N};s \right) ,X^{j,N}\left( t,y^{j,N};s \right) \right) \right\} _{1\le j\le N}$ satisfies  the same interacting particles system (\ref{interacting}), differing only in their initial values, which is identically distributed. 
As a result, the sequence of difference processes 
 $\left\{ X^{j,N}\left( t,x^{j,N};s \right)-X^{j,N}\left( t,y^{j,N};s \right) \right\} _{1\leq j\leq N}$ is identically distributed. This implies that  the sequence $\left\{ \left|\mathcal{P} _{t,x^{i,N},y^{i,N}}^{i,N}\left( s\right) \right|^2 \right\} _{1 \leq i\leq N}$ is also identically distributed and therefore, for any $1\leq j\leq N$, we have  
\begin{equation}\tag{5.31}\label{4.14}
\begin{aligned}
&\mathbb{E}\left[e^{\left(K_1-K_2\right)\left(s-t\right)}\left|\mathcal{P}^{j,N}_{t, x^{j,N}, y^{j,N}}\left(s\right)\right|^{2}\right]-\mathbb{E}\left[\left|\mathcal{P}^{j,N}_{t, x^{j,N}, y^{j,N}}\left(t\right)\right|^{2}\right] \\
\leq & \int_t^s  e^{\left(K_1-K_2\right)\left(\theta-t\right)}\left[\left(K_1-K_2\right)\mathbb{E}\left[\left|\mathcal{P}^{j,N}_{t, x^{j,N}, y^{j,N}}\left(\theta\right)\right|^{2}\right]\right]\mathrm{d} \theta\\ 
&-\int_t^s  e^{\left(K_1-K_2\right)\left(\theta-t\right)}\left[\left(K_1-K_2\right)\mathbb{E}\left[\left|\mathcal{P}^{j,N}_{t, x^{j,N}, y^{j,N}}\left(\theta\right)\right|^{2}\right]\right]\mathrm{d} \theta\\
\leq&0.
\end{aligned}
\end{equation}
Taking the supremum over $ j $ in (\ref{4.14}) yields
\begin{equation}\tag{5.32}\label{4.15}
\underset{1\leqslant j\leqslant N}{\sup}\mathbb{E}\left[e^{\left(K_1-K_2\right)\left(s-t\right)}\left|\mathcal{P}^{j,N}_{t, x^{j,N}, y^{j,N}}\left(s\right)\right|^{2}\right] \leq \underset{1\leqslant j\leqslant N}{\sup}\mathbb{E}\left[\left|\mathcal{P}^{j,N}_{t, x^{j,N}, y^{j,N}}\left(t\right)\right|^{2}\right]=\mathbb{E}\left[\left|x^{j,N}-y^{j,N}\right|^{2}\right].
\end{equation}
Finally, setting $s=t+h$ completes the proof of inequality (\ref{4.6}).
 
By applying It{\^o}'s formula in conjunction with (\ref{4.9}), we obtain, for $\theta \geq 0$ and $t \geq 0$,
\begin{equation}\tag{5.33}\label{4.16}
\begin{aligned}
&\left|\mathcal{H}^{j,N}_{t, x^{j,N}, y^{j,N}}\left(t+\theta\right)\right|^{2} \\= &2  \int_t^{t+\theta} \left[\left\langle\mathcal{P}^{j,N}_{t, x^{j,N}, y^{j,N}}\left(s\right), b\left(X^{j,N}\left(t, x^{j,N} ;s\right), \mu _{s}^{X,N}\right)-b\left(X^{j,N}\left(t, y^{j,N} ; s\right), \nu _{s}^{X,N}\right)\right\rangle\right. \\
 &+\left.\tfrac{1}{2}\left|\left|\sigma\left(X^{j,N}\left(t, x^{j,N} ;s\right), \mu _{s}^{X,N}\right)-\sigma\left(X^{j,N}\left(t, y^{j,N} ; s\right), \nu _{s}^{X,N}\right)\right|\right|^2\right] \mathrm{d} s \\
 &-2 \int_t^{t+\theta}\left\langle x^{j,N}-y^{j,N},b\left(X^{j,N}\left(t, x^{j,N} ;s\right), \mu _{s}^{X,N}\right)-b\left(X^{j,N}\left(t, y^{j,N} ; s\right), \nu _{s}^{X,N}\right)\right\rangle \mathrm{d} s \\
&+2 \int_t^{t+\theta}\left\langle\mathcal{H}^{j,N}_{t, x^{j,N}, y^{j,N}}\left(s\right),\left(\sigma\left(X^{j,N}\left(t, x^{j,N} ;s\right), \mu _{s}^{X,N}\right)-\sigma\left(X^{j,N}\left(t, y^{j,N} ; s\right), \nu _{s}^{X,N}\right)\right) \mathrm{d} W^{j}\left(s\right)\right\rangle.
\end{aligned}
\end{equation}
Subsequently, noting that the sequence $\left\{ \left|\mathcal{P} _{t,x^{i,N},y^{i,N}}^{i,N}\left( s\right) \right|^2 \right\} _{1 \leq i\leq N}$ is identically distributed, we apply the Cauchy-Schwarz inequality, H{\"o}lder's inequality, Assumption \ref{ass2.1} and \ref{ass2.2}, along with (\ref{2.8}), to establish that for any $1\leq j\leq N$:
\begin{equation}\tag{5.34}\label{4.17}
\begin{aligned}
&\mathbb{E} \left[ \left| \mathcal{H} _{t,x^{j,N},y^{j,N}}^{j,N}\left(t+\theta \right) \right|^2 \right]\\
\le &\int_t^{t+\theta}{\left\{ -K_1\mathbb{E} \left[ \left|\mathcal{P} _{t,x^{j,N},y^{j,N}}^{j,N}\left( s \right) \right|^2 \right] +K_2\mathbb{E} \left[\mathcal{W} _2\left( \mu _{s}^{X,N},\nu _{s}^{X,N} \right) ^2\right] \right\}}ds\\
&-2\int_t^{t+\theta}{\mathbb{E}\left[ \left\langle x^{j,N}-y^{j,N},b\left(X^{j,N}\left(t,x^{j,N};s\right),\mu _{s}^{X,N}\right)-b\left(X^{j,N}\left(t,y^{j,N};s\right),\nu _{s}^{X,N}\right)\right\rangle \right]} \mathrm{d}s\\
\le &\int_t^{t+\theta}{\left\{ -K_1\mathbb{E} \left[ \left|\mathcal{P} _{t,x^{j,N},y^{j,N}}^{j,N}\left( s \right) \right|^2 \right] +\tfrac{K_2}{N}\sum_{i=1}^N{\mathbb{E} \left[ \left|\mathcal{P} _{t,x^{i,N},y^{i,N}}^{i,N}\left(s\right)\right|^2 \right]} \right\}}ds\\
&+2\int_t^{t+\theta}{\mathbb{E}\left[ \left|x^{j,N}-y^{j,N}\right|\left|b\left(X^{j,N}\left(t,x^{j,N};s\right),\mu _{s}^{X,N}\right)-b\left(X^{j,N}\left(t,y^{j,N};s\right),\nu _{s}^{X,N}\right)\right| \right]} \mathrm{d}s\\
\le &\int_t^{t+\theta}{\left\{ -K_1\mathbb{E} \left[ \left|\mathcal{P} _{t,x^{j,N},y^{j,N}}^{j,N}\left( s \right) \right|^2 \right] +K_2\mathbb{E} \left[ \left|\mathcal{P} _{t,x^{j,N},y^{j,N}}^{j,N}\left(s\right)\right|^2 \right] \right\}}ds\\
+&2\sqrt{L_1}\int_t^{t+\theta}\mathbb{E}\left[\left|x^{j,N}-y^{j,N}\right|\left\{ \left(1+\left|X^{j,N}\left(t,x^{j,N};s\right)\right|^{2\kappa -2}+\left|X^{j,N}\left(t,y^{j,N};s\right)\right|^{2\kappa -2}\right) \right.\right.\\
&\times\left. \left.\left| \mathcal{P} _{t,x^{j,N},y^{j,N}}^{j,N}\left( s \right) \right|^2+\mathcal{W} _2\left( \mu _{s}^{X,N},\nu _{s}^{X,N} \right) ^2 \right\} ^{\frac{1}{2}}\right]\mathrm{d}s\\
\le &2\sqrt{L_1}\int_t^{t+\theta}\mathbb{E}\left[\left|x^{j,N}-y^{j,N}\right|\mathbb{E} \left[ \left(1+\left|X^{j,N}\left(t,x^{j,N};s\right)\right|^{2\kappa -2}+\left|X^{j,N}\left(t,y^{j,N};s\right)\right|^{2\kappa -2}\right)^{\frac{1}{2}}\right. \right.\\
&\times\left. \left. \left. \left| \mathcal{P} _{t,x^{j,N},y^{j,N}}^{j,N}\left( s \right) \right|\right\rvert\ \mathcal{F} _t  \right] \right] +\mathbb{E} \left[ \left|x^{j,N}-y^{j,N}\right|\mathcal{W} _2\left( \mu _{s}^{X,N},\nu _{s}^{X,N} \right) \right] \mathrm{d}s\\
\le&2\sqrt{L_1}\int_t^{t+\theta}\mathbb{E}\left[\left|x^{j,N}-y^{j,N}\right|\left\{ \mathbb{E} \left[\left.\left(1+\left|X^{j,N}\left(t,x^{j,N};s\right)\right|^{2\kappa -2}+\left|X^{j,N}\left(t,y^{j,N};s\right)\right|^{2\kappa -2}\right)\right\rvert\ \mathcal{F} _t \right] \right\} ^{\frac{1}{2}}\right.\\
&\times\left.  \left\{ \mathbb{E} \left[\left. \left| \mathcal{P} _{t,x^{j,N},y^{j,N}}^{j,N}\left( s \right) \right|^2 \right\rvert\ \mathcal{F} _t  \right] \right\} ^{\frac{1}{2}} \right] +\left\{ \mathbb{E} \left[ \left|x^{j,N}-y^{j,N}\right|^2 \right] \right\} ^{\frac{1}{2}}\left\{ \mathbb{E} \left[ \left| \mathcal{P} _{t,x^{j,N},y^{j,N}}^{j,N}\left( s \right) \right|^2 \right] \right\} ^{\frac{1}{2}}\mathrm{d}s.\\
\end{aligned}
\end{equation}
From the above  (\ref{4.17}), along with the conditional version of (\ref{3.25}) and (\ref{4.14}), and (\ref{4.14}), it follows that
\begin{equation}\tag{5.35}\label{4.18}
\begin{aligned}
&\mathbb{E} \left[ \left| \mathcal{H} _{t,x^{j,N},y^{j,N}}^{j,N}\left(t+\theta \right) \right|^2 \right]\\
\le& C^{\prime}\int_t^{t+\theta}{\mathbb{E} \left[ \left|x^{j,N}-y^{j,N}\right|\left( 1+\left|x^{j,N}\right|^{2\kappa -2}+\left|y^{j,N}\right|^{2\kappa -2} \right) ^{\frac{1}{2}}\left|x^{j,N}-y^{j,N}\right| \right] e^{ -\frac{K_1-K_2}{2}\left( s-t \right) }}\\
&+\mathbb{E} \left[ \left|x^{j,N}-y^{j,N}\right|^2 \right] e^{ -\frac{K_1-K_2}{2}\left( s-t \right) }\mathrm{d}s\\
\le& \bar{C}\mathbb{E} \left[ \left|x^{j,N}-y^{j,N}\right|^2\left( 1+\left|x^{j,N}\right|^{2\kappa -2}+\left|y^{j,N}\right|^{2\kappa -2} \right) ^{\frac{1}{2}} \right] \int_t^{t+\theta}{e^{ -\frac{K_1-K_2}{2}\left( s-t \right) }}\mathrm{d}s\\
\le& C_{11}\mathbb{E} \left[ \left( 1+\left| x^{j,N} \right|^{2\kappa -2}+\left| y^{j,N} \right|^{2\kappa -2} \right) ^{\frac{1}{2}}\left| x^{j,N}-y^{j,N} \right|^2 \right] \theta.\\
\end{aligned}
\end{equation}
Setting $\theta=h$ and taking the supremum over $j$  establishes inequality (\ref{4.7}),
thereby Lemma \ref{lemma 4.1} is proved.  
\end{proof} 
We can now complete the proof of Theorem \ref{theorem 4.2}.
\begin{proof}[ Proof of Theorem \ref{theorem 4.2}]
To emphasize the reliance on the initial condition, we denote the solution to the interacting particle system (\ref{interacting}) by $X^{j,N}\left(t_0, X^{j,N}_0 ; t_0+t\right)$. The error between the exact solution and its numerical approximation is expressed as 
\begin{equation}\tag{5.36}\label{4.23}
\begin{aligned}
\rho^{j,N}_{k+1}: & =X^{j,N}\left(t_0, X^{j,N}_0 ; t_{k+1}\right)-Y^{j,N}\left(t_0, X^{j,N}_0 ; t_{k+1}\right) \\
 &=X^{j,N}\left(t_k, X^{j,N}_k ; t_{k+1}\right)-Y^{j,N}\left(t_k, Y^{j,N}_k ; t_{k+1}\right) \\
 &=X^{j,N}\left(t_k, X^{j,N}_k ; t_{k+1}\right)-X^{j,N}\left(t_k, Y^{j,N}_k ; t_{k+1}\right)\\
&\qquad\quad +X^{j,N}\left(t_k, Y^{j,N}_k ; t_{k+1}\right)-Y^{j,N}\left(t_k, Y^{j,N}_k ; t_{k+1}\right) \\
&: =\mathcal{P}^{j,N}_{t_k, X^{j,N}_k, Y^{j,N}_k}\left(t_{k+1}\right)+\mathcal{V}^{j,N}_{k+1}.
\end{aligned}
\end{equation}
We note that the first component on the right side of (\ref{4.23}) represents the difference between the exact solutions  at time $t_{k+1}$ resulting from distinct initial conditions at time $t_k$. This term can be further decomposed as follows: 
\begin{equation}\tag{5.37}\label{4.24}
\begin{aligned}
\mathcal{P}^{j,N}_{t_k, X^{j,N}_k, Y^{j,N}_k}\left(t_{k+1}\right): & =X^{j,N}\left(t_k, X^{j,N}_k ; t_{k+1}\right)-X^{j,N}\left(t_k, Y^{j,N}_k ; t_{k+1}\right) \\
& :=\rho^{j,N}_k+\mathcal{H}^{j,N}_{t_k, X^{j,N}_k, Y^{j,N}_k}\left(t_{k+1}\right),
\end{aligned}
\end{equation}
where $\mathcal{H}$ is as defined in (\ref{4.5}).
The second component on the right side of (\ref{4.23}) represents the one-step error at the $\left(k+1\right)$-th step, which takes the following form: 
\begin{equation}\tag{5.38}\label{4.25}
\mathcal{V}^{j,N}_{k+1}:=X^{j,N}\left(t_k, Y^{j,N}_k ; t_{k+1}\right)-Y^{j,N}\left(t_k, Y^{j,N}_k ; t_{k+1}\right).
\end{equation}
Therefore, 
\begin{equation}\tag{5.39}\label{4.26}
\begin{aligned}
 \underset{1\leqslant j\leqslant N}{\sup}\mathbb{E}\left[\left|\rho^{j,N}_{k+1}\right|^{2}\right] 
= & \underset{1\leqslant j\leqslant N}{\sup}\mathbb{E}\left[\left|\mathcal{P}^{j,N}_{t_k, X^{j,N}_k, Y^{j,N}_k}\left(t_{k+1}\right)+\mathcal{V}^{j,N}_{k+1}\right|^{2}\right] \\
= &\underset{1\leqslant j\leqslant N}{\sup} \mathbb{E}\left[\left|\mathcal{P}^{j,N}_{t_k, X^{j,N}_k, Y^{j,N}_k}\left(t_{k+1}\right)\right|^2+2\left\langle\mathcal{P}^{j,N}_{t_k, X^{j,N}_k, Y^{j,N}_k}\left(t_{k+1}\right), \mathcal{V}^{j,N}_{k+1}\right\rangle+\left|\mathcal{V}^{j,N}_{k+1}\right|^2\right] \\
\leq & \underset{1\leqslant j\leqslant N}{\sup}\mathbb{E}\left[\left|\mathcal{P}^{j,N}_{t_k, X^{j,N}_k, Y^{j,N}_k}\left(t_{k+1}\right)\right|^{2}\right]+2\underset{1\leqslant j\leqslant N}{\sup}\mathbb{E}\left[\left\langle\rho^{j,N}_k, \mathcal{V}^{j,N}_{k+1}\right\rangle\right]\\
&\qquad  +2\underset{1\leqslant j\leqslant N}{\sup}\mathbb{E}\left[\left\langle\mathcal{H}^{j,N}_{t_k, X^{j,N}_k, Y^{j,N}_k}\left(t_{k+1}\right), \mathcal{V}^{j,N}_{k+1}\right\rangle\right]+\underset{1\leqslant j\leqslant N}{\sup}\mathbb{E}\left[\left|\mathcal{V}^{j,N}_{k+1}\right|^2\right].
\end{aligned}
\end{equation}
The sequence of exact solutions at the $k$-th step of the interacting particle system, $\left\{ X_{k}^{j,N} \right\} _{1\leq j\leq N}$,
and the corresponding sequence of one-step approximations, $\left\{ Y_{k}^{j,N} \right\} _{1\leq j\leq N}$, are identically distributed. Since the symmetry inherent in the construction of the one-step approximation for the interacting particle system, which preserves the system's overall symmetry. For instance, the projected Euler and backward Euler schemes, as time-discretization methods discussed later, exhibit this symmetry. Consequently, the joint sequences $\left\{ \left( X_{k}^{j,N},Y_{k}^{j,N} \right) \right\} _{1\leq j\leq N}$ are also identically distributed.\\
For the first component on the right side of (\ref{4.26}),  
we derive 
\begin{equation}\tag{5.40}\label{4.27}
\underset{1\leqslant j\leqslant N}{\sup}\mathbb{E}\left[\left|\mathcal{P}^{j,N}_{t_k, X^{j,N}_k, Y^{j,N}_k}\left(t_{k+1}\right)\right|^{2}\right] \leq \mathbb{E}\left[\left|\rho^{j,N}_k\right|^{2}\right] e^{-\left(K_1-K_2\right)h}.
\end{equation}
The conditional version of (\ref{4.19}) takes the following form:
\begin{equation}\tag{5.41}\label{4.28}
\begin{aligned}
 \left|\mathbb{E} \left[\left.\mathcal{V}^{j,N} _{k+1}\right\rvert\ \mathcal{F} _{t_k} \right]\right|&=\left|\mathbb{E} \left[\left. X^{j,N}\left( t_k,Y_{k}^{j,N};t_{k+1} \right) -Y^{j,N}\left( t_k,Y_{k}^{j,N};t_{k+1} \right) \right\rvert\ \mathcal{F} _{t_k} \right]\right|\\
 &\leq C_2 \left( 1+\left|Y_{k}^{j,N}\right|^{\eta _1} \right)  h^{q_1}.
 \end{aligned}
\end{equation}
Due to the $\mathcal{F}_{t_k}$-measurability of $\rho^{j,N}_k$, along with the conditional version of (\ref{4.19}) and the application of the inner product inequality, we derive the following result for the second component on the right side of (\ref{4.26}):
\begin{equation}\tag{5.42}\label{4.29}
\begin{aligned}
2\underset{1\leqslant j\leqslant N}{\sup}\mathbb{E} \left[ \left \langle \rho^{j,N} _k,\mathcal{V}^{j,N} _{k+1} \right \rangle \right] 
&= 2\underset{1\leqslant j\leqslant N}{\sup}\mathbb{E} \left[ \mathbb{E} \left[\left. \left\langle \rho^{j,N} _k,\mathcal{V}^{j,N} _{k+1} \right \rangle \right\rvert\ \mathcal{F} _{t_k} \right] \right]\\
&\leq 2\underset{1\leqslant j\leqslant N}{\sup}\mathbb{E} \left[ \left |\mathbb{E} \left[ \left.\rho^{j,N} _k\mathcal{V}^{j,N} _{k+1}\right\rvert\ \mathcal{F} _{t_k} \right] \right|\right] \\
&\leq 2\underset{1\leqslant j\leqslant N}{\sup}\mathbb{E} \left[\left|\rho ^{j,N}_k\right|\left| \mathbb{E} \left[\left. \mathcal{V}^{j,N} _{k+1}\right\rvert\ \mathcal{F} _{t_k} \right]\right |\right] \\
&\leq 2C_2\underset{1\leqslant j\leqslant N}{\sup}\mathbb{E} \left[ \left| \rho^{j,N} _k \right|\left( 1+\left| Y_{k}^{j,N} \right|^{\eta_1} \right) \right] h^{q_1}.
\end{aligned}
\end{equation}
Subsequently, by applying the conditional version of (\ref{4.20}), we obtain 
\begin{equation}\tag{5.43}\label{4.30}
\begin{aligned}
\left\{ \mathbb{E} \left[\left. \left|\mathcal{V}^{j,N} _{k+1} \right|^2\right\rvert\ \mathcal{F} _{t_k} \right] \right\} ^{\frac{1}{2}}&=\left\{ \mathbb{E} \left[ \left.\left| X^{j,N}\left( t_k,Y_{k}^{j,N};t_{k+1} \right) -Y^{j,N}\left( t_k,Y_{k}^{j,N};t_{k+1} \right) \right|^2\right\rvert\ \mathcal{F} _{t_k} \right] \right\} ^{\frac{1}{2}}\\
&\leq C_3 \left( 1+\left| Y_{k}^{j,N} \right|^{2\eta _2}  \right) ^{\frac{1}{2}}h^{q_2}\,. 
\end{aligned}
\end{equation}
The conditional version of (\ref{4.7}), given by 
 \begin{equation}\tag{5.44}\label{4.31}
\left\{ \mathbb{E} \left[\left. \left| \mathcal{H} _{t_k,X_{k}^{j,N},Y_{k}^{j,N}}^{j,N}\left(t_{k+1}\right) \right|^2\right\rvert\ \mathcal{F} _{t_k} \right] \right\} ^{\frac{1}{2}}\le {C_{11}}^{\frac{1}{2}} \left( 1+\left|X_{k}^{j,N}\right|^{2\kappa -2}+\left|Y_{k}^{j,N}\right|^{2\kappa -2} \right) ^{\frac{1}{4}}\left|\rho^{j,N} _k\right|  h^{\frac{1}{2}},
\end{equation}
together with H\"{o}lder's inequality, implies 
\begin{equation}\tag{5.45}\label{4.32}
\begin{aligned}
 2 \underset{1\leqslant j\leqslant N} {\sup}\mathbb{E}&\left[\left\langle\mathcal{H}^{j,N}_{t_k, X^{j,N}_k, Y^{j,N}_k}\left(t_{k+1}\right), \mathcal{V}^{j,N}_{k+1}\right\rangle\right] \\
= & 2  \underset{1\leqslant j\leqslant N}{\sup}\mathbb{E}\left[\mathbb{E}\left[\left.\left\langle\mathcal{H}^{j,N}_{t_k, X^{j,N}_k, Y^{j,N}_k}\left(t_{k+1}\right), \mathcal{V}^{j,N}_{k+1}\right\rangle \right\rvert\ \mathcal{F}_{t_k}\right]\right] \\
\leq & 2 \underset{1\leqslant j\leqslant N}{\sup} \mathbb{E}\left[\left\{\mathbb{E}\left[\left.\left|\mathcal{H}^{j,N}_{t_k, X^{j,N}_k, Y^{j,N}_k}\left(t_{k+1}\right)\right|^{2} \right\rvert\ \mathcal{F}_{t_k}\right]\right\}^{\frac{1}{2}}\left\{\mathbb{E}\left[\left.\left|\mathcal{V}^{j,N}_{k+1}\right|^{2} \right\rvert\ \mathcal{F}_{t_k}\right]\right\}^{\frac{1}{2}}\right] \\
\leq & 2C^{\frac{1}{2}}_{11}C_3\underset{1\leqslant j\leqslant N}{\sup}\mathbb{E}\left[\left|\rho^{j,N}_k\right|\left(1+\left|X^{j,N}_k\right|^{2 \kappa-2}+\left|Y^{j,N}_k\right|^{2 \kappa-2}\right)^{\frac{1}{4}}\left(1+\left|Y^{j,N}_k\right|^{2\eta_2}\right)^{\frac{1}{2}}\right] h^{q_2+\frac{1}{2}}.
\end{aligned}
\end{equation}
Moreover,
\begin{equation}\tag{5.46}\label{4.33}
\underset{1\leqslant j\leqslant N}{\sup}\mathbb{E} \left[\left |\mathcal{V}^{j,N}_{k+1}\right|^2 \right] \le C^2_{3}\underset{1\leqslant j\leqslant N}{\sup}\mathbb{E} \left[ \left( 1+\left| Y_{k}^{j,N} \right|^{2\eta _2} \right) \right] h^{2q_2}.
\end{equation}
Substituting (\ref{4.27}), (\ref{4.29}), (\ref{4.32}) and (\ref{4.33}) into (\ref{4.26}) yields 
\begin{equation}\tag{5.47}\label{4.34}
\begin{aligned}
 \underset{1\leqslant j\leqslant N}{\sup}\mathbb{E}&\left[\left|\rho^{j,N}_{k+1}\right|^{2}\right]\\ 
\leq & \underset{1\leqslant j\leqslant N}{\sup}\mathbb{E}\left[\left|\rho^{j,N}_k\right|^{2}\right] e^{-\left(K_1-K_2\right)h} +2C_2 \underset{1\leqslant j\leqslant N}{\sup}\mathbb{E}\left[\left|\rho^{j,N}_k\right|\left(1+\left|Y^{j,N}_k\right|^{\eta_1}\right)\right] h^{q_1} \\
& +2C^{\frac{1}{2}}_{11}C_3\underset{1\leqslant j\leqslant N}{\sup}\mathbb{E}\left[\left|\rho^{j,N}_k\right|\left(1+\left|X^{j,N}_k\right|^{2 \kappa-2}+\left|Y^{j,N}_k\right|^{2 \kappa-2}\right)^{\frac{1}{4}} \left(1+\left|Y^{j,N}_k\right|^{2\eta_2}\right)^{\frac{1}{2}}\right]h^{q_2+\frac{1}{2}} \\
&+C^2_{3}\underset{1\leqslant j\leqslant N}{\sup}\mathbb{E} \left[ \left( 1+\left| Y_{k}^{j,N} \right|^{2\eta _2} \right) \right] h^{2q_2}.
\end{aligned}
\end{equation}
Next, by applying Young's inequality $ab\leq \varepsilon_{2} \tfrac{a^2}{2}+\tfrac{1}{\varepsilon_{2}}\tfrac{b^2}{2}$ to (\ref{4.34}) and noting that $q_1 \geq q_2+\tfrac{1}{2}$, we derive 
\begin{equation}\tag{5.48}\label{4.35}
\begin{aligned}
 \underset{1\leqslant j\leqslant N}{\sup}\mathbb{E}&\left[\left|\rho^{j,N}_{k+1}\right|^{2}\right]\\
\leq & \underset{1\leqslant j\leqslant N}{\sup}\mathbb{E}\left[\left|\rho^{j,N}_k\right|^{2}\right] e^{-\left(K_1-K_2\right)h} \\
&+\tfrac{\left(K_1-K_2\right) h}{4} \underset{1\leqslant j\leqslant N}{\sup}\mathbb{E}\left[\left|\rho^{j,N}_k\right|^2\right]+K^{*} \underset{1\leqslant j\leqslant N}{\sup}\mathbb{E}\left[\left(1+\left|Y^{j,N}_k\right|^{\eta_1}\right)^{2}\right] h^{2q_2} \\
&+K^{*} \underset{1\leqslant j\leqslant N}{\sup}\mathbb{E}\left[\left(1+\left|X^{j,N}_k\right|^{2 \kappa-2}+\left|Y^{j,N}_k\right|^{2 \kappa-2}\right)^{\frac{1}{2}}\left(1+\left|Y^{j,N}_k\right|^{2\eta_2}\right)\right] h^{2 q_2} \\
&+\tfrac{\left(K_1-K_2\right) h}{4} \underset{1\leqslant j\leqslant N}{\sup}\mathbb{E}\left[\left|\rho^{j,N}_k\right|^2\right] +K^{*} \underset{1\leqslant j\leqslant N}{\sup}\mathbb{E}\left[\left(1+\left|Y^{j,N}_k\right|^{2\eta_2}\right)\right] h^{2q_2},\\
\end{aligned}
\end{equation}
where $K^{*}$ is a positive constant independent of $h,N, k$. \ Subsequently, under the assumption $0<h \leq \tfrac{1}{2\left(K_1-K_2\right)}$ and utilizing the inequality $e^{-x} \leq 1-x+\tfrac{x^2}{2}$ for $0<x<1$, we obtain
\begin{equation}\tag{5.49}\label{4.36}
e^{-\left( K_1-K_2 \right) h}\leq 1-\left( K_1-K_2 \right) h+\tfrac{\left( K_1-K_2 \right) ^2}{2}\tfrac{1}{2\left( K_1-K_2 \right)}h= 1-\tfrac{3}{4}\left( K_1-K_2 \right) h.
\end{equation}
 Finally, by combining (\ref{3.25}), (\ref{4.21}), and (\ref{4.36}), we conclude that
\begin{equation}\tag{5.50}\label{4.37}
\begin{aligned} 
&\underset{1\leqslant j\leqslant N}{\sup}\mathbb{E}\left[\left|\rho^{j,N}_{k+1}\right|^{2}\right] \\
\leq & \left\{1-\tfrac{3\left(K_1-K_2\right)}{4}h+\tfrac{\left(K_1-K_2\right)}{2}h\right\} \underset{1\leqslant j\leqslant N}{\sup}\mathbb{E}\left[\left|\rho^{j,N}_k\right|^{2}\right]+K^{*} \underset{1\leqslant j\leqslant N}{\sup}\mathbb{E}\left[\left(1+\left|X_0^{j}\right|^{2\lambda}\right)\right] h^{2q_2} \\
\leq & \left\{1-\tfrac{\left(K_1-K_2\right)}{4}h\right\} \underset{1\leqslant j\leqslant N}{\sup}\mathbb{E}\left[\left|\rho^{j,N}_k\right|^{2}\right]+K^{*} \underset{1\leqslant j\leqslant N}{\sup}\mathbb{E}\left[\left(1+\left|X_0^{j}\right|^{2 \lambda}\right)\right] h^{2q_2},
\end{aligned}
\end{equation}
where $\lambda=\max \left\{\eta_1\eta_3 ,\left(\tfrac{\kappa-1}{2}+\eta_2\right)\eta_3 \right\}$.
On the other hand, we obtain \begin{equation}\tag{5.51}\label{4.38}
\underset{1\leqslant j\leqslant N}{\sup}\mathbb{E}\left[\left|\rho^{j,N}_k\right|^{2}\right] \leq C^{2}_5\underset{1\leqslant j\leqslant N}{\sup}\mathbb{E}\left[\left(1+\left|X_0^{j}\right|^{2 \lambda}\right)\right] h^{2\left(q_2-\frac{1}{2}\right)}=C^{2}_5\mathbb{E}\left[\left(1+\left|X_0\right|^{2 \lambda}\right)\right] h^{2\left(q_2-\frac{1}{2}\right)},
\end{equation}
where the constant $C_5$ is independent of $h,k,N$. 
Combining this with \eqref{4.37} yields  
\begin{equation}\tag{5.52}\label{4.39}
\underset{1\leqslant j\leqslant N}{\sup}\left\{\mathbb{E}\left[\left|X^{j,N}_k-Y^{j,N}_k\right|^{2}\right]\right\}^{\frac{1}{2}} \leq C_{5}\left\{\mathbb{E}\left[\left(1+\left|X_0\right|^{2 \lambda}\right)\right]\right\}^{\frac{1}{2}} h^{q_2-\frac{1}{2}}.
\end{equation}
Consequently, we deduce (\ref{4.22}). Theorem~\ref{theorem 4.2} is thus proved. 
\end{proof} 
\subsection{Proof of Theorem \ref{momentboundsforPEM}}
For this part, we establish the boundedness of high-order moments for the projected Euler method. For this purpose, we first present the following lemma:
\begin{lemma}\label{lemma5.2} 
Consider a sequence of random variables $\left\{Z_k\right\}_{k \in \mathbb{N}}$. Suppose there are positive constants $m$ and $n$ such that $1-mh>0$  and
\begin{equation}\tag{5.53}\label{5.5}
\mathbb{E}\left[\left|Z_k\right|^{2 p}\right] \leq\left(1-m h\right)\mathbb{E}\left[\left|Z_{k-1}\right|^{2 p}\right]+n h\mathbb{E}\left[\left|Z_{k-1}\right|^{2 p-2}\right]
\end{equation}
holds for all $p \geq 1$ and $k \in \mathbb{N}$. Then,
\begin{equation}\tag{5.54}\label{5.6}
\mathbb{E}\left[\left|Z_k\right|^{2 p}\right] \leq C_{12}\mathbb{E}\left[\left(1+\left|Z_0\right|^{2 p}\right)\right],
\end{equation}
where $Z_0$ is the initial value and $C_{12}$ is a constant depending only on $p, m, n$.
\end{lemma}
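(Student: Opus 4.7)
The plan is to proceed by induction on $p$, using the recurrence \eqref{5.5} as a scalar inequality and exploiting the contraction factor $1-mh$ to obtain bounds uniform in both $h$ and $k$. The key observation is that for any sequence $\{a_k\}$ satisfying $a_k\leq(1-mh)a_{k-1}+\alpha h$ with $1-mh>0$, iteration yields
\[
a_k\leq(1-mh)^k a_0+\alpha h\sum_{j=0}^{k-1}(1-mh)^j\leq a_0+\tfrac{\alpha}{m},
\]
since the geometric sum is dominated by $\tfrac{1}{m}$ independently of $h$ and $k$. This is the discrete analogue of the modified Gronwall Lemma~\ref{lem:Gronwalllemma} and provides the mechanism that keeps the final constant free of $h$ and $k$.

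For the base case $p=1$, the hypothesis reads $\mathbb{E}[|Z_k|^2]\leq(1-mh)\mathbb{E}[|Z_{k-1}|^2]+nh$, and the scalar argument above gives $\mathbb{E}[|Z_k|^2]\leq\mathbb{E}[|Z_0|^2]+\tfrac{n}{m}\leq C_{12,1}\mathbb{E}[1+|Z_0|^2]$ with $C_{12,1}:=1+\tfrac{n}{m}$. For the inductive step, assume the conclusion holds at order $p-1$ with a constant $C_{12,p-1}$ depending only on $p-1,m,n$. Then, uniformly in $k$,
\[
\mathbb{E}\bigl[|Z_{k-1}|^{2p-2}\bigr]\leq C_{12,p-1}\mathbb{E}\bigl[1+|Z_0|^{2p-2}\bigr]\leq 2C_{12,p-1}\mathbb{E}\bigl[1+|Z_0|^{2p}\bigr],
\]
where the last step uses the elementary bound $|x|^{2p-2}\leq 1+|x|^{2p}$. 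Substituting this into \eqref{5.5} produces a recurrence of the same form with $\alpha:=2nC_{12,p-1}\mathbb{E}[1+|Z_0|^{2p}]$ in place of $nh\mathbb{E}[|Z_{k-1}|^{2p-2}]$. The scalar iteration then gives
\[
\mathbb{E}\bigl[|Z_k|^{2p}\bigr]\leq\mathbb{E}\bigl[|Z_0|^{2p}\bigr]+\tfrac{2nC_{12,p-1}}{m}\mathbb{E}\bigl[1+|Z_0|^{2p}\bigr]\leq C_{12,p}\mathbb{E}\bigl[1+|Z_0|^{2p}\bigr],
\]
with $C_{12,p}:=1+\tfrac{2nC_{12,p-1}}{m}$, a constant depending only on $p,m,n$.

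There is no serious obstacle in this argument; the only subtlety is to verify that the constant produced at each inductive step does not depend on $h$ or $k$. This is exactly guaranteed by the uniform boundedness of the geometric series $h\sum_{j\geq 0}(1-mh)^j\leq\tfrac{1}{m}$, which in turn requires the standing assumption $1-mh>0$. The bound $|x|^{2p-2}\leq 1+|x|^{2p}$ is what allows the higher moment of $Z_0$ to absorb the lower one on the right-hand side, yielding the clean form stated in \eqref{5.6}.
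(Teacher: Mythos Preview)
Your proposal is correct and follows essentially the same approach as the paper: induction on $p$, with the base case handled by iterating the scalar recursion and bounding the geometric sum by $1/m$, and the inductive step using the hypothesis at level $p-1$ to control $\mathbb{E}[|Z_{k-1}|^{2p-2}]$ before iterating again. The paper's proof is organized slightly differently in its phrasing of the induction (passing from all $p\le p_0$ to $p_0\le p\le p_0+1$), but the substance is identical, and your explicit recursion $C_{12,p}=1+\tfrac{2n}{m}C_{12,p-1}$ for the constants is a welcome clarification.
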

\begin{proof} We prove (\ref{5.6}) by induction on $p$.  For $p = 1$, inequality (\ref{5.5}) yields 
\begin{equation}\tag{5.55}\label{5.7}
\begin{aligned}
\mathbb{E} \left[ \left| Z_k \right|^2 \right] &\le \left(1-mh\right)^k\mathbb{E} \left[ \left| Z_0 \right|^2 \right] +\left( 1+\left(1-mh\right)+...+\left(1-mh\right)^{k-1} \right) nh
\\
&=\left(1-mh\right)^k\mathbb{E} \left[ \left| Z_0 \right|^2 \right] +\tfrac{1-\left(1-mh\right)^k}{mh}nh
\\
&\leq C_{13}\mathbb{E} \left[ \left( 1+\left| Z_0 \right|^2 \right) \right]. \\
\end{aligned}
\end{equation}
Assume now that, for some $p_0 \geq 1$, inequality (\ref{5.6}) holds for any $p \leq p_0$ and all $k \in \mathbb{N}$; that is,
\begin{equation}\tag{5.56}\label{5.8}
\mathbb{E}\left[\left|Z_k\right|^{2 p}\right] \leq C_{12}\mathbb{E}\left[\left(1+\left|Z_0\right|^{2 p}\right)\right]\,. 
\end{equation}
For $p_0\le p\le p_0+1$, notice that
\begin{equation}\tag{5.57}\label{5.9}
\mathbb{E} \left[ \left|Z_{k-1}\right|^{2p-2} \right]\le 
C_{12} \mathbb{E} \left [ 1+\left|Z_0\right|^{2p-2}\right]\le
C_{12}' \mathbb{E} \left [ 1+\left|Z_0\right|^{2p}\right]\,. 
\end{equation}
Combining (\ref{5.5}) and (\ref{5.9}), we obtain
\begin{equation}\tag{5.58}\label{5.10}
\begin{aligned}
 \mathbb{E} \left[ \left| Z_k \right|^{2p} \right]& \le \left(1-mh\right)\mathbb{E} \left[ \left| Z_{k-1} \right|^{2p} \right] +nh\mathbb{E} \left[ \left| Z_{k-1} \right|^{2p-2} \right] 
\\
&\le \left(1-mh\right)\mathbb{E} \left[ \left| Z_{k-1} \right|^{2p} \right] +C_{13}nh\mathbb{E} \left[ \left( 1+\left| Z_0 \right|^{2p} \right) \right] 
\\
&\le \left(1-mh\right)^k\mathbb{E} \left[ \left| Z_0 \right|^{2p} \right] +C_{13}nh\left( 1+\left(1-mh\right)+...+\left(1-mh\right)^{k-1} \right) \mathbb{E} \left[ \left( 1+\left| Z_0 \right|^{2p} \right) \right] 
\\
&=\left(1-mh\right)^k\mathbb{E} \left[ \left| Z_0 \right|^{2p}\right] +C_{13}nh\tfrac{1-\left(1-mh\right)^k}{mh}\mathbb{E} \left[ \left( 1+\left| Z_0 \right|^{2p} \right) \right] 
\\
&\leq C_{14}\mathbb{E} \left[ \left( 1+\left| Z_0 \right|^{2p}\right) \right].\\
\end{aligned}
\end{equation}
Therefore, by mathematical induction, we conclude that for all $p \in \mathbb{N}$,
\begin{equation}\tag{5.59}\label{5.11}
\mathbb{E}\left[\left|Z_k\right|^{2 p}\right] \leq C_{12}\mathbb{E}\left[\left(1+\left|Z_0\right|^{2 p}\right)\right],
\end{equation}
where $C_{12}$ depends only on $p, m, n$.
\end{proof}

We now return to our scheme. By applying (\ref{2.6}), (\ref{2.15}), and (\ref{2.16}), we obtain:
\begin{equation}\tag{5.60}\label{5.12}
\begin{aligned}
 \left|b\left(\bar{Y}^{j,N}_{k-1},\mu^{\bar{Y}^{j,N}_{k-1},N}_{k-1}\right)\right|^2&\leq K_4\left( \left|\bar{Y}^{j,N}_{k-1}\right|^{2\kappa}+1 \right) +a_3\mathcal{W}_2\left(\mu^{\bar{Y}^{j,N}_{k-1},N}_{k-1}\right) ^2 \\
&= K_4\left( \left|\bar{Y}^{j,N}_{k-1}\right|^{2\kappa}+1 \right) +\tfrac{a_3}{N}\sum_{j=1}^N{\left|\bar{Y}^{j,N}_{k-1}\right|^2},
\end{aligned}
\end{equation}
\begin{equation}\tag{5.61}\label{5.13}
\begin{aligned}
\left|\left| \sigma\left(\bar{Y}^{j,N}_{k-1},\mu^{\bar{Y}^{j,N}_{k-1},N}_{k-1}\right) \right| \right|^2&\leq K_5\left( \left|\bar{Y}^{j,N}_{k-1}\right|^{\kappa+1}+1 \right) +a_4\mathcal{W}_2\left(\mu^{\bar{Y}^{j,N}_{k-1},N}_{k-1}\right) ^2 \\
&= K_5\left( \left|\bar{Y}^{j,N}_{k-1}\right|^{\kappa+1}+1 \right) +\tfrac{a_4}{N}\sum_{j=1}^N{\left|\bar{Y}^{j,N}_{k-1}\right|^2}.
\end{aligned}
\end{equation}

Taking the supremum over $ j $ in (\ref{5.12}) and using (\ref{5.2}), we obtain
\begin{equation}\tag{5.62}\label{5.14}
\begin{aligned}
\underset{1\leqslant j\leqslant N}{\sup}\left|b\left(\bar{Y}^{j,N}_{k-1},\mu^{\bar{Y}^{j,N}_{k-1},N}_{k-1}\right)\right|^2
&\leq K_4\left(\underset{1\leqslant j\leqslant N}{\sup} \left|\bar{Y}^{j,N}_{k-1}\right|^{2\kappa}+1 \right) +a_3\underset{1\leqslant j\leqslant N}{\sup}\left|\bar{Y}^{j,N}_{k-1}\right|^2\\
&= K_4\left(\underset{1\leqslant j\leqslant N}{\sup}\left|\psi \left(Y_{k-1}^{j,N}\right)\right|^{2\kappa}+1 \right) +a_3\underset{1\leqslant j\leqslant N}{\sup}\left|\psi \left(Y_{k-1}^{j,N}\right)\right|^2\\
&\leq K_4\left(h^{-\frac{2\kappa}{2\left( \kappa +2 \right)}}+1\right)+a_3h^{-\frac{1}{ \kappa +2 }}\\
&\leq K_{12} h^{-1}+a_5.
\end{aligned}
\end{equation}

Taking the supremum over $ j $ in (\ref{5.13}), and utilizing 
(\ref{5.2}), we obtain
\begin{equation}\tag{5.63}\label{5.15}
\begin{aligned}
\underset{1\leqslant j\leqslant N}{\sup}\left|\left|\sigma\left(\bar{Y}^{j,N}_{k-1},\mu^{\bar{Y}^{j,N}_{k-1},N}_{k-1}\right)\right| \right|^2
&\leq K_5\left( \underset{1\leqslant j\leqslant N}{\sup}\left|\bar{Y}^{j,N}_{k-1}\right|^{\kappa+1}+1 \right) +a_4\underset{1\leqslant j\leqslant N}{\sup}\left|\bar{Y}^{j,N}_{k-1}\right|^2\\
&= K_5\left( \underset{1\leqslant j\leqslant N}{\sup}\left|\psi \left(Y_{k-1}^{j,N}\right)\right|^{\kappa+1}+1 \right) +a_4\underset{1\leqslant j\leqslant N}{\sup}\left|\psi \left(Y_{k-1}^{j,N}\right)\right|^2\\
&\leq K_5\left(h^{-\frac{\kappa+1}{2\left( \kappa +2 \right)}}+1\right)+a_4h^{-\frac{1}{ \kappa +2 }}\\
&\leq K_{13} h^{-\frac{1}{2}}+a_6.
\end{aligned}
\end{equation}
 
The preceding arguments will yield the moment bounds for the projected Euler scheme. At this point, we are able to  finalize the proof of Theorem~\ref{momentboundsforPEM}.

\begin{proof}[Proof of Theorem~\ref{momentboundsforPEM}]
For $p=1$, by applying Assumption \ref{ass2.4}, along with (\ref{2.6}), (\ref{5.1}), (\ref{5.4}), and (\ref{5.14}), and exploiting the properties of Brownian motion, we obtain, for all $1\leq j\leq N$,\\
\begin{equation}\tag{5.64}\label{5.17}
\begin{aligned}
&\mathbb{E} \left[\left| Y_{k}^{j,N} \right|^2\right] \\
\leq& \mathbb{E} \left[\left| \bar{Y}_{k-1}^{j,N} \right|^2\right]+h^{2}\mathbb{E} \left[\left|b\left(\bar{Y}^{j,N}_{k-1},\mu^{\bar{Y}^{j,N}_{k-1},N}_{k-1}\right) \right|^2\right]\\
&+h\mathbb{E} \left[2\left \langle \bar{Y}_{k-1}^{j,N},b\left( \bar{Y}_{k-1}^{j,N},\mu _{k-1}^{\bar{Y}_{k-1}^{j,N},N} \right) \right \rangle +\left|\left| \sigma \left( \bar{Y}_{k-1}^{j,N},\mu _{k-1}^{\bar{Y}_{k-1}^{j,N},N} \right)  \right|\right|^2\right]\\
\le&  \mathbb{E} \left[\left| \bar{Y}_{k-1}^{j,N} \right|^2\right]+h^{2}\left(K_{12} h^{-1}+a_5\right)\\
&+h\left\{ -a_1\mathbb{E} \left[\left|\bar{Y}_{k-1}^{j,N}\right|^2\right]+a_2\mathbb{E} \left[\mathcal{W}_2\left( \mu _{k-1}^{\bar{Y}_{k-1}^{j,N},N} \right) ^2\right]+C \right\}\\
 \leq&\left(1-\left(a_1-a_2\right) h\right)\mathbb{E}\left[\left|\bar{Y}^{j,N}_{k-1}\right|^2\right]+K_{14} h \\
\leq&\left(1-\left(a_1-a_2\right) h\right)\mathbb{E}\left[\left|Y_{k-1}^{j,N}\right|^2\right]+K_{14} h.\\
\end{aligned}
\end{equation}
By applying recursion and subsequently taking the supremum over $ j $, we conclude that inequality (\ref{5.16}) holds for the case $p=1$.

For any positive integer  $p \geq 2$,  applying (\ref{5.1}) and (\ref{5.14}) yields
\begin{equation}\tag{5.65}\label{5.18}
\begin{aligned}
&\mathbb{E} \left[\left. \left( 1+\left| Y_{k}^{j,N} \right|^2 \right) ^p\right\rvert\ \mathcal{F} _{t_{k-1}} \right] \\
\leq& \mathbb{E} \left[ \left( 1+\left| \bar{Y}_{k-1}^{j,N} \right|^2+2\left \langle \bar{Y}_{k-1}^{j,N},b\left( \bar{Y}_{k-1}^{j,N},\mu _{k-1}^{\bar{Y}_{k-1}^{j,N},N} \right) h \right \rangle +\left| \sigma \left( \bar{Y}_{k-1}^{j,N},\mu _{k-1}^{\bar{Y}_{k-1}^{j,N},N} \right) \Delta W_{k-1}^{j} \right|^2\right.\right.\\
&\qquad +\left.\left.\left.2\left \langle \bar{Y}_{k-1}^{j,N}+b\left( \bar{Y}_{k-1}^{j,N},\mu _{k-1}^{\bar{Y}_{k-1}^{j,N},N} \right) h,\sigma \left( \bar{Y}_{k-1}^{j,N},\mu _{k-1}^{\bar{Y}_{k-1}^{j,N},N} \right) \Delta W_{k-1}^{j} \right \rangle +K_{15}h \right) ^p\right\rvert\ \mathcal{F} _{t_{k-1}} \right] \\
=& \mathbb{E} \left[ \left( 1+\left| \bar{Y}_{k-1}^{j,N} \right|^2 \right) ^p\left( 1+\tfrac{2\left\langle \bar{Y}_{k-1}^{j,N},b\left( \bar{Y}_{k-1}^{j,N},\mu _{k-1}^{\bar{Y}_{k-1}^{j,N},N} \right) h \right \rangle +\left| \sigma \left( \bar{Y}_{k-1}^{j,N},\mu _{k-1}^{\bar{Y}_{k-1}^{j,N},N} \right) \Delta W_{k-1}^{j} \right|^2}{1+\left| \bar{Y}_{k-1}^{j,N} \right|^2}\right.\right.\\
&\qquad +\left.\left.\left.\tfrac{ 2\left\langle \bar{Y}_{k-1}^{j,N}+b\left( \bar{Y}_{k-1}^{j,N},\mu _{k-1}^{\bar{Y}_{k-1}^{j,N},N} \right) h,\sigma \left( \bar{Y}_{k-1}^{j,N},\mu _{k-1}^{\bar{Y}_{k-1}^{j,N},N} \right) \Delta W_{k-1}^{j} \right\rangle}{1+\left| \bar{Y}_{k-1}^{j,N} \right|^2}+\tfrac{K_{15}h}{1+\left| \bar{Y}_{k-1}^{j,N} \right|^2}\right) ^p\right\rvert\ \mathcal{F} _{t_{k-1}} \right]. \\
\end{aligned}
\end{equation}
We define
\begin{equation}\tag{5.66}\label{5.19}
\begin{aligned}
\xi^{j,N}:=&\tfrac{2\left\langle \Bar{Y}^{j,N}_{k-1},b\left(\bar{Y}^{j,N}_{k-1},\mu^{\bar{Y}^{j,N}_{k-1},N}_{k-1}\right) h\right\rangle}{1+\left|\Bar{Y}^{j,N}_{k-1}\right|^2}+\tfrac{\left|\sigma\left(\bar{Y}^{j,N}_{k-1},\mu^{\bar{Y}^{j,N}_{k-1},N}_{k-1}\right) \Delta W_{k-1}^{j}\right|^2}{1+\left|\Bar{Y}^{j,N}_{k-1}\right|^2}\\
&+\tfrac{2\left\langle \Bar{Y}^{j,N}_{k-1}+b\left(\bar{Y}^{j,N}_{k-1},\mu^{\bar{Y}^{j,N}_{k-1},N}_{k-1}\right) h , \sigma\left(\bar{Y}^{j,N}_{k-1},\mu^{\bar{Y}^{j,N}_{k-1},N}_{k-1}\right)\Delta W_{k-1}^{j}\right\rangle}{1+\left|\Bar{Y}^{j,N}_{k-1}\right|^2}\\
&:=\xi^{j,N}_1+\xi^{j,N}_2+\xi^{j,N}_3.
\end{aligned}
\end{equation}
Therefore, leveraging the $\mathcal{F}_{t_{k-1}}$-measurability of $\bar{Y}_{k-1}^{j,N}$ and employing the binomial expansion, we derive from (\ref{5.18}), for any $1 \leq j \leq N$,\\
\begin{equation}\tag{5.67}\label{5.20}
\begin{aligned}
&\mathbb{E} \left[\left. \left( 1+\left| Y_{k}^{j,N} \right|^2 \right) ^p\right\rvert\ \mathcal{F} _{t_{k-1}} \right]\\
\le &\mathbb{E} \left[\left. \left(1+\xi ^{j,N}+\tfrac{K_{15}h}{1+\left| \bar{Y}_{k-1}^{j,N} \right|^2}\right)^p\right\rvert\ \mathcal{F} _{t_{k-1}} \right] \left( 1+\left| \bar{Y}_{k-1}^{j,N} \right|^2 \right) ^p
\\
\le &\mathbb{E} \left[\left. \left(1+\xi ^{j,N}\right)^p\right\rvert\ \mathcal{F} _{t_{k-1}} \right] \left( 1+\left| \bar{Y}_{k-1}^{j,N} \right|^2 \right) ^p+pK_{16}h\left( 1+\left| \bar{Y}_{k-1}^{j,N} \right|^2 \right) ^{p-1}\mathbb{E}\left[\left. \left(1+\xi ^{j,N}\right)^{p-1}\right\rvert\ \mathcal{F} _{t_{k-1}} \right] 
\\
\le &\left\{1+p\mathbb{E}\left[\left. \xi^{j,N} \right\rvert\ \mathcal{F}_{t_{k-1}}\right]+\tfrac{p\left(p-1\right)}{2}\mathbb{E}\left[\left.\left(\xi^{j,N}\right)^2\right\rvert\ \mathcal{F}_{t_{k-1}}\right]+\sum_{r=3}^p{C_{p}^{r}\mathbb{E}\left[\left.\left(\xi^{j,N}\right)^r\right\rvert\ \mathcal{F}_{t_{k-1}}\right]}\right\}\\
&\times\left( 1+\left| \bar{Y}_{k-1}^{j,N} \right|^2 \right) ^p+K_{16}ph\left\{1+\left(p-1\right)\mathbb{E}\left[\left. \xi^{j,N} \right\rvert\ \mathcal{F}_{t_{k-1}}\right]\right.\\
&+\left.\tfrac{\left(p-1\right)\left(p-2\right)}{2}\mathbb{E}\left[\left.\left(\xi^{j,N}\right)^2\right\rvert\ \mathcal{F}_{t_{k-1}}\right]+\sum_{r=3}^p{C_{p-1}^{r}\mathbb{E}\left[\left.\left(\xi^{j,N}\right)^r\right\rvert\ \mathcal{F}_{t_{k-1}}\right]}\right\}\left( 1+\left| \bar{Y}_{k-1}^{j,N} \right|^2 \right) ^{p-1}.
\end{aligned}
\end{equation}
Exploiting the properties of Brownian motion, we derive
\begin{equation}\tag{5.68}\label{5.21}
\begin{aligned}
&\mathbb{E}\left[\left.\xi^{j,N} \right\rvert\ \mathcal{F}_{t_{k-1}}\right]\\
=&\mathbb{E}\left[\left.\tfrac{2\left\langle \Bar{Y}^{j,N}_{k-1}, b\left(\bar{Y}^{j,N}_{k-1},\mu^{\bar{Y}^{j,N}_{k-1},N}_{k-1}\right)h\right\rangle+\left|\sigma\left(\bar{Y}^{j,N}_{k-1},\mu^{\bar{Y}^{j,N}_{k-1},N}_{k-1}\right) \Delta W_{k-1}^{j}\right|^2}{1+\left|\bar{Y}^{j,N}_{k-1}\right|^2} \right\rvert\ \mathcal{F}_{t_{k-1}}\right] \\
&+\mathbb{E}\left[\left.\tfrac{2\left\langle\bar{Y}^{j,N}_{k-1}+b\left(\bar{Y}^{j,N}_{k-1},\mu^{\bar{Y}^{j,N}_{k-1},N}_{k-1}\right)h, \sigma\left(\bar{Y}^{j,N}_{k-1},\mu^{\bar{Y}^{j,N}_{k-1},N}_{k-1}\right) \Delta W_{k-1}^{j}\right\rangle}{1+\left|\Bar{Y}^{j,N}_{k-1}\right|^2} \right\rvert\ \mathcal{F}_{t_{k-1}}\right]\\
=&\tfrac{2\left\langle\bar{Y}^{j,N}_{k-1}, b\left(\bar{Y}^{j,N}_{k-1},\mu^{\bar{Y}^{j,N}_{k-1},N}_{k-1}\right)h\right\rangle+\left|\left|\sigma\left(\bar{Y}^{j,N}_{k-1},\mu^{\bar{Y}^{j,N}_{k-1},N}_{k-1}\right)\right|\right|^2 h}{1+\left|\Bar{Y}^{j,N}_{k-1}\right|^2}.\\
\end{aligned}
\end{equation}
Subsequently, we obtain
\begin{equation}\tag{5.69}\label{5.22}
\begin{aligned}
 &\mathbb{E}\left[\left.\left(\xi^{j,N}\right)^2  \right\rvert\ \mathcal{F}_{t_{k-1}}\right] \\
=  &\mathbb{E}\left[\left.\left(\left(\xi^{j,N}_1\right)^2+\left(\xi^{j,N}_2\right)^2+\left(\xi^{j,N}_3\right)^2+2 \xi^{j,N}_1 \xi^{j,N}_2+2 \xi^{j,N}_1 \xi^{j,N}_3+2 \xi^{j,N}_2 \xi^{j,N}_3\right)  \right\rvert\ \mathcal{F}_{t_{k-1}}\right].
\end{aligned}
\end{equation}
We now evaluate each component on the right side of (\ref{5.22}). Applying (\ref{5.14}), the following estimate holds for the initial component on the right side of (\ref{5.22}):
\begin{equation}\tag{5.70}\label{5.23}
\begin{aligned}
\mathbb{E}\left[\left.\left(\xi^{j,N}_1\right)^2  \right\rvert\ \mathcal{F}_{t_{k-1}}\right] & \leq \mathbb{E}\left[\left.\tfrac{4\left|\bar{Y}^{j,N}_{k-1}\right|^2\left|b\left(\bar{Y}^{j,N}_{k-1},\mu^{\bar{Y}^{j,N}_{k-1},N}_{k-1}\right)h\right|^2}{\left(1+\left|\bar{Y}^{j,N}_{k-1}\right|^2\right)^2}  \right\rvert\ \mathcal{F}_{t_{k-1}}\right] \\
& \leq \tfrac{4\left|\bar{Y}^{j,N}_{k-1}\right|^2\left(K_{12} h+a_5 h^2\right)}{\left(1+\left|\bar{Y}^{j,N}_{k-1}\right|^2\right)^2}    \leq \tfrac{K h}{1+\left|\Bar{Y}^{j,N}_{k-1}\right|^2}.
\end{aligned}
\end{equation}
For the second component on the right side of (\ref{5.22}), applying (\ref{5.15}) yields
\begin{equation}\tag{5.71}\label{5.24}
\begin{aligned}
\mathbb{E}\left[\left.\left(\xi^{j,N}_2\right)^2  \right\rvert\ \mathcal{F}_{t_{k-1}}\right] & =\mathbb{E}\left[\left.\tfrac{\left|\sigma\left(\bar{Y}^{j,N}_{k-1},\mu^{\bar{Y}^{j,N}_{k-1},N}_{k-1}\right) \Delta W_{k-1}^{j}\right|^4}{\left(1+\left|\bar{Y}^{j,N}_{k-1}\right|^2\right)^2}  \right\rvert\ \mathcal{F}_{t_{k-1}}\right] \\
& \leq \tfrac{3\left(K_{13} h^{-\frac{1}{2}}+a_6\right)^2 h^2}{\left(1+\left|\bar{Y}^{j,N}_{k-1}\right|^2\right)^2}   \leq \tfrac{K h}{1+\left|\bar{Y}^{j,N}_{k-1}\right|^2}.
\end{aligned}
\end{equation}
Utilizing (\ref{5.14}) and (\ref{5.15}), we observe that the third and fourth components appearing on the right side of (\ref{5.22}) satisfy the following estimates:
\begin{equation}\tag{5.72}\label{5.25}
\begin{aligned}
\mathbb{E}\left[\left.\left(\xi^{j,N}_3\right)^2  \right\rvert\ \mathcal{F}_{t_{k-1}}\right] & \leq\mathbb{E}\left[\left.\tfrac{4\left|\bar{Y}^{j,N}_{k-1}+b\left(\bar{Y}^{j,N}_{k-1},\mu^{\bar{Y}^{j,N}_{k-1},N}_{k-1}\right)h\right|^2\left|\sigma\left(\bar{Y}^{j,N}_{k-1},\mu^{\bar{Y}^{j,N}_{k-1},N}_{k-1}\right) \Delta W_{k-1}^{j}\right|^2}{\left(1+\left|\bar{Y}^{j,N}_{k-1}\right|^2\right)^2} \right\rvert\, \mathcal{F}_{t_{k-1}}\right] \\
&\leq \tfrac{8\left( \left|\bar{Y}_{k-1}^{j,N}\right|^2+h^2\left|b\left( \bar{Y}_{k-1}^{j,N},\mu _{k-1}^{\bar{Y}_{k-1}^{j,N},N} \right) \right|^2 \right) \left|\left|\sigma \left( \bar{Y}_{k-1}^{j,N},\mu _{k-1}^{\bar{Y}_{k-1}^{j,N},N} \right) \right|\right|^2h}{\left( 1+\left|\bar{Y}_{k-1}^{j,N}\right|^2 \right) ^2}\\
& \leq \tfrac{8h\left|\left|\sigma\left(\bar{Y}^{j,N}_{k-1},\mu^{\bar{Y}^{j,N}_{k-1},N}_{k-1}\right)\right|\right|^2}{1+\left|\bar{Y}^{j,N}_{k-1}\right|^2}+\tfrac{K h}{1+\left|\bar{Y}^{j,N}_{k-1}\right|^2},
\end{aligned}
\end{equation}
and
\begin{equation}\tag{5.73}\label{5.26}
\begin{aligned}
\mathbb{E}\left[\left.2 \xi^{j,N}_1 \xi^{j,N}_2  \right\rvert\ \mathcal{F}_{t_{k-1}}\right] & =\mathbb{E}\left[\left.\tfrac{4\left\langle\bar{Y}^{j,N}_{k-1}, b\left(\bar{Y}^{j,N}_{k-1},\mu^{\bar{Y}^{j,N}_{k-1},N}_{k-1}\right)h\right\rangle\left|\sigma\left(\bar{Y}^{j,N}_{k-1},\mu^{\bar{Y}^{j,N}_{k-1},N}_{k-1}\right) \Delta W_{k-1}^{j}\right|^2}{\left(1+\left|\bar{Y}^{j,N}_{k-1}\right|^2\right)^2} \right\rvert\, \mathcal{F}_{t_{k-1}}\right] \\
& \leq \tfrac{K h}{1+\left|\bar{Y}^{j,N}_{k-1}\right|^2}.
\end{aligned}
\end{equation}
The properties of Brownian motion imply
\begin{equation}\tag{5.74}\label{5.27}
\mathbb{E}\left[\left.2 \xi^{j,N}_1 \xi^{j,N}_3  \right\rvert\ \mathcal{F}_{t_{k-1}}\right]=\mathbb{E}\left[\left.2 \xi^{j,N}_2 \xi^{j,N}_3  \right\rvert\ \mathcal{F}_{t_{k-1}}\right]=0.
\end{equation}
Combining (\ref{5.23})--(\ref{5.27}) yields
\begin{equation}\tag{5.75}\label{5.28}
\mathbb{E}\left[\left.\left(\xi^{j,N}\right)^2  \right\rvert\ \mathcal{F}_{t_{k-1}}\right] \leq \tfrac{8 h \left|\left|\sigma\left(\bar{Y}^{j,N}_{k-1},\mu^{\bar{Y}^{j,N}_{k-1},N}_{k-1}\right)\right|\right|^2}{1+\left|\bar{Y}^{j,N}_{k-1}\right|^2}+\tfrac{K h}{1+\left|\bar{Y}^{j,N}_{k-1}\right|^2}.
\end{equation}  
For $r \geq 3$, we can assert that
\begin{equation}\tag{5.76}\label{5.29}
\begin{aligned}
\mathbb{E} \left[\left. \left(\xi ^{j,N}\right)^r \right\rvert\ \mathcal{F} _{t_{k-1}} \right] &=\mathbb{E} \left[\left. \left( \xi _{1}^{j,N}+\xi _{2}^{j,N}+\xi _{3}^{j,N} \right) ^r\right\rvert\ \mathcal{F} _{t_{k-1}} \right] 
\\
&=\sum_{r_0+s_0+l_0=r}{\tfrac{r!}{r_0!s_0!l_0!}}\mathbb{E} \left[\left.\left( \xi _{1}^{j,N} \right) ^{r_0}\left( \xi _{2}^{j,N} \right) ^{s_0}\left( \xi _{3}^{j,N} \right) ^{l_0} \right\rvert\ \mathcal{F} _{t_{k-1}} \right] 
\\
&\le \tfrac{Kh}{1+\left|\bar{Y}_{k-1}^{j,N}\right|^2}.\\
\end{aligned}
\end{equation}
If $ l_0 = 2n_0 + 1 $ with $ n_0 \in \mathbb{N} $, the properties of Brownian motion imply 
\begin{equation}\tag{5.77}\label{5.30}
\mathbb{E} \left[\left. \left( \xi _{1}^{j,N} \right) ^{r_0}\left( \xi _{2}^{j,N} \right) ^{s_0}\left( \xi _{3}^{j,N} \right) ^{2n_0+1}\right\rvert\ \mathcal{F} _{t_{k-1}} \right] =0\,. 
\end{equation}
If $l_0=2n_0$ with $n_0 \in \mathbb{N}$, we can utilize the fact that $\mathbb{E} \left[\left. \left| \Delta W_{k-1}^{j} \right|^{2s}\right\rvert\ \mathcal{F} _{t_{k-1}} \right] =\left( 2s-1 \right) !!h^s$, together with (\ref{5.14}), (\ref{5.15}), and (\ref{5.19}), to obtain
\begin{equation}\tag{5.78}\label{5.31}
\begin{aligned}
&\mathbb{E} \left[\left. \left( \xi _{1}^{j,N} \right) ^{r_0}\left( \xi _{2}^{j,N} \right) ^{s_0}\left( \xi _{3}^{j,N} \right) ^{2n_0}\right\rvert\ \mathcal{F} _{t_{k-1}} \right] 
\\
\le& \tfrac{K_{18}\left| \bar{Y}_{k-1}^{j,N} \right|^{r_0+2n_0}\left| b\left( \bar{Y}_{k-1}^{j,N},\mu _{k-1}^{\bar{Y}_{k-1}^{j,N},N} \right) \right|^{r_0}\left|\left| \sigma \left( \bar{Y}_{k-1}^{j,N},\mu _{k-1}^{\bar{Y}_{k-1}^{j,N},N} \right) \right|\right| ^{2s_0+2n_0}h^{r_0+s_0+n_0}}{\left( 1+\left| \bar{Y}_{k-1}^{j,N} \right|^2 \right) ^r}
\\
&+\tfrac{K_{18}\left| \bar{Y}_{k-1}^{j,N} \right|^{r_0}\left| b\left( \bar{Y}_{k-1}^{j,N},\mu _{k-1}^{\bar{Y}_{k-1}^{j,N},N} \right) \right|^{r_0+2n_0}\left|\left| \sigma \left( \bar{Y}_{k-1}^{j,N},\mu _{k-1}^{\bar{Y}_{k-1}^{j,N},N} \right) \right|\right| ^{2s_0+2n_0}h^{r_0+s_0+3n_0}}{\left( 1+\left| \bar{Y}_{k-1}^{j,N} \right|^2 \right) ^r}
\\
\le& \tfrac{K_{19}\left| \bar{Y}_{k-1}^{j,N} \right|^{r_0+2n_0}\left( K_{12}h^{-1}+a_5 \right) ^{\frac{r_0}{2}}\left( K_{13}h^{-\frac{1}{2}}+a_6 \right) ^{s_0+n_0}h^{r_0+s_0+n_0}}{\left( 1+\left| \bar{Y}_{k-1}^{j,N} \right|^2 \right) ^r}
\\
&+\tfrac{K_{19}\left| \bar{Y}_{k-1}^{j,N} \right|^{r_0}\left( K_{12}h^{-1}+a_5 \right) ^{\frac{r_0}{2}+n_0}\left( K_{13}h^{-\frac{1}{2}}+a_6 \right) ^{s_0+n_0}h^{r_0+s_0+3n_0}}{\left( 1+\left| \bar{Y}_{k-1}^{j,N} \right|^2 \right) ^r}
\\
\le& \tfrac{K_{20}h^{\frac{r_0+s_0+n_0}{2}}}{\left( 1+\left| \bar{Y}_{k-1}^{j,N} \right|^2 \right) ^{\frac{r+s_0}{2}}}+\tfrac{K_{20}h^{\frac{r_0+s_0+3n_0}{2}}}{\left( 1+\left| \bar{Y}_{k-1}^{j,N} \right|^2 \right) ^{r-\frac{r_0}{2}}}
\le  \tfrac{K_{20}h^{\frac{r_0+s_0+n_0}{2}}}{\left( 1+\left| \bar{Y}_{k-1}^{j,N} \right|^2 \right) ^{\frac{r+s_0}{2}}}
= \tfrac{K_{20}h^{\frac{r-n_0}{2}}}{\left( 1+\left| \bar{Y}_{k-1}^{j,N} \right|^2 \right) ^{\frac{r+s_0}{2}}}.\\
\end{aligned}
\end{equation}
Since $ r \geq 3 $ and $ r, r_0, s_0, n_0 \in \mathbb{N} $ with $ r = r_0 + s_0 + 2n_0 $, it follows that $ \tfrac{r + s_0}{2} \geq 1 $ and $ \tfrac{r - n_0}{2} \geq 1 $. Hence, 
\begin{equation}\tag{5.79}\label{5.32}
\mathbb{E} \left[\left. \left( \xi _{1}^{j,N} \right) ^{r_0}\left( \xi _{2}^{j,N} \right) ^{s_0}\left( \xi _{3}^{j,N} \right) ^{2n_0}\right\rvert\ \mathcal{F} _{t_{k-1}} \right] \le \tfrac{Kh}{1+\left|\bar{Y}_{k-1}^{j,N}\right|^2}.\\
\end{equation}
Therefore, the assertion (\ref{5.29}) holds. 

By combining (\ref{2.6}), (\ref{5.21}), (\ref{5.28}), and (\ref{5.29}), and  leveraging Assumption \ref{ass2.4}, we derive from (\ref{5.20}) that 
\begin{equation}\tag{5.80}\label{5.33}
\begin{aligned}
	&\mathbb{E} \left[ \left.\left( 1+\left| Y_{k}^{j,N} \right|^2 \right) ^p \right\rvert\ \mathcal{F} _{t_{k-1}}\right]\\
	\le & \left\{ 1+\tfrac{2p\left.\left \langle \bar{Y}_{k-1}^{j,N},b\left( \bar{Y}_{k-1}^{j,N},\mu _{k-1}^{\bar{Y}_{k-1}^{j,N},N} \right) h \right.\right \rangle +p\left(4p-3\right)h\left|\left| \sigma \left( \bar{Y}_{k-1}^{j,N},\mu _{k-1}^{\bar{Y}_{k-1}^{j,N},N} \right) \right|\right| ^2}{1+\left| \bar{Y}_{k-1}^{j,N} \right|^2} +\sum_{r=3}^p{C_{p}^{r} \tfrac{Kh}{1+\left|\bar{Y}_{k-1}^{j,N}\right|^2}}\right\} \left( 1+\left| \bar{Y}_{k-1}^{j,N} \right|^2 \right) ^p \\
	&+K_{16}ph\left\{ 1+\tfrac{2\left(p-1\right)\left.\left \langle \bar{Y}_{k-1}^{j,N},b\left( \bar{Y}_{k-1}^{j,N},\mu _{k-1}^{\bar{Y}_{k-1}^{j,N},N} \right) h \right.\right \rangle +\left(p-1\right)\left(4p-7\right)h\left|\left| \sigma \left( \bar{Y}_{k-1}^{j,N},\mu _{k-1}^{\bar{Y}_{k-1}^{j,N},N} \right) \right|\right| ^2}{1+\left| \bar{Y}_{k-1}^{j,N} \right|^2} +\sum_{r=3}^p{C_{p-1}^{r} \tfrac{Kh}{1+\left|\bar{Y}_{k-1}^{j,N}\right|^2}}\right\}\\
    &\times\left( 1+\left| \bar{Y}_{k-1}^{j,N} \right|^2 \right) ^{p-1}\\
	\le&  \left\{ 1+\tfrac{ph\left[ -a_1\left| \bar{Y}_{k-1}^{j,N} \right|^2+a_2\mathcal{W} _2\left( \mu _{k-1}^{\bar{Y}_{k-1}^{j,N},N} \right) ^2+C \right]}{1+\left| \bar{Y}_{k-1}^{j,N} \right|^2} \right\} \left( 1+\left| \bar{Y}_{k-1}^{j,N} \right|^2 \right) ^p \\
	&+K_{21}h\left\{ 1+\tfrac{\left(p-1\right)h\left[ -a_1\left| \bar{Y}_{k-1}^{j,N} \right|^2+a_2\mathcal{W} _2\left( \mu _{k-1}^{\bar{Y}_{k-1}^{j,N},N} \right) ^2+C \right]}{1+\left| \bar{Y}_{k-1}^{j,N} \right|^2} \right\} \left( 1+\left| \bar{Y}_{k-1}^{j,N} \right|^2 \right) ^{p-1} \\
	\leq&  \left\{ 1+\tfrac{ph\left[ -a_1\left| \bar{Y}_{k-1}^{j,N} \right|^2+\tfrac{a_2}{N}\sum_{i=1}^N{\left|\bar{Y}_{k-1}^{i,N}\right|^2}\right]}{1+\left| \bar{Y}_{k-1}^{j,N} \right|^2} \right\} \left( 1+\left| \bar{Y}_{k-1}^{j,N} \right|^2 \right) ^p \\
	&+K_{22}h \left\{ 1+\tfrac{\left(p-1\right)h\left[ -a_1\left| \bar{Y}_{k-1}^{j,N} \right|^2+\tfrac{a_2}{N}\sum_{i=1}^N{\left|\bar{Y}_{k-1}^{i,N}\right|^2}\right]}{1+\left| \bar{Y}_{k-1}^{j,N} \right|^2} \right\} \left( 1+\left| \bar{Y}_{k-1}^{j,N} \right|^2 \right) ^{p-1} \\
	\leq&\left( 1-a_1ph \right)  \left( 1+\left| \bar{Y}_{k-1}^{j,N} \right|^2 \right) ^p+\tfrac{a_2ph}{N}\sum_{i=1}^N{ \left( 1+\left| \bar{Y}_{k-1}^{i,N} \right|^2 \right) \left( 1+\left| \bar{Y}_{k-1}^{j,N} \right|^2 \right) ^{p-1}}\\
&+  K_{23}h\left(1-a_{1}\left(p-1\right)h\right) \left( 1+\left| \bar{Y}_{k-1}^{j,N} \right|^2 \right) ^{p-1}\\
&+K_{23}h\tfrac{a_2\left(p-1\right)h}{N}\sum_{i=1}^N{ \left( 1+\left| \bar{Y}_{k-1}^{i,N} \right|^2 \right) \left( 1+\left| \bar{Y}_{k-1}^{j,N} \right|^2 \right) ^{p-2}} .\\
\end{aligned}
\end{equation}
Applying the expectation operator to both sides of (\ref{5.33}), and observing that the sequence $\left\{ \bar{Y}_{k-1}^{j,N} \right\} _{1\leqslant j\leqslant N}$ is identically distributed, which implies that both the sequences $\left\{ \left( 1+\left| \bar{Y}_{k-1}^{j,N} \right|^2 \right)^p\right\} _{1\leqslant j\leqslant N}$ and $\left\{ \left( 1+\left| \bar{Y}_{k-1}^{j,N} \right|^2 \right)^{p-1}\right\} _{1\leqslant j\leqslant N}$ are also identically distributed, we apply (\ref{5.1}), (\ref{5.4}), and H\"{o}lder's inequality to  derive 
\begin{equation}\tag{5.81}\label{5.34}
\begin{aligned}
&\mathbb{E}\left[\left(1+\left|Y^{j,N}_k\right|^2\right)^p  \right]\\
\le& \left( 1-a_1ph \right) \mathbb{E} \left[ \left( 1+\left| \bar{Y}_{k-1}^{j,N} \right|^2 \right) ^p \right] 
\\
&+\tfrac{a_2ph}{N}\sum_{i=1}^N{\left\{ \mathbb{E} \left[ \left( 1+\left| \bar{Y}_{k-1}^{i,N} \right|^2 \right) ^p \right] \right\} ^{\frac{1}{p}}\left\{ \mathbb{E} \left[ \left( 1+\left| \bar{Y}_{k-1}^{j,N} \right|^2 \right) ^p \right] \right\} ^{\frac{p-1}{p}}}
\\
&+K_{23}h\left(1-a_{1}\left(p-1\right)h\right) \mathbb{E} \left[ \left( 1+\left| \bar{Y}_{k-1}^{j,N} \right|^2 \right) ^{p-1}\right]\\
&+K_{23}h\tfrac{a_2\left(p-1\right)h}{N}\sum_{i=1}^N{\left\{ \mathbb{E} \left[ \left( 1+\left| \bar{Y}_{k-1}^{i,N} \right|^2 \right) ^{p-1} \right] \right\} ^{\frac{1}{p-1}}\left\{ \mathbb{E} \left[ \left( 1+\left| \bar{Y}_{k-1}^{j,N} \right|^2 \right) ^{p-1} \right] \right\} ^{\frac{p-2}{p-1}}}\\
 \leq&\left(1-p\left(a_1-a_2\right) h\right)\mathbb{E}\left[\left(1+\left|\bar{Y}_{k-1}^{j,N}\right|^2\right)^p \right]+K_{24} h\mathbb{E}\left[\left(1+\left|\bar{Y}_{k-1}^{j,N}\right|^2\right)^{p-1}\right]\\
\leq&\left(1-p\left(a_1-a_2\right) h\right)\mathbb{E}\left[\left(1+\left|Y^{j,N}_{k-1}\right|^2\right)^p\right]+K_{24} h\mathbb{E}\left[\left(1+\left|Y^{j,N}_{k-1}\right|^2\right)^{p-1}\right].
\end{aligned}
\end{equation}
Applying Lemma~\ref{lemma5.2} and subsequently taking the supremum over $j$ establishes the following inequality for all positive integers $p$ satisfying $1 \leq p < \left\lfloor \frac{q_0}{2} \right\rfloor$:
\begin{equation}\tag{5.82}\label{5.82}
\underset{1\leqslant j\leqslant N}{\sup}\mathbb{E}\left[\left|Y^{j,N}_k\right|^{2 p}\right] \leq K_6 \underset{1\leqslant j\leqslant N}{\sup}\mathbb{E}\left[\left(1+\left|X_0^{j}\right|^{2 p}\right)\right]= K_6\mathbb{E}\left[\left(1+\left|X_0\right|^{2 p}\right)\right]\leq K_6\mathbb{E}\left[\left(1+\left|X_0\right|^{4 p}\right)\right].
\end{equation}

For non-integer values of $p$, let $p_1 = \left\lfloor p \right\rfloor$ and $\theta = p - p_1 \in \left[ 0,1 \right)$. Since $1 \leq p < \left\lfloor \frac{q_0}{2} \right\rfloor$, it follows that $1 \leq p_1 \leq p < p_1 + 1 \leq \left\lfloor \frac{q_0}{2} \right\rfloor$. Applying H\"older's inequality, we obtain, for any $1 \leq j \leq N$,
\begin{equation}\tag{5.83}\label{5.666}
\begin{aligned}
    \mathbb{E} \left[ \left| Y_{k}^{j,N} \right|^{2p} \right] &=\mathbb{E} \left[ \left| Y_{k}^{j,N} \right|^{2p_1\left( 1-p+p_1 \right) +2\left( p_1+1 \right) \left( p-p_1 \right)} \right] 
\\
&\le \left( \mathbb{E} \left[ \left| Y_{k}^{j,N} \right|^{2p_1} \right] \right) ^{1-\theta}\left( \mathbb{E} \left[ \left| Y_{k}^{j,N} \right|^{2\left( p_1+1 \right)} \right] \right) ^{\theta}.
\end{aligned}
\end{equation}
By the result \eqref{5.82} already established for positive integers, and noting that $2p_1<2\left(p_1+1\right)\leq4p_1\leq4p$, we obtain
\begin{equation}\tag{5.84}\label{5.669}
\begin{aligned}
	\mathbb{E} \left[ \left| Y_{k}^{j,N} \right|^{2p} \right]
	\le& K_{6}^{1-\theta}K_{6}^{\theta}\left( \mathbb{E} \left[ \left( 1+\left| X_0 \right|^{2p_1} \right) \right] \right) ^{1-\theta}\left( \mathbb{E} \left[ \left( 1+\left| X_0 \right|^{2\left(p_1+1\right)} \right) \right] \right) ^{\theta}\\
    \le& K_{6}\left( \mathbb{E} \left[ \left( 1+\left| X_0 \right|^{4p} \right) \right] \right) ^{1-\theta}\left( \mathbb{E} \left[ \left( 1+\left| X_0 \right|^{4p} \right) \right] \right) ^{\theta}\\
	\le& \bar{K}\mathbb{E} \left[ \left( 1+\left| X_0 \right|^{4p} \right) \right] .\\
\end{aligned}
\end{equation}
Taking the supremum over $j$ establishes inequality \eqref{5.16} for all $p$ satisfying $1 \leq p < \left\lfloor \frac{q_0}{2} \right\rfloor$.\\
The theorem is thus proved.
\end{proof} 
\subsection{Proof of Theorem \ref{theorem5.7}}
To establish this theorem, we first introduce several useful lemmas.
\begin{lemma}\label{lemma5.4}
  Assume that  Assumptions \ref{ass2.1}--\ref{ass2.4} are satisfied, then, for $1 \leq\kappa\leq \tfrac{q_0}{2}$ and $0<h \leq h_2$,
\begin{equation}\tag{5.85}\label{5.56}
\underset{1\leqslant j\leqslant N}{\sup}\mathbb{E}\left[\left|X^{j,N}\left(h\right)-X^{j,N}_0\right|^{2}\right] \leq C_{15} \mathbb{E}\left[\left(1+\left|X_0\right|^{2 \kappa }\right)\right] h ,
\end{equation}
\begin{equation}\tag{5.86}\label{5.57}
\underset{1\leqslant j\leqslant N}{\sup}\mathbb{E}\left[\left|X^{j,N}\left(h\right)-X^{j,N}_0\right|^{4}\right] \leq C_{16} \mathbb{E}\left[\left(1+\left|X_0\right|^{4 \kappa }\right)\right] h^2 ,
\end{equation}
where $C_{15}$ and $C_{16}$ are constants independent of $h, N$.
\end{lemma}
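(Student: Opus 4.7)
The plan is to rewrite the one-step increment in integral form using the interacting particle system \eqref{interacting}, namely
\begin{equation*}
X^{j,N}(h)-X^{j,N}_0=\int_0^h b\bigl(X_s^{j,N},\mu_s^{X,N}\bigr)\,ds+\int_0^h \sigma\bigl(X_s^{j,N},\mu_s^{X,N}\bigr)\,dW_s^j,
\end{equation*}
and then to bound the $2$-nd and $4$-th moments by splitting drift and diffusion contributions via the elementary inequality $|a+b|^{2p}\le 2^{2p-1}(|a|^{2p}+|b|^{2p})$.

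For the $L^2$ estimate \eqref{5.56}, I would apply the Cauchy–Schwarz inequality in time to the drift integral and the It\^o isometry to the stochastic integral, yielding an upper bound of the form $2h\int_0^h \mathbb{E}[|b(X_s^{j,N},\mu_s^{X,N})|^2]\,ds+2\int_0^h \mathbb{E}[\|\sigma(X_s^{j,N},\mu_s^{X,N})\|^2]\,ds$. Substituting the polynomial growth bounds \eqref{2.15}–\eqref{2.16} and using the empirical-measure identity
\begin{equation*}
\mathcal{W}_2(\mu_s^{X,N})^2=\tfrac{1}{N}\sum_{i=1}^{N}|X_s^{i,N}|^2,
\end{equation*}
followed by the uniform-in-time moment bounds from Theorem~\ref{momentboundednessofthesolution} (valid because $2\kappa\le q_0$), reduces every expectation on the right to a constant multiple of $\mathbb{E}[1+|X_0|^{2\kappa}]$. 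Since $\kappa+1\le 2\kappa$ for $\kappa\ge 1$, the diffusion contribution is dominated by the same quantity. Absorbing the remaining $h^2$ into $h$ (which uses $h\le h_2\le 1$) delivers \eqref{5.56}.

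For the $L^4$ estimate \eqref{5.57}, the strategy is identical, except that Hölder's inequality is applied with exponent~$4$ to the drift integral (picking up a factor of $h^3$) and the Burkholder–Davis–Gundy inequality, combined with Cauchy–Schwarz in time, is applied to the stochastic integral (also producing $h\int_0^h\mathbb{E}[\|\sigma\|^4]\,ds$). Squaring the growth bounds \eqref{2.15}–\eqref{2.16}, invoking identical distribution to replace $\mathbb{E}[\mathcal{W}_2(\mu_s^{X,N})^4]$ by $\mathbb{E}[|X_s^{j,N}|^4]$ up to a harmless combinatorial constant via Jensen's inequality on the empirical sum, and then applying Theorem~\ref{momentboundednessofthesolution} with $p=2\kappa$ (requiring $4\kappa\le 2q_0$, which is exactly the hypothesis $\kappa\le q_0/2$) produces a bound of the form $C h^2\mathbb{E}[1+|X_0|^{4\kappa}]$.

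The main technical nuisance is the measure-dependent term: one must avoid losing a factor of $N$ when passing from $\mathcal{W}_2(\mu_s^{X,N})^{2p}$ to individual particle moments. This is handled by the empirical-measure identity combined with Jensen's inequality (to exchange $\left(\tfrac1N\sum\right)^p$ with $\tfrac1N\sum(\cdot)^p$), after which identical distribution of the particles renders the bound independent of $N$.
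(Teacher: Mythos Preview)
Your proposal is correct and follows essentially the same approach as the paper: both split the increment into drift and diffusion parts, apply H\"older's inequality (resp.\ BDG) to the deterministic (resp.\ stochastic) integral, invoke the growth bounds \eqref{2.15}--\eqref{2.16}, handle the measure term via the empirical-measure identity \eqref{2.6} together with identical distribution of the particles, and close with the uniform moment bounds of Theorem~\ref{momentboundednessofthesolution}. The only cosmetic difference is that the paper expands $|b|^4$ and $\|\sigma\|^4$ explicitly into cross-terms before applying H\"older and identical distribution, whereas you invoke Jensen on the empirical sum---both routes lead to the same conclusion.
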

\begin{proof} By applying (\ref{2.6}), (\ref{2.15}), and (\ref{2.16}), along with H\"older's inequality and the It\^o isometry, we derive
\begin{equation}\tag{5.87}\label{5.58}
\begin{aligned}
 \mathbb{E} \bigg[\big|X^{j,N}\left(h\right)&-X^{j,N}_0\big|^{2}\bigg] \\
 \leq& 2 h \int_0^h \mathbb{E}\left[\left|b\left(X_s^{j, N}, \mu_s^{X, N}\right)\right|^{2}\right] \mathrm{d} s+2  \int_0^h \mathbb{E}\left[\left|\left|\sigma\left(X_s^{j, N}, \mu_s^{X, N}\right)\right|\right|^{2}\right] \mathrm{d} s \\
 \leq& 2 h \int_0^h\left[K_4\left\{ \mathbb{E}\left[\left|X_s^{j, N}\right|^{2\kappa}\right]+1 \right\} +\tfrac{a_3}{N}\sum_{i=1}^N{\mathbb{E}\left[\left|{X}^{i,N}_s\right|^2\right]}\right] \mathrm{d} s\\
&+2  \int_0^h \left[K_5\left\{ \mathbb{E}\left[\left|X_s^{j, N}\right|^{\kappa+1}\right]+1\right\} +\tfrac{a_4}{N}\sum_{i=1}^N{\mathbb{E}\left[\left|{X}^{i,N}_s\right|^2\right]}\right] \mathrm{d} s.\\
  \end{aligned}
\end{equation}
Since $\left\{ {X}^{i,N}_s \right\} _{1\leqslant i\leqslant N}$ is identically distributed, $\left\{ \left|{X}^{i,N}_s\right|^2 \right\} _{1\leqslant i\leqslant N}$ is also identically distributed.
Applying (\ref{3.25}), we obtain, for all $1\leq j\leq N$,   
\begin{equation}\tag{5.88}\label{5.59}
\begin{aligned}
 \mathbb{E}\left[\left|X^{j,N}\left(h\right)-X^{j,N}_0\right|^{2}\right]  
 \leq& 2 h \int_0^hK_{25}\left\{ \mathbb{E}\left[\left|X_{0}\right|^{2\kappa}\right]+1 \right\} \mathrm{d} s+2  \int_0^h K_{26}\left\{ \mathbb{E}\left[\left|X_{0}\right|^{\kappa+1}\right]+1\right\} \mathrm{d} s \\
\leq&  C_{15} \mathbb{E}\left[\left(1+\left|X_0\right|^{2 \kappa }\right)\right] h.
\end{aligned}
\end{equation}
Taking the supremum over $ j $ in (\ref{5.59}) yields (\ref{5.56}).

By applying (\ref{2.6}), (\ref{2.15}), and (\ref{2.16}), along with H\"older's inequality and the BDG inequality, we derive
\begin{equation}\tag{5.89}\label{5.60}
\begin{aligned}
 \mathbb{E} \bigg[\big|X^{j,N}\left(h\right)&-X^{j,N}_0\big|^{4}\bigg] \\
 \leq& 8 h^{3} \int_0^h \mathbb{E}\left[\left|b\left(X_s^{j, N}, \mu_s^{X, N}\right)\right|^{4}\right] \mathrm{d} s+36h  \int_0^h \mathbb{E}\left[\left|\left|\sigma\left(X_s^{j, N}, \mu_s^{X, N}\right)\right|\right|^{4}\right] \mathrm{d} s \\
\leq& 8 h^{3} \int_0^h K_4^{2} \mathbb{E}\left[\left(\left|X_s^{j, N}\right|^{2\kappa}+1\right)^2\right]+\tfrac{2 K_4 a_3}{N}\sum_{i=1}^N{\mathbb{E}\left[\left(1+\left|{X}^{j,N}_s\right|^{2\kappa}\right)\left|{X}^{i,N}_s\right|^2\right]}\\
&+\tfrac{a_3^{2}}{N}\sum_{i=1}^N{\mathbb{E}\left[\left|{X}^{i,N}_s\right|^4\right]}\mathrm{d} s\\
&+ 36 h \int_0^h K_5^{2} \mathbb{E}\left[\left(\left|X_s^{j, N}\right|^{\kappa+1}+1\right)^2\right]+\tfrac{2 K_5 a_4}{N}\sum_{i=1}^N{\mathbb{E}\left[\left(1+\left|{X}^{j,N}_s\right|^{\kappa+1}\right)\left|{X}^{i,N}_s\right|^2\right]}\\
&+\tfrac{a_4^{2}}{N}\sum_{i=1}^N{\mathbb{E}\left[\left|{X}^{i,N}_s\right|^4\right]}\mathrm{d} s.\\
  \end{aligned}
\end{equation}
Since $\left\{ {X}^{i,N}_s \right\} _{1\leqslant i\leqslant N}$ is identically distributed, $\left\{ \left|{X}^{i,N}_s\right|^4 \right\} _{1\leqslant i\leqslant N}$ is also identically distributed.
Applying (\ref{3.25}), we obtain, for all $1\leq j\leq N$,   
\begin{equation}\tag{5.90}\label{5.61}
\begin{aligned}
 \mathbb{E}\left[\left|X^{j,N}\left(h\right)-X^{j,N}_0\right|^{4}\right]  
 \leq& K_{27} h^4 \mathbb{E}\left[\left(\left|X_{0}\right|^{4\kappa}+1\right)\right]  +K_{27} h^4\mathbb{E}\left[\left(\left|X_{0}\right|^{2\kappa+2}+1\right)\right] \\
 &+ K_{27} h^4 \mathbb{E}\left[\left(\left|X_{0}\right|^{4}+1\right)\right] +K_{27} h^2 \mathbb{E}\left[\left(\left|X_{0}\right|^{2\kappa+2}+1\right)\right]\\
 &+ K_{27} h^2 \mathbb{E}\left[\left(\left|X_{0}\right|^{\kappa+3}+1\right)\right] +K_{27} h^2 \mathbb{E}\left[\left(\left|X_{0}\right|^{4}+1\right)\right]\\
\leq&  C_{16} \mathbb{E}\left[\left(1+\left|X_0\right|^{4 \kappa }\right)\right] h^2.\\
\end{aligned}
\end{equation}
Taking the supremum over $j$ in (\ref{5.61}) yields (\ref{5.57}).
\end{proof} 
The Euler-Maruyama method for the interacting particle system \eqref{interacting} takes the form:
\begin{equation}\tag{5.91}\label{5.64}
Y^{j,N}_{k+1}:={Y}^{j,N}_k+hb\left({Y}^{j,N}_k,\mu^{{Y}^{j,N}_k,N}_k\right)+\sigma\left({Y}^{j,N}_k,\mu^{{Y}^{j,N}_k,N}_k\right) \Delta W^{j}_{k},
\end{equation}
and the corresponding one-step approximation takes the form
\begin{equation}\tag{5.92}\label{5.65}
Y^{j,N}_{E}\left( t,x^{j,N};t+h \right) :=x^{j,N}+\int_t^{t+h}{b\left( x^{j,N},\mu ^{x^{j,N},N} \right)}ds+\int_t^{t+h}{\sigma \left( x^{j,N},\mu ^{x^{j,N},N} \right)}dW^j\left( s \right).
\end{equation}
\begin{lemma}\label{lemmma5.5}
       For $h \in \left(0,1\right]$ and $q \geq 1$, the following inequality holds:
\begin{equation}\tag{5.93}\label{5.62}
\left|x-\psi\left(x\right)\right| \leq 2\left(1+\left|x\right|^{q+1}\right) h^{\frac{q}{2\left(\kappa+2\right)}}.
\end{equation}
\end{lemma}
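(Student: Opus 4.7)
The plan is to proceed by a simple case split based on which branch of the $\min$ in the definition $\psi(x) = \min\{1, h^{-\frac{1}{2(\kappa+2)}}|x|^{-1}\}x$ is active. Let $r := h^{-\frac{1}{2(\kappa+2)}}$, so $\psi(x)$ either equals $x$ (when $r|x|^{-1}\ge 1$) or equals $\frac{r}{|x|}x$ (when $r|x|^{-1}<1$), i.e.\ $\psi(x)$ is the radial projection of $x$ onto the ball of radius $r$ centered at the origin.

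In the first case $|x|\le r$, we have $\psi(x)=x$, so $|x-\psi(x)|=0$ and the bound $2(1+|x|^{q+1})h^{\frac{q}{2(\kappa+2)}}\ge 0$ holds trivially.

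In the second case $|x|>r$, the vectors $x$ and $\psi(x)$ are colinear and point in the same direction, so $|x-\psi(x)|=|x|-r\le |x|$. Since $|x|>r=h^{-\frac{1}{2(\kappa+2)}}$, raising to the $q$-th power gives $|x|^{q}>h^{-\frac{q}{2(\kappa+2)}}$, which rearranges to $1<|x|^q h^{\frac{q}{2(\kappa+2)}}$. Multiplying by $|x|$ yields
\begin{equation*}
|x|<|x|^{q+1}h^{\frac{q}{2(\kappa+2)}}\le 2\bigl(1+|x|^{q+1}\bigr)h^{\frac{q}{2(\kappa+2)}},
\end{equation*}
which gives the desired bound in this case as well.

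There is no real obstacle here: the lemma is an elementary property of the radial cutoff, and the only thing to notice is that in the nontrivial case the lower bound $|x|>h^{-\frac{1}{2(\kappa+2)}}$ is exactly what converts a linear factor of $|x|$ into $|x|^{q+1}h^{\frac{q}{2(\kappa+2)}}$. The factor of $2$ and the $+1$ inside $(1+|x|^{q+1})$ are present only to make the statement hold uniformly across both cases without having to track constants, but in fact the sharper bound $|x-\psi(x)|\le |x|^{q+1}h^{\frac{q}{2(\kappa+2)}}$ holds on $\{|x|>r\}$.
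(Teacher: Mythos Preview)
Your proof is correct and follows essentially the same approach as the paper: both split on whether $|x|$ exceeds the threshold $h^{-\frac{1}{2(\kappa+2)}}$, note the first case is trivial, and in the second case use $|x|>h^{-\frac{1}{2(\kappa+2)}}$ to convert a factor of $|x|$ into $|x|^{q+1}h^{\frac{q}{2(\kappa+2)}}$. Your intermediate bound $|x-\psi(x)|=|x|-r\le |x|$ is slightly sharper than the paper's $|x-\psi(x)|\le 2|x|$, but the argument is otherwise identical.
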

\begin{proof}
Since $\psi\left(x\right) = \min \left\{ 1, h^{-\tfrac{1}{2\left(\kappa + 2\right)}} \left|x\right|^{-1} \right\} x$, then we obtain
\begin{equation}\tag{5.94}
\begin{aligned}
    \left| x-\psi \left(x\right) \right|&=\mathbf{1}_{\left\{ \left| x \right|\leq h^{-{\frac{1}{2\left( \kappa +2 \right)}}} \right\}}\left(x\right)\left| x-x \right|+\mathbf{1}_{\left\{ \left| x \right|>h^{-{\frac{1}{2\left( \kappa +2 \right)}}} \right\}}\left(x\right)\left| x-h^{-{\frac{1}{2\left( \kappa +2 \right)}}}\tfrac{x}{\left| x \right|} \right|
\\
&\leq \mathbf{1}_{\left\{ \left| x \right|>h^{-{\frac{1}{2\left( \kappa +2 \right)}}} \right\}}\left(x\right)\left| x-h^{-{\frac{1}{2\left( \kappa +2 \right)}}}\tfrac{x}{\left| x \right|} \right|
\\
&\leq 2\left| x \right|\mathbf{1}_{\left\{ \left| x \right|>h^{-{\frac{1}{2\left( \kappa +2 \right)}}} \right\}}\left(x\right)\leq 2\left| x \right|^{q+1}h^{{\frac{q}{2\left( \kappa +2 \right)}}}<2\left( 1+\left| x \right|^{q+1} \right) h^{{\frac{q}{2\left( \kappa +2 \right)}}}.\\
\end{aligned}
\end{equation}
This gives the desired estimate. 
\end{proof}
\begin{lemma}\label{lemma5.6}
Assuming that Assumptions~\ref{ass2.1}--\ref{ass2.4} are satisfied.  For any $1\leq \kappa \leq \tfrac{q_0-3}{2}$ and $0<h \leq h_2$, we obtain the following bounds: 
\begin{equation}\tag{5.95}\label{5.66}
\underset{1\leqslant j\leqslant N}{\sup}\left|\mathbb{E}\left[X^{j,N}\left(t, x^{j,N} ; t+h\right)-Y^{j,N}_{E}\left(t, x^{j,N}; t+h\right)\right]\right| \leq K_{28}\mathbb{E}\left[\left(1+\left|x^{j,N}\right|^{4 \kappa+6}\right)\right] h^{\frac{3}{2}} ,
\end{equation}
\begin{equation}\tag{5.96}\label{5.67}
\underset{1\leqslant j\leqslant N}{\sup}\left|\mathbb{E}\left[Y^{j,N}_{E}\left(t, x^{j,N} ; t+h\right)-Y^{j,N}\left(t, x^{j,N} ; t+h\right)\right]\right|  \leq K_{29}\mathbb{E}\left[\left(1+\left|x^{j,N}\right|^{3 \kappa+7}\right)\right] h^{\frac{3}{2}}, 
\end{equation}
\begin{equation}\tag{5.97}\label{5.68}
\underset{1\leqslant j\leqslant N}{\sup}\left|\mathbb{E}\left[X^{j,N}\left(t, x^{j,N} ; t+h\right)-Y^{j,N}\left(t, x^{j,N} ; t+h\right)\right]\right|  \leq K_{7}\mathbb{E}\left[\left(1+\left|x^{j,N}\right|^{ 4\kappa+6}\right)\right] h^{\frac{3}{2}}.
\end{equation}
\end{lemma}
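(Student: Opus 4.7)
The plan is to prove the three estimates in the stated order, with \eqref{5.68} following from \eqref{5.66} and \eqref{5.67} via the triangle inequality (the exponent $4\kappa+6$ dominates $3\kappa+7$ whenever $\kappa\geq 1$).

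For \eqref{5.66}, subtracting the one-step Euler--Maruyama representation \eqref{5.65} from the integral form of the particle system on $[t,t+h]$ gives
\begin{align*}
X^{j,N}(t,x^{j,N};t+h)-Y^{j,N}_E(t,x^{j,N};t+h)
&=\int_t^{t+h}\bigl[b(X^{j,N}(s),\mu_s^{X,N})-b(x^{j,N},\mu^{x^{j,N},N})\bigr]\,ds\\
&\quad+\int_t^{t+h}\bigl[\sigma(X^{j,N}(s),\mu_s^{X,N})-\sigma(x^{j,N},\mu^{x^{j,N},N})\bigr]\,dW^j(s).
\end{align*}
The stochastic integral has zero mean, so only the drift increment survives. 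I would condition on $x^{j,N}$, bound $|\mathbb{E}[\,\cdot\,|x^{j,N}]|$ by $\{\mathbb{E}[|\cdot|^2|x^{j,N}]\}^{1/2}$, and apply Assumption~\ref{ass2.2} to replace the drift difference by a polynomial factor in $|X^{j,N}(s)|$ and $|x^{j,N}|$ multiplied by $|X^{j,N}(s)-x^{j,N}|^2$, plus a Wasserstein increment controlled by \eqref{2.8}. The short-time bounds of Lemma~\ref{lemma5.4} supply the $L^2$- and $L^4$-estimates on $X^{j,N}(s)-x^{j,N}$, while Theorem~\ref{momentboundednessofthesolution} controls the polynomial prefactors. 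After Cauchy--Schwarz, the inner $\{\mathbb{E}[\,\cdot\,|x^{j,N}]\}^{1/2}$ carries one factor of $h^{1/2}$, and integrating over $[t,t+h]$ contributes a further $h$, producing $h^{3/2}$. Taking the outer expectation and absorbing the accumulated polynomial weights into a single moment yields the factor $\mathbb{E}(1+|x^{j,N}|^{4\kappa+6})$.

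For \eqref{5.67}, I would expand
\begin{align*}
Y^{j,N}_E(t,x^{j,N};t+h)-Y^{j,N}(t,x^{j,N};t+h)
&=\bigl(x^{j,N}-\psi(x^{j,N})\bigr)\\
&\quad+\int_t^{t+h}\bigl[b(x^{j,N},\mu^{x^{j,N},N})-b(\psi(x^{j,N}),\mu^{\psi(x^{j,N}),N})\bigr]\,ds\\
&\quad+\int_t^{t+h}\bigl[\sigma(x^{j,N},\mu^{x^{j,N},N})-\sigma(\psi(x^{j,N}),\mu^{\psi(x^{j,N}),N})\bigr]\,dW^j(s).
\end{align*}
Under expectation the martingale term vanishes. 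Applying Lemma~\ref{lemmma5.5} with $q=3\kappa+6$ yields $|x-\psi(x)|\leq 2(1+|x|^{3\kappa+7})h^{3/2}$, which directly produces the stated polynomial exponent. For the drift correction I would use Lemma~\ref{lemmma5.5} a second time with $q=\kappa+2$ to bound $|x-\psi(x)|$ by $2(1+|x|^{\kappa+3})h^{1/2}$, combine this with Assumption~\ref{ass2.2} together with the contraction $|\psi(x)|\leq|x|$ from \eqref{5.4}, and then multiply by the integration length $h$; this produces a second $h^{3/2}$ contribution with a strictly smaller polynomial weight than $3\kappa+7$, so the projection defect dominates.

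The principal bookkeeping obstacle is to ensure that every intermediate polynomial power remains at most $2q_0$, so that Theorem~\ref{momentboundednessofthesolution} and Lemma~\ref{lemma5.4} are applicable and all expectations are finite; this is the source of the eventual restriction on $\kappa$ in the strong-error theorem that follows. Once \eqref{5.66} and \eqref{5.67} are established, \eqref{5.68} follows immediately by the triangle inequality, with $4\kappa+6\geq 3\kappa+7$ for all $\kappa\geq 1$.
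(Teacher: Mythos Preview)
Your proposal is correct and follows essentially the same route as the paper: for \eqref{5.66} the paper also drops the martingale term, applies Assumption~\ref{ass2.2} pointwise in $s$, splits off the Wasserstein piece via \eqref{2.8} and the identical-distribution property, and then combines Theorem~\ref{momentboundednessofthesolution} with the $L^2$ short-time bound of Lemma~\ref{lemma5.4} to extract the $(s-t)^{1/2}$ factor whose time integral yields $h^{3/2}$; for \eqref{5.67} the paper uses the same three-term decomposition and invokes Lemma~\ref{lemmma5.5} with exactly your choices $q=3(\kappa+2)$ for the projection defect and $q=\kappa+2$ for the drift correction. The only minor imprecision is that the $L^4$ estimate from Lemma~\ref{lemma5.4} is not needed here (it enters only in the strong-error Lemma~\ref{lemma5.7}), but this does not affect the argument.
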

\begin{proof} To establish the first inequality, we invoke Assumption~\ref{ass2.2}, together with \eqref{2.8}, \eqref{interacting}, and \eqref{5.65}, along with Jensen's inequality and H\"older's inequality, to obtain
\begin{equation}\tag{5.98}\label{5.69}
\begin{aligned}
&\left|\mathbb{E}\left[X^{j,N}\left(t, x^{j,N} ; t+h\right)-Y^{j,N}_E\left(t, x^{j,N} ; t+h\right)\right]\right|\\
\leq& \left|\mathbb{E}\left[\int_t^{t+h} b\left({X}^{j,N}_s,\mu^{X,N}_s\right)-b\left( x^{j,N},\mu ^{x^{j,N},N} \right) \mathrm{d} s\right]\right|\\
\leq & \sqrt{L_1} \int_t^{t+h} \mathbb{E}\left[ \left\{\left( 1+\left|X_{s}^{j,N}\right|^{2\kappa -2}+\left|x^{j,N}\right|^{2\kappa -2} \right) \left|X_{s}^{j,N}-x^{j,N}\right|^2\right.\right.\\
&+\left.\left.\tfrac{1}{N}\sum_{i=1}^N{\left|X_{s}^{i,N}-x^{i,N}\right|^2}\right\}^{\frac{1}{2}}\right] \mathrm{d} s\\
\leq & \sqrt{L_1} \int_t^{t+h} \left\{\mathbb{E}\left[  \left(1+\left|X_{s}^{j,N}\right|^{2\kappa -2}+\left|x^{j,N}\right|^{2\kappa -2}\right)\right]\right\}^{\frac{1}{2}} \left(\mathbb{E}\left[\left|X_{s}^{j,N}-x^{j,N}\right|^{2}\right]\right)^{\frac{1}{2}}\mathrm{d} s\\
 &+ \sqrt{L_1} \int_t^{t+h} \left(\tfrac{1}{N}\sum_{i=1}^N{\mathbb{E}\left[\left|X_{s}^{i,N}-x^{i,N}\right|^2\right]}\right) ^{\frac{1}{2}}\mathrm{d} s.\\
\end{aligned}
\end{equation}
Since the pairs $\left(X_s^{i,N}, x^{i,N}\right)$ for $1 \leq i \leq N$ are solutions to the same interacting particle system \eqref{interacting}, they are identically distributed. Consequently, $\left\{\left|X_s^{i,N} - x^{i,N}\right|^2\right\}_{1 \leq i \leq N}$ is also identically distributed sequence. Taking the supremum over $j$ and utilizing Lemma~\ref{lemma5.4} together with Theorem~\ref{momentboundednessofthesolution}, we derive
\begin{equation}\tag{5.99}\label{5.71}
\begin{aligned}
&\underset{1\leq j\leq N}{\sup}\left|\mathbb{E}\left[X^{j,N}\left(t, x^{j,N} ; t+h\right)-Y^{j,N}_E\left(t, x^{j,N} ; t+h\right)\right]\right|\\
\leq&  \underset{1\leq j\leq N}{\sup} K_{30} \int_t^{t+h} \left\{\mathbb{E}\left[  \left(1+\left|X_{s}^{j,N}\right|^{2\kappa -2}+\left|x^{j,N}\right|^{2\kappa -2}\right)\right]\right\}^{\frac{1}{2}} \left(\mathbb{E}\left[\left|X_{s}^{j,N}-x^{j,N}\right|^{2}\right]\right)^{\frac{1}{2}}\mathrm{d} s\\
\leq & K_{31} \int_t^{t+h}\left\{\mathbb{E}\left[\left(1+\left|x^{j,N}\right|^{2\kappa-2}\right)\right]\right\}^{\frac{1}{2}}\left\{\mathbb{E}\left[\left(1+\left|x^{j,N}\right|^{2\kappa}\right)\right]\right\}^{\frac{1}{2}}\left(s-t\right)^{\frac{1}{2}} \mathrm{d} s \\
\leq&  K_{28}\mathbb{E}\left[\left(1+\left|x^{j,N}\right|^{4 \kappa+6}\right) \right]h^{\frac{3}{2}}.
\end{aligned}
\end{equation}
Furthermore, by leveraging \eqref{2.8}, \eqref{5.4}, \eqref{5.63}, and \eqref{5.65}, along with Assumption~\ref{ass2.2}, H\"older's inequality, and Jensen's inequality, we  derive  for  the second term 
\begin{equation}\tag{5.100}\label{5.72}
\begin{aligned}
& \left|\mathbb{E}\left[Y^{j,N}_{E}\left(t, x^{j,N} ; t+h\right)-Y^{j,N}\left(t, x^{j,N}; t+h\right)\right]\right| \\
\leq& \mathbb{E}\left[\left|x^{j,N}-\psi\left(x^{j,N}\right)\right|\right]+\mathbb{E}\left[\int_t^{t+h}\left| b\left( x^{j,N},\mu ^{x^{j,N},N} \right)-b\left( \psi\left(x^{j,N}\right),\mu ^{\psi\left(x^{j,N}\right),N} \right)\right|\mathrm{d} s\right]\\
\leq &\mathbb{E}\left[\left|x^{j,N}-\psi\left(x^{j,N}\right)\right|\right]\\
&+\sqrt{L_1} \int_t^{t+h} \mathbb{E}\left[ \left( 1+\left|x^{j,N}\right|^{2\kappa -2}+\left|\psi\left(x^{j,N}\right)\right|^{2\kappa -2} \right) \left|x^{j,N}-\psi\left(x^{j,N}\right)\right|^2\right] ^{\frac{1}{2}}\mathrm{d} s\\
&+\sqrt{L_1} \int_t^{t+h} \mathbb{E}\left[\left\{\tfrac{1}{N}\sum_{i=1}^N{\left|x^{i,N}-\psi\left(x^{i,N}\right)\right|^2}\right\}^{\frac{1}{2}}\right] \mathrm{d} s\\
\leq &\mathbb{E}\left[\left|x^{j,N}-\psi\left(x^{j,N}\right)\right|\right]+\sqrt{L_1} \int_t^{t+h} \mathbb{E}\left[ \left( 1+2\left|x^{j,N}\right|^{2\kappa -2}\right)^{\frac{1}{2}} \left|x^{j,N}-\psi\left(x^{j,N}\right)\right|\right] 
\mathrm{d} s\\
&+\sqrt{L_1}\int_t^{t+h} \left(\mathbb{E}\left[\left|x^{j,N}-\psi\left(x^{j,N}\right)\right|^2\right] \right)^{\frac{1}{2}}\mathrm{d} s\\
\leq&2\mathbb{E}\left[\left(1+\left|x^{j,N}\right|^{q_{3}+1}\right)\right] h^{\frac{q_{3}}{2\left(\kappa+2\right)}}+K_{32} \int_t^{t+h} \mathbb{E}\left[\left(1+\left|x^{j,N}\right|^{\kappa -1}\right)\left(1+\left|x^{j,N}\right|^{q_{4}+1}\right)\right]h^{\frac{q_{4}}{2\left(\kappa+2\right)}}\mathrm{d} s\\
&+K_{32} \int_t^{t+h}\left\{\mathbb{E}\left[\left(1+\left|x^{j,N}\right|^{q_{4}+1}\right)^2\right]\right\}^{\frac{1}{2}}h^{\frac{q_{4}}{2\left(\kappa+2\right)}}\mathrm{d} s\\
\leq&2\mathbb{E}\left[\left(1+\left|x^{j,N}\right|^{q_{3}+1}\right)\right] h^{\frac{q_{3}}{2\left(\kappa+2\right)}}+K_{32}\mathbb{E}\left[\left(1+\left|x^{j,N}\right|^{q_{4}+\kappa}\right)\right]h^{\frac{q_{4}}{2\left(\kappa+2\right)}+1}\\
&+K_{32} \mathbb{E}\left[\left(1+\left|x^{j,N}\right|^{2q_{4}+2}\right)\right]h^{\frac{q_{4}}{2\left(\kappa+2\right)}+1}.\\
\end{aligned}
\end{equation}
Taking the supremum over $ j $ in \eqref{5.72} and setting $ q_3 = 3\left(\kappa + 2\right) $ and $ q_4 = \kappa + 2 $, we obtain
\begin{equation}\tag{5.101}\label{5.73}
\begin{aligned}
& \underset{1\leqslant j\leqslant N}{\sup}\left|\mathbb{E}\left[Y^{j,N}_{E}\left(t, x^{j,N} ; t+h\right)-Y^{j,N}\left(t, x^{j,N} ; t+h\right)\right]\right| \\
\leq& 2\mathbb{E}\left[\left(1+\left|x^{j,N}\right|^{3\kappa+7}\right)\right] h^{\frac{3}{2}}+K_{32}\mathbb{E}\left[\left(1+\left|x^{j,N}\right|^{2\kappa+2}\right)\right]h^{\frac{3}{2}}+K_{32}\mathbb{E}\left[\left(1+\left|x^{j,N}\right|^{2\kappa+6}\right)\right]h^{\frac{3}{2}} \\
 \leq& K_{29}\mathbb{E}\left[\left(1+\left|x^{j,N}\right|^{3 \kappa+7}\right)\right] h^{\frac{3}{2}}.
\end{aligned}
\end{equation}
In conclusion, combining \eqref{5.71} and \eqref{5.73} readily yields the bound in \eqref{5.68}, which corresponds to \eqref{5.85} in Theorem~\ref{theorem5.7}. 
This completes the proof.
\end{proof} 

\begin{lemma}\label{lemma5.7}
Assuming that Assumptions~\ref{ass2.1}--\ref{ass2.4} are satisfied, for any $1\leq \kappa \leq \tfrac{q_0-6}{4}$ and $0<h \leq h_2$, we can derive the following estimates:
\begin{equation}\tag{5.102}\label{5.75}
    \underset{1\leqslant j\leqslant N}{\sup} \left\{\mathbb{E}\left[\left|X^{j,N}\left(t, x^{j,N} ; t+h\right)-Y^{j,N}_E\left(t, x^{j,N} ; t+h\right)\right|^{2}\right]\right\}^{\frac{1}{2}} \leq K_{33}\left\{\mathbb{E}\left[\left(1+\left|x^{j,N}\right|^{8\kappa+6}\right)\right]\right\}^{\frac{1}{2}} h,
    \end{equation}
    \begin{equation}\tag{5.103}\label{5.76}
    \underset{1\leqslant j\leqslant N}{\sup} \left\{\mathbb{E}\left[\left|Y^{j,N}_E\left(t, x^{j,N} ; t+h\right)-Y^{j,N}\left(t, x^{j,N} ; t+h\right)\right|^{2}\right]\right\}^{\frac{1}{2}} \leq K_{34}\left\{\mathbb{E}\left[\left(1+\left|x^{j,N}\right|^{6\kappa+8}\right)\right]\right\}^{\frac{1}{2}} h, 
    \end{equation}
    \begin{equation}\tag{5.104}\label{5.77}
    \underset{1\leqslant j\leqslant N}{\sup} \left\{\mathbb{E}\left[\left|X^{j,N}\left(t, x^{j,N} ; t+h\right)-Y^{j,N}\left(t, x^{j,N} ; t+h\right)\right|^{2}\right]\right\}^{\frac{1}{2}} \leq K_{8}\left\{\mathbb{E}\left[\left(1+\left|x^{j,N}\right|^{8\kappa+12 }\right)\right]\right\}^{\frac{1}{2}} h.
    \end{equation}
\end{lemma}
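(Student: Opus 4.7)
\subsection*{Proof plan for Lemma~\ref{lemma5.7}}

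The plan is to prove the three estimates \eqref{5.75}, \eqref{5.76}, \eqref{5.77} in order; the last follows from the first two by the triangle inequality $\{\mathbb{E}[|X-Y|^2]\}^{1/2} \leq \{\mathbb{E}[|X-Y_E|^2]\}^{1/2}+\{\mathbb{E}[|Y_E-Y|^2]\}^{1/2}$, so the real work is in the first two.

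For \eqref{5.75}, I would write
\begin{equation*}
X^{j,N}(t,x^{j,N};t+h)-Y^{j,N}_E(t,x^{j,N};t+h)=\int_t^{t+h}\!\big[b(X^{j,N}_s,\mu^{X,N}_s)-b(x^{j,N},\mu^{x^{j,N},N})\big]\mathrm{d}s+\int_t^{t+h}\!\big[\sigma(X^{j,N}_s,\mu^{X,N}_s)-\sigma(x^{j,N},\mu^{x^{j,N},N})\big]\mathrm{d}W^j(s).
\end{equation*}
Apply the elementary inequality $|a+b|^2\le 2|a|^2+2|b|^2$, Cauchy--Schwarz on the first integral, and the It\^o isometry on the second, so the estimation reduces to bounding $\mathbb{E}[|b(X^{j,N}_s,\mu^{X,N}_s)-b(x^{j,N},\mu^{x^{j,N},N})|^2]$ and $\mathbb{E}[\|\sigma(X^{j,N}_s,\mu^{X,N}_s)-\sigma(x^{j,N},\mu^{x^{j,N},N})\|^2]$. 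Using Assumption~\ref{ass2.2} for the drift and Remark~\ref{remark 2.5} for the diffusion, a further Cauchy--Schwarz inequality separates the polynomial growth factor $(1+|X_s^{j,N}|^{2\kappa-2}+|x^{j,N}|^{2\kappa-2})$ (or $|\cdot|^{\kappa-1}$ for $\sigma$) from the increment $|X_s^{j,N}-x^{j,N}|^2$; the polynomial growth factor is controlled by Theorem~\ref{momentboundednessofthesolution}, while the increment is controlled by \eqref{5.86} of Lemma~\ref{lemma5.4}. The measure-dependent term is handled via \eqref{2.8} together with the identical distribution of the sequence $\{(X_s^{i,N},x^{i,N})\}_{1\le i\le N}$, reducing $\mathbb{E}[\mathcal{W}_2(\mu^{X,N}_s,\mu^{x^{j,N},N})^2]$ to $\mathbb{E}[|X_s^{j,N}-x^{j,N}|^2]$, which is again handled by Lemma~\ref{lemma5.4}. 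Assembling the powers, the diffusion integral gives the rate $h$ (one factor of $h$ from the time integral and one from $\mathbb{E}|X_s-x|^2$), while the drift integral is of higher order $h^{3/2}$; hence the dominant contribution is the diffusion, producing the bound with the power $8\kappa+6$ after applying Cauchy--Schwarz to split moments into fourth-moment increments.

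For \eqref{5.76}, I would decompose
\begin{equation*}
Y^{j,N}_E(t,x^{j,N};t+h)-Y^{j,N}(t,x^{j,N};t+h)=\big(x^{j,N}-\psi(x^{j,N})\big)+\int_t^{t+h}\!\big[b(x^{j,N},\mu^{x^{j,N},N})-b(\psi(x^{j,N}),\mu^{\psi(x^{j,N}),N})\big]\mathrm{d}s+\int_t^{t+h}\!\big[\sigma(x^{j,N},\mu^{x^{j,N},N})-\sigma(\psi(x^{j,N}),\mu^{\psi(x^{j,N}),N})\big]\mathrm{d}W^j(s),
\end{equation*}
and estimate the three pieces separately. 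The central tool is Lemma~\ref{lemmma5.5}, which yields $|x-\psi(x)|\le 2(1+|x|^{q+1})h^{q/(2(\kappa+2))}$ for any $q\ge 1$; the exponent $q$ will be tuned to each term to force the factor $h^{q/(2(\kappa+2))}$ to be at least $h$, at the cost of a higher polynomial weight on $|x|$. Concretely, for the pointwise jump term I choose $q=2(\kappa+2)$ to get $|x-\psi(x)|^2\le 4(1+|x|^{2\kappa+5})^2 h^2$; for the diffusion integral the It\^o isometry contributes an extra $h^{1/2}$, so a smaller $q=\kappa+2$ suffices; for the drift integral a further Cauchy--Schwarz absorbs the length of the interval. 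In each piece I apply Assumption~\ref{ass2.2}/Remark~\ref{remark 2.5}, use $|\psi(x)|\le|x|$ from \eqref{5.4} to collapse the polynomial growth factors, and handle the measure term via \eqref{2.8} combined with the identical distribution of $\{(x^{i,N},\psi(x^{i,N}))\}$. Book-keeping the largest exponent across the three contributions yields the moment power $6\kappa+8$.

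The main obstacle will be the combinatorial book-keeping of polynomial exponents: at each application of Cauchy--Schwarz or H\"older the polynomial weight doubles, so one has to carefully choose the H\"older exponents and the parameter $q$ in Lemma~\ref{lemmma5.5} simultaneously, ensuring that (i) the exponent of $h$ reaches~$1$ in the square-root sense and (ii) the exponent of $|x|$ remains within the range $[1,2q_0]$ permitted by the moment bounds. The constraint $1\le \kappa\le (q_0-6)/4$ stated in the lemma is precisely what is needed so that the maximal weight $8\kappa+12$ appearing in \eqref{5.77} stays bounded by $2q_0$, allowing Theorem~\ref{momentboundednessofthesolution} to be applied at the end to replace $x^{j,N}=X_k^{j,N}$ by moments of the initial condition $X_0$.
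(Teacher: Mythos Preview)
Your proof plan is correct and follows essentially the same route as the paper: the same drift/diffusion decomposition with It\^o isometry, H\"older's inequality, Theorem~\ref{momentboundednessofthesolution} and Lemma~\ref{lemma5.4} for \eqref{5.75}; the same three-term decomposition combined with Lemma~\ref{lemmma5.5}, \eqref{5.4}, Assumption~\ref{ass2.2}/\eqref{2.14}, and \eqref{2.8} for \eqref{5.76}; and the triangle inequality for \eqref{5.77}. The only cosmetic difference is that the paper takes $q_5=q_6=q_7=2(\kappa+2)$ uniformly in all three pieces of \eqref{5.76} rather than tuning $q$ term by term, which yields the same leading exponent $6\kappa+8$ coming from the drift contribution.
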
 
\begin{proof}  We begin by establishing the first inequality. Applying Assumption~\ref{ass2.2}, noting that the sequence $\left\{ \left|X_s^{i,N} - x^{i,N}\right|^2 \right\}_{1 \leq i \leq N}$ is identically distributed, and utilizing  \eqref{2.8}, \eqref{2.14}, \eqref{interacting}, and \eqref{5.65}, together with the It\^o isometry, H\"older's inequality, Theorem~\ref{momentboundednessofthesolution}, and Lemma~\ref{lemma5.4}, we derive 
\begin{equation}\tag{5.105}\label{5.78}
\begin{aligned}
& \mathbb{E}\left[\left|X^{j,N}\left(t, x^{j,N} ; t+h\right)-Y^{j,N}_E\left(t, x^{j,N} ; t+h\right)\right|^{2}\right] \\
\leq & 2\mathbb{E}\left[\left|\int_t^{t+h} b\left({X}^{j,N}_s,\mu^{X,N}_s\right)-b\left( x^{j,N},\mu ^{x^{j,N},N} \right) \mathrm{d} s\right|^{2}\right]\\
&+2\int_t^{t+h} \mathbb{E}\left[\left|\left|\sigma\left({X}^{j,N}_s,\mu^{X,N}_s\right)-\sigma\left( x^{j,N},\mu ^{x^{j,N},N} \right) \right|\right|^{2}\right]\mathrm{d}s \\
\leq & 2 h\int_t^{t+h} L_1 \mathbb{E}\left[\left(1+\left|X^{j,N}_s\right|^{2 \kappa-2}+\left|x^{j,N}\right|^{2 \kappa-2}\right)\left|X^{j,N}_s-x^{j,N}\right|^{2}+\tfrac{1}{N}\sum_{i=1}^N{\left|X_{s}^{i,N}-x^{i,N}\right|^2}\right] \mathrm{d} s \\
& +2K_{3}\int_t^{t+h} \mathbb{E}\left[\left(1+\left|X^{j,N}_s\right|^{\kappa-1}+\left|x^{j,N}\right|^{\kappa-1}\right)\left|X^{j,N}_s-x^{j,N}\right|^{2}+\tfrac{1}{N}\sum_{i=1}^N{\left|X_{s}^{i,N}-x^{i,N}\right|^2}\right] \mathrm{d} s \\
\leq & K_{35} h\int_t^{t+h}\mathbb{E}\left[\left(1+\left|X^{j,N}_s\right|^{2 \kappa-2}+\left|x^{j,N}\right|^{2 \kappa-2}\right)\left|X^{j,N}_s-x^{j,N}\right|^{2}\right] \mathrm{d} s \\
&+K_{36}\int_t^{t+h} \mathbb{E}\left[\left(1+\left|X^{j,N}_s\right|^{\kappa-1}+\left|x^{j,N}\right|^{\kappa-1}\right)\left|X^{j,N}_s-x^{j,N}\right|^{2}\right] \mathrm{d} s \\
\leq & K_{35} h
\int_t^{t+h} \left\{\mathbb{E}\left[\left(1+\left|X^{j,N}_s\right|^{2 \kappa-2}+\left|x^{j,N}\right|^{2 \kappa-2}\right)^{2}\right]\right\}^{\frac{1}{2 }}\left\{\mathbb{E}\left[\left|X^{j,N}_s-x^{j,N}\right|^{4}\right]\right\}^{\frac{1}{2}}\mathrm{d} s\\
&+K_{36}\int_t^{t+h}\left\{\mathbb{E}\left[\left(1+\left|X^{j,N}_s\right|^{\kappa-1}+\left|x^{j,N}\right|^{\kappa-1}\right)^{2}\right]\right\}^{\frac{1}{2}}\left\{\mathbb{E}\left[\left|X^{j,N}_s-x^{j,N}\right|^{4}\right]\right\}^{\frac{1}{2}}\mathrm{d} s\\
\leq & K_{37}\mathbb{E}\left[\left(1+\left|x^{j,N}\right|^{8 \kappa-4}\right)\right] h^{3}+K_{38}\mathbb{E}\left[\left(1+\left|x^{j,N}\right|^{6 \kappa -2}\right)\right]h^2 \\
\leq &K^{2}_{33}\mathbb{E}\left[\left(1+\left|x^{j,N}\right|^{8 \kappa+6}\right)\right] h^2.\\
\end{aligned}
\end{equation}
Subsequently, taking the supremum over $ j $ yields \eqref{5.75}.

To derive \eqref{5.76}, we invoke Assumption~\ref{ass2.2}, together with  \eqref{2.8}, \eqref{2.14}, \eqref{5.4}, \eqref{5.62}, \eqref{5.63}, and \eqref{5.65}, as well as the It\^o isometry and H\"older's inequality, to obtain
\begin{equation}\tag{5.106}\label{5.80}
\begin{aligned}
  \mathbb{E}&\left[\left|Y^{j,N}_E\left(t, x^{j,N} ; t+h\right)-Y^{j,N}\left(t, x^{j,N} ; t+h\right)\right|^{2}\right] \\
\leq & 3 \mathbb{E}\left[\left|x^{j,N}-\psi\left(x^{j,N}\right)\right|^{2}\right]+3 h\int_t^{t+h} \mathbb{E}\left[\left|b\left( x^{j,N},\mu ^{x^{j,N},N} \right)-b\left( \psi\left(x^{j,N}\right),\mu ^{\psi\left(x^{j,N}\right),N} \right)\right|^{2}\right] \mathrm{d} s\\
&+3\int_t^{t+h}\mathbb{E}\left[\left|\left| \sigma\left( x^{j,N},\mu ^{x^{j,N},N} \right)-\sigma\left( \psi\left(x^{j,N}\right),\mu ^{\psi\left(x^{j,N}\right),N} \right)\right|\right|^{2}\right] \mathrm{d}s  \\
\leq &3\mathbb{E}\left[\left|x^{j,N}-\psi\left(x^{j,N}\right)\right|^{2}\right]+3L_{1}h \int_t^{t+h} \mathbb{E}\left[ \left( 1+\left|x^{j,N}\right|^{2\kappa -2}+\left|\psi\left(x^{j,N}\right)\right|^{2\kappa -2} \right) \left|x^{j,N}-\psi\left(x^{j,N}\right)\right|^2\right.\\
&+\left.\tfrac{1}{N}\sum_{i=1}^N{\left|x^{i,N}-\psi\left(x^{i,N}\right)\right|^2}\right] \mathrm{d} s+3K_{3} \int_t^{t+h} \mathbb{E}\left[ \left( 1+\left|x^{j,N}\right|^{\kappa -1}+\left|\psi\left(x^{j,N}\right)\right|^{\kappa -1} \right) \right.\\
&\times\left.\left|x^{j,N}-\psi\left(x^{j,N}\right)\right|^2+\tfrac{1}{N}\sum_{i=1}^N{\left|x^{i,N}-\psi\left(x^{i,N}\right)\right|^2} \right] \mathrm{d} s\\
\leq &3\mathbb{E}\left[\left|x^{j,N}-\psi\left(x^{j,N}\right)\right|^{2}\right]+{K_{39}}h \int_t^{t+h} \mathbb{E}\left[ \left( 1+\left|x^{j,N}\right|^{2\kappa -2}+\left|\psi\left(x^{j,N}\right)\right|^{2\kappa -2} \right)\right.\\
&\times\left.\left|x^{j,N}-\psi\left(x^{j,N}\right)\right|^2\right] \mathrm{d} s\\
&+{K_{40}} \int_t^{t+h} \mathbb{E}\left[ \left( 1+\left|x^{j,N}\right|^{\kappa -1}+\left|\psi\left(x^{j,N}\right)\right|^{\kappa -1} \right) \left|x^{j,N}-\psi\left(x^{j,N}\right)\right|^2\right] \mathrm{d} s\\
\leq & K_{41}\mathbb{E}\left[\left(1+\left|x^{j,N}\right|^{2q_{5}+2}\right)\right]h^{\frac{q_5}{\kappa+2}}\\
&+K_{42}h\int_t^{t+h}\mathbb{E}\left[\left(1+\left|x^{j,N}\right|^{2 \kappa-2}\right)\left(1+\left|x^{j,N}\right|^{2q_{6}+2}\right)\right]h^{\frac{q_6}{\kappa+2}}\mathrm{d} s\\
&+K_{43}\int_t^{t+h}\mathbb{E}\left[\left(1+\left|x^{j,N}\right|^{\kappa-1}\right)\left(1+\left|x^{j,N}\right|^{2q_{7}+2}\right)\right]h^{\frac{q_7}{\kappa+2}} \mathrm{d} s.\\
\end{aligned}
\end{equation}
In \eqref{5.80}, by setting $ q_5 = q_6 = q_7 = 2\left(\kappa + 2\right) $, we obtain
\begin{equation}\tag{5.107}\label{5.81}
\begin{aligned}
&\mathbb{E}\left[\left|Y^{j,N}_E\left(t, x^{j,N} ; t+h\right)-Y^{j,N}\left(t, x^{j,N} ; t+h\right)\right|^{2}\right] \\
\leq & K_{44}\mathbb{E}\left[\left(1+\left|x^{j,N}\right|^{4\kappa+10}\right)\right]h^2+K_{45} \mathbb{E}\left[\left(1+\left|x^{j,N}\right|^{6\kappa+8}\right)\right]h^4+K_{46}\mathbb{E}\left[ \left(1+\left|x^{j,N}\right|^{5\kappa+9}\right)\right]h^3\\
\leq & K^{2}_{34}\mathbb{E}\left[\left(1+\left|x^{j,N}\right|^{6\kappa+8}\right)\right] h^2.
\end{aligned}
\end{equation}
Taking the supremum over $j$ yields (\ref{5.76}).

Combining \eqref{5.75} and \eqref{5.76} readily establishes \eqref{5.77}, which corresponds to \eqref{5.86} in Theorem~\ref{theorem5.7}. 
This completes the proof of the lemma. 
\end{proof}

\subsection{Proof of Theorem \ref{momentboundsforBEM}}
For this part, we aim to demonstrate the moment boundedness for the backward Euler method.
\begin{remark}
    From (\ref{2.16}), we derive
    \begin{equation}\tag{5.108}\label{6.3}
    \left|\left|\sigma \left( 0,\delta _0 \right) \right|\right|^2\leq K_5\left( \left|0\right|^{\kappa +1}+1 \right) +a_4\mathcal{W} _2\left( \delta _0 \right) ^2 = K_5.
    \end{equation}
     Consequently, we obtain
\begin{equation}\tag{5.109}\label{6.4}
\begin{aligned}
\left|\left|\sigma \left( x,\mu \right) \right|\right|^2
&\leq 2\left[ \left|\left|\sigma \left( x,\mu \right) -\sigma \left( 0,\delta _0 \right) \right|\right|^2+\left|\left|\sigma \left( 0,\delta _0 \right) \right|\right|^2 \right] \\
&\leq 2\left\{ L_{\sigma}\left[ \left|x\right|^2+\mathcal{W} _2\left( \mu \right) ^2 \right] +K_5 \right\}.\\
\end{aligned}
\end{equation}
\end{remark}
 With these results, we are able to finish the proof of Theorem~\ref{momentboundsforBEM}.
\begin{proof}[Proof of Theorem~\ref{momentboundsforBEM}]
Applying (\ref{6.1}) and Assumption~\ref{ass2.4}, we obtain
\begin{equation}\tag{5.110}\label{6.6}
\left( 1+a_1h \right) ^p\left| \hat{X}_{k+1}^{j,N} \right|^{2p}\le \left( \left| \hat{X}_{k}^{j,N}+\sigma \left( \hat{X}_{k}^{j,N},\mu _{k}^{X,N} \right) \Delta W_{k}^{j} \right|^2+a_2h\mathcal{W} _2\left( \mu _{k}^{X,N} \right) ^2+Ch \right) ^p.
\end{equation}
We begin with the case $ p = 1 $. Taking expectations on both sides of \eqref{6.6} and applying \eqref{2.6} and \eqref{6.4}, we obtain, for any $ 1 \leq j \leq N $,
\begin{equation}\tag{5.111}\label{6.7}
\begin{aligned}
&\left( 1+a_1h \right) \mathbb{E} \left[ \left| \hat{X}_{k+1}^{j,N} \right|^2 \right] \\
\le &\mathbb{E} \left[ \left| \hat{X}_{k}^{j,N} \right|^2 \right] +h\mathbb{E} \left[ \left|\left| \sigma \left( \hat{X}_{k}^{j,N},\mu _{k}^{X,N} \right) \right|\right| ^2 \right] +a_2h\mathbb{E} \left[ \left| \hat{X}_{k}^{j,N} \right|^2 \right] +Ch
\\
\le &\mathbb{E} \left[ \left| \hat{X}_{k}^{j,N} \right|^2 \right] +2h\left\{ L_{\sigma}\left[ \mathbb{E} \left[ \left|\hat{X}_{k}^{j,N}\right|^2 \right] +\mathbb{E} \left[ \mathcal{W} _2\left( \mu _{k}^{X,N} \right) ^2 \right] \right] +K_5 \right\} +a_2h\mathbb{E} \left[ \left| \hat{X}_{k}^{j,N} \right|^2 \right] +Ch
\\
\le &\left( 1+\left( 4L_{\sigma}+a_2 \right) h \right) \mathbb{E} \left[ \left| \hat{X}_{k}^{j,N} \right|^2 \right] +\left( 2K_5+C \right) h.\\
\end{aligned}
\end{equation}
Since $0<L_{\sigma}<\tfrac{a_1-a_2}{5\left( 2q_0-1 \right)}<\tfrac{a_1-a_2}{4}$, it follows that $\tfrac{1+\left( 4L_{\sigma}+a_2 \right) h}{1+a_1h}=1-\gamma_{1} h$, where $\gamma_{1} =\tfrac{a_1-a_2-4L_{\sigma}}{1+a_1h}>0$. Applying \eqref{6.7} and Lemma~\ref{lemma5.2}, we obtain
\begin{equation}\tag{5.112}\label{6.8}
\begin{aligned}
	\mathbb{E} \left[ \left| \hat{X}_{k}^{j,N} \right|^2 \right] &\le \left(1-\gamma_{1} h\right)\mathbb{E} \left[ \left| \hat{X}_{k-1}^{j,N} \right|^2 \right] +\tfrac{2K_5+C}{1+a_1h}h\\
	&\le K_{47}\mathbb{E} \left[ \left( 1+\left| X^{j}_0 \right|^2 \right) \right].\\
\end{aligned}
\end{equation}
Taking the supremum over $ j $ yields \eqref{6.5} for the case $ p = 1 $.

For a positive integer  $p > 1$,   applying the binomial expansion yields, for any $1 \leq j \leq N$, 
\begin{equation}\tag{5.113}\label{6.9}
\begin{aligned}
&\mathbb{E} \left[\left. \left( \left| \hat{X}_{k}^{j,N}+\sigma \left( \hat{X}_{k}^{j,N},\mu _{k}^{X,N} \right) \Delta W_{k}^{j} \right|^2+a_2h\mathcal{W} _2\left( \mu _{k}^{X,N} \right) ^2+Ch \right) ^p\right\rvert\ \mathcal{F}_{t_{k}} \right] 
\\
&+\sum_{l=1}^p{C_{p}^{l}\left( Ch \right) ^l\mathbb{E} \left[\left. \left( \left| \hat{X}_{k}^{j,N}+\sigma \left( \hat{X}_{k}^{j,N},\mu _{k}^{X,N} \right) \Delta W_{k}^{j} \right|^2+a_2h\mathcal{W} _2\left( \mu _{k}^{X,N} \right) ^2 \right) ^{p-l}\right\rvert\ \mathcal{F}_{t_{k}} \right]}
\\
=&\mathbb{E} \left[\left. \left( \left| \hat{X}_{k}^{j,N}+\sigma \left( \hat{X}_{k}^{j,N},\mu _{k}^{X,N} \right) \Delta W_{k}^{j} \right|^2+a_2h\mathcal{W} _2\left( \mu _{k}^{X,N} \right) ^2 \right) ^p\right\rvert\ \mathcal{F}_{t_{k}} \right] 
\\
&+\sum_{l=1}^p C_{p}^{l} \left( Ch \right)^l\left\{\mathbb{E}\left[\left. \left|\hat{X}_{k}^{j,N}+\sigma \left( \hat{X}_{k}^{j,N},\mu_{k}^{X,N} \right) \Delta W_{k}^{j} \right|^{2\left(p-l\right)} \right\rvert\ \mathcal{F}_{t_{k}}\right] \right.\\
&\left.+ \sum_{r=1}^{p-l}{C_{p-l}^{r}\left( a_2h \right) ^r\mathbb{E} \left[\left. \left| \hat{X}_{k}^{j,N}+\sigma \left( \hat{X}_{k}^{j,N},\mu _{k}^{X,N} \right) \Delta W_{k}^{j} \right|^{2\left( p-l-r \right)}\mathcal{W} _2\left( \mu _{k}^{X,N} \right) ^{2r} \right\rvert\ \mathcal{F}_{t_{k}}\right]} \right\}\\
:=&I_{1}^{j,N}+\sum_{l=1}^p{C_{p}^{l}\left( Ch \right) ^l\left\{ \mathbb{E} \left[\left. \left| \hat{X}_{k}^{j,N}+\sigma \left( \hat{X}_{k}^{j,N},\mu _{k}^{X,N} \right) \Delta W_{k}^{j} \right|^{2\left(p-l\right)}\right\rvert\ \mathcal{F}_{t_{k}} \right]+ \sum_{r=1}^{p-l}{C_{p-l}^{r}\left( a_2h \right) ^rI_{2}^{j,N}} \right\}}.
\end{aligned}
\end{equation}\\
For the term $ I_1^{j,N} $, by applying \eqref{2.6} along with the binomial expansion, Young's inequality, and the $\mathcal{F}_{t_k}$-measurability of $\hat{X}_k^{j,N}$, we derive
\begin{equation}\tag{5.114}\label{6.10}
\begin{aligned}
&\mathbb{E} \left[ \left.\left( \left| \hat{X}_{k}^{j,N}+\sigma \left( \hat{X}_{k}^{j,N},\mu _{k}^{X,N} \right) \Delta W_{k}^{j} \right|^2+a_2h\mathcal{W} _2\left( \mu _{k}^{X,N} \right) ^2 \right) ^p \right\rvert\ \mathcal{F}_{t_{k}}\right] 
\\
\leq&  \left| \hat{X}_{k}^{j,N} \right|^{2p}  +\sum_{r=1}^p{C_{2p}^{2r}\mathbb{E} \left[\left. \left| \sigma \left( \hat{X}_{k}^{j,N},\mu _{k}^{X,N} \right) \Delta W_{k}^{j} \right|^{2r}\left| \hat{X}_{k}^{j,N} \right|^{2p-2r}\right\rvert\ \mathcal{F}_{t_{k}}  \right]}
\\
&+\sum_{l=1}^p{C_{p}^{l}\left( a_2h \right) ^l \tfrac{l}{p}\mathbb{E} \left[\left. \mathcal{W} _2\left( \mu _{k}^{X,N} \right) ^{2p}\right\rvert\ \mathcal{F}_{t_{k}}  \right]} \\
&+\sum_{l=1}^p{C_{p}^{l}\left( a_2h \right) ^l\tfrac{p-l}{p}\mathbb{E} \left[\left. \left| \hat{X}_{k}^{j,N}+\sigma \left( \hat{X}_{k}^{j,N},\mu _{k}^{X,N} \right) \Delta W_{k}^{j} \right|^{2p} \right\rvert\ \mathcal{F}_{t_{k}} \right] }
\\
\leq& \left| \hat{X}_{k}^{j,N} \right|^{2p} +\sum_{r=1}^p{C_{2p}^{2r}\mathbb{E} \left[ \left.\left| \sigma \left( \hat{X}_{k}^{j,N},\mu _{k}^{X,N} \right) \Delta W_{k}^{j} \right|^{2r}\left| \hat{X}_{k}^{j,N} \right|^{2p-2r} \right\rvert\ \mathcal{F}_{t_{k}} \right]}
\\
&+\sum_{l=1}^p{C_{p}^{l}\left( a_2h \right) ^l \tfrac{l}{p} \left| \hat{X}_{k}^{j,N} \right|^{2p} }\\
&+\sum_{l=1}^p{C_{p}^{l}\left( a_2h \right) ^l\tfrac{p-l}{p}\left\{\left| \hat{X}_{k}^{j,N} \right|^{2p}+\sum_{r=1}^p{C_{2p}^{2r}\mathbb{E} \left[ \left.\left| \sigma \left( \hat{X}_{k}^{j,N},\mu _{k}^{X,N} \right) \Delta W_{k}^{j} \right|^{2r}\left| \hat{X}_{k}^{j,N} \right|^{2p-2r}\right\rvert\ \mathcal{F}_{t_{k}}  \right]} \right\}}.
\end{aligned}
\end{equation}\\
By applying \eqref{2.6}, \eqref{6.4}, the inner product inequality, the binomial expansion, the fact  that $\mathbb{E} \left[ \left.\left|\Delta W_{k}^j\right|^{2r}\right\rvert\ \mathcal{F}_{t_k} \right] = \left(2r - 1\right)!! \, h^r$, along with Jensen's inequality, we derive, for $r \geq 1$,
 \begin{equation}\tag{5.115}\label{6.11}
\begin{aligned}
&\mathbb{E} \left[\left. \left| \sigma \left( \hat{X}_{k}^{j,N},\mu _{k}^{X,N} \right) \Delta W_{k}^{j} \right|^{2r}\left| \hat{X}_{k}^{j,N} \right|^{2p-2r}\right\rvert\ \mathcal{F} _{t_k} \right]\\
\le &2^r\mathbb{E} \left[\left. \left\{ L_{\sigma}\left[ \left|\hat{X}_{k}^{j,N}\right|^2+\tfrac{1}{N}\sum_{i=1}^N{\left|\hat{X}_{k}^{i,N}\right|^2} \right] +K_5 \right\} ^r\left| \hat{X}_{k}^{j,N} \right|^{2p-2r}\left| \Delta W_{k}^{j} \right|^{2r}\right\rvert\ \mathcal{F} _{t_k} \right]\\
\le& 2^rL_{\sigma}^{r}\mathbb{E} \left[\left. \left\{ \tfrac{1}{N}\sum_{i=1}^N{\left( \left|\hat{X}_{k}^{i,N}\right|^2+\left|\hat{X}_{k}^{j,N}\right|^2 \right)} \right\} ^r\left| \hat{X}_{k}^{j,N} \right|^{2p-2r}\left| \Delta W_{k}^{j} \right|^{2r}\right\rvert\ \mathcal{F} _{t_k} \right]\\
&+2^r\sum_{l=1}^r{C_{r}^{l}K_{5}^{l}L_{\sigma}^{r-l}\mathbb{E} \left[\left. \left\{ \tfrac{1}{N}\sum_{i=1}^N{\left( \left|\hat{X}_{k}^{i,N}\right|^2+\left|\hat{X}_{k}^{j,N}\right|^2 \right)} \right\} ^{r-l}\left| \hat{X}_{k}^{j,N} \right|^{2p-2r}\left| \Delta W_{k}^{j} \right|^{2r}\right\rvert\ \mathcal{F} _{t_k} \right]}\\
\le& 2^rL_{\sigma}^{r}\left( 2r-1 \right) !!h^r\tfrac{1}{N}\sum_{i=1}^N{\left( \left|\hat{X}_{k}^{i,N}\right|^2+\left|\hat{X}_{k}^{j,N}\right|^2 \right) ^r\left| \hat{X}_{k}^{j,N} \right|^{2p-2r}}\\
&+2^r\left( 2r-1 \right) !!h^r\sum_{l=1}^r{C_{r}^{l}K_{5}^{l}L_{\sigma}^{r-l}\tfrac{1}{N}\sum_{i=1}^N{\left( \left|\hat{X}_{k}^{i,N}\right|^2+\left|\hat{X}_{k}^{j,N}\right|^2 \right) ^{r-l}\left| \hat{X}_{k}^{j,N} \right|^{2p-2r}}}.\\
\end{aligned}
\end{equation}
Since the sequence $\left\{ \hat{X}_k^{j,N} \right\}_{1 \leq j \leq N}$ is identically distributed, the sequences $\left\{ \left|\hat{X}_k^{j,N}\right|^{2p} \right\}_{1 \leq j \leq N}$ and $\left\{ \left|\hat{X}_k^{j,N}\right|^{2p - 2l} \right\}_{1 \leq j \leq N}$ are also identically distributed. Taking expectations on both sides of \eqref{6.11} and applying H\"older's inequality, we derive
\begin{equation}\tag{5.116}\label{6.13}
\begin{aligned}
&\mathbb{E} \left[ \left| \sigma \left( \hat{X}_{k}^{j,N},\mu _{k}^{X,N} \right) \Delta W_{k}^{j} \right|^{2r}\left| \hat{X}_{k}^{j,N} \right|^{2p-2r} \right] 
\\
\le& 2^rL_{\sigma}^{r}\left( 2r-1 \right) !!h^r\tfrac{1}{N}\sum_{i=1}^N{\left\{ 2^{r-1}\left( \mathbb{E} \left[ \left| \hat{X}_{k}^{i,N} \right|^{2r}\left| \hat{X}_{k}^{j,N} \right|^{2p-2r} \right] +\mathbb{E} \left[ \left| \hat{X}_{k}^{j,N} \right|^{2p} \right] \right) \right\}}
\\
&+2^r\left( 2r-1 \right) !!h^r\sum_{l=1}^r{C_{r}^{l}K_{5}^{l}L_{\sigma}^{r-l}\tfrac{1}{N}\sum_{i=1}^N{\left\{ 2^{r-l-1} \right.}}
\\
 &\times\left. \left( \mathbb{E} \left[ \left| \hat{X}_{k}^{i,N} \right|^{2r-2l}\left| \hat{X}_{k}^{j,N} \right|^{2p-2r} \right] +\mathbb{E} \left[ \left| \hat{X}_{k}^{j,N} \right|^{2p-2l} \right] \right) \right\} 
\\
\le& 2^{2r-1}L_{\sigma}^{r}\left( 2r-1 \right) !!h^r\tfrac{1}{N}\sum_{i=1}^N{\left\{ \left( \mathbb{E} \left[ \left| \hat{X}_{k}^{i,N} \right|^{2p} \right] \right) ^{\frac{r}{p}}\left( \mathbb{E} \left[ \left| \hat{X}_{k}^{j,N} \right|^{2p} \right] \right) ^{\frac{p-r}{p}}+\mathbb{E} \left[ \left| \hat{X}_{k}^{j,N} \right|^{2p} \right] \right\}}
\\
&+2^{2r-l-1}\left( 2r-1 \right) !!h^r\sum_{l=1}^r{C_{r}^{l}K_{5}^{l}L_{\sigma}^{r-l}\tfrac{1}{N}\sum_{i=1}^N{\left\{ \left( \mathbb{E} \left[ \left| \hat{X}_{k}^{i,N} \right|^{2\left( p-l \right)} \right] \right) ^{\frac{r-l}{p-l}} \right.}}
\\
 &\times \left.\left( \mathbb{E} \left[ \left| \hat{X}_{k}^{j,N} \right|^{2\left( p-l \right)} \right] \right) ^{\frac{p-r}{p-l}}+\mathbb{E} \left[ \left| \hat{X}_{k}^{j,N} \right|^{2\left( p-l \right)} \right] \right\} \\
\le& 4^rL_{\sigma}^{r}\left( 2r-1 \right) !!h^r\mathbb{E} \left[ \left| \hat{X}_{k}^{j,N} \right|^{2p} \right] +K_{48}h\mathbb{E} \left[ \left( 1+\left| \hat{X}_{k}^{j,N} \right|^{2p-2} \right) \right].\\
\end{aligned}
\end{equation}
Taking expectations on both sides of \eqref{6.10} and applying \eqref{6.13}, we obtain
\begin{equation}\tag{5.117}\label{6.14}
\begin{aligned}
\mathbb{E} \left[I_{1}^{j,N}\right]\leq & \mathbb{E} \left[\left| \hat{X}_{k}^{j,N} \right|^{2p}\right] +\sum_{r=1}^p{C_{2p}^{2r} \left( 2r-1 \right) !!h^r4^rL_{\sigma}^{r} \mathbb{E} \left[\left| \hat{X}_{k}^{j,N} \right|^{2p}\right]  }\\
&+K_{49}h \mathbb{E} \left[\left(1+\left|\hat{X}_{k}^{j,N}\right|^{2p-2}\right)\right] +\sum_{l=1}^p{C_{p}^{l}\left( a_2h \right) ^l\mathbb{E} \left[\left| \hat{X}_{k}^{j,N} \right|^{2p}\right] }\\
&+\sum_{l=1}^p{C_{p}^{l}\left( a_2h \right) ^l\tfrac{p-l}{p} \sum_{r=1}^p{C_{2p}^{2r} \left( 2r-1 \right) !!h^r4^rL_{\sigma}^{r} \mathbb{E} \left[\left| \hat{X}_{k}^{j,N} \right|^{2p}\right]  } }\\
&+K_{49}h \mathbb{E} \left[\left(1+\left|\hat{X}_{k}^{j,N}\right|^{2p-2}\right)\right].\\
\end{aligned}
\end{equation}
By applying \eqref{6.13} in conjunction with the binomial expansion, we obtain
\begin{equation}\tag{5.118}\label{6.15}
\begin{aligned}
&\mathbb{E} \left[ \left| \hat{X}_{k}^{j,N}+\sigma \left( \hat{X}_{k}^{j,N},\mu _{k}^{X,N} \right) \Delta W_{k}^{j} \right|^{2\left( p-l \right)}\right] 
\\
\le&  \mathbb{E} \left[ \left| \hat{X}_{k}^{j,N} \right|^{2\left( p-l \right)}\right]  +\sum_{r=1}^{p-l}{C_{2\left( p-l \right)}^{2r}\left( 2r-1 \right) !!h^r4^rL_{\sigma}^{r}\mathbb{E} \left[ \left| \hat{X}_{k}^{j,N} \right|^{2\left( p-l \right)}\right]} \\
&+K_{50}h\mathbb{E} \left[  \left(1+\left|\hat{X}_{k}^{j,N}\right|^{2\left( p-l-1 \right)}\right)\right] 
\\
\le& K_{51}h \mathbb{E} \left[ \left(1+\left| \hat{X}_{k}^{j,N} \right|^{2\left( p-1 \right)}\right)\right].
\end{aligned}
\end{equation}\\
For the term $ I_2^{j,N} $, by applying \eqref{2.6} and \eqref{6.13} in conjunction with the binomial expansion and Young's inequality, we derive, for $ l \geq 1 $,
\begin{equation}\tag{5.119}\label{6.16}
\begin{aligned}
&\mathbb{E} \left[I_{2}^{j,N}\right]\\
\le& \tfrac{r}{p-l}\mathbb{E} \left[ \mathcal{W} _2\left( \mu _{k}^{X,N} \right) ^{2\left( p-l \right)} \right]+\tfrac{p-l-r}{p-l}\mathbb{E} \left[ \left| \hat{X}_{k}^{j,N}+\sigma \left( \hat{X}_{k}^{j,N},\mu _{k}^{X,N} \right) \Delta W_{k}^{j} \right|^{2\left( p-l \right)} \right] 
\\
\le&  \mathbb{E} \left[\left| \hat{X}_{k}^{j,N} \right|^{2\left( p-l \right)}\right]  +\tfrac{p-l-r}{p-l}\sum_{s=1}^{p-l}{C_{2\left( p-l \right)}^{2s}\left( 2s-1 \right) !!h^s4^sL_{\sigma}^{s} \mathbb{E} \left[\left| \hat{X}_{k}^{j,N} \right|^{2\left( p-l \right)}\right] }\\
&+K_{52}h\mathbb{E} \left[\left( 1+\left|\hat{X}_{k}^{j,N}\right|^{2\left( p-l-1 \right)} \right)\right]
\\
\le& K_{53}h\mathbb{E} \left[\left( 1+\left| \hat{X}_{k}^{j,N} \right|^{2\left( p-1 \right)} \right)\right].
\end{aligned}
\end{equation}\\
Substituting inequalities \eqref{6.14}, \eqref{6.15}, and \eqref{6.16} into \eqref{6.9} yields 
\begin{equation}\tag{5.120}\label{6.17}
\begin{aligned}
&\left( 1+a_1h \right) ^p\mathbb{E} \left[ \left| \hat{X}_{k+1}^{j,N} \right|^{2p} \right] 
\\
\le& \left[ 1+\sum_{r=1}^p{C_{2p}^{2r}\left( 2r-1 \right) !!h^r4^rL_{\sigma}^{r}}\right.\\
&\left.+\sum_{l=1}^p{C_{p}^{l}\left( a_2h \right) ^l\left( 1+\tfrac{p-l}{p}\sum_{r=1}^p{C_{2p}^{2r}\left( 2r-1 \right) !!h^r4^rL_{\sigma}^{r}} \right)} \right] \mathbb{E} \left[ \left| \hat{X}_{k}^{j,N} \right|^{2p}\right] 
\\
&+K_{54}h \mathbb{E} \left[\left( 1+\left|\hat{X}_{k}^{j,N}\right|^{2p-2} \right)\right]. 
\end{aligned}
\end{equation}\\
Furthermore, utilizing the condition $0 < L_\sigma < \frac{a_1 - a_2}{5\left(2q_0 - 1\right)}$ and $1 \leq p < \left\lfloor \frac{q_0}{2} \right\rfloor \leq\frac{q_0}{2}$, we obtain 
\begin{equation}\tag{5.121}\label{6.18}
\begin{aligned}
&\mathbb{E} \left[ \left| \hat{X}_{k+1}^{j,N} \right|^{2p}\right] 
\\
\le &\left\{ 1-\left[ \tfrac{\left( 1+a_1h \right) ^p-\left( 1+\sum_{r=1}^p{C_{2p}^{2r}\left( 2r-1 \right) !!h^r4^rL_{\sigma}^{r}} \right)}{\left( 1+a_1h \right) ^p} \right] \right\}  \mathbb{E} \left[\left| \hat{X}_{k}^{j,N} \right|^{2p} \right]
\\
&+\left\{ \tfrac{\sum_{l=1}^p{C_{p}^{l}\left( a_2h \right) ^l\left[ 1+\tfrac{p-l}{p}\sum_{r=1}^p{C_{2p}^{2r}\left( 2r-1 \right) !!h^r4^rL_{\sigma}^{r}} \right]}}{\left( 1+a_1h \right) ^p} \right\}  \mathbb{E} \left[\left| \hat{X}_{k}^{j,N} \right|^{2p}\right] 
\\
&+K_{55}h \mathbb{E} \left[\left( 1+\left|\hat{X}_{k}^{j,N}\right|^{2p-2} \right)\right] 
\\
\le& \left\{ 1-\left( \tfrac{\sum_{l=1}^p{C_{p}^{l}h^{l-1}\left[ a_{1}^{l}-a_{2}^{l}\left( 1+\tfrac{p-l}{p}\sum_{r=1}^p{C_{2p}^{2r}\left( 2r-1 \right) !!h^r4^r\tfrac{\left( a_1-a_2 \right) ^r}{5^r\left( 4p-1 \right) ^r}} \right) \right]}}{\left( 1+a_1h \right) ^p} \right) h \right\}  \mathbb{E} \left[\left| \hat{X}_{k}^{j,N} \right|^{2p}\right]  
\\
&+ \tfrac{\sum_{r=1}^p{C_{2p}^{2r}\left( 2r-1 \right) !!h^{r-1}4^r\tfrac{\left( a_1-a_2 \right) ^r}{5^r\left( 4p-1 \right) ^r}}}{\left( 1+a_1h \right) ^p} h  \mathbb{E} \left[\left| \hat{X}_{k}^{j,N} \right|^{2p}\right] 
\\
&+K_{55}h \mathbb{E} \left[\left( 1+\left|\hat{X}_{k}^{j,N}\right|^{2p-2} \right)\right].
\end{aligned}
\end{equation}\\
It is straightforward to verify by mathematical induction that the inequality is valid for every positive integers $p$ satisfying $1 \leq p < \left\lfloor \frac{q_0}{2} \right\rfloor$, under the conditions $a_1 > a_2 \geq 0$ and $0 < h \leq h_2$.
\begin{equation}\tag{5.122}\label{6.19}
\begin{aligned}
\gamma_{2}:&=\tfrac{1}{\left( 1+a_1h \right) ^p}\sum_{l=1}^{p}{\tfrac{p!}{l!\left( p-l \right) !}h^{l-1}}\\
&\times\left[ a_{1}^{l}-a_{2}^{l}\left( 1+\tfrac{p-l}{p}\sum_{r=1}^p{\tfrac{\left( 2p \right) !}{\left( 2r \right) !\left( 2p-2r \right) !}4^r\tfrac{\left( a_1-a_2 \right) ^r}{5^r\left( 4p-1 \right) ^r}\left( 2r-1 \right) !!h^r} \right) \right]\\
&-\tfrac{1}{\left( 1+a_1h \right) ^p}\sum_{r=1}^p{\tfrac{\left( 2p \right) !}{\left( 2r \right) !\left( 2p-2r \right) !}4^r\tfrac{\left( a_1-a_2 \right) ^r}{5^r\left( 4p-1 \right) ^r}\left( 2r-1 \right) !!h^{r-1}}>0.
\end{aligned}
\end{equation}\\
Using this notation, we can rewrite \eqref{6.18} as 
\begin{equation}\tag{5.123}\label{6.20}
\mathbb{E} \left[ \left| \hat{X}_{k+1}^{j,N} \right|^{2p} \right] \le \left(1-\gamma_{2} h\right)\mathbb{E} \left[ \left| \hat{X}_{k}^{j,N} \right|^{2p} \right] +K_{55}h\mathbb{E} \left[ \left( 1+\left| \hat{X}_{k}^{j,N} \right|^{2p-2} \right) \right], 
\end{equation}
where $\gamma_{2}>0$.

By invoking Lemma \ref{lemma5.2} and subsequently taking the supremum with respect to $j$ establishes inequality \eqref{6.5} for all positive integers $p$ satisfying $1 \leq p < \left\lfloor \frac{q_0}{2} \right\rfloor$.  
For non-integer values of $p$, following an approach analogous to the proof of Theorem~\ref{momentboundsforPEM}, we readily establish that inequality \eqref{6.5} holds for all $p$ satisfying $1 \leq p < \left\lfloor \frac{q_0}{2} \right\rfloor$. 
The theorem is thus proved.
\end{proof} 
\subsection{Proof of Theorem \ref{theorem6.7}}
The one-step approximation of the Euler-Maruyama scheme can be expressed as follows:
\begin{equation}\tag{5.124}\label{6.21}
Y^{j,N}_{E}\left( t_{0},X^{j,N}_{0};t_{0}+h \right) :=X^{j,N}_{0}+\int_{t_{0}}^{t_{0}+h}{b\left( X^{j,N}_{0},\mu _{t_{0}}^{X,N} \right)}ds+\int_{t_{0}}^{t_{0}+h}{\sigma \left( X^{j,N}_{0},\mu _{t_{0}}^{X,N} \right)}dW^j\left( s \right).
\end{equation}

Subtracting (\ref{6.21}) from (\ref{6.22}) yields
\begin{equation}\tag{5.125}\label{6.23}
\begin{aligned}
   \hat{X}^{j,N}\left( t_0,X^{j,N}_0;t_0+h \right) :&=Y_{E}^{j,N}\left( t_0,X^{j,N}_0;t_0+h \right)\\
&+\underbrace{\int_{t_0}^{{t_0}+h}{b\left( \hat{X}^{j,N}\left( t_0,X^{j,N}_0;t_0+h \right) ,\mu _{t_0}^{X,N} \right) -b\left( X^{j,N}_0,\mu _{t_0}^{X,N} \right)}\mathrm{d}s}_{:=I_{b}^{j,N}}.
\end{aligned}
\end{equation}

To establish the local weak and strong errors as specified in \eqref{4.19} and \eqref{4.20}, we first introduce the following useful lemmas.

\begin{lemma}\label{lemma6.4}
    Assuming that Assumptions~\ref{ass2.1}--\ref{ass2.4} and \ref{ass6.1} are satisfied, for any $1 \leq \kappa \leq \tfrac{q_0}{4}$ and $0 < h \leq h_2$, we can derive the following estimates:
\begin{equation}\tag{5.126}\label{6.24}
\underset{1\leqslant j\leqslant N}{\mathrm{sup}}\mathbb{E} \left[ \left|\hat{X}^{j,N}\left( t_0,X^{j,N}_0;t_0+h \right) -X^{j,N}_0\right|^2 \right] \le C_{17}\mathbb{E} \left[\left( 1+\left|X_0\right|^{4\kappa}\right) \right] h,
\end{equation}\\
\begin{equation}\tag{5.127}\label{6.25}
\underset{1\leqslant j\leqslant N}{\mathrm{sup}}\mathbb{E} \left[ \left|\hat{X}^{j,N}\left( t_0,X^{j,N}_0;t_0+h \right) -X^{j,N}_0\right|^4 \right] \le C_{18}\mathbb{E} \left[\left( 1+\left|X_0\right|^{8\kappa}\right) \right] h^{2},
\end{equation}\\
where $C_{17}$ and $C_{18}$ are constants independent of $h,N$.
\end{lemma}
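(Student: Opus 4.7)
The plan is to start from the one-step identity
\begin{equation*}
\hat{X}^{j,N}(t_0, X_0^{j,N}; t_0+h) - X_0^{j,N} = h\,b\bigl(\hat{X}^{j,N}(t_0, X_0^{j,N}; t_0+h),\,\mu_{t_0}^{X,N}\bigr) + \sigma\bigl(X_0^{j,N},\,\mu_{t_0}^{X,N}\bigr)\Delta W_0^j,
\end{equation*}
apply $|a+b|^r \leq 2^{r-1}(|a|^r+|b|^r)$ with $r=2,4$, and bound the drift and diffusion contributions separately. Since the drift argument is the \emph{implicit} value $\hat{X}^{j,N}$, the key input will be the a priori moment bound of Theorem~\ref{momentboundsforBEM}, and the hypothesis $1 \leq \kappa \leq q_0/4$ is exactly the range in which this bound controls both $\mathbb{E}|\hat{X}^{j,N}|^{2\kappa}$ and $\mathbb{E}|\hat{X}^{j,N}|^{4\kappa}$.

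For (\ref{6.24}), I would bound $h^2\mathbb{E}|b(\hat{X}^{j,N},\mu_{t_0}^{X,N})|^2$ using the polynomial growth bound (\ref{2.15}) and It\^o isometry together with (\ref{6.4}) to handle $\mathbb{E}\|\sigma(X_0^{j,N},\mu_{t_0}^{X,N})\Delta W_0^j\|^2$. The measure term $\mathbb{E}[\mathcal{W}_2(\mu_{t_0}^{X,N})^2]$ reduces to $\mathbb{E}|X_0|^2$ via (\ref{2.6}) and the identically distributed property of $\{X_0^{i,N}\}$. Theorem~\ref{momentboundsforBEM} with $p=\kappa<\lfloor q_0/2\rfloor$ gives $\mathbb{E}|\hat{X}^{j,N}|^{2\kappa}\leq K_9\mathbb{E}(1+|X_0|^{4\kappa})$. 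Collecting the $h^2$ drift contribution and the $h$ diffusion contribution, and using $h \leq h_2 \leq 1$ together with $\kappa \geq 1$ to absorb lower-order terms, yields $\mathbb{E}|\hat{X}^{j,N}-X_0^{j,N}|^2 \leq C_{17}\mathbb{E}(1+|X_0|^{4\kappa})h$; taking the supremum over $j$ finishes the $2$-nd moment estimate.

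For (\ref{6.25}) the plan is analogous but uses two sharper tools: H\"older's inequality to give $|\int_{t_0}^{t_0+h}b(\hat{X}^{j,N},\mu_{t_0}^{X,N})\,ds|^4 \leq h^4|b|^4$, and the Burkholder--Davis--Gundy inequality to give $\mathbb{E}\|\int_{t_0}^{t_0+h}\sigma(X_0^{j,N},\mu_{t_0}^{X,N})\,dW^j(s)\|^4 \leq C\,h^2\,\mathbb{E}\|\sigma\|^4$. Expanding $|b|^4$ via (\ref{2.15}) and $\|\sigma\|^4$ via (\ref{6.4}) produces at worst $|\hat{X}^{j,N}|^{4\kappa}$ and $|X_0^{j,N}|^4$, respectively; Theorem~\ref{momentboundsforBEM} applied with $p=2\kappa<\lfloor q_0/2\rfloor$ (which is precisely the constraint $\kappa \leq q_0/4$) controls the former by $K_9\mathbb{E}(1+|X_0|^{8\kappa})$, and the diffusion part is absorbed into the same right-hand side for $\kappa\geq 1$. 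Since $h^4 \leq h^2$, the resulting bound is $C_{18}\mathbb{E}(1+|X_0|^{8\kappa})h^2$.

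The main, though mild, obstacle is the implicit nature of the scheme: the drift is evaluated at $\hat{X}^{j,N}$, which is not $\mathcal{F}_{t_0}$-measurable, so the usual conditional-expectation argument available for Euler--Maruyama (see Lemma~\ref{lemma5.4}) cannot be used directly. This is circumvented entirely by invoking the already-established uniform-in-$k$ moment bound of Theorem~\ref{momentboundsforBEM}, after which the remaining manipulations are standard combinations of growth bounds, It\^o isometry/BDG, and H\"older's inequality, and produce constants $C_{17}, C_{18}$ that inherit independence of $h$ and $N$ from Theorem~\ref{momentboundsforBEM}.
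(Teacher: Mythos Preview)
Your proposal is correct and matches the paper's approach: the paper states only ``Similarly to the proof of Lemma~\ref{lemma5.4}'', and the adaptation you describe---replacing the uniform moment bound for the exact IPS solution (Theorem~\ref{momentboundednessofthesolution}) by the backward-Euler moment bound (Theorem~\ref{momentboundsforBEM}) to handle the implicit drift argument, and using (\ref{6.4}) instead of (\ref{2.16}) for the now globally Lipschitz diffusion---is exactly the intended modification. The identification of $\kappa\le q_0/4$ as the threshold allowing $p=2\kappa$ in Theorem~\ref{momentboundsforBEM} is the right reading of the moment constraint.
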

\begin{proof}
   Similarly to the proof of Lemma~\ref{lemma5.4}.
\end{proof}
\begin{lemma}\label{lemma6.5}
  Assuming that Assumptions~\ref{ass2.1}--\ref{ass2.4} and \ref{ass6.1} are satisfied, for any $1 \leq \kappa \leq \tfrac{q_0 - 3}{2}$ and $0 < h \leq h_2$, we can derive the following estimates:
\begin{equation}\tag{5.128}\label{6.28}
\underset{1\leqslant j\leqslant N}{\mathrm{sup}}\left| \mathbb{E} \left[ X^{j,N}\left( t_0,X^{j,N}_0;t_0+h \right) -X^{j,N}_0 \right] \right|\le K_{56}\mathbb{E} \left[ \left(1+\left|X_0\right|^{2\kappa}\right) \right] h,
\end{equation}\\
\begin{equation}\tag{5.129}\label{6.29}
\underset{1\leqslant j\leqslant N}{\mathrm{sup}}\left| \mathbb{E} \left[ Y_{E}^{j,N}\left( t_0,X^{j,N}_0;t_0+h \right) -X^{j,N}_0 \right] \right|\le K_{57}\mathbb{E} \left[\left( 1+\left|X_0\right|^{4\kappa +6}\right) \right] h,
\end{equation}\\
where $K_{56}$ and $K_{57}$ are constants independent of $h, N$.
\end{lemma}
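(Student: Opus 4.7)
\textbf{Proof proposal for Lemma~\ref{lemma6.5}.}

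The plan is to prove the two bounds separately. Both rely on the fact that taking expectations annihilates the Itô-integral contribution to the drift-diffusion displacement, reducing the task to controlling the mean of the drift coefficient along the trajectory.

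For \eqref{6.28}, I start from the integral form of \eqref{interacting},
\begin{equation*}
X^{j,N}(t_0,X^{j,N}_0;t_0+h)-X^{j,N}_0
=\int_{t_0}^{t_0+h}\!\! b\bigl(X_s^{j,N},\mu_s^{X,N}\bigr)\,ds
+\int_{t_0}^{t_0+h}\!\!\sigma\bigl(X_s^{j,N},\mu_s^{X,N}\bigr)\,dW^j_s.
\end{equation*}
The stochastic integral is a martingale of zero mean, so by Jensen's inequality
\begin{equation*}
\bigl|\mathbb{E}[X^{j,N}(t_0,X^{j,N}_0;t_0+h)-X^{j,N}_0]\bigr|
\le \int_{t_0}^{t_0+h}\mathbb{E}\bigl[\bigl|b(X^{j,N}_s,\mu^{X,N}_s)\bigr|\bigr]\,ds.
\end{equation*}
Using the sublinearity $\sqrt{a+b}\le\sqrt{a}+\sqrt{b}$ applied to \eqref{2.15}, one has $|b(x,\mu)|\le \sqrt{K_4}(1+|x|^\kappa)+\sqrt{a_3}\,\mathcal{W}_2(\mu)$. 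Combined with Theorem~\ref{momentboundednessofthesolution} (which supplies $\sup_s\sup_j\mathbb{E}[|X^{j,N}_s|^{2\kappa}]\le C\mathbb{E}[1+|X_0|^{2\kappa}]$) and the identity $\mathbb{E}[\mathcal{W}_2(\mu^{X,N}_s)^2]=\tfrac1N\sum_i\mathbb{E}[|X^{i,N}_s|^2]$, together with $\sqrt{a}\le 1+a$, the integrand is uniformly bounded by $C\mathbb{E}[1+|X_0|^{2\kappa}]$, and integration over $[t_0,t_0+h]$ yields the prefactor $h$. Taking the supremum over $j$ gives \eqref{6.28}.

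For \eqref{6.29}, the cleanest route is the decomposition
\begin{equation*}
Y_{E}^{j,N}(t_0,X^{j,N}_0;t_0+h)-X^{j,N}_0
=\bigl[Y_{E}^{j,N}-X^{j,N}\bigr](t_0,X^{j,N}_0;t_0+h)
+\bigl[X^{j,N}(t_0,X^{j,N}_0;t_0+h)-X^{j,N}_0\bigr].
\end{equation*}
The expectation of the first bracket is controlled by the local weak error estimate \eqref{5.66} of Lemma~\ref{lemma5.6}, yielding $K_{28}\mathbb{E}[1+|X_0|^{4\kappa+6}]h^{3/2}$; since $h\le h_2\le 1$, this is dominated by the same expression with $h$ in place of $h^{3/2}$. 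The second bracket is precisely the quantity estimated in \eqref{6.28}, whose bound $K_{56}\mathbb{E}[1+|X_0|^{2\kappa}]h$ is further dominated by $K_{56}\mathbb{E}[1+|X_0|^{4\kappa+6}]h$. Adding the two contributions and taking the supremum over $j$ produces \eqref{6.29} with $K_{57}:=K_{28}+K_{56}$.

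The only conceptual point requiring care is verifying that Lemma~\ref{lemma5.6} is applicable in the present setting: its hypotheses are exactly Assumptions~\ref{ass2.1}--\ref{ass2.4} with the range $1\le\kappa\le\tfrac{q_0-3}{2}$, which coincides with the standing assumptions of Lemma~\ref{lemma6.5}; Assumption~\ref{ass6.1} is not needed there, so the citation is legitimate. No step presents a genuine obstacle—the routine calculations are standard applications of the moment bounds of Theorem~\ref{momentboundednessofthesolution}, the polynomial-growth estimate \eqref{2.15}, and the already-established local weak error estimate \eqref{5.66}.
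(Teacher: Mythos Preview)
Your proposal is correct and follows essentially the same approach as the paper: for \eqref{6.28} the paper likewise takes expectations to kill the stochastic integral, bounds $|b|$ via \eqref{2.15} and Theorem~\ref{momentboundednessofthesolution}, and integrates; for \eqref{6.29} the paper writes simply ``Combining \eqref{5.66} and \eqref{6.28} yields \eqref{6.29}'', which is exactly your decomposition. Your extra remark that Lemma~\ref{lemma5.6} only requires Assumptions~\ref{ass2.1}--\ref{ass2.4} (not \ref{ass6.1}) and the same $\kappa$-range is a useful sanity check that the paper leaves implicit.
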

\begin{proof}
By applying \eqref{2.6}, \eqref{2.15}, the characteristics of the It\^o integral, Theorem~\ref{momentboundednessofthesolution}, as well as Jensen's inequality, we derive, for any $1 \leq j \leq N$,
\begin{equation}\tag{5.130}\label{6.30}
\begin{aligned}
&\left| \mathbb{E} \left[ X^{j,N}\left( t_0,X^{j,N}_0;t_0+h \right) -X^{j,N}_0 \right] \right|
\\
\le& \int_{t_0}^{t_0+h}{\mathbb{E} \left[ \left| b\left( X^{j,N}\left( t_0,X^{j,N}_0;s \right) ,\mu _{s}^{X,N} \right) \right| \right]}ds
\\
\le& \int_{t_0}^{t_0+h}K_4^{\frac{1}{2}}\mathbb{E} \left[ \left( \left|X^{j,N}\left( t_0,X^{j,N}_0;s \right) \right|^{2\kappa}+1 \right) ^{\frac{1}{2}} \right]+a_3^{\frac{1}{2}}\mathbb{E} \left[ \left( \tfrac{1}{N}\sum_{i=1}^N{\left|X^{i,N}\left( t_0,X^{i,N}_0;s \right) \right|^2} \right) ^{\frac{1}{2}} \right]\mathrm{d}s
\\
\le& K_{56}\mathbb{E} \left[ \left(1+\left|X^{j,N}_0\right|^{2\kappa}\right) \right] h.
\end{aligned}
\end{equation}
This establishes \eqref{6.28}. 
Combining \eqref{5.66} and \eqref{6.28} yields \eqref{6.29}. 
The lemma is thus proved.
\end{proof}

\begin{lemma}\label{lemma6.6}
    Assuming that Assumptions~\ref{ass2.1}--\ref{ass2.4} and \ref{ass6.1} are satisfied, for any $1 \leq \kappa \leq \tfrac{q_0 - 1}{4}$ and $0 < h \leq h_2$, we can derive the following estimates:
\begin{equation}\tag{5.131}\label{6.32}
\underset{1\leqslant j\leqslant N}{\mathrm{sup}}\left|\mathbb{E}\left [X^{j,N}\left( t_0,X^{j,N}_0;t_0+h \right) -\hat{X}^{j,N}\left( t_0,X^{j,N}_0;t_0+h \right) \right]\right|\le K_{10}\mathbb{E} \left[\left( 1+\left|X_0\right|^{8\kappa+2}\right) \right] h^{\frac{3}{2}},
\end{equation}
\begin{equation}\tag{5.132}\label{6.33}
\underset{1\leqslant j\leqslant N}{\mathrm{sup}}\left|\mathbb{E}\left [\hat{X}^{j,N}\left( t_0,X^{j,N}_0;t_0+h \right) \right]\right|\le K_{58}\mathbb{E} \left[ \left(1+\left|X_0\right|^{4\kappa}\right) \right] h,
\end{equation}
where $ K_{10}$ and $ K_{58}$ are constants independent of $h, N$.
\end{lemma}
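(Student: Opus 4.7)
The plan is to establish \eqref{6.32} by inserting the explicit Euler--Maruyama one-step approximation $Y_E^{j,N}$ as an intermediate object. Using the identity \eqref{6.23}, I would write
\begin{equation*}
X^{j,N}(t_0,X_0^{j,N};t_0+h)-\hat X^{j,N}(t_0,X_0^{j,N};t_0+h) \;=\; \bigl[X^{j,N}(t_0,X_0^{j,N};t_0+h)-Y_E^{j,N}(t_0,X_0^{j,N};t_0+h)\bigr]\;-\;I_b^{j,N}.
\end{equation*}
The first bracket is the local weak error of the explicit scheme. Its expectation is bounded exactly as in the derivation of \eqref{5.66} in Lemma~\ref{lemma5.6}---invoking Assumption~\ref{ass2.2}, the empirical-measure inequality \eqref{2.8}, the one-step strong bound of Lemma~\ref{lemma5.4}, and the infinite-horizon moment bound of Theorem~\ref{momentboundednessofthesolution}---which yields a contribution of order $\mathbb{E}[(1+|X_0|^{4\kappa+6})]\,h^{3/2}$, uniformly in $j$. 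Crucially, no projection term appears here, so the argument is a direct transcription of the bound for $X^{j,N}-Y_E^{j,N}$.

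For the defect $I_b^{j,N}=h\bigl[b(\hat X^{j,N}(t_0,X_0^{j,N};t_0+h),\mu_{t_0}^{X,N})-b(X_0^{j,N},\mu_{t_0}^{X,N})\bigr]$, I would exploit the polynomial Lipschitz condition of Assumption~\ref{ass2.2} (noting that both arguments share the same measure $\mu_{t_0}^{X,N}$, so the Wasserstein term drops out) to obtain
\begin{equation*}
|I_b^{j,N}| \;\le\; h\sqrt{L_1}\,\bigl(1+|\hat X^{j,N}(t_0,X_0^{j,N};t_0+h)|^{2\kappa-2}+|X_0^{j,N}|^{2\kappa-2}\bigr)^{1/2}\,\bigl|\hat X^{j,N}(t_0,X_0^{j,N};t_0+h)-X_0^{j,N}\bigr|.
\end{equation*}
Taking expectations, applying the Cauchy--Schwarz inequality, and inserting the moment bound of Theorem~\ref{momentboundsforBEM} for the polynomial prefactor together with the strong one-step increment estimate \eqref{6.24} of Lemma~\ref{lemma6.4} produces a factor of $h^{1/2}$ from the increment, giving $\mathbb{E}|I_b^{j,N}|\lesssim\mathbb{E}[(1+|X_0|^{8\kappa+2})]\,h^{3/2}$ after composing the moment exponents. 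Adding the two contributions and taking the supremum over $j$ yields \eqref{6.32}.

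For \eqref{6.33}, I would apply the expectation directly to the defining identity \eqref{6.22}. Since the integrand of the stochastic integral is $\mathcal F_{t_0}$-measurable, that term has zero mean, leaving $\mathbb{E}[\hat X^{j,N}(t_0,X_0^{j,N};t_0+h)-X_0^{j,N}]=h\,\mathbb{E}[b(\hat X^{j,N}(t_0,X_0^{j,N};t_0+h),\mu_{t_0}^{X,N})]$. Applying the growth bound \eqref{2.15}, Jensen's inequality, the identity \eqref{2.6}, and the moment bound of Theorem~\ref{momentboundsforBEM}, one estimates $\mathbb{E}|b(\hat X^{j,N},\mu_{t_0}^{X,N})|\lesssim\mathbb{E}[(1+|X_0|^{4\kappa})]$, which after multiplication by $h$ gives \eqref{6.33}.

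The main difficulty is the implicit nature of $\hat X^{j,N}(t_0,X_0^{j,N};t_0+h)$ inside the drift difference appearing in $I_b^{j,N}$: since this quantity is itself defined through a fixed-point equation, every pointwise bound must be routed through the \emph{a priori} moment estimate of Theorem~\ref{momentboundsforBEM}, which doubles the effective moment exponent at each use. The exponent $8\kappa+2$ in \eqref{6.32} is the product of this doubling with the $2\kappa-2$ growth from Assumption~\ref{ass2.2}, so careful bookkeeping of moment exponents through the Cauchy--Schwarz chain is essential to stay within the admissible range $\kappa\le(q_0-1)/4$ dictated by the finite-moment hypothesis in Assumption~\ref{ass2.3}.
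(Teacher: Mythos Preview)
Your proposal is correct and follows essentially the same approach as the paper: the decomposition via $Y_E^{j,N}$ and $I_b^{j,N}$ from \eqref{6.23}, bounding the first bracket by invoking \eqref{5.66} and handling $I_b^{j,N}$ through Assumption~\ref{ass2.2} (with the Wasserstein term vanishing), Cauchy--Schwarz, Theorem~\ref{momentboundsforBEM}, and the increment bound \eqref{6.24}, is exactly what the paper does in \eqref{6.35}--\eqref{6.36}. For \eqref{6.33} the paper simply says ``analogous to the proof of \eqref{6.28}'', and your route via \eqref{6.22}, the vanishing stochastic integral, \eqref{2.15}, and Theorem~\ref{momentboundsforBEM} is precisely that analogue.
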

\begin{proof} 
To establish the result, we apply Assumption~\ref{ass2.2}, together with \eqref{6.23}, \eqref{6.24}, H\"older's inequality, and Theorem~\ref{momentboundsforBEM}, to derive
\begin{equation}\tag{5.133}\label{6.35}
\begin{aligned}
	&\left| \mathbb{E} \left[ I_{b}^{j,N} \right] \right|\\
\le &\sqrt{L_1}\int_{t_0}^{t_0+h}{\mathbb{E}}\left[ \left( 1+\left|\hat{X}^{j,N}\left( t_0,X^{j,N}_0;t_0+h \right)\right |^{2\kappa -2}+\left|X^{j,N}_0\right|^{2\kappa -2} \right) ^{\frac{1}{2}}\right.\\
& \left.\times\left|\hat{X}^{j,N}\left( t_0,X^{j,N}_0;t_0+h \right) -X^{j,N}_0\right| \right] \mathrm{d}s\\
\le &\sqrt{L_1}\int_{t_0}^{t_0+h}{\left( \mathbb{E} \left[ \left( 1+\left|\hat{X}^{j,N}\left( t_0,X^{j,N}_0;t_0+h \right) \right|^{2\kappa -2}+\left|X^{j,N}_0\right|^{2\kappa -2} \right)  \right] \right) ^{\frac{1}{2}}}\\
&\times \left( \mathbb{E} \left[ \left|\hat{X}^{j,N}\left( t_0,X^{j,N}_0;t_0+h \right) -X^{j,N}_0\right|^{2} \right] \right) ^{\frac{1}{2}}\mathrm{d}s\\
\le &K_{59}\mathbb{E} \left[ \left(1+\left|X_0\right|^{8\kappa -4}\right) \right] h^{\frac{3}{2}}.\\
\end{aligned}
\end{equation}
Combining \eqref{5.66}, \eqref{6.23}, and \eqref{6.35}, we obtain, for any $1 \leq j \leq N$,
\begin{equation}\tag{5.134}\label{6.36}
\begin{aligned}
	&\left|\mathbb{E} \left[X^{j,N}\left( t_0,X^{j,N}_0;t_0+h \right) -\hat{X}^{j,N}\left( t_0,X^{j,N}_0;t_0+h \right)\right ]\right|\\
	\le& \left| \mathbb{E} \left[ X^{j,N}\left( t_0,X^{j,N}_0;t_0+h \right) -Y_{E}^{j,N}\left(t_0,X^{j,N}_0;t_0+h\right) \right] \right|+\left| \mathbb{E} \left[ I_{b}^{j,N} \right] \right|\\
	\le& K_{28}\mathbb{E} \left[ \left(1+\left|X_0\right|^{8\kappa +2}\right) \right] h^{\frac{3}{2}}+K_{59}\mathbb{E} \left[ \left(1+\left|X_0\right|^{8\kappa -4}\right) \right] h^{\frac{3}{2}}\\
	\le& K_{10}\mathbb{E} \left[ \left(1+\left|X_0\right|^{8\kappa +2}\right) \right] h^{\frac{3}{2}}.\\
\end{aligned}
\end{equation}
This establishes \eqref{6.32}, which corresponds to \eqref{6.44} in Theorem~\ref{theorem6.7}. 
Analogous to the proof of \eqref{6.28}, \eqref{6.33} is readily established.
\end{proof} 

\begin{lemma}\label{lemma6.7}
    Assuming that Assumptions~\ref{ass2.1}--\ref{ass2.4} and \ref{ass6.1} are satisfied, for any $1 \leq \kappa \leq \tfrac{q_0 - 12}{8}$ and $0 < h \leq h_2$, we can derive the following estimates:
\begin{equation}\tag{5.135}\label{6.38}
\underset{1\leqslant j\leqslant N}{\mathrm{sup}}\left\{ \mathbb{E} \left[ \left| I_{b}^{j,N} \right|^2 \right] \right\} ^{\frac{1}{2}}\le K_{60}\left\{ \mathbb{E} \left[ \left(1+\left|X_0\right|^{16\kappa -8} \right)\right] \right\} ^{\frac{1}{2}}h^{\frac{3}{2}},
\end{equation}
\begin{equation}\tag{5.136}\label{6.39}
\underset{1\leqslant j\leqslant N}{\mathrm{sup}}\left\{ \mathbb{E} \left[ \left|X^{j,N}\left( t_0,X^{j,N}_0;t_0+h \right) -\hat{X}^{j,N}\left( t_0,X^{j,N}_0;t_0+h \right)\right |^2 \right] \right\} ^{\frac{1}{2}}\le K_{11}\left\{ \mathbb{E} \left[ \left(1+\left|X_0\right|^{16\kappa +24}\right) \right] \right\} ^{\frac{1}{2}}h,
\end{equation}
where $I_b^{j,N}$ is as defined in \eqref{6.23}, and $K_{60}$ and $K_{11}$ are constants independent of $h$, $N$.
\end{lemma}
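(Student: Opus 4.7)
The plan is to first bound $I_b^{j,N}$ directly via Assumption~\ref{ass2.2} together with the moment estimates of Theorem~\ref{momentboundsforBEM} and Lemma~\ref{lemma6.4}, and then to obtain \eqref{6.39} by decomposing the error through the explicit Euler--Maruyama one-step $Y_E^{j,N}$ via \eqref{6.23}, piggybacking on the already-established strong one-step bound \eqref{5.75}.

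For \eqref{6.38}, I would start by noting that the measure argument is frozen at $\mu_{t_0}^{X,N}$ inside $I_b^{j,N}$, so the $\mathcal{W}_2$-contribution of Assumption~\ref{ass2.2} vanishes. Cauchy--Schwarz on the time integral in \eqref{6.23}, combined with the polynomial Lipschitz bound, yields
\begin{equation*}
\mathbb{E}\bigl[|I_b^{j,N}|^2\bigr] \le L_1 h\int_{t_0}^{t_0+h}\mathbb{E}\bigl[(1+|\hat X^{j,N}|^{2\kappa-2}+|X_0^{j,N}|^{2\kappa-2})\,|\hat X^{j,N}-X_0^{j,N}|^2\bigr]\,\mathrm{d}s.
\end{equation*}
A further H\"older split of the integrand into two $L^2$-factors, combined with Theorem~\ref{momentboundsforBEM} at $p=2\kappa-2$ for the polynomial weight (trivial when $\kappa=1$) and with \eqref{6.25} for the fourth moment of $\hat X^{j,N}-X_0^{j,N}$, produces a factor of order $h^2$ inside the time integral. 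Taking square roots and consolidating the two polynomial weights $(1+|X_0|^{8\kappa-8})$ and $(1+|X_0|^{8\kappa})$ through the elementary inequality $|x|^{8\kappa-8}+|x|^{8\kappa}\le 2(1+|x|^{16\kappa-8})$, valid for $\kappa\ge 1$, then delivers \eqref{6.38}.

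For \eqref{6.39}, I would use \eqref{6.23} to write the decomposition
\begin{equation*}
X^{j,N}(t_0,X_0^{j,N};t_0+h)-\hat X^{j,N}(t_0,X_0^{j,N};t_0+h) = \bigl[X^{j,N}(t_0,X_0^{j,N};t_0+h)-Y_E^{j,N}(t_0,X_0^{j,N};t_0+h)\bigr] - I_b^{j,N},
\end{equation*}
apply the triangle inequality in $L^2(\Omega)$, and then invoke \eqref{5.75} (with $x^{j,N}=X_0^{j,N}$) for the first term and \eqref{6.38} for the second. Since $h\le h_2\le 1$, the $h^{3/2}$ rate contributed by $I_b^{j,N}$ is absorbed into the $h$ rate of the Euler--Maruyama term, and the two polynomial weights $(1+|X_0|^{8\kappa+6})$ and $(1+|X_0|^{16\kappa-8})$ are both dominated by the larger $(1+|X_0|^{16\kappa+24})$ advertised in the statement. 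The only substantive obstacle is bookkeeping of moment exponents: the restriction $1\le\kappa\le\tfrac{q_0-12}{8}$ is precisely what is needed to simultaneously permit the invocation of Theorem~\ref{momentboundsforBEM} at $p=2\kappa-2$, the use of \eqref{6.25}, and the finiteness of $\mathbb{E}[|X_0|^{16\kappa+24}]$ through Assumption~\ref{ass2.3}. No further analytic difficulty arises, since the diffusive fluctuations of $\hat X^{j,N}-X_0^{j,N}$ have already been absorbed into the fourth-moment estimate of Lemma~\ref{lemma6.4}.
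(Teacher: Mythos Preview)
Your proposal is correct and follows essentially the same approach as the paper. The only cosmetic difference is that for \eqref{6.39} the paper expands the $X^{j,N}-Y_E^{j,N}$ contribution inline (bounding the drift and diffusion increments separately via Assumption~\ref{ass2.2} and Assumption~\ref{ass6.1}) rather than citing \eqref{5.75}; your route of invoking \eqref{5.75} directly is a clean shortcut with the same content.
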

\begin{proof} 
By invoking Assumption~\ref{ass2.2} and \eqref{6.23}, along with the Cauchy-Schwarz inequality, Theorem~\ref{momentboundsforBEM}, Lemma~\ref{lemma6.4}, and H\"older's inequality, we derive, for any $1 \leq j \leq N$,
\begin{equation}\tag{5.137}\label{6.40}
\begin{aligned}
	&\mathbb{E} \left[ \left| I_{b}^{j,N} \right|^2 \right]\\
	\le& h\int_{t_0}^{t_0+h}{\mathbb{E} \left[ \left| b\left( \hat{X}^{j,N}\left( t_0,X^{j,N}_0;t_0+h \right) ,\mu _{t_0}^{X,N} \right) -b\left( X^{j,N}_0,\mu _{t_0}^{X,N} \right) \right|^2 \right]}\mathrm{d}s\\
	\le& L_1h\int_{t_0}^{t_0+h}{\left\{ \mathbb{E} \left[ \left( 1+\left|\hat{X}^{j,N}\left( t_0,X^{j,N}_0;t_0+h \right) \right|^{2\kappa -2}+\left|X^{j,N}_0\right|^{2\kappa -2} \right) ^{2} \right] \right\} ^{\frac{1}{2}}}\\
	&\times \left\{ \mathbb{E} \left[ \left|\hat{X}^{j,N}\left( t_0,X^{j,N}_0;t_0+h \right) -X^{j,N}_0\right|^{4} \right] \right\} ^{\frac{1}{2}}\mathrm{d}s\\
	\le& K_{60}^{2}\mathbb{E} \left[ \left(1+\left|X_0\right|^{16\kappa -8}\right) \right] h^3\,, \\
\end{aligned}
\end{equation}
which establishes \eqref{6.38}.

Next, by invoking Assumptions~\ref{ass2.2} and~\ref{ass6.1}, together with  \eqref{2.8}, \eqref{interacting}, \eqref{6.22}, \eqref{6.23}, and \eqref{6.38}, as well as the It\^o isometry, H\"older's inequality, Theorem~\ref{momentboundednessofthesolution}, and Lemma~\ref{lemma5.4}, the left-hand side of \eqref{6.39} admits the following estimate for any $1 \leq j \leq N$:
\begin{equation}\tag{5.138}\label{6.41}
\begin{aligned} 
	&\mathbb{E} \left[ \left|X^{j,N}\left( t_0,X^{j,N}_0;t_0+h \right) -\hat{X}^{j,N}\left( t_0,X^{j,N}_0;t_0+h \right)\right |^2 \right]\\
	\le& 3h\int_{t_0}^{t_0+h}{\mathbb{E} \left[ \left| b\left( X^{j,N}\left( t_0,X^{j,N}_0;s \right) ,\mu _{s}^{X,N} \right) -b\left( X^{j,N}_0,\mu _{t_0}^{X,N} \right) \right|^2 \right]}\mathrm{d}s+3\mathbb{E} \left[ \left| I_{b}^{j,N} \right|^2 \right]\\
	&+3\int_{t_0}^{t_0+h}{\mathbb{E}}\left[ \left|\left| \sigma \left( X^{j,N}\left( t_0,X^{j,N}_0;s \right) ,\mu _{s}^{X,N} \right) -\sigma \left( X^{j,N}_0,\mu _{t_0}^{X,N} \right) \right|\right| ^2 \right] \mathrm{d}s\\
\le& K_{61}h\int_{t_0}^{t_0+h}{\left\{ \mathbb{E} \left[ \left( 1+\left|X^{j,N}\left( t_0,X^{j,N}_0;s \right) \right|^{2\kappa -2}+\left|X^{j,N}_0\right|^{2\kappa -2} \right) ^{2} \right] \right\} ^{\frac{1}{2}}}\\
&\times \left\{ \mathbb{E} \left[ \left|X^{j,N}\left( t_0,X^{j,N}_0;s \right) -X^{j,N}_0\right|^{4} \right] \right\} ^{\frac{1}{2}}\mathrm{d}s\\
&+K^{2}_{60}\mathbb{E} \left[ \left(1+\left|X_0\right|^{16\kappa -8}\right) \right] h^3+K_{62}\mathbb{E} \left[ \left(1+\left|X_0\right|^{2\kappa}\right) \right] h^2\\
\le& K_{63}\mathbb{E} \left[ \left(1+\left|X_0\right|^{8\kappa -4} \right)\right] h^3+K^{2}_{60}\mathbb{E} \left[ \left(1+\left|X_0\right|^{16\kappa -8}\right) \right] h^3+K_{62}\mathbb{E} \left[ \left(1+\left|X_0\right|^{2\kappa}\right) \right] h^2\\
\le& K^{2}_{11}\mathbb{E} \left[ \left(1+\left|X_0\right|^{16\kappa +24}\right) \right] h^2\,,
\end{aligned}
\end{equation}
This establishes \eqref{6.39}, which corresponds to \eqref{6.45} in Theorem~\ref{theorem6.7}.
The lemma is thus proved.
\end{proof}   

\section*{\textbf{Acknowledgement}}
This research was supported by the Natural Science Foundation of China under Grant No. 12371417, the NSERC Discovery Grant RGPIN 2024-05941, and a Centennial Fund from the University of Alberta. 
The authors  extend their thanks to Prof. Xiaojie Wang, Lei Dai, Xiaoming Wu, and Yuying Zhao for their insightful and helpful comments.
{\color{black} 
}



\begin{thebibliography}{10}

\bibitem{angeli2023uniform}
L.~Angeli, D.~Crisan, and M.~Ottobre.
\newblock {Uniform in time convergence of numerical schemes for stochastic differential equations via strong exponential stability: Euler methods, Split-step and tamed schemes}.
\newblock {\em arXiv preprint arXiv:2303.15463}, 2023.

\bibitem{baladron2012mean}
J.~Baladron, D.~Fasoli, O.~Faugeras, and J.~Touboul.
\newblock {Mean-field description and propagation of chaos in networks of Hodgkin-Huxley and FitzHugh-Nagumo neurons}.
\newblock {\em The Journal of Mathematical Neuroscience}, 2:1--50, 2012.

\bibitem{bao2024uniform}
J.~Bao and J.~Hao.
\newblock {Uniform-in-time estimates for mean-field type SDEs and applications}.
\newblock {\em arXiv preprint arXiv:2405.04047}, 2024.

\bibitem{bao2021first}
J.~Bao, C.~Reisinger, P.~Ren, and W.~Stockinger.
\newblock {First-order convergence of Milstein schemes for McKean--Vlasov equations and interacting particle systems}.
\newblock {\em Proceedings of the Royal Society A}, 477(2245):20200258, 2021.

\bibitem{benachour1998nonlinear}
S.~Benachour, B.~Roynette, D.~Talay, and P.~Vallois.
\newblock {Nonlinear self-stabilizing processes--I Existence, invariant probability, propagation of chaos}.
\newblock {\em Stochastic processes and their applications}, 75(2):173--201, 1998.

\bibitem{bensoussan2013mean}
A.~Bensoussan, J.~Frehse, P.~Yam, et~al.
\newblock {\em {Mean field games and mean field type control theory}}, volume 101.
\newblock Springer, 2013.

\bibitem{bolley2011stochastic}
F.~Bolley, J.~A. Canizo, and J.~A. Carrillo.
\newblock {Stochastic mean-field limit: non-Lipschitz forces and swarming}.
\newblock {\em Mathematical Models and Methods in Applied Sciences}, 21(11):2179--2210, 2011.

\bibitem{bossy2015clarification}
M.~Bossy, O.~Faugeras, and D.~Talay.
\newblock {Clarification and complement to “Mean-field description and propagation of chaos in networks of Hodgkin--Huxley and FitzHugh--Nagumo neurons”}.
\newblock {\em The Journal of Mathematical Neuroscience (JMN)}, 5:1--23, 2015.

\bibitem{bossy1997stochastic}
M.~Bossy and D.~Talay.
\newblock {A stochastic particle method for the McKean-Vlasov and the Burgers equation}.
\newblock {\em Mathematics of computation}, 66(217):157--192, 1997.

\bibitem{Kumar2022Well-posedness}
C.~R. C.~Kumar, Neelima and W.~Stockinger.
\newblock {Well-posedness and tamed schemes for McKean-Vlasov equations with common noise}.
\newblock {\em Ann. Appl. Probab}, 32:3283--3330, 2022.

\bibitem{chen2022flexible}
X.~Chen and G.~Dos~Reis.
\newblock {A flexible split-step scheme for solving McKean-Vlasov stochastic differential equations}.
\newblock {\em Applied Mathematics and Computation}, 427:127180, 2022.

\bibitem{chen2024euler}
X.~Chen and G.~Dos~Reis.
\newblock {Euler simulation of interacting particle systems and McKean--Vlasov SDEs with fully super-linear growth drifts in space and interaction}.
\newblock {\em IMA Journal of Numerical Analysis}, 44(2):751--796, 2024.

\bibitem{crisan2021uniform}
D.~Crisan, P.~Dobson, and M.~Ottobre.
\newblock {Uniform in time estimates for the weak error of the Euler method for SDEs and a pathwise approach to derivative estimates for diffusion semigroups}.
\newblock {\em Transactions of the American Mathematical Society}, 374(5):3289--3330, 2021.

\bibitem{crisan2010approximate}
D.~Crisan and J.~Xiong.
\newblock {Approximate McKean--Vlasov representations for a class of SPDEs}.
\newblock {\em Stochastics An International Journal of Probability and Stochastics Processes}, 82(1):53--68, 2010.

\bibitem{yuanping2024explicit}
Y.~Cui and X.~Li.
\newblock {Explicit numerical approximations for McKean-Vlasov stochastic differential equations in finite and infinite time}.
\newblock {\em arXiv preprint arXiv:2401.02878}, 2024.

\bibitem{dalalyan2017theoretical}
A.~S. Dalalyan.
\newblock {Theoretical guarantees for approximate sampling from smooth and log-concave densities}.
\newblock {\em Journal of the Royal Statistical Society Series B: Statistical Methodology}, 79(3):651--676, 2017.

\bibitem{dawson1983critical}
D.~A. Dawson.
\newblock {Critical dynamics and fluctuations for a mean-field model of cooperative behavior}.
\newblock {\em Journal of Statistical Physics}, 31(1):29--85, 1983.

\bibitem{dos2022simulation}
G.~dos Reis, S.~Engelhardt, and G.~Smith.
\newblock {Simulation of McKean--Vlasov SDEs with super-linear growth}.
\newblock {\em IMA Journal of Numerical Analysis}, 42(1):874--922, 2022.

\bibitem{GDReis2019Freidlin-Wentzell}
G.~dos Reis and J.~Tugaut.
\newblock {Freidlin-Wentzell LDP in path space for McKean-Vlasov equations and the functional iterated logarithm law}.
\newblock {\em Ann. Appl. Probab}, 29(3):1478--1540, 2019.

\bibitem{dreyer2011phase}
W.~Dreyer, M.~Gaber{\v{s}}{\v{c}}ek, C.~Guhlke, R.~Huth, and J.~Jamnik.
\newblock {Phase transition in a rechargeable lithium battery}.
\newblock {\em European Journal of Applied Mathematics}, 22(3):267--290, 2011.

\bibitem{Fournier2015empirical}
N.~Fournier and A.~Guillin.
\newblock {On the rate of convergence in Wasserstein distance of the empirical measure}.
\newblock {\em Probab. Theory Related Fields}, 162(3):707--738, 2015.

\bibitem{germain2022numerical}
M.~Germain, J.~Mikael, and X.~Warin.
\newblock {Numerical resolution of McKean-Vlasov FBSDEs using neural networks}.
\newblock {\em Methodology and Computing in Applied Probability}, 24(4):2557--2586, 2022.

\bibitem{guhlke2018stochastic}
C.~Guhlke, P.~Gajewski, M.~Maurelli, P.~K. Friz, and W.~Dreyer.
\newblock {Stochastic many-particle model for LFP electrodes}.
\newblock {\em Continuum Mechanics and Thermodynamics}, 30:593--628, 2018.

\bibitem{han2024learning}
J.~Han, R.~Hu, and J.~Long.
\newblock {Learning high-dimensional McKean--Vlasov forward-backward stochastic differential equations with general distribution dependence}.
\newblock {\em SIAM Journal on Numerical Analysis}, 62(1):1--24, 2024.

\bibitem{hong2019invariant}
J.~Hong and X.~Wang.
\newblock {\em {Invariant measures for stochastic nonlinear Schr{\"o}dinger equations}}.
\newblock Springer, 2019.

\bibitem{hutzenthaler2011strong}
M.~Hutzenthaler, A.~Jentzen, and P.~E. Kloeden.
\newblock {Strong and weak divergence in finite time of Euler's method for stochastic differential equations with non-globally Lipschitz continuous coefficients}.
\newblock {\em Proceedings of the Royal Society A: Mathematical, Physical and Engineering Sciences}, 467(2130):1563--1576, 2011.

\bibitem{hutzenthaler2022multilevel}
M.~Hutzenthaler, T.~Kruse, and T.~A. Nguyen.
\newblock {Multilevel Picard approximations for McKean-Vlasov stochastic differential equations}.
\newblock {\em Journal of Mathematical Analysis and Applications}, 507(1):125761, 2022.

\bibitem{KIto1964numericalstationarysolutions}
K.~Itô and M.~Nisio.
\newblock {On stationary solutions of a stochastic differential equation }.
\newblock {\em J. Math. Kyoto Univ.}, 4(1):1--75, 1964.

\bibitem{jian2025modified}
J.~Jian, Q.~Song, X.~Wang, Z.~Zhang, and Y.~Zhao.
\newblock {On modified Euler methods for McKean-Vlasov stochastic differential equations with super-linear coefficients}.
\newblock {\em arXiv preprint arXiv:2502.05057}, 2025.

\bibitem{kumar2021explicit}
C.~Kumar and Neelima.
\newblock {On explicit Milstein-type scheme for McKean--Vlasov stochastic differential equations with super-linear drift coefficient}.
\newblock {\em Electronic Journal of Probability}, 26:1--32, 2021.

\bibitem{li2023strong}
Y.~Li, X.~Mao, Q.~Song, F.~Wu, and G.~Yin.
\newblock {Strong convergence of Euler--Maruyama schemes for McKean--Vlasov stochastic differential equations under local Lipschitz conditions of state variables}.
\newblock {\em IMA Journal of Numerical Analysis}, 43(2):1001--1035, 2023.

\bibitem{liu2023tamed}
H.~Liu, B.~Shi, and F.~Wu.
\newblock {Tamed Euler--Maruyama approximation of McKean--Vlasov stochastic differential equations with super-linear drift and H{\"o}lder diffusion coefficients}.
\newblock {\em Applied Numerical Mathematics}, 183:56--85, 2023.

\bibitem{liu2023backward}
W.~Liu, X.~Mao, and Y.~Wu.
\newblock {The backward Euler-Maruyama method for invariant measures of stochastic differential equations with super-linear coefficients}.
\newblock {\em Applied Numerical Mathematics}, 184:137--150, 2023.

\bibitem{liu2023stability}
Z.~Liu, S.~Gao, C.~Yuan, and Q.~Guo.
\newblock {Stability of the numerical scheme for stochastic McKean-Vlasov equations}.
\newblock {\em arXiv preprint arXiv:2312.12699}, 2023.

\bibitem{mattingly2010convergence}
J.~C. Mattingly, A.~M. Stuart, and M.~V. Tretyakov.
\newblock {Convergence of numerical time-averaging and stationary measures via Poisson equations}.
\newblock {\em SIAM Journal on Numerical Analysis}, 48(2):552--577, 2010.

\bibitem{mckean1966class}
H.~P. McKean~Jr.
\newblock {A class of Markov processes associated with nonlinear parabolic equations}.
\newblock {\em Proceedings of the National Academy of Sciences}, 56(6):1907--1911, 1966.

\bibitem{mckean1967propagation}
H.~P. McKean~Jr.
\newblock {Propagation of chaos for a class of non-linear parabolic equations. 1967 Stochastic Differential Equations (Lecture Series in Differential Equations, Session 7, Catholic Univ., 1967) pp. 41--57 Air Force Office Sci}.
\newblock {\em Res., Arlington, Va}, 1967.

\bibitem{mishura2020existence}
Y.~Mishura and A.~Veretennikov.
\newblock {Existence and uniqueness theorems for solutions of McKean--Vlasov stochastic equations}.
\newblock {\em Theory of Probability and Mathematical Statistics}, 103:59--101, 2020.

\bibitem{reisinger2022adaptive}
C.~Reisinger and W.~Stockinger.
\newblock {An adaptive Euler--Maruyama scheme for McKean--Vlasov SDEs with super-linear growth and application to the mean-field FitzHugh--Nagumo model}.
\newblock {\em Journal of Computational and Applied Mathematics}, 400:113725, 2022.

\bibitem{song2020score}
Y.~Song, J.~Sohl-Dickstein, D.~P. Kingma, A.~Kumar, S.~Ermon, and B.~Poole.
\newblock {Score-based generative modeling through stochastic differential equations}.
\newblock {\em arXiv preprint arXiv:2011.13456}, 2020.

\bibitem{sznitman1991topics}
A.-S. Sznitman.
\newblock {Topics in propagation of chaos}.
\newblock {\em Ecole d’{\'e}t{\'e} de probabilit{\'e}s de Saint-Flour XIX—1989}, 1464:165--251, 1991.

\bibitem{talay1990second}
D.~Talay.
\newblock {Second-order discretization schemes of stochastic differential systems for the computation of the invariant law}.
\newblock {\em Stochastics: An International Journal of Probability and Stochastic Processes}, 29(1):13--36, 1990.

\bibitem{talay2002stochastic}
D.~Talay.
\newblock {Stochastic Hamiltonian systems: exponential convergence to the invariant measure, and discretization by the implicit Euler scheme}.
\newblock {\em Markov Process. Related Fields}, 8(2):163--198, 2002.

\bibitem{tran2025infinite}
N.~K. Tran, T.-T. Kieu, D.-T. Luong, and H.-L. Ngo.
\newblock {On the infinite time horizon approximation for L{\'e}vy-driven McKean-Vlasov SDEs with non-globally Lipschitz continuous and super-linearly growth drift and diffusion coefficients}.
\newblock {\em Journal of Mathematical Analysis and Applications}, 543(2):128982, 2025.

\bibitem{Wolf-Jurgen2016project}
E.~I. Wolf-Jurgen~Beyn and R.~Kruse.
\newblock {Stochastic C-stability and B-consistency of explicit and implicit Euler-type schemes}.
\newblock {\em Journal of Scientific Computing}, 67(3):955--987, 2016.

\bibitem{wu2022stabilization}
H.~Wu, J.~Hu, S.~Gao, and C.~Yuan.
\newblock {Stabilization of stochastic McKean--Vlasov equations with feedback control based on discrete-time state observation}.
\newblock {\em SIAM Journal on Control and Optimization}, 60(5):2884--2901, 2022.

\bibitem{WANG2024long-timeStrong}
X.~Wu and X.~Wang.
\newblock {Strong convergence rates for long-time approximations of SDEs with non-globally Lipschitz continuous coefficients}.
\newblock {\em arXiv:2406.10582}, 2024.

\end{thebibliography}

\end{document}